\numberwithin{equation}{section}
\newcommand{\norma}{|\!\!|}
\newtheorem{theorem}{Theorem}[section]
\newtheorem{proposition}{Proposition}[section]
\newtheorem{lemma}{Lemma}[section]
\newtheorem{corollary}{Corollary}[section]
\newtheorem{remark}{Remark}[section]
\newtheorem{remarks}{Remark}[section]
\newtheorem{definition}{Definition}[section]
\newcommand{\be}{\begin{equation}}
\newcommand{\ee}{\end{equation}}
\newcommand{\e}{\varepsilon}
\newcommand{\R}{\mathbb R}
\newcommand{\C}{\mathbb C}
\newcommand{\Z}{\mathbb Z}
\newcommand{\N}{\mathbb N}
\newcommand{\T}{\mathbb T}
\newcommand{\ii }{{\rm i} }
\renewcommand{\d }{\delta }
\newcommand{\g }{\gamma}
\newcommand{\vphi}{\varphi }
\renewcommand{\t }{\tau }
\newcommand{\p}{\pi}
\newcommand{\Lipg}{{\rm{Lip}(\g)}}
\newcommand{\lip}{{\rm lip}}
\begin{document}

\title{\textbf{A reducibility result for a class of linear wave equations on $\T^d$}} 

\date{}
\author{ Riccardo Montalto \footnote{Supported in part by the Swiss National Science Foundation}}

\maketitle

\noindent
{\bf Abstract.}
We prove a {\it reducibility} result for a class of quasi-periodically forced linear wave equations on the $d$-dimensional torus $\T^d$ of the form 
$$
\partial_{tt} v - \Delta v + \e {\cal P}(\omega t)[v] = 0
$$
where the perturbation ${\cal P}(\omega t)$ is a second order operator of the form ${\cal P}(\omega t) = - a(\omega t) \Delta  -  {\cal R}(\omega t)$, the frequency $\omega \in \R^\nu$ is in some Borel set of large Lebesgue measure, the function $a : \T^\nu \to \R$ (independent of the space variable) is sufficiently smooth and ${\cal R}(\omega t)$ is a time-dependent finite rank operator. This is the first reducibility result for linear wave equations with unbounded perturbations on the higher dimensional torus $\T^d$. As a corollary, we get that the linearized Kirchhoff equation at a smooth and sufficiently small quasi-periodic function is {\it reducible}.

\smallskip

\noindent
{\it Keywords:}  reducibility of linear wave equations on $\T^d$, KAM for PDEs, unbounded perturbations.

\smallskip

\noindent
{\it MSC 2010:} 35L10, 37K55.

\tableofcontents

\section{Introduction and main result}
We consider a linear quasi-periodically forced wave equation of the form  
\begin{equation}\label{main equation}
\partial_{tt} v -  \Delta v + \e {\cal P}(\omega t)[v] = 0, \quad x \in \T^d
\end{equation}
where $\T := \R/(2 \pi \Z)$,  $\e > 0 $ is a small parameter, $\omega \in \Omega \subseteq \R^\nu$, with $\Omega$ a closed bounded domain and the operator ${\cal P}(\omega t)$ is given by 
\begin{equation}\label{perturbazione P omega t}
{\cal P}(\vphi)[v] := - a(\vphi) \Delta v - {\cal R}(\vphi)[v]\,, \quad \vphi \in \T^\nu\,, \quad v \in L^2_0(\T^d, \R)
\end{equation}
with ${\cal R}(\vphi)$ being an operator of the form 
\begin{equation}\label{definizione perturbazione rango finito}
{\cal R}(\vphi)[v] :=  \sum_{k = 1}^N b_k(\vphi, x) \int_{\T^d} c_k(\vphi, y) v(y)\, d y + c_k(\vphi, x) \int_{\T^d} b_k(\vphi, y) v(y)\, d y\,, \qquad \vphi \in \T^\nu\,, \quad v \in L^2_0(\T^d, \R)\,. 
\end{equation}
Here $\nu, d \geq 1$ are integer numbers, $L^2_0(\T^d, \R)$ denotes the space of the real valued $L^2$ functions with zero average and the functions $a : \T^\nu \to \R, b_k, c_k : \T^\nu \times \T^d \to \R$, $k = 1, \ldots, N$ are assumed to be sufficiently smooth, namely $a \in {\cal C}^q(\T^\nu, \R), b_k, c_k \in {\cal C}^q(\T^\nu \times \T^d, \R)$ for some $q > 0$ large enough. Note that the operator ${\cal R}(\vphi)$ is symmetric with respect to the real $L^2$-inner product. 
Our aim is to prove a reducibility result for the equation \eqref{main equation} for $\e$ small enough and for $\omega$ in a suitable Borel set of parameters $\Omega_\e \subset \Omega$ with asymptotically full Lebesgue measure. The PDE \eqref{main equation} may be written as the first order system
\begin{equation}\label{Kirchoff first order}
\begin{cases}
\partial_t v = \psi \\
\partial_t \psi = \Big( 1 + \e a(\omega t)  \Big) \Delta v + \e {\cal R}(\omega t)[v]
\end{cases}
\end{equation}
which is a real Hamiltonian system of the form 
\begin{equation}\label{sistema hamiltoniano generale}
\begin{cases}
\partial_t v = \nabla_\psi H(\omega t, v, \psi) \\
\partial_t \psi = - \nabla_v H(\omega t, v, \psi)
\end{cases}
\end{equation}
whose $\vphi$-dependent Hamiltonian is given by 
\begin{equation}\label{Hamiltoniana Kirchoff}
H(\vphi, v, \psi) := \frac12 \int_{\T^d} \Big( \psi^2 + (1 + \e a(\vphi))|\nabla v|^2 \Big)\, dx  - \e \frac12\int_{\T^d} {\cal R}(\vphi)[v]\, v\, dx\,.  
\end{equation}
In \eqref{sistema hamiltoniano generale}, $\nabla_\psi H$ and $\nabla_v H$ denote the $L^2$-gradients of the Hamiltonian $H$ with respect to the variables $v$ and $\psi$. We assume that the functions $b_k(\vphi, x), c_k(\vphi, x)$, $k = 1, \ldots, N$ have zero average with respect to $x \in \T^d$, namely 
\begin{equation}\label{condizione media f}
\int_{\T^{d}} b_k(\vphi, x)\, d x = 0\,, \quad \int_{\T^d} c_k(\vphi, x)\, d x= 0 \quad \forall \vphi \in \T^\nu\,, \quad k = 1, \ldots, N\,. 
\end{equation}

In order to precisely state the main result of this paper, let us introduce some more notations. For any $s \geq 0$, we define the Sobolev spaces $H^s(\T^d) = H^s(\T^d, \C)$, $H^s(\T^d, \R)$ respectively of complex and real valued functions
\begin{equation}\label{Sobolev x}
H^s(\T^d) := \big\{ u(x) = \sum_{j \in \Z^d} u_j e^{\ii j x} : \| u \|_{H^s_x}^2 := \sum_{j \in \Z^d} \langle j \rangle^{2 s} |u_j|^2 < + \infty \big\}\,,\quad H^s(\T^d, \R) := \big\{ u \in H^s(\T^d) : u = \overline u \big\}\,
\end{equation}
where 
$$
 \langle j \rangle := {\rm max}\{1, |j|\}\,, \quad |j | := \sqrt{j_1^2 + \ldots + j_d^2}\,, \quad \forall j = (j_1, \ldots, j_d) \in \Z^d\,. 
$$
Moreover we define 
\begin{equation}\label{Sobolev x media nulla}
H_0^s(\T^d) := \big\{ u \in H^s(\T^d) : \int_{\T^d} u(x)\, d x = 0  \big\}\,, \quad H^s_0(\T^d, \R) := \big\{ u \in H^s(\T^d, \R) : \int_{\T^d} u(x)\, d x = 0  \big\}\,
\end{equation}
and introduce the real subspace ${\bf H}^s_0(\T^d)$ of $H^s_0(\T^d) \times H^s_0(\T^d)$  
$$
{\bf H}^s_0(\T^d) := \big\{ {\bf u} := (u, \overline u) : u \in H^s_0(\T^d) \big\}\,, \quad \text{equipped\,with\,the\,norm}\quad \| {\bf u}\|_{{\bf H}^s_x} := \| u \|_{H^s_x}\,.
$$
Given a linear operator ${\cal R} : L^2_0(\T^d) \to L^2_0(\T^d)$ (where $L^2_0(\T^d) := H^0_0(\T^d)$), we define its Fourier coefficients with respect to the exponential basis $\{ e^{\ii j \cdot x} : j \in \Z^d \setminus \{ 0 \} \}$ of $L^2_0(\T^d)$ as 
\begin{equation}\label{coefficienti spazio operatore}
{\cal R}_j^{j'} := \frac{1}{(2 \pi)^d} \int_{\T^d} {\cal R}[e^{\ii j' \cdot x}] e^{- \ii j \cdot x}\, d x\,, \qquad \forall j, j' \in \Z^d \setminus \{ 0 \}\,. 
\end{equation}
We introduce the linear operator $\overline{\cal R}$, defined by $\overline{\cal R} [u] = \overline{{\cal R}[\overline u]}$, for any $u \in L^2_0(\T^d)$. 

\noindent
We say that the operator ${\cal R}$ is block diagonal if ${\cal R}_j^{j'} = 0$ for any $j, j' \in \Z^d \setminus \{ 0 \}$ with $|j| \neq |j'|$. 

\noindent
Because of the hyphothesis \eqref{condizione media f} the Hamiltonian vector field 
\begin{equation}\label{campo vettoriale main equation}
{\cal L}(\vphi) := \begin{pmatrix}
0 & 1 \\
\Delta - \e {\cal P}(\vphi) & 0
\end{pmatrix} \stackrel{\eqref{perturbazione P omega t}}{=}\begin{pmatrix}
0 & 1 \\
(1 + \e a(\vphi)) \Delta + \e {\cal R}(\vphi) & 0
\end{pmatrix}\,, \qquad \vphi \in \T^\nu\,,
\end{equation}
leaves the space of functions with zero average invariant. More precisely for any $0 \leq s \leq q$
$$
{\cal L}(\vphi) : H^{s + 2}_0(\T^d, \R) \times H^{s + 1}_0(\T^d, \R) \to H^{s + 1}_0(\T^d, \R) \times H^{s }_0(\T^d, \R)\,, \quad \forall \vphi \in \T^\nu
$$
and therefore we can choose $H_0^1(\T^d, \R) \times L^2_0(\T^d, \R)$ as phase space for the Hamiltonian $H$ defined in \eqref{Hamiltoniana Kirchoff}. 
Now we are ready to state the main result of the present paper. 
\begin{theorem}\label{theorem linear stability} 
Let $\nu, d$ be integer numbers greater or equal than $1$. There exists a strictly positive integer $q_0 = q_0(\nu, d) > 1/2$ such that for any $q \geq q_0$ there exists $\e_q = \e(q, \nu, d) > 0$ and $\mathfrak S_q := \mathfrak S(q,  \nu, d)$, with $1/2 < \mathfrak S_q < q$ such that if $a \in {\cal C}^q(\T^\nu, \R), b_k, c_k \in {\cal C}^q(\T^\nu \times \T^d, \R)$, with $b_k, c_k$ satisfying the hyphothesis \eqref{condizione media f} for any $k = 1, \ldots, N$, then for any $\e \in (0, \e_q)$ there exists a Borel set $  \Omega_\e  \subset \Omega $ of asymptotically full Lebesgue measure, i.e. 
\begin{equation}\label{stima misura main theorem}
|\Omega_\e| \to |\Omega|\, \qquad \text{as} \qquad \e \to 0\,,
\end{equation}
such that the following holds: for all $\omega \in \Omega_\e$ and $\vphi \in \T^\nu$, there exists a bounded linear invertible operator ${\cal W}_\infty(\vphi) = {\cal W}_\infty(\vphi; \omega)$ such that for any $ \frac12 \leq s \leq \mathfrak S_q$ 
$$
{\cal W}_\infty (\vphi ) : {\bf H}^s_0(\T^d) \to H^{s + \frac12}_0(\T^d, \R) \times H^{s - \frac12}_0(\T^d, \R)
$$
satisfying the following property: $(v(t, \cdot), \psi(t, \cdot))$ is a solution of \eqref{Kirchoff first order} in $H^{s + \frac12}_0(\T^d, \R) \times H^{s - \frac12}_0(\T^d, \R)$ if and only if 
$$
{\bf u}(t, \cdot) = (u(t, \cdot), \overline u(t, \cdot)) = {\cal W}_\infty(\omega t)^{- 1}[(v(t, \cdot), \psi(t, \cdot)))]
$$
is a solution in ${\bf H}^s_0(\T^d)$ of the PDE with constant coefficients
$$
\partial_t {\bf u} = {\cal D}_\infty {\bf u}\,, \qquad {\cal D}_\infty := \ii \begin{pmatrix}
- {\cal D}_\infty^{(1)} & 0 \\
0 & \overline{\cal D}_\infty^{(1)}
\end{pmatrix}
$$
where for any $s \geq 1$, ${\cal D}_\infty^{(1)} : { H}_0^s(\T^d) \to { H}^{s - 1}_0(\T^d)$ is a linear, time-independent, $L^2$-self-adjoint, block-diagonal operator.
\end{theorem}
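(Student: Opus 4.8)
\noindent\textit{Strategy of the proof.}\enspace The plan is to first \emph{regularize} the equation by a finite sequence of explicit transformations, turning it into a \emph{bounded} (order $0$) perturbation of a diagonal, constant-coefficient operator, and then to run a KAM reducibility iteration of the type now standard for bounded perturbations on $\T^d$. \emph{Step 1 (first order system and symmetrization).} Set $\Lambda := \sqrt{-\Delta}$ on $H^s_0(\T^d)$, and pass from $(v,\psi)$ in \eqref{Kirchoff first order} to the ``half--wave'' variables
$$
{\bf u} = (u, \overline u) := Q(\omega t)^{-1}\big(\Lambda^{1/2} v,\ \Lambda^{-1/2}\psi\big)\,, \qquad Q(\varphi) := \begin{pmatrix} 1 & 1 \\ -\ii \sqrt{1 + \e a(\varphi)} & \ii \sqrt{1 + \e a(\varphi)} \end{pmatrix}\,.
$$
Then \eqref{Kirchoff first order} becomes $\partial_t {\bf u} = \mathcal{L}_0(\omega t){\bf u}$ with
$$
\mathcal{L}_0(\varphi) = \ii\, \sqrt{1 + \e a(\varphi)}\ \Lambda \begin{pmatrix} - 1 & 0 \\ 0 & 1\end{pmatrix} \ - \ Q(\varphi)^{-1} \big(\om\cdot\pa_\varphi Q\big)(\varphi) \ + \ \e\, Q(\varphi)^{-1} \mathcal{S}_0(\varphi)\, Q(\varphi)\,,
$$
where $\mathcal{S}_0(\varphi)$ is built from $\Lambda^{-1/2}\mathcal{R}(\varphi)\Lambda^{-1/2}$, hence is smoothing by \eqref{definizione perturbazione rango finito}. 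This is the point where the hypothesis that $a$ is \emph{independent of} $x$ is essential: the leading symbol is the scalar $\sqrt{1+\e a(\varphi)}$ times the first-order multiplier $\langle j\rangle$, so it is diagonalized simultaneously for all $j$ by the $\langle j\rangle$-independent matrix $Q(\varphi)$, whose columns are the eigenvectors of $\bigl(\begin{smallmatrix}0 & 1 \\ -(1+\e a(\varphi)) & 0\end{smallmatrix}\bigr)$ and which is $\e$-close to a constant invertible matrix. Since $Q(\varphi)$ and $\pa_\varphi Q(\varphi)$ are constant in $x$, the terms $Q^{-1}(\om\cdot\pa_\varphi Q)$ and $\e\,Q^{-1}\mathcal{S}_0 Q$ are $O(\e)$ and of order $0$ (the latter smoothing), and $Q(\varphi)$ preserves the reality structure ${\bf u}=(u,\overline u)$. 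The passage $(v,\psi)\leftrightarrow{\bf u}$ carries a built-in $1/2$-smoothing, ${\bf u}\in{\bf H}^s_0$ iff $v\in H^{s+1/2}_0$, $\psi\in H^{s-1/2}_0$, which accounts for the mapping property of $\mathcal{W}_\infty$ in the statement.

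\emph{Step 2 (reduction of the order-one term to constant coefficients).} Write $\sqrt{1+\e a(\varphi)} = \mathtt{m} + \e\,\widetilde a(\varphi)$, $\mathtt{m} := (2\pi)^{-\nu}\!\int_{\T^\nu}\sqrt{1+\e a}$, with $\widetilde a$ of zero $\varphi$-average, and conjugate $\mathcal{L}_0$ by the Fourier multiplier $\Phi(\varphi) := {\rm diag}\big(e^{\ii\beta(\varphi)\Lambda},\, e^{-\ii\beta(\varphi)\Lambda}\big)$, $\beta := \e\,(\om\cdot\pa_\varphi)^{-1}[\widetilde a]$. This removes the term $\ii\,\e\,\widetilde a(\omega t)\,\Lambda\,{\rm diag}(-1,1)$, because $\om\cdot\pa_\varphi\Phi$ exactly compensates it while the constant-coefficient part commutes with $\Phi$; crucially $e^{\pm\ii\beta(\varphi)\Lambda}$ has modulus one on each Fourier mode, so $\Phi(\varphi)$ is uniformly bounded on every $H^s_x$ and, being a Fourier multiplier, preserves the order-$0$/smoothing nature of the remainder. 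The only non-resonance used is the first-order Melnikov condition $|\om\cdot\ell|\ge\g\,\langle\ell\rangle^{-\tau}$, $\ell\in\Z^\nu\setminus\{0\}$, which holds on a set of measure $\ge|\Omega|-C\g$. The outcome is
$$
\partial_t{\bf u} = \big(\mathcal{D}_0 + \e\,\mathcal{Q}_0(\omega t)\big){\bf u}\,, \qquad \mathcal{D}_0 := \ii\,{\rm diag}\big(-\mathtt{m}\Lambda,\ \mathtt{m}\Lambda\big)\,,
$$
with $\mathcal{Q}_0(\omega t)$ bounded of order $0$ and of size $O(\e)$, whose non-smoothing part is a $\varphi$-dependent Fourier multiplier (hence couples only the modes $j$ and $-j$), and which -- using that $\mathcal{R}(\varphi)$ is $L^2$-symmetric -- is of Hamiltonian (skew-adjoint) form, compatible with the reality structure.

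\emph{Step 3 (KAM reducibility and conclusion).} On $\mathcal{D}_0+\e\,\mathcal{Q}_0$ I would run a quadratically convergent KAM iteration. Each step solves a homological equation $\om\cdot\pa_\varphi\Psi + [\Psi,\mathcal{D}_0] = \mathcal{Q} - [\mathcal{Q}]$, leaving in the normal form only the resonant part $[\mathcal{Q}]$ supported on $\ell=0$, $|j|=|j'|$, and removing the rest; the divisors $\ii\,\om\cdot\ell\mp\ii\,\mathtt{m}(|j|\pm|j'|)$ are kept bounded below -- after excising a further set of $\omega$'s of measure $O(\g)$ -- by second-order Melnikov conditions of the form $|\om\cdot\ell\mp\mathtt{m}(|j|\pm|j'|)|\ge\g\,\langle\ell\rangle^{-\tau}(1+|j|+|j'|)^{-\tau}$, using for the ``$|j|-|j'|$'' divisors that $\big||j|-|j'|\big|\ge(|j|+|j'|)^{-1}$ whenever $|j|\ne|j'|$, which makes the total excised measure summable and $O(\g)$. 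The splitting ``Fourier multiplier $+$ smoothing operator'' of $\mathcal{Q}_0$ is preserved along the iteration, so that the loss of $(1+|j|+|j'|)^{\tau}$ in the homological step is harmless (it hits either modes $j$, $-j$ only -- where it is absorbed by the regularity in $\varphi$ since then $|j|\lesssim|\ell|$ -- or the smoothing part, which has the required decay). Because the resonant set is exactly $\{|j|=|j'|\}$, the normal form stays block-diagonal with blocks indexed by the spheres $\{j\in\Z^d\setminus\{0\}:|j|={\rm const}\}$; and since $\mathcal{D}_0$, $\mathcal{Q}_0$ are skew-adjoint and the transformations are chosen symplectic, this block-diagonal, $L^2$-self-adjoint structure of order $1$ persists. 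The scheme converges for $\tfrac12\le s\le\mathfrak{S}_q$ provided $q$ -- the number of derivatives of $a,b_k,c_k$ -- is large enough, and yields a bounded invertible conjugation to $\partial_t{\bf w} = \mathcal{D}_\infty{\bf w}$ with $\mathcal{D}_\infty = \ii\,{\rm diag}(-\mathcal{D}_\infty^{(1)},\overline{\mathcal{D}_\infty^{(1)}})$, $\mathcal{D}_\infty^{(1)}$ block-diagonal, $L^2$-self-adjoint and of order $1$. Finally $\mathcal{W}_\infty(\varphi)$ is the composition of $(v,\psi)\leftrightarrow{\bf u}$ (responsible for the shift $H^s_0\to H^{s+1/2}_0\times H^{s-1/2}_0$), of $Q(\varphi)$, $\Phi(\varphi)$, and of the infinite product of KAM transformations; $\Omega_\e$ is the intersection of the parameter sets of Steps 2 and 3, with $\g=\g(\e)\to0$ as $\e\to0$, so $|\Omega_\e|\to|\Omega|$.

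\emph{Main obstacle.} The real difficulty is the unbounded, first-order perturbation $\e\,a(\omega t)\Delta$ on $\T^d$ with $d\ge2$: the eigenvalues $\langle j\rangle$ of $\sqrt{-\Delta}$ are then highly degenerate and not separated, so the $x$-dependent pseudodifferential conjugations that regularize such terms for $d=1$ are not available. What unlocks the higher-dimensional case is exactly the structural assumption of the theorem -- $a$ depends only on time -- which reduces the first-order part of the symbol to $\sqrt{1+\e a(\varphi)}\,\langle j\rangle$ and lets one straighten it by the single modulus-one Fourier multiplier $\Phi(\varphi)$ of Step 2, an operation bounded on all Sobolev spaces and requiring only first-order Melnikov conditions, so that no derivatives are lost before entering the KAM scheme. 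The remaining technical core is the measure and separation estimate of Step 3 for these clustered eigenvalues on $\Z^d$, together with the bookkeeping that the zero-$x$-average finite-rank term \eqref{definizione perturbazione rango finito}--\eqref{condizione media f} keeps the whole construction inside $L^2_0(\T^d)$ and of the form ``multiplier $+$ smoothing'' throughout.
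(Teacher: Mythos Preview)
Your overall architecture---regularize the unbounded part, then run a KAM scheme with second-order Melnikov conditions that lose powers of $|j|,|j'|$---is the paper's, and you correctly isolate both the decisive structural hypothesis ($a$ independent of $x$) and the arithmetic input $\big||j|-|j'|\big|\ge(|j|+|j'|)^{-1}$ for $|j|\ne|j'|$ on $\Z^d$. The gap is in the regularization, and it has two parts.

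In Step~2 you straighten the leading coefficient by the gauge transform $\Phi(\varphi)=\mathrm{diag}\big(e^{i\beta(\varphi)\Lambda},e^{-i\beta(\varphi)\Lambda}\big)$. This is bounded on each $H^s_x$, but the exponent is of order $+1$: conjugating the off-diagonal scalar entries of $Q^{-1}\omega\!\cdot\!\partial_\varphi Q$ by $\Phi$ produces symbols $c(\varphi)\,e^{\mp 2i\beta(\varphi)|j|}$ whose $H^s_\varphi$-norm at fixed $j$ grows like $(\e|j|)^s$, so they lie in no Fourier-multiplier class with uniform-in-$j$ control of $\varphi$-derivatives (the paper's norms $\norma\cdot\norma_{0,s}$ are infinite on them). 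Your remark ``$|j|\lesssim|\ell|$ absorbs the loss'' concerns only the small-divisor region of the homological equation, not the size of the symbol itself or of its iterated products. The paper instead uses a \emph{time reparametrization} $\tau=t+\alpha(\omega t)$ (Section~\ref{subsection rimparametrizzazione tempo}): this rescales the whole vector field by $\rho^{-1}$ and leaves the off-diagonal order-$0$ remainder a $j$-\emph{independent} scalar $a_2(\vartheta)$, which does belong to the class $OPS^0$ of Section~\ref{sezione pseudo diff}.

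Even granting a correct Step~2, entering the KAM with an order-$0$ remainder does not close. With a spatial loss in the divisors, the solution $\Psi$ of the homological equation is bounded only if the remainder is smoothing by at least that many orders; and the block-diagonal normal-form corrections must decay in $\alpha$ (the paper obtains $\|[{\cal D}^{(1)}_\infty-{\cal D}^{(1)}_0]_\alpha^\alpha\|_{HS}\lesssim\e\,\alpha^{-S_q}$) so that the separation $|\lambda^{(\alpha)}-\lambda^{(\beta)}|\gtrsim(\alpha+\beta)^{-1}$ survives for \emph{all} $\alpha\ne\beta$. The paper therefore inserts, between the time reparametrization and the KAM, an $M$-step \emph{block-decoupling} (Section~\ref{sezione block decoupling}, with $M=[q/2]$) that pushes the off-diagonal Fourier-multiplier part to order $-M$, followed by a reduction of the diagonal order-$(-1)$ symbol to constant coefficients (Section~\ref{riduzione diagonale dopo decoupling}); only then does the full remainder have finite block-decay norm $|\cdot|_s$ for $s\le[q/2]-\overline\mu$, and the KAM of Section~\ref{sec:redu} converges. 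Your ``multiplier $+$ smoothing'' splitting is the right heuristic, but as written it is not a substitute for this regularization.
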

The following corollary holds: 
\begin{corollary}\label{crescita norme di sobolev}
For any $ \omega \in \Omega_\e$ and any initial data $(v^{(0)}, \psi^{(0)}) \in H^{s + \frac12}_0(\T^d, \R) \times H^{s - \frac12}_0(\T^d, \R)$ with $1/2  \leq s \leq  \mathfrak S_q$, the solution $ t \in \R \mapsto (v(t, \cdot), \psi(t, \cdot))\in H^{s + \frac12}_0(\T^d, \R) \times H^{s - \frac12 }_0(\T^d , \R)$ of the Cauchy problem 
\begin{equation}\label{equazione linearizzata}
\begin{cases}
\partial_t v = \psi \\
\partial_t \psi = \Big( 1 + \e a(\omega t)  \Big) \Delta v + \e {\cal R}(\omega t)[v]  \\
v(0, \cdot) = v^{(0)} \\
\psi(0, \cdot) = \psi^{(0)}
\end{cases}
\end{equation}
 is stable, namely 
$$
\sup_{t \in \R} \Big(\| v(t, \cdot) \|_{H^{s + \frac12}_x} + \| \psi (t, \cdot)\|_{H^{s - \frac12 }_x} \Big) \leq C_q \big( \|  v^{(0)}\|_{H^{s + \frac12}_x} + \| \psi^{(0)}\|_{H^{s - \frac12}_x} \big)\,.
$$
for some constant $C_q = C(q, \nu, d) > 0$. 
\end{corollary}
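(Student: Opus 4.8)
The plan is to obtain the corollary as a direct consequence of the reducibility provided by Theorem \ref{theorem linear stability}: the quasi-periodic in time change of variables ${\cal W}_\infty(\omega t)$ conjugates the non-autonomous Cauchy problem \eqref{equazione linearizzata} to the autonomous equation $\partial_t {\bf u} = {\cal D}_\infty {\bf u}$, whose flow I will show preserves every Sobolev norm $\| \cdot \|_{{\bf H}^s_x}$. Since ${\cal W}_\infty(\vphi)$ and its inverse are bounded uniformly in $\vphi\in\T^\nu$, transferring an initial datum forward by this flow and back will produce the desired time-uniform bound on $(v,\psi)$.

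First I would isolate the key feature of the limit equation. The operator ${\cal D}_\infty^{(1)}$ is time-independent, $L^2$-self-adjoint and block-diagonal, hence for $j\in\Z^d\setminus\{0\}$ it leaves invariant each finite-dimensional ``shell'' $E_{[j]} := \Span\{ e^{\ii j'\cdot x} : |j'| = |j| \}$ and restricts to a Hermitian matrix on it. Consequently the flow $\Phi_\infty(t) := e^{t {\cal D}_\infty}$ is a well-defined strongly continuous group that acts on ${\bf u} = (u,\overline u)$ by $u\mapsto e^{-\ii t {\cal D}_\infty^{(1)}} u$, i.e. shell by shell through a unitary matrix. Because the Sobolev weight $\langle j'\rangle^{2s}$ is constant along each shell, this yields $\| \Phi_\infty(t){\bf u}\|_{{\bf H}^s_x} = \| {\bf u}\|_{{\bf H}^s_x}$ for every $t\in\R$, every ${\bf u}\in{\bf H}^s_0(\T^d)$ and every $1/2\le s\le\mathfrak S_q$.

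Then I would run the conjugation. Given $(v^{(0)},\psi^{(0)})\in H^{s+\frac12}_0(\T^d,\R)\times H^{s-\frac12}_0(\T^d,\R)$, set ${\bf u}^{(0)} := {\cal W}_\infty(0)^{-1}[(v^{(0)},\psi^{(0)})]\in{\bf H}^s_0(\T^d)$, ${\bf u}(t,\cdot) := \Phi_\infty(t){\bf u}^{(0)}$, and $(v(t,\cdot),\psi(t,\cdot)) := {\cal W}_\infty(\omega t)[{\bf u}(t,\cdot)]$; by the ``if and only if'' in Theorem \ref{theorem linear stability} this is the (unique, since $\partial_t{\bf u}={\cal D}_\infty{\bf u}$ has a unique solution) solution of \eqref{equazione linearizzata}. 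Combining the boundedness of ${\cal W}_\infty(\omega t)$, the isometry property of $\Phi_\infty(t)$, and the boundedness of ${\cal W}_\infty(0)^{-1}$ gives, for all $t\in\R$,
\begin{align*}
\| v(t,\cdot)\|_{H^{s+\frac12}_x} + \| \psi(t,\cdot)\|_{H^{s-\frac12}_x}
&\le C\,\| {\cal W}_\infty(\omega t)\|\, \| {\bf u}(t,\cdot)\|_{{\bf H}^s_x}
= C\,\| {\cal W}_\infty(\omega t)\|\, \| {\bf u}^{(0)}\|_{{\bf H}^s_x} \\
&\le C'\, \| {\cal W}_\infty(\omega t)\|\, \| {\cal W}_\infty(0)^{-1}\|\, \big(\| v^{(0)}\|_{H^{s+\frac12}_x} + \| \psi^{(0)}\|_{H^{s-\frac12}_x}\big)\,,
\end{align*}
where the operator norms are taken in ${\cal B}({\bf H}^s_0,\, H^{s+\frac12}_0\times H^{s-\frac12}_0)$ and in ${\cal B}(H^{s+\frac12}_0\times H^{s-\frac12}_0,\, {\bf H}^s_0)$ respectively.

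The only step that is not purely formal — and the one I expect to require some care — is the claim that $\sup_{\vphi\in\T^\nu}\| {\cal W}_\infty(\vphi)\|$ and $\sup_{\vphi\in\T^\nu}\| {\cal W}_\infty(\vphi)^{-1}\|$ are finite, with bounds depending only on $q,\nu,d$. This is not literally contained in the statement of Theorem \ref{theorem linear stability}, which gives boundedness only pointwise in $\vphi$, but it follows from the quantitative estimates in its proof, where ${\cal W}_\infty$ is constructed as a $\vphi$-uniform limit of finite compositions of near-identity bounded transformations; alternatively one may invoke the continuity of $\vphi\mapsto {\cal W}_\infty(\vphi)$ in operator norm together with the compactness of $\T^\nu$. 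Granting this, $\sup_{t\in\R}\| {\cal W}_\infty(\omega t)\| <\infty$ and the estimate above holds with a constant $C_q = C(q,\nu,d)$, which proves the corollary.
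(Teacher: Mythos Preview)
Your proposal is correct and follows essentially the same approach as the paper: conjugate the Cauchy problem to $\partial_t {\bf u} = {\cal D}_\infty {\bf u}$ via ${\cal W}_\infty(\omega t)$, observe that the block-diagonal self-adjoint structure of ${\cal D}_\infty^{(1)}$ makes the reduced flow an isometry on every ${\bf H}^s_0$, and then use the uniform-in-$\vphi$ bounds on ${\cal W}_\infty^{\pm 1}$ to transfer this back. The paper phrases the norm preservation via $[|D|^s,{\cal D}_\infty^{(1)}]=0$ and a derivative-in-$t$ computation rather than your shell-by-shell unitarity, and it tracks the time reparametrization ${\cal A}$ inside ${\cal W}_\infty$ explicitly, but these are cosmetic differences; you have also correctly identified that the $\vphi$-uniform boundedness of ${\cal W}_\infty^{\pm 1}$ is the one point that must be extracted from the quantitative estimates in the proof of Theorem~\ref{theorem linear stability} rather than its bare statement.
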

\begin{remark}\label{remark costanti q}
Note that the constants $\e_q$, $\mathfrak S_q$ in Theorem \ref{theorem linear stability} and the constant $C_q$ in Corollary \ref{crescita norme di sobolev} depend also on the $\| \cdot \|_{q}$ Sobolev norms of the functions $a$, $b_k, c_k$, $k = 1, \ldots, N$ appearing in the definition of the perturbation ${\cal P}$ given in \eqref{perturbazione P omega t}, \eqref{definizione perturbazione rango finito}. 
\end{remark}
Theorem \ref{theorem linear stability} implies a reducibility result for the linearized Kirchhoff equation at a small and sufficiently smooth quasi-periodic function $\e v_0(\omega t , x)$. The Kirchhoff equation  
\begin{equation}\label{Kirchoff equation Td}
 K(v ) := \partial_{tt} v - \Big( 1 + \int_{\T^d} |\nabla v|^2\, d x\Big)\Delta v = 0
\end{equation}
describes nonlinear vibrations of a $d$-dimensional body (in particular, a string for $d = 1$ and a membrane for $d = 2$). The Cauchy problem for the Kirchhoff equation has been extensively studied, starting from the pioneering paper of Bernstein \cite{Bernstein}. Both local and global existence results have been established for initial data in Sobolev and analytic class, see \cite{arosio-spagnolo}, \cite{arosio-panizzi}, \cite{dancona-spagnolo}, \cite{dickey}, \cite{lions}, \cite{manfrin}, \cite{pozohaev} and the recent survey \cite{Ruzansky}. The existence of periodic solutions for the Kirchhoff equation has been proved by Baldi \cite{Baldi Kirchhoff}. This result is proved via Nash-Moser method and thanks to the special structure of the nonlinearity (it is diagonal in space), the linearized operator at any approximate solution can be inverted by Neumann series. This approach does not imply the linear stability of the solutions, since only {\it the first order Melnikov conditions} are required along the proof. In one space-dimension ($d = 1$), the existence of quasi-periodic solutions and the reducibility of the linearized equation have been established in \cite{Montalto}. In dimension greater or equal than two, there are no results concerning the existence of quasi-periodic solutions. It is well-known that a good strategy for proving the existence and the linear stability of quasi-periodic solutions is to prove the reducibility of the linearized equations at small quasi-periodic approximate solutions obtained along a suitable iterative scheme. Hence our result (Theorem \ref{teorema corollario Kirchhoff} below) could be used to prove the existence of quasi-periodic solutions for the nonlinear Kirchhoff equation.  

\noindent
Linearizing the operator $K$ in \eqref{Kirchoff equation Td} at a quasi-periodic function $\e v_0(\omega t , x)$ and writing the {\it linearized equation} $K'(\e v_0) [v] = 0$ as a first order system, one gets a system of differential equations of the form \eqref{Kirchoff first order} where 
$$
a(\vphi) = \int_{\T^d} |\nabla v_0(\vphi, x)|^2\, d x\,,\quad  {\cal R}(\vphi)[v] = - 2 \Delta v_0(\vphi, x) \int_{\T^d} \Delta v_0(\vphi, y) v(y)\, d y\,, \quad \vphi \in \T^\nu\,, \quad v \in L^2_0(\T^d, \R)\,. 
$$
Note that the operator ${\cal R}(\vphi)$ defined above has the same form as the one defined in \eqref{definizione perturbazione rango finito}, by taking $N = 1$, $b_1 = - \Delta v_0$, $c_1 = \Delta v_0$. We point out that $\Delta v_0$ has zero average in $x \in \T^d$, hence the hyphothesis \eqref{condizione media f} is satisfied. An immediate consequence of Theorem \ref{theorem linear stability} and Corollary \ref{crescita norme di sobolev} is then the following 
\begin{theorem}\label{teorema corollario Kirchhoff}
Let $q_0, q, \e_q, \mathfrak S_q$ as in Theorem \ref{theorem linear stability} and $v_0 \in {\cal C}^{q + 2}(\T^{\nu} \times \T^d, \R)$. Then the conclusions of Theorem \ref{theorem linear stability} and Corollary \ref{crescita norme di sobolev} hold for the linearized Kirchhoff equation $K'(\e v_0)[v] = 0$ at the quasi-periodic function $\e v_0(\omega t, x)$. 
\end{theorem}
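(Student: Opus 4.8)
The plan is to verify that the linearized Kirchhoff equation fits, verbatim, into the abstract framework of Theorem \ref{theorem linear stability}, and then simply invoke that theorem together with Corollary \ref{crescita norme di sobolev}. The only real content is the bookkeeping of regularity and the check of the structural hypotheses \eqref{condizione media f}, so this is a short verification rather than a genuinely new argument.

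First I would compute the linearization. Writing $K(v) = \partial_{tt} v - (1 + \int_{\T^d} |\nabla v|^2\, dx)\Delta v$, one differentiates at $\e v_0$ in the direction $v$: the term $\int_{\T^d}|\nabla v|^2\, dx$ contributes $2 \e \int_{\T^d} \nabla v_0 \cdot \nabla v\, dx = -2\e \int_{\T^d} \Delta v_0\, v\, dx$ (integration by parts, using $v \in L^2_0$), while $\Delta v$ gets multiplied by $1 + \e^2 \int_{\T^d}|\nabla v_0|^2\, dx$. Rescaling and rearranging, the equation $K'(\e v_0)[v]=0$ becomes $\partial_{tt} v - (1 + \e a(\om t))\Delta v - \e {\cal R}(\om t)[v] = 0$ with $a(\vphi) = \int_{\T^d}|\nabla v_0(\vphi,x)|^2\, dx$ and ${\cal R}(\vphi)[v] = -2\Delta v_0(\vphi,x)\int_{\T^d}\Delta v_0(\vphi,y) v(y)\, dy$, exactly as displayed in the excerpt; written as a first order system this is precisely \eqref{Kirchoff first order}.

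Next I would check that this $a$ and ${\cal R}$ satisfy the hypotheses of Theorem \ref{theorem linear stability}. Since $v_0 \in {\cal C}^{q+2}(\T^\nu \times \T^d, \R)$, we have $\nabla v_0 \in {\cal C}^{q+1}$ and $\Delta v_0 \in {\cal C}^{q}$, hence $a = \int_{\T^d}|\nabla v_0|^2 dx \in {\cal C}^{q}(\T^\nu,\R)$ (the integral in $x$ does not lower $\vphi$-regularity, and products of ${\cal C}^{q+1}$ functions are ${\cal C}^{q+1}\subset {\cal C}^q$), and ${\cal R}$ has the finite-rank form \eqref{definizione perturbazione rango finito} with $N=1$, $b_1 = -\Delta v_0 \in {\cal C}^{q}(\T^\nu\times\T^d,\R)$, $c_1 = \Delta v_0 \in {\cal C}^{q}(\T^\nu\times\T^d,\R)$. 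Moreover $\int_{\T^d} \Delta v_0(\vphi,x)\, dx = 0$ for every $\vphi$ (the integral of a Laplacian over the torus vanishes), so the zero-average condition \eqref{condizione media f} holds for $b_1$ and $c_1$, and ${\cal R}(\vphi)$ is manifestly symmetric with respect to the real $L^2$-inner product. Thus all the assumptions of Theorem \ref{theorem linear stability} are met with the same $q_0, q, \e_q, \mathfrak S_q$.

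Finally, applying Theorem \ref{theorem linear stability} and Corollary \ref{crescita norme di sobolev} to this choice of $a, {\cal R}$ yields, for $\e \in (0,\e_q)$ and $\om$ in the Borel set $\Omega_\e$ of asymptotically full measure, the conjugating operator ${\cal W}_\infty(\vphi)$ reducing the linearized Kirchhoff system to the constant-coefficient diagonal system $\partial_t {\bf u} = {\cal D}_\infty {\bf u}$, together with the uniform-in-time Sobolev bound on solutions. This is exactly the assertion of the theorem. The one mild subtlety to keep in mind is the regularity offset: the abstract theorem requires the coefficients in ${\cal C}^q$ of the \emph{perturbation}, and since ${\cal R}$ involves one Laplacian applied to $v_0$, one needs $v_0$ to be two derivatives smoother, which is why the hypothesis is stated as $v_0\in {\cal C}^{q+2}$; no other obstacle arises. $\blacksquare$
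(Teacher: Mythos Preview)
Your proposal is correct and matches the paper's approach exactly: the paper does not give a separate proof but states the theorem as an ``immediate consequence'' of Theorem~\ref{theorem linear stability} and Corollary~\ref{crescita norme di sobolev}, after observing (just before the statement) that the linearized Kirchhoff operator has the form \eqref{perturbazione P omega t}--\eqref{definizione perturbazione rango finito} with $N=1$, $b_1=-\Delta v_0$, $c_1=\Delta v_0$, and that $\Delta v_0$ has zero spatial average so \eqref{condizione media f} holds. Your additional remark explaining why the hypothesis $v_0\in{\cal C}^{q+2}$ (rather than ${\cal C}^q$) is needed is a useful clarification that the paper leaves implicit; note only that in your linearization both perturbative terms actually carry a factor $\e^2$ rather than $\e$, but this is harmless since the smallness condition $\e^2\gamma^{-1}\le\delta_q$ is implied by $\e\gamma^{-1}\le\delta_q$ for $\e<1$.
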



\noindent
Now we outline some related works concerning the reducibility of quasi-periodically forced linear partial differential equations. Let us consider a linear differential equation of the form 
\begin{equation}\label{riducibilita generale}
\partial_t u = {\cal D} u + \e {\cal P}(\omega t) u
\end{equation}
where ${\cal D}$ is a diagonal operator with discrete spectrum and ${\cal P}(\omega t)$ is a linear quasi-periodically forced vector field with non constant coefficients. We say that such an equation is {\it reducible} if there exists a quasi-periodically forced change of variable $u = \Phi(\omega t) [v]$ such that in the new coordinate $v$, the equation \eqref{riducibilita generale} is reduced to constant coefficients. 
Typically, it is necessary to assume that $\e$ (size of the perturbation) is small enough and that the frequency $\omega$, together with the eigenvalues of the operator ${\cal D}$, satisfy the so-called {\it second order Melnikov} non-resonance conditions. These non resonance conditions involve the differences of the eigenvalues of the operator ${\cal D}$. We point out that the reducibility of linear equations is the main ingredient for proving the existence of quasi-periodic solutions (KAM tori) for nonlinear PDEs. Indeed the first reducibility results for linear PDEs have been obtained as a corollary   of KAM theorems. We mention the pioneering papers of Kuksin \cite{Ku}, and Wayne \cite{W1} concerning the existence of invariant tori for Schr\"odinger and wave equations in one space dimension with Dirichlet boundary conditions and with bounded perturbations. The first KAM results for PDEs with unbounded perturbations have been obtained by Kuksin \cite{K2}, Kappeler-P\"oschel \cite{KaP} for analytic perturbations of the KdV equation. Here the unperturbed vector field is $\partial_{xxx}$ and the perturbation contains one space derivative $\partial_x$. Concerning unbounded perturbations of the quantum Harmonic oscillator on the real line, the first result is due to Bambusi-Graffi \cite{Bambusi-Graffi}. In all these aforementioned results, the perturbation contains derivatives of order $\delta < n - 1$, where $n$ is the order of the highest derivative appearing in the linear constant coefficients term. In the case of critical unbounded perturbations, i.e. $\delta = n - 1$, we mention \cite{LY}, \cite{ZGY} concerning the derivative NLS with Dirichlet boundary conditions, in which the authors generalized appropriately the so-called {\it Kuksin Lemma}, developed in \cite{K2}. We also mention the KAM results for the derivative Klein-Gordon equation \cite{BBiP1}, \cite{BBiP2} in which the generalization of the Kuksin Lemma developed in \cite{LY}, \cite{ZGY} does not apply because of the weaker dispersion relation.  

\noindent
It is well known that the ideas used to deal with the case $\delta \leq n - 1$ do not apply in the quasi-linear and fully nonlinear case, i.e. $\delta = n$. The first KAM results in this case have been obtained in \cite{BBM-Airy}, \cite{BBM-auto}, \cite{BBM-mKdV}, \cite{Giuliani} for quasi-linear perturbations of the Airy, KdV and m-KdV equations, in \cite{Feola}, \cite{Feola-Procesi} for quasi-linear Hamiltonian and reversible NLS equations, in \cite{Montalto} for the Kirchhoff equation and in \cite{BM16}, \cite{BertiMontalto} for the water waves equations. The key idea in these series of papers is to split the reduction to constant coefficients of the linearized equation into two parts: the first part is to reduce the equation to another one which is constant coefficients plus a bounded remainder and this is inspired by the breakthrough result of Iooss, Plotnikov and Toland \cite{Ioo-Plo-Tol}. In a second step, one applies a convergent KAM reducibility scheme which reduces quadratically the size of the perturbation and completes the diagonalization of the equation. This method has been extended also by Bambusi in \cite{Bambusi1}, \cite{Bambusi2} to deal with unbounded quasi-periodic perturbations of the Schr\"odinger operator on the real line. 

\noindent
Another difficulty for the reduction procedures and the KAM schemes concerns the multiplicity of the eigenvalues of the unperturbed part of the equation. The first result in this direction is due to Chierchia-You \cite{ChierchiaYou} in which the authors prove a KAM result for analytic bounded perturbations of nonlinear wave equations with periodic boundary conditions (double eigenvalues). We mention also the more recent papers \cite{BertiKappelerMontalto}, \cite{Feola},  \cite{Montalto} concerning Schr\"odinger and Kirchhoff equations with periodic boundary conditions. 

\noindent
There are very few results for PDEs in higher space dimension since the second order Melnikov non-resonance conditions are violated, typically due to the high multiplicity of the eigenvalues. The first KAM and reducibility results in higher space dimension have been obtained by Eliasson-Kuksin \cite{EK1}, \cite{EK} for the linear Schr\"odinger equation on $\T^d$ with a multiplicative analytic potential and for the nonlinear Schr\"odinger equation with a convolution potential. The second order Melnikov non resonance conditions are verified {\it blockwise}, by introducing the notion of T\"oplitz-Lipschitz Hamiltonians. A KAM result for the completely resonant Nonlinear Schr\"odinger equation on $\T^d$ has been proved by Procesi-Procesi \cite{PP}, by using Quasi-T\"oplitz Hamiltonians. We also mention the KAM theorem for the beam equation obtained by Eliasson-Grebert-Kuksin in \cite{KAMbeam}. Recently, Grebert and Paturel \cite{GrebertPaturel} proved a reducibility result for the quantum harmonic oscillator on $\R^d$ with an analytic multiplicative potential and in \cite{Grebert-Paturel-sfera} they proved a KAM result for the nonlinear Klein Gordon equation on the $d$-dimensional sphere. In \cite{BB12}, \cite{BBo10}, \cite{BCP}, the authors proved the existence of quasi-periodic solutions for Nonlinear wave and Schr\"odinger equations on $\T^d$ and on Lie groups, by using the {\it multiscale method}, introduced by Bourgain \cite{Bo1}, \cite{B3}, \cite{B5} in the analytic setup. This approach does not imply the linear stability of the quasi-periodic solutions since it requires to impose only the {\it first order Melnikov conditions}. 

\noindent
The reduciblity for the quasi-periodically forced Klein-Gordon equation with a small multiplicative potential $\partial_{tt} u - \Delta u + m u + \e V(\omega t, x) u = 0$ on $\T^d$ is still open. In \cite{almostreducibility}, Eliasson-Grebert-Kuksin proved that this equation is {\it almost reducible} in the sense that it can be reduced to constant coefficients up to a {\it small remainder}. The aim of the present paper is to provide a class of linear wave equations with unbounded perturbations on $\T^d$ which are reducible. We point out that the main difference between Schr\"odinger and wave (Klein-Gordon) equations is the following: for the Schr\"odinger equation, the eigenvalues of the linear part of the equation grow like $\sim |j|^2, j \in \Z^d$, whereas the wave equation, written as a first order system in complex coordinates, has eigenvalues growing as $\sim |j|, j \in \Z^d$. It turns out that the second order Melnikov non-resonance conditions 
\begin{equation}\label{Melnikov conditions standard}
|\omega \cdot \ell + \mu_j - \mu_{j'}| \geq \frac{\gamma}{\langle \ell \rangle^\tau}\,, \quad \forall (\ell, j, j') \in \Z^\nu \times \Z^d \times \Z^d\,, \quad (\ell, |j|, |j'|) \neq (0, |j|, |j|) 
\end{equation}
in the case of the wave (Klein Gordon) equation, i.e. $\mu_j \sim |j|$, $j \in \Z^d$ are violated. 


\medskip

\noindent
In the following we shall explain the main ideas of the proof of Theorem \ref{theorem linear stability}. The proof consists in reducing the quasi-periodically forced linear vector field ${\cal L}(\omega t)$ defined in \eqref{campo vettoriale main equation} to a time-independent block-diagonal operator. This reduction procedure is split into two parts. 

\medskip

\noindent
{\it Regularization of the vector field ${\cal L}(\omega t)$.} Our first goal is to conjugate the vector field ${\cal L}(\omega t)$ to another one which is diagonal up to a sufficiently regularizing perturbation. This is achieved by using a change of variables induced by a reparametrization of time (so that the highest order term has constant coefficients) and time dependent Fourier multipliers (introduced in Section \ref{sezione pseudo diff}), see Section \ref{riduzione linearizzato}. We point out that this procedure involve only a reduction in time, since our unbounded perturbation ${\cal P}(\omega t)$ is assumed to be diagonal in space up to the finite rank operator ${\cal R}(\omega t)$, which is already regularizing, see \eqref{perturbazione P omega t}, \eqref{definizione perturbazione rango finito}. 

\medskip

\noindent
{\it KAM reducibility scheme.} After the preliminary reduction of the order of derivatives, we deal with a time dependent vector field which is a small and regularizing perturbation of a diagonal time-independent vector field. We then perform a KAM reducibility scheme, see Theorem \ref{thm:abstract linear reducibility}. The key feature of the scheme is that since the perturbation is regularizing, along the KAM iteration, we can impose non-resonance conditions with a {\it loss of derivatives} in space, namely 
\begin{equation}\label{Melnikov conditions standard modificata}
|\omega \cdot \ell + \mu_j - \mu_{j'}| \geq \frac{\gamma}{| j |^{\mathtt d} | j' |^{\mathtt d}\langle \ell \rangle^\tau}\,, \quad \forall (\ell, j, j') \in \Z^\nu \times (\Z^d \setminus \{ 0 \}) \times (\Z^d \setminus \{ 0 \})\,, \quad (\ell, |j|, |j'|) \neq (0, |j|, |j|) 
\end{equation}
for some constant exponents $\mathtt d$ and $\tau$ large enough and $\gamma \in (0, 1)$. Neverthless, all the canonical transformations defined along the iteration will be bounded linear operators (on Sobolev spaces), since the regularizing property of the remainder balances the {\it loss of space derivatives} in the Melnikov conditions \eqref{Melnikov conditions standard modificata}. This strategy has been used also in \cite{KAM ww f depth}, to prove a KAM result for gravity water waves in finite depth without capillarity and we implement it within this context. 

\noindent
The conditions \eqref{Melnikov conditions standard modificata} are much weaker that the ones given in \eqref{Melnikov conditions standard} and we are able to prove that they are fullfilled for a {\it large set} of parameters $\omega$. We use the block-decay norm $| \cdot |_s$ (see \eqref{decadimento Kirchoff}) to estimate the size of the remainders along the iteration. This is convenient since the class of operators having finite block-decay norm is closed under composition (Lemma \ref{interpolazione decadimento Kirchoff}), solution of the homological equation (Lemma \ref{homologica equation}) and projections (Lemma \ref{lemma smoothing decay}). This norm is well adapted to finite rank operators of the form \eqref{definizione perturbazione rango finito} and it gives a strong decay of the blocks arising in the spectral decomposition with respect to the eigenspaces of the operator $\sqrt{- \Delta}$, see Sections \ref{sezione algebrica blocchi}, \ref{sezione norma decadimento a blocchi}. 

\medskip

\noindent
The paper is organized as follows. In Section \ref{sec:2} we introduce some notations and abstract technical tools needed along the proof of Theorem \ref{theorem linear stability}. The proof of the Theorem is developed in Sections \ref{riduzione linearizzato}-\ref{sezione stime in misura}. In Section \ref{riduzione linearizzato} we perform the regularization procedure for the linear Hamiltonian vector field ${\cal L}$ and we conjugate it to the vector field ${\cal L}_4$, defined in \eqref{cal L5}. In Section \ref{sec:redu}, we prove the block-diagonal reducibility of the vector field ${\cal L}_4$, showing that it is conjugated to the block diagonal operator ${\cal D}_\infty$ defined in \eqref{cal D infinito}. In Section \ref{sezione stime in misura} we provide the measure estimate of the set of {\it good parameters} $\Omega_\infty^{2 \gamma}$ defined in \eqref{Omegainfty}. Finally, in Section \ref{conclusione proof}, we conclude the proof of Theorem \ref{theorem linear stability} and we prove the Corollary \ref{crescita norme di sobolev}. 

\bigskip

\noindent
{\it Acknowledgements}. I warmly thank Massimiliano Berti, Giuseppe Genovese, Benoit Grebert, Thomas Kappeler, Sergei Kuksin and Michela Procesi for many useful discussions and comments.

\section{Function spaces, linear operators and norms}\label{sec:2} 
For a function $u \in L^2_0(\T^d) \equiv L^2_0(\T^d, \C)$ we consider its Fourier series 
\begin{equation}\label{Fourier Td}
u(x) = \sum_{j \in \Z^d \setminus \{ 0 \}} u_j e^{\ii j \cdot x}\,, \qquad u_j := \frac{1}{(2 \pi)^d} \int_{\T^d} u(x) e^{- \ii j \cdot x}\, d x \,, \quad \forall j \in \Z^d \setminus \{ 0 \}\,. 
\end{equation}
We denote by $\sigma_0(\sqrt{- \Delta})$ the spectrum of the operator $\sqrt{- \Delta}$ restricted to the zero-average functions, i.e. 
\begin{equation}\label{definizione cal N}
\sigma_0(\sqrt{- \Delta}) := \Big\{ |j | = \sqrt{j_1^2 + \ldots + j_d^2} : j = (j_1, \ldots, j_d) \in \Z^d \setminus \{ 0 \} \Big\}\,
\end{equation}
and for any eigenvalue $\alpha \in \sigma_0(\sqrt{- \Delta})$, we denote by ${\mathbb E}_\alpha$ the corresponding eigenspace, i.e. 
\begin{equation}\label{bf E alpha}
{\mathbb E}_\alpha := {\rm span} \{ e^{\ii j \cdot x} : j \in \Z^d\,, \quad |j| = \alpha\,\}\,.
\end{equation}
Then, any function $u \in L^2_0(\T^d)$ can be written as 

\begin{equation}\label{raggruppamento modi Fourier}
u(x) = \sum_{\alpha \in \sigma_0(\sqrt{- \Delta})} {\mathfrak u}_\alpha(x)\,, \quad {\mathfrak u}_\alpha(x) = \sum_{|j| = \alpha} u_j e^{\ii j \cdot x} \in {\mathbb E}_\alpha
\end{equation}
and if $u \in H^s_0(\T^d)$ for some $s \geq 0$, one has 
\begin{align}
\| u \|_{H^s_x}^2 & = \sum_{\begin{subarray}{c}
 j \in \Z^d \setminus \{ 0 \} 
 \end{subarray}} |  j |^{2 s} |u_j|^2  = \sum_{ \alpha \in \sigma_0(\sqrt{- \Delta})} \alpha^{2 s}  \sum_{|j| = \alpha} |u_j|^2  = \sum_{\alpha \in \sigma_0(\sqrt{- \Delta})} \alpha^{2 s} \| \mathfrak u_\alpha\|_{L^2_x}^2\,. \label{altro modo norma s}
\end{align}

We also deal with functions $ u \in L^2_0 (\T^\nu \times \T^d ) = L^2(\T^\nu, L^2_0(\T^d))$ which can be regarded as
 $ \vphi $-dependent family of  functions $ u(\vphi, \cdot ) \in L^2_0 (\T^d) $ that we expand in Fourier series as
\be
u(\vphi, x ) =   \sum_{j \in \Z^d \setminus \{ 0 \}  } u_{j} (\vphi) e^{\ii j \cdot x } =
\sum_{\begin{subarray}{c}
\ell \in \Z^\nu \\
 j \in \Z^d \setminus \{ 0 \} 
 \end{subarray}}  \widehat u_j(\ell)  e^{\ii (\ell \cdot \vphi + j \cdot  x)}   
\ee
where 
$$
u_j(\vphi) := \frac{1}{(2 \pi)^d} \int_{\T^d} u(\vphi, x) e^{- \ii j \cdot x}\, d x\,, \quad \widehat u_j(\ell) :=  \frac{1}{(2 \pi)^{\nu + d}} \int_{\T^{\nu + d}} u(\vphi, x) e^{- \ii (\ell \cdot \vphi + j \cdot  x)}\,d \vphi\, d x\,.
$$
According to \eqref{raggruppamento modi Fourier}, we can write 
\begin{equation}\label{bf h ell alpha 0}
u(\vphi, x) = \sum_{\alpha \in \sigma_0(\sqrt{- \Delta})} {\mathfrak u}_\alpha(\vphi, x)  = \sum_{\begin{subarray}{c}
\ell \in \Z^\nu \\
\alpha \in \sigma_0(\sqrt{- \Delta})
\end{subarray}} \widehat{\mathfrak u}_\alpha(\ell) e^{\ii \ell \cdot \vphi}
\end{equation}
where 
\begin{equation}\label{bf h ell alpha}
{\mathfrak u}_\alpha(\vphi, x) := \sum_{|j| = \alpha} u_j(\vphi) e^{\ii j \cdot x}\,, \quad \widehat{\mathfrak u}_\alpha(\ell) \equiv \widehat{\mathfrak u}_\alpha(\ell, x)  := \frac{1}{(2 \pi)^\nu} \int_{\T^\nu} {\mathfrak u}_\alpha(\vphi,x) e^{- \ii \ell \cdot \vphi}\, d \vphi = \sum_{|j| = \alpha} \widehat u_j(\ell) e^{\ii j \cdot x}\,.
\end{equation}

We define for any $s \geq 0$ the Sobolev spaces $H^s_0(\T^{\nu + d}) = H^s_0(\T^{\nu + d}, \C)$ as 
\begin{equation}\label{Sobolev media nulla}
H^s_0(\T^{\nu + d}) := \big\{ u \in L^2_0 (\T^\nu \times \T^d ) : \| u \|_{s}^2 := \sum_{\begin{subarray}{c}
\ell \in \Z^\nu \\
j \in \Z^d \setminus \{ 0 \}
\end{subarray}} \langle \ell, j \rangle^{2 s} |\widehat u_j(\ell)|^2 < +\infty \big\}\,, 
\end{equation}
where $ \langle \ell, j \rangle := {\rm max}\{1 , |\ell|, |j| \}$, and for any $\ell = (\ell_1, \ldots, \ell_\nu) \in \Z^\nu$, $|\ell| := \sqrt{\ell_1^2 + \ldots + \ell_\nu^2}$. 
One has 
\begin{equation}\label{altro modo norma s vphi x}
\| u \|_s^2 = \sum_{\begin{subarray}{c}
\ell \in \Z^\nu \\
j \in \Z^d \setminus \{ 0 \}
\end{subarray}} \langle \ell, j \rangle^{2 s} |\widehat u_j(\ell)|^2 = \sum_{\begin{subarray}{c}
\ell \in \Z^\nu \\
\alpha \in \sigma_0(\sqrt{- \Delta}) 
\end{subarray}} \langle \ell, \alpha \rangle^{2 s} \sum_{|j| = \alpha} |\widehat u_j(\ell)|^2 = \sum_{\begin{subarray}{c}
\ell \in \Z^\nu \\
\alpha \in \sigma_0(\sqrt{- \Delta}) 
\end{subarray}} \langle \ell, \alpha \rangle^{2 s} \| \widehat{\mathfrak u}_\alpha(\ell) \|_{L^2}^2 
\end{equation}
where  $\langle \ell, \alpha \rangle := {\rm max}\{ 1, |\ell|, \alpha\}$, for any $\ell \in \Z^\nu, \alpha \in \sigma_0(\sqrt{- \Delta})$. 

\noindent
In a similar way, we define the  spaces of real valued functions $L^2_0(\T^d, \R)$, $L^2_0(\T^{\nu + d}, \R)$, $H^s_0(\T^d, \R)$, $H^s_0(\T^{\nu + d}, \R)$ and we also deal with Sobolev functions $x$-independent, belonging to the Sobolev space $H^s(\T^\nu)$ (or $H^s(\T^\nu, \R)$). For $u \in H^s(\T^\nu)$ we denote by $\| u \|_s$ its Sobolev norm, given by 
$$
\| u \|_s := \sum_{\ell \in \Z^\nu}\langle \ell \rangle^{2 s} |\widehat u(\ell)|^2\,, \qquad \widehat u(\ell) := \frac{1}{(2 \pi)^\nu} \int_{\T^\nu} u(\vphi) e^{- \ii \ell \cdot \vphi}\, d \vphi\,.  
$$ 

\noindent
Given a Banach space $(E, \| \cdot \|_E )$, we denote by $L^\infty(\T^\nu, E)$ the space of the essentially bounded functions $\T^\nu \to E$ equipped with the norm 
$$
\| u \|_{L^\infty(\T^\nu, E)} := {\rm esssup}_{\vphi \in \T^\nu} \|u(\vphi) \|_E\,.
$$ 
For any $p \in \N$ we denote by $W^{p, \infty}(\T^\nu, E)$ the space of the p-times weakly differentiable functions $\T^\nu \to E$ equipped with the norm 
$$
 \quad \| u \|_{W^{p, \infty}(\T^\nu, E)} := {\rm max}_{|a| \leq p}  \| \partial_\vphi^a u \|_{L^\infty(\T^\nu, E)}\,.
$$
In the above formula, for any multi-index $a = (a_1, \ldots, a_\nu) \in \N^\nu$, we use the notations $|a| := a_1 + \ldots + a_\nu$ and $\partial_\vphi^a = \partial_{\vphi_1}^{a_1} \ldots \partial_{\vphi_\nu}^{a_\nu}$. We also denote by ${\cal C}^0(\T^\nu, E)$ the space of continuous functions $\T^\nu \to E$ equipped with the norm 
$$
\| u \|_{{\cal C}^0(\T^\nu, E)} := {\rm sup}_{\vphi \in \T^\nu} \|u (\vphi) \|_E
$$ 
and we denote by ${\cal C}^p(\T^\nu, E)$ the space of the $p$-times differentiable functions with continuous derivatives equipped with the norm 
$$
\| u \|_{{\cal C}^p(\T^\nu, E)} := {\rm max}_{|a| \leq p} \| \partial_\vphi^a u \|_{{\cal C}^0(\T^\nu, E)}\,. 
$$
We recall the standard property 
\begin{equation}\label{immersione W k infinito Ck}
W^{p + 1, \infty}(\T^\nu, E) \subset {\cal C}^p(\T^\nu, E)\,. 
\end{equation}


\medskip

\noindent
 For a function $f : \Omega_o \to E$, $\omega \mapsto f(\omega)$, where $(E, \| \cdot \|_E)$ is a Banach space and 
$ \Omega_o $ is a subset of $\R^\nu$, we define the sup-norm and the lipschitz semi-norm as 
\be \label{def norma sup lip}
\| f \|^{\sup}_{E, \Omega_o} 
:= \sup_{ \omega \in \Omega_o } \| f(\omega) \|_E \,,\quad \| f \|_{E, \Omega_o}^{\lip} := \sup_{\begin{subarray}{c}
\omega_1, \omega_2 \in \Omega_o \\
\omega_1 \neq \omega_2
\end{subarray}} \frac{\| f(\omega_1) - f(\omega_2)\|_E}{|\omega_1 - \omega_2|}
\ee
and, for $ \g > 0 $, we define the weighted Lipschitz-norm
\be \label{def norma Lipg}
\| f \|^{\Lipg}_{E, \Omega_o}  
:= \| f \|^{\sup}_{E, \Omega_o} + \g \|  f \|^{\lip}_{E, \Omega_o}  \, . 
\ee
To shorten the above notations we simply omit to write $\Omega_o$, namely $\| f \|^{\sup}_{E} = \| f \|^{\sup}_{E, \Omega_o}$, $\| f \|_{E}^{\lip} = \| f \|_{E, \Omega_o}^{\lip}$, $\| f \|^{\Lipg}_{E} = \| f \|^{\Lipg}_{E, \Omega_o}$.
If $f : \Omega_o \to \C$, we simply denote $\| f \|_{\C}^{\Lipg}$ by $|f|^\Lipg$
and if $ E = H^s(\T^{\nu + d}) $ we simply denote $ \| f \|^{\Lipg}_{H^s} := \| f \|^{\Lipg}_s $. Given two Banach spaces $E, F$, we denote by ${\mathcal B}(E, F)$ the space of the bounded linear operators $E \to F$. If $E = F$, we simply write ${\mathcal B}(E)$. 

\medskip

\noindent
{\it Notation:} From now on we fix 
\begin{equation}\label{definition s0}
s_0 := \Big[\frac{\nu+ d}{2} \Big] + 1
\end{equation}
where for any real number $x \in \R$, we denote by $[x]$ its integer part. We write  
$$
a \lesssim_s b \quad \ \Longleftrightarrow \quad a \leq C(s) b
$$ 
for some constant $ C(s) $ depending on the data of the problem, namely the Sobolev norms $\| a \|_{s}, \| b_k \|_{s}, \| c_k \|_s$ of the functions $a, b_k, c_k$ appearing in \eqref{perturbazione P omega t}, the number $ \nu $ of frequencies, the dimension $d$ of the space variable $x$, the diophantine exponent $ \tau >  0 $  in the non-resonance conditions, which will be required along the proof.  For $ s = s_0$  we only write $ a \lesssim b $. 
Also the small constants $ \d $ in the sequel depend on the data of the problem.

\medskip

\noindent
We recall the classical estimates for the operator $(\omega \cdot \partial_\vphi)^{- 1}$ defined as 
\begin{equation}\label{om d vphi inverso}
(\omega \cdot \partial_\vphi)^{- 1}[1] = 0\,, \quad (\omega \cdot \partial_\vphi)^{- 1}[e^{\ii \ell \cdot \vphi}] = \frac{1}{\ii (\omega \cdot \ell)} e^{\ii \ell \cdot \vphi}\,, \qquad \forall \ell \neq 0\,,
\end{equation}
for $\omega \in DC(\gamma, \tau)$, where for $\gamma, \tau > 0$, 
\begin{equation}\label{diofantei Kn}
DC(\gamma, \tau) := \Big\{ \omega \in \Omega : |\omega \cdot \ell| \geq \frac{\gamma}{| \ell |^\tau}\,, \quad \forall \ell \in \Z^\nu \setminus \{ 0 \} \Big\}\,. 
\end{equation}
 If $h(\cdot ; \omega) \in H^{s + 2 \tau + 1}(\T^{\nu})$, with $\omega \in DC(\gamma, \tau)$, we have 
\begin{equation}\label{stima om d vphi inverso}
\| (\omega \cdot \partial_\vphi)^{- 1} h \|_{s} \leq \gamma^{- 1} \| h \|_{s + \tau}\,, \qquad \| (\omega \cdot \partial_\vphi)^{- 1} h \|_{s}^\Lipg \leq \gamma^{- 1} \| h \|_{s + 2 \tau + 1}^\Lipg\,.
\end{equation}

\noindent
\noindent
We also recall some classical Lemmas on the composition operators and on the interpolation. Since the variables $ (\vphi, x)$ have the same role, we present it for a  generic Sobolev space  $ H^s (\T^n ) $. For any $s \geq 0$ integer, for any domain $A \subseteq \R^n$ we denote by ${\mathcal C}^s(A)$ the space of the s-times continuously differentiable functions equipped by the usual $\| \cdot \|_{{\mathcal C}^s}$ norm.

\begin{lemma}\label{interpolazione C1 gamma}{\bf (Interpolation)}
Let $u, v \in H^s(\T^{n})$ with $s \geq s_n$, $s_n := [n/ 2] + 1$. Then, there exists an increasing function $s \mapsto C(s)$ such that 
$$
\| u v \|_{s} \leq C(s) \| u \|_{s} \| v \|_{s_n}+ C(s_n)\| u \|_{s_n} \| v \|_{s}\,.
$$
If $u(\cdot; \omega)$, $v(\cdot; \omega)$, $\omega \in \Omega_o \subseteq \R^\nu$ are $\omega$-dependent families of functions in $H^{s}(\T^{n})$, with $s \geq s_n$ then the same estimate holds replacing $\| \cdot \|_{s}$ by $\| \cdot \|_{s}^\Lipg$.
\end{lemma}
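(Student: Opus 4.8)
The plan is to reduce everything to a discrete convolution estimate in Fourier space. Expanding $u = \sum_{k\in\Z^n}\widehat u(k)e^{\ii k\cdot\vphi}$, $v = \sum_{k\in\Z^n}\widehat v(k)e^{\ii k\cdot\vphi}$, the product has Fourier coefficients $\widehat{uv}(k) = \sum_{k_1+k_2=k}\widehat u(k_1)\widehat v(k_2)$. Since $|k|\leq|k_1|+|k_2|$ when $k=k_1+k_2$, one has the elementary bound $\langle k\rangle^s\leq C(s)\big(\langle k_1\rangle^s+\langle k_2\rangle^s\big)$ valid for all $s\geq 0$, and I would use it to split
\[
\langle k\rangle^s|\widehat{uv}(k)| \leq C(s)\sum_{k_1+k_2=k}\langle k_1\rangle^s|\widehat u(k_1)|\,|\widehat v(k_2)| \;+\; C(s)\sum_{k_1+k_2=k}|\widehat u(k_1)|\,\langle k_2\rangle^s|\widehat v(k_2)|\,,
\]
treating the two pieces symmetrically.

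For the first piece I would regard the sum as the convolution of the sequence $\big(\langle k_1\rangle^s|\widehat u(k_1)|\big)_{k_1\in\Z^n}\in\ell^2$ with $\big(|\widehat v(k_2)|\big)_{k_2\in\Z^n}$, and apply Young's inequality $\|f*g\|_{\ell^2}\leq\|f\|_{\ell^2}\|g\|_{\ell^1}$, obtaining a bound $\lesssim\|u\|_s\sum_{k}|\widehat v(k)|$. The decisive point is then the estimate $\sum_{k\in\Z^n}|\widehat v(k)|\leq\big(\sum_{k\in\Z^n}\langle k\rangle^{-2s_n}\big)^{1/2}\|v\|_{s_n}$ coming from Cauchy--Schwarz, where the series $\sum_{k\in\Z^n}\langle k\rangle^{-2s_n}$ converges precisely because $2s_n=2([n/2]+1)>n$; this produces the term $C(s)\|u\|_s\|v\|_{s_n}$. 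The second piece is handled in the same way and gives $C(s_n)\|u\|_{s_n}\|v\|_s$. Taking the $\ell^2$-norm in $k$ of $\langle k\rangle^s|\widehat{uv}(k)|$ and using $\|uv\|_s^2=\sum_{k\in\Z^n}\langle k\rangle^{2s}|\widehat{uv}(k)|^2$ then yields the claimed inequality, with $s\mapsto C(s)$ increasing.

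For the weighted Lipschitz statement I would apply the inequality just obtained to the difference $u(\omega_1)v(\omega_1)-u(\omega_2)v(\omega_2)=\big(u(\omega_1)-u(\omega_2)\big)v(\omega_1)+u(\omega_2)\big(v(\omega_1)-v(\omega_2)\big)$ for $\omega_1\neq\omega_2$ in $\Omega_o$, divide by $|\omega_1-\omega_2|$ and pass to the supremum; combined with the plain estimate applied to $u(\omega)v(\omega)$, this bounds $\|uv\|_s^{\sup}$ and $\|uv\|_s^{\lip}$ by sums of products of sup- and Lipschitz-seminorms of $u$ and $v$ in $H^s$ and $H^{s_n}$. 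Recombining these according to the definition \eqref{def norma Lipg} of $\|\cdot\|_s^{\Lipg}$ and using $\|\cdot\|^{\sup}\leq\|\cdot\|^{\Lipg}$ and $\g\|\cdot\|^{\lip}\leq\|\cdot\|^{\Lipg}$ gives the same inequality with every $\|\cdot\|_s$ replaced by $\|\cdot\|_s^{\Lipg}$.

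I expect the only genuine subtlety to be the role of the threshold $s_n=[n/2]+1$: the whole argument rests on the summability of $\langle k\rangle^{-2s_n}$ over $\Z^n$, i.e. on $2s_n>n$, and on keeping the loss of $s$ derivatives on a single factor at a time, so that $H^s$-norms of both factors never occur simultaneously. The remaining manipulations — tracking the constants $C(s)$ and reorganizing the Lipschitz bookkeeping — are routine.
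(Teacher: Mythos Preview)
The paper does not actually prove this lemma: it is stated as a classical interpolation/tame product estimate being recalled, with no proof given. Your argument is the standard Fourier-space proof and is correct. One small imprecision: after the splitting $\langle k\rangle^s\leq C(s)(\langle k_1\rangle^s+\langle k_2\rangle^s)$ both terms carry the factor $C(s)$, so your ``second piece'' yields $C(s)\,C'(s_n)\|u\|_{s_n}\|v\|_s$ rather than $C(s_n)\|u\|_{s_n}\|v\|_s$; since $C$ is increasing this only gives a slightly weaker (but still tame) inequality, and is harmless for every application in the paper. The Lipschitz extension via the telescoping identity and the recombination into $\|\cdot\|_s^{\Lipg}$ is exactly the routine bookkeeping one expects.
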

Iterating the above inequality one gets that, for some constant $K(s)$, for any $n \geq 0$, 
\begin{equation}\label{interpolazione iterata}
\| u ^k\|_s \leq K(s)^k \| u\|_{s_0}^{k - 1} \| u \|_s\,
\end{equation}
and if $u(\cdot; \omega) \in H^s$, $s \geq s_n$ is a family of Sobolev functions, the same inequality holds repacing $\| \cdot \|_s$ by $\| \cdot \|_s^\Lipg$.

\noindent
 We consider the composition operator
$$
u(y) \mapsto {\mathtt f}(u)(y) := f(y, u(y))\,. 
$$ 
The following lemma is a classical result due to Moser.
  \begin{lemma}\label{Moser norme pesate} {\bf (Composition operator)}
Let $ f \in {\mathcal C}^{s + 1}(\T^n \times \R, \R )$, with  $   s \geq s_n :=   [n/2] + 1 $. If $u \in H^s(\T^n)$, with $\| u \|_{s_n} \leq 1$, then $\| \mathtt f(u)\|_s \leq C(s, \| f \|_{{\mathcal C}^s})(1 + \| u \|_s)$.  
If   $u(\cdot, \omega) \in H^s(\T^n)$, $\omega \in \Omega_o \subseteq \R^\nu$  is a family of Sobolev functions
satisfying $\| u \|_{s_n}^{\Lipg} \leq 1$, then,  
$  \| {\mathtt f}(u) \|_s^{\Lipg} \leq C(s,  \| f\|_{{\mathcal C}^{s + 1}} ) ( 1 + \| u \|_{s}^{\Lipg}) $. 
\end{lemma}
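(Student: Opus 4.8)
\noindent
The plan is to follow the classical argument of Moser, establishing first the bound on the plain Sobolev norm and then deducing from it the weighted Lipschitz version. I would begin by reducing to the case $s \in \N$, $s \geq s_n$: for non-integer $s$ one recovers the estimate by interpolating the integer bounds at $\lfloor s \rfloor$ and $\lceil s \rceil$ against the Gagliardo--Nirenberg inequalities. Since $s \geq s_n$, the Sobolev embedding $H^{s_n}(\T^n) \hookrightarrow {\mathcal C}^0(\T^n)$ gives $\| u \|_{L^\infty} \lesssim \| u \|_{s_n} \leq 1$, so that $u(y)$ ranges in a fixed compact interval on which $f$ and all its $y$- and $v$-derivatives up to order $s$ are bounded by a constant multiple of $\| f \|_{{\mathcal C}^s}$.

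For $s \in \N$ I would differentiate $\mathtt f(u)(y) = f(y, u(y))$ by the Fa\`a di Bruno chain-rule formula: for every multi-index $\alpha$ with $|\alpha| = s$, the derivative $\partial_y^\alpha\big( f(y, u(y)) \big)$ is a finite sum, with a number of terms depending only on $s$, of expressions
$$
\big( \partial_y^{\beta_0} \partial_v^m f \big)(y, u(y)) \, \prod_{i = 1}^m \partial_y^{\beta_i} u(y)\,, \qquad m \geq 0\,, \quad |\beta_i| \geq 1 \ \ (i \geq 1)\,, \quad \sum_{i = 0}^m |\beta_i| = s\,.
$$
Since $|\beta_0| + m \leq \sum_{i=0}^m |\beta_i| = s$, the first factor is bounded in $L^\infty$ by $C \| f \|_{{\mathcal C}^s}$ by the previous step. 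For the product I would combine H\"older's inequality with the Gagliardo--Nirenberg inequalities $\| \partial_y^\beta u \|_{L^{2 s / |\beta|}} \lesssim \| u \|_{L^\infty}^{1 - |\beta|/s} \| u \|_s^{|\beta|/s}$ (valid for $1 \leq |\beta| \leq s$), which, since $\sum_{i \geq 1} |\beta_i| \leq s$ and $\| u \|_{L^\infty} \leq 1$, yield $\big\| \prod_{i = 1}^m \partial_y^{\beta_i} u \big\|_{L^2} \lesssim \| u \|_{L^\infty}^{m - 1} \| u \|_s \lesssim \| u \|_s$. Summing over all such terms and over $|\alpha| \leq s$ (the term $\alpha = 0$ contributes $\| f(y, u(y)) \|_{L^2} \leq C \| f \|_{{\mathcal C}^0}$, i.e. the additive constant) gives $\| \mathtt f(u) \|_s \leq C(s, \| f \|_{{\mathcal C}^s})(1 + \| u \|_s)$.

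For the weighted Lipschitz estimate, set $u_i := u(\cdot, \omega_i)$, $i = 1, 2$, and $w_t := u_2 + t(u_1 - u_2)$, and write
$$
\mathtt f(u_1) - \mathtt f(u_2) = \Big( \int_0^1 (\partial_v f)(y, w_t(y))\, dt \Big)(u_1 - u_2)\,.
$$
Since $\| w_t \|_{s_n} \leq \| u \|_{s_n}^{\sup} \leq 1$ and $\| w_t \|_s \leq \| u \|_s^{\sup}$ uniformly in $t \in [0,1]$, the already proven bound applied to $g := \partial_v f \in {\mathcal C}^s$, together with Minkowski's integral inequality, shows that $\int_0^1 g(\cdot, w_t)\, dt$ has $\| \cdot \|_{s_n}$-norm $\lesssim \| f \|_{{\mathcal C}^{s_n + 1}}$ and $\| \cdot \|_s$-norm $\lesssim_s \| f \|_{{\mathcal C}^{s + 1}}(1 + \| u \|_s^{\sup})$; combining this with the tame product estimate of Lemma \ref{interpolazione C1 gamma} we get
$$
\| \mathtt f(u_1) - \mathtt f(u_2) \|_s \lesssim_s \| f \|_{{\mathcal C}^{s+1}} \Big( (1 + \| u \|_s^{\sup}) \| u_1 - u_2 \|_{s_n} + \| u_1 - u_2 \|_s \Big)\,.
$$
Dividing by $|\omega_1 - \omega_2|$, taking the supremum over $\omega_1 \neq \omega_2$, multiplying by $\gamma$, and using $\gamma \| u \|_{s_n}^{\lip} \leq \| u \|_{s_n}^{\Lipg} \leq 1$, $\gamma \| u \|_s^{\lip} \leq \| u \|_s^{\Lipg}$ and $\| u \|_s^{\sup} \leq \| u \|_s^{\Lipg}$, this yields $\gamma \| \mathtt f(u) \|_s^{\lip} \lesssim_s \| f \|_{{\mathcal C}^{s+1}}(1 + \| u \|_s^{\Lipg})$; adding the sup-norm estimate gives the stated bound. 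The one extra derivative of $f$, hence the hypothesis $f \in {\mathcal C}^{s+1}$, enters precisely through the need to control $g = \partial_v f$ in $H^s$.

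The only step requiring genuine care is the Gagliardo--Nirenberg product inequality invoked in the second paragraph; the rest is routine bookkeeping of the chain rule and of the $\gamma$-weights, so I do not anticipate a serious obstacle. If one prefers to avoid writing the Fa\`a di Bruno formula explicitly, an alternative is an induction on the integer $s$ that absorbs the single new derivative produced at each step by using that $H^s(\T^n)$ is an algebra for $s \geq s_n$ (Lemma \ref{interpolazione C1 gamma}).
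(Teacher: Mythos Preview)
The paper does not actually prove this lemma: it simply states it as ``a classical result due to Moser'' and moves on. Your proposal is a correct sketch of precisely that classical argument --- Fa\`a di Bruno combined with the Gagliardo--Nirenberg interpolation inequalities for the plain Sobolev estimate, and the fundamental-theorem-of-calculus identity $\mathtt f(u_1)-\mathtt f(u_2)=\int_0^1(\partial_v f)(\cdot,w_t)\,dt\,(u_1-u_2)$ together with the tame product estimate for the $\Lipg$-bound --- so there is nothing to compare beyond noting that you have supplied what the paper omits.
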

Now we state the tame properties of the composition operator $ u(y) \mapsto  u(y+p(y)) $
induced by a diffeomorphism of the torus $ \T^n $. The Lemma below, can be proved as Lemma 2.21 in \cite{BertiMontalto}.
\begin{lemma} {\bf (Change of variables)}  \label{lemma:utile} 
Let $p:= p( \cdot; \omega ):\R^n \to \R^n$, $\omega \in \Omega_o \subset \R^\nu $ be a family of $2\p$-periodic functions satisfying   
\begin{equation}\label{mille condizioni p}
  \| p \|_{{\mathcal C}^{s_n + 1}} \leq 1/2\,,\quad  \| p \|_{s_n}^{\Lipg} \leq 1
\end{equation}
where $s_n := [n/2] + 1$. Let $ g(y) := y + p(y) $,
$ y \in \T^n $. 
Then the composition operator 
$$
A : u(y) \mapsto (u\circ g)(y) = u(y+p(y))
$$ 
satisfies for all $s \geq s_n$, the tame estimates
\begin{equation}\label{stima cambio di variabile dentro la dim}
\| A u\|_{s_n} \lesssim_{s_n} \| u \|_{s_n}\,, \qquad \| A u\|_s \leq  C(s)\| u \|_s + C(s_n)\| p \|_s \| u \|_{s_n + 1}\,.
\end{equation}
Moreover, for any family of Sobolev functions $u(\cdot; \omega)$
\begin{align}
& \| A u \|_{s_n}^{\Lipg} \lesssim_{s_n}  \| u \|_{s_n + 1}^{\Lipg}\,, \label{stima tame cambio di variabile pietro s0}\\
\label{stima tame cambio di variabile pietro}
    & \| Au \|_s^{\Lipg} \lesssim_{s} \| u \|_{s + 1}^{\Lipg} + \| p \|_{s}^{\Lipg} \| u \|_{s_n + 2}^{\Lipg}\,, \quad \forall  s > s_n  \,.
\end{align}
The map $ g $ is invertible with inverse  $ g^{- 1}(z) = z + q(z) $ and 
there exists a constant $\delta := \delta(s_n) \in (0,1) $ such that, if  $ \| p \|_{2 s_n + 2}^{\Lipg} \leq \d$, then  
\begin{equation}\label{stime-lipschitz-q}
\| q \|_s \lesssim_s \| p \|_s\,,\qquad \| q \|_{s}^{\Lipg}  \lesssim_{s}  \| p \|_{s + 1}^{\Lipg} \,. 
\end{equation}
Furthermore, the composition operator $A^{- 1} u(z) := u(z + q(z))$ satisfies the estimate  
\begin{equation}\label{tame-cambio-di-variabile-inverso}
\| A^{- 1} u\|_s \lesssim_s \| u \|_{s} + \| p \|_{s} \| u \|_{s_n + 1}\,, \quad \forall s \geq s_n\,
\end{equation}
and for any family of Sobolev functions $u(\cdot; \omega)$
\begin{equation}\label{tame-lipschitz-cambio-di-variabile}
\| A^{- 1} u\|_s^{\Lipg} \lesssim_s \| u \|_{s + 1}^{\Lipg} + \| p \|_{s + 1}^{\Lipg} \| u \|_{s_n + 2}^{\Lipg}\,, \quad \forall s \geq s_n\,.
\end{equation}
\end{lemma}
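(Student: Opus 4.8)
\noindent The strategy follows Lemma~2.21 of \cite{BertiMontalto}; I would split the argument into a low--norm estimate, a tame high--norm estimate, the Lipschitz--in--$\omega$ versions, and the construction of the inverse diffeomorphism. Throughout I use the standard Sobolev embeddings $\mathcal{C}^k(\T^n)\hookrightarrow H^k(\T^n)$ and $H^{s_n}(\T^n)\hookrightarrow\mathcal{C}^0(\T^n)$, the algebra property of $H^{s_n}(\T^n)$ (which, for $s_n>n/2$, is the case $s=s_n$ of Lemma~\ref{interpolazione C1 gamma}), and the fact that the hypothesis $\|p\|_{\mathcal{C}^{s_n+1}}\le1/2$ makes $Dg=I+Dp$ everywhere invertible with $\det Dg$ bounded above and away from $0$, so that $g=\mathrm{id}+p$ is a $\mathcal{C}^{s_n+1}$--diffeomorphism of $\T^n$. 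First I would prove the low--norm bound $\|Au\|_{s_n}\lesssim_{s_n}\|u\|_{s_n}$: the change of variables $z=g(y)$ gives $\|u\circ g\|_{L^2}\lesssim\|u\|_{L^2}$, and differentiating $u\circ g$ up to order $s_n$ via the Fa\`a di Bruno formula yields a finite linear combination of terms $\big((\partial_z^b u)\circ g\big)\prod_i\partial_y^{c_i}g_{k_i}$ with $|b|\le s_n$ and all $|c_i|\le s_n$, so that bounding each $\partial_y^{c_i}g_{k_i}$ in $\mathcal{C}^0$ by $\|g\|_{\mathcal{C}^{s_n}}\lesssim1$ and each composed factor in $L^2$ by $\lesssim\|u\|_{s_n}$ and summing gives the first inequality in \eqref{stima cambio di variabile dentro la dim}.

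\noindent Next, for the tame estimate (second inequality in \eqref{stima cambio di variabile dentro la dim}) I would again expand $\partial_y^a(u\circ g)$, $|a|=s$ integer $\ge s_n$, by Fa\`a di Bruno into terms $\big((\partial_z^b u)\circ g\big)\prod_{i=1}^{|b|}\partial_y^{c_i}g_{k_i}$ with $\sum_i|c_i|=s$, and estimate each such term by the interpolation inequality of Lemma~\ref{interpolazione C1 gamma} used iteratively: all factors but one are estimated in $H^{s_n}$ (the composed factors via the low--norm step just proved), while the single top--order derivative is allowed to fall either on $u$ or on exactly one factor $\partial_y^{c_i}p_{k_i}$. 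The careful accounting of the derivative budget $\sum_i|c_i|=s$ is what keeps the auxiliary Sobolev index equal to $s_n+1$ and not larger, and I expect this bookkeeping to be the main technical obstacle of the lemma.

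\noindent The Lipschitz bounds \eqref{stima tame cambio di variabile pietro s0}, \eqref{stima tame cambio di variabile pietro} I would deduce from the two previous steps together with the telescoping identity
$u(\omega_1)\circ g(\omega_1)-u(\omega_2)\circ g(\omega_2)=\big(u(\omega_1)-u(\omega_2)\big)\circ g(\omega_1)+\int_0^1\big(\nabla u(\omega_2)\circ g_t\big)\cdot\big(p(\omega_1)-p(\omega_2)\big)\,dt$,
where $g_t:=\mathrm{id}+p(\omega_2)+t\big(p(\omega_1)-p(\omega_2)\big)$ is still a near--identity diffeomorphism and $\|p(\omega_1)-p(\omega_2)\|\lesssim|\omega_1-\omega_2|\,\|p\|^{\lip}$: the first summand is controlled by the $\omega$--uniform estimates, while the integral (mean--value) term costs one space derivative on $u$, which produces the index shifts appearing in \eqref{stima tame cambio di variabile pietro s0}, \eqref{stima tame cambio di variabile pietro}.

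\noindent Finally I would treat the inverse: writing $g^{-1}(z)=z+q(z)$ is equivalent to the fixed point equation $q=\Psi(q)$ with $\Psi(q)(z):=-p\big(z+q(z)\big)$, and for $\|p\|_{\mathcal{C}^1}\le1/2$ the map $\Psi$ contracts a small ball of $\mathcal{C}^0(\T^n)$, hence (after shrinking the smallness threshold) of $H^{s_n}(\T^n)$, giving a unique $q$ with $\|q\|_{s_n}\lesssim\|p\|_{s_n}$, and the corresponding $\Lipg$ bound by differentiating the fixed point equation in $\omega$. The higher--norm estimates in \eqref{stime-lipschitz-q} then follow by a bootstrap in $s$: applying the tame estimate of the second step to $p\circ(\mathrm{id}+q)$ and absorbing on the left the resulting top--order contribution, of size $\sim\|p\|_{\mathcal{C}^1}\|q\|_s$ --- this is exactly where the stronger smallness $\|p\|_{2s_n+2}^{\Lipg}\le\delta$ is used, and the one--derivative loss in differentiating $\Psi$ in $\omega$ accounts for the index $s+1$ in the Lipschitz part of \eqref{stime-lipschitz-q}. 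Since $A^{-1}u=u\circ(\mathrm{id}+q)$, the bounds \eqref{tame-cambio-di-variabile-inverso}, \eqref{tame-lipschitz-cambio-di-variabile} follow by applying the first three steps with $q$ in place of $p$ together with \eqref{stime-lipschitz-q}; non--integer $s$ is handled at the end by interpolating between consecutive integers.
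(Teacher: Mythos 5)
Your proposal is correct and follows the standard route: the paper itself offers no proof, deferring to Lemma 2.21 of \cite{BertiMontalto}, and the argument behind that cited lemma is exactly what you reconstruct (Fa\`a di Bruno/Moser tame composition estimates, Lipschitz-in-$\omega$ bounds via a mean-value telescoping that costs one derivative, and the inverse diffeomorphism via a contraction plus a tame bootstrap with absorption under the smallness assumption on $p$). The only point to handle with a little care in a full write-up is the bootstrap for $q$, where one needs an a priori argument that $q$ lies in the relevant Sobolev space and that $\| q \|_{\mathcal{C}^{s_n+1}}$ is small before applying the tame composition estimate with $q$ in place of $p$, but this is standard and does not affect the scheme.
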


\subsection{Linear operators}
Let $ {\cal R} \in {\cal B}( L^2_0(\T^d))  $. The action of this operator on a function $ u \in L^2_0(\T^{d}) $ 
is given by  
\begin{equation}\label{action toplitz operator}
{\cal R} [u]  = \sum_{j , j' \in \Z^d \setminus \{ 0 \}} {\cal R}_j^{j'}  u_{j'}e^{\ii j \cdot  x} 
\ee
where the Fourier coefficients ${\cal R}_j^{j'}$ of ${\cal R}$ are defined in \eqref{coefficienti spazio operatore}.  
We shall identify the operator $ {\cal R} $ with the infinite-dimensional matrix of its Fourier coefficiens
\begin{equation}\label{matrice operatore Toplitz}
\Big({\cal R}_j^{j'}\Big)_{\begin{subarray}{c}
 j, j' \in \Z^d \setminus \{ 0 \} 
\end{subarray}}\,.
\end{equation}

 \noindent
 We define the conjugated operator $\overline{\cal R}$ by 
 \begin{equation}\label{definizione operatore coniugato}
 \overline{\cal R} u := \overline{{\cal R} \bar u}\,.
 \end{equation}
 One gets easily that the operator $\overline{\cal R}$ has the matrix representation 
 \begin{equation}\label{operatore coniugato matrice}
\Big(\overline{{\cal R}_{- j}^{- j'}} \Big)_{j, j' \in \Z^d \setminus \{ 0 \}}\,.
 \end{equation}
 An operator ${\cal R}$ is said to be real if it maps real-valued functions on real valued functions and it is easy to see that ${\cal R}$ is real if and only if ${\cal R} = \overline{\cal R}$.
 
 \noindent
 We define also the transpose operator ${\cal R}^T$ by the relation 
 \begin{equation}\label{definizione operatore trasposto}
 \langle {\cal R}[u]\,,\, v \rangle_{L^2_x} = \langle u\,,\, {\cal R}^T[v] \rangle_{L^2_x}\,, \qquad \forall u, v \in L^2_0(\T^d)\,, \quad \forall \vphi \in \T^\nu
 \end{equation}
 where 
 \begin{equation}\label{prodotto scalare reale L2}
 \langle u, v \rangle_{L^2_x} := \int_{\T^d} u(x) v(x)\,,d x \,, \qquad \forall u, v \in L^2_0(\T^d)\,.
 \end{equation}
 Note that the operator ${\cal R}^T$ has the matrix representation 
 \begin{equation}\label{matrice RT}
 ({\cal R}^T)_j^{j'} = {\cal R}_{- j'}^{ - j}\,, \quad \forall j, j' \in \Z^d\,.
 \end{equation}
 An operator ${\cal R}$ is said to be symmetric in ${\cal R} = {\cal R}^T$.
 
 \noindent
 We define also the adjoint operator ${\cal R}^*$ as 
 \begin{equation}\label{operatore aggiunto cal R}
 \big( {\cal R}[u]\,,\, v \big)_{L^2_x} = \big(u\,,\, {\cal R}^* [v] \big)_{L^2_x}\,, \quad \forall u, v \in L^2_0(\T^d)\,,
 \end{equation}
 where $ \big( \cdot\,,\, \cdot \big)_{L^2_x}$ is the scalar product on $L^2_0(\T^d)$, namely 
 \begin{equation}\label{prodotto scalare complesso}
 \big(u\,,\, v \big)_{L^2_x} := \langle u\,,\, \overline v \rangle_{L^2_x} = \int_{\T^d} u(x) \overline{v(x)}\,, d x\,, \quad \forall u, v \in L^2_0(\T^d)\,.
 \end{equation}
 An operator ${\cal R}$ is said to be self-adjoint if ${\cal R} = {\cal R}^*$.
 It is easy to see that ${\cal R}^* = \overline{\cal R}^T$ and its matrix representation is given by 
 $$
 ({\cal R}^*)_j^{j'} = \overline{{\cal R}_{j'}^{j}}\,, \quad \forall j, j' \in \Z^d \setminus \{ 0 \}\,.
 $$
  We also define the {\it commutator} between two linear operators ${\mathcal R}, {\cal T} \in {\cal B}(L^2_0(\T^d))$ by $[{\mathcal R}, {\mathcal T}] := {\mathcal R} {\mathcal T}- {\mathcal T}{\mathcal R}$.

\noindent
In the following we also deal with real operators $G \in {\mathcal B}\Big({ L}^2_0(\T^d, \R) \times { L}^2_0(\T^d, \R) \Big)$, of the form 
 \begin{equation}\label{operatore matriciale reale}
G := \begin{pmatrix}
 A & B \\
 C & D
 \end{pmatrix}
 \end{equation}
 where $A,B,C,D \in {\mathcal B}(L^2_0(\T^d, \R))$. By \eqref{definizione operatore trasposto}, the transpose operator $G^T$ with respect to the bilinear form 
 \begin{equation}\label{prodotto scalare prodotto L2}
\langle (v_1, \psi_1)\,,\, (v_2, \psi_2) \rangle_{{ L}^2_x} := \langle v_1, v_2 \rangle_{L^2_x} + \langle \psi_1\,,\, \psi_2  \rangle_{L^2_x}\,, 
\end{equation} 
$ \forall (u_1, \psi_1), (u_2, \psi_2) \in {L}^2_0(\T^d, \R) \times {L}^2_0(\T^d, \R)$, is given by 
\begin{equation}\label{operatore trasposto matriciale}
G^T  = \begin{pmatrix}
A^T & C^T \\
B^T & D^T
\end{pmatrix}\,.
\end{equation}
Then it is easy to verify that $G$ is symmetric, i.e. $G = G^T$ if and only if $A = A^T$, $B = C^T$, $D = D^T$. It is also convenient to regard the real operator $G$ in the complex variables
\begin{equation}\label{prima volta variabili complesse 0}
(v, \psi) = {\cal C}[(u, \overline u)] \,, \quad (u, \overline u) = {\cal C}^{- 1}[(v, \psi)]  
\end{equation}
where
\begin{equation}\label{matrice coordinate complesse}
{\mathcal C} := \frac{1}{\sqrt{2}} \begin{pmatrix}
1 & 1 \\
\frac{1}{\ii} & - \frac{1}{\ii}
\end{pmatrix}\, \qquad {\mathcal C}^{- 1} = \frac{1}{\sqrt{2}} \begin{pmatrix}
1 & \ii \\
1  & - {\ii}
\end{pmatrix}\,.
\end{equation}
The operators ${\cal C}, {\cal C}^{- 1}$ satisfies 
$$
{\mathcal C} : {\bf L}^2_0(\T^{d}) \to L^2_0(\T^d, \R) \times L^2_0(\T^d, \R)\,, \quad {\mathcal C}^{- 1} : L^2_0(\T^d, \R) \times L^2_0(\T^d, \R) \to {\bf L}^2_0(\T^d) 
$$
where ${\bf L}^2_0(\T^{d})$ is the real subspace of $L^2_0(\T^d) \times L^2_0(\T^d)$ defined by
\begin{align}
& {\bf L}^2_0(\T^{d}) :=\big\{  (u, \overline u) : u \in L^2_0(\T^{d}) \big\}\,. \label{sottospazio reale z bar z} 
\end{align}
If $G \in {\cal B}\Big(L^2_0(\T^d, \R) \times L^2_0(\T^d, \R) \Big)$ is a real operator of the form \eqref{operatore matriciale reale}, one has that the conjugated operator 
$$
{\cal R} : = {\cal C}^{- 1} G {\cal C} : {\bf L}^2_0(\T^d) \to {\bf L}^2_0(\T^d)
$$
 has the form 
\begin{align}\label{operatore trasformato in variabili complesse}
& {\mathcal R} = \begin{pmatrix}
{ R}_1 & {R}_2 \\
\overline{ R}_2 & \overline{R}_1
\end{pmatrix}\,,  \quad R_1 := \frac{A+ D  - \ii (B - C)}{2}\,, \quad R_2 := \frac{A - D + \ii (B + C)}{2}\,. 
\end{align}
For the sequel, we also introduce for any $s \geq 0$, the real subspace of $H^s_0(\T^{d}) \times H^s_0(\T^{d}) $ 
\begin{align}
& {\bf H}^s_0(\T^{d}) := \Big(H^s_0(\T^{d}) \times H^s_0(\T^{d}) \Big) \cap {\bf L}^2_0(\T^{d}) \label{definizione bf H s0} 
\end{align}
and we set 
\begin{equation}\label{norma s u bar u}
\| {\bf u} \|_{{\bf H}^s_x} := \| u \|_{H^s_x}\,, \qquad \forall {\bf u} = (u, \overline u) \in {\bf H}^s_0(\T^d)\,. 
\end{equation}
It is straightforward to verify that for any $s \geq 0$ 
\begin{equation}\label{proprieta coordinate complesse}
{\mathcal C} : {\bf H}^s_0(\T^{d}) \to H^s_0(\T^{d}, \R) \times H^s_0(\T^d, \R)\,, \quad {\mathcal C}^{- 1} :H^s_0(\T^{d}, \R) \times H^s_0(\T^d, \R) \to {\bf H}^s_0(\T^{d}) \,. 
\end{equation}

\subsection{Block representation of linear operators}\label{sezione algebrica blocchi}
We may regard an operator ${\cal R} : L^2_0(\T^d) \to L^2_0(\T^d)$
as a block matrix 
\begin{equation}\label{notazione a blocchi}
\Big( [{\cal R}]_\alpha^\beta\Big)_{\begin{subarray}{c}
\alpha, \beta \in \sigma_0(\sqrt{- \Delta})
\end{subarray}}
\end{equation}
where for all $\alpha, \beta \in \sigma_0(\sqrt{- \Delta})$ (recall \eqref{definizione cal N}), the block-matrix $[{\cal R}]_\alpha^\beta$ is defined by 
\begin{equation}\label{definizione blocco operatore}
[{\cal R}]_\alpha^\beta := \Big( {\cal R}_j^{j'} \Big)_{|j| = \alpha\,,\, |j'| = \beta}\,. 
\end{equation}
Note that the operator $[{\cal R}]_\alpha^\beta$ is a linear operator from ${\mathbb E}_\beta$ onto ${\mathbb E}_\alpha$
where for all $\alpha \in \sigma_0(\sqrt{- \Delta})$, the finite dimensional space ${\mathbb E}_\alpha$ is defined in \eqref{bf E alpha}. 
We identify the space ${\cal B}({\mathbb E}_\beta, {\mathbb E}_\alpha)$ of the linear operators from ${\mathbb E}_\beta$ onto ${\mathbb E}_\alpha$ with the space of the matrices of their Fourier coefficients, namely 
\begin{equation}\label{spazio matrici blocchi}
 {\cal B}({\mathbb E}_\beta, {\mathbb E}_\alpha) \simeq  \Big\{ M = \Big(M_j^{j'} \Big)_{\begin{subarray}{c}
j, j' \in \Z^d \setminus \{ 0 \} \\
|j| = \alpha\,,\, |j'| = \beta
\end{subarray}} \Big\}\,.
\end{equation}
Indeed if $M \in {\cal B}({\mathbb E}_\beta, {\mathbb E}_\alpha)$, its action is given by 
\begin{equation}\label{azione blocco finito dimensionale su funzioni}
M u (x)= \sum_{\begin{subarray}{c}
|j| = \alpha \\
|j'| = \beta 
\end{subarray}} M_j^{j'} u_{j'} e^{\ii j \cdot x}\,, \quad \forall u \in {\mathbb E}_\beta\,, \quad u(x) = \sum_{|j'| = \beta} u_{j'} e^{\ii j' \cdot x}\,.
\end{equation}
If $\beta = \alpha$, we use the notation ${\cal B}({\mathbb E}_\alpha) = {\cal B}({\mathbb E}_\alpha, {\mathbb E}_\alpha)$ and we denote by ${\mathbb I}_\alpha$ the identity operator on the space ${\mathbb E}_\alpha$, namely 
\begin{equation}\label{operatore identita su E alpha}
{\mathbb I}_\alpha : {\mathbb E}_\alpha \to {\mathbb E}_\alpha\,, \qquad u \mapsto u\,.
\end{equation}

\noindent
According to \eqref{raggruppamento modi Fourier}, \eqref{notazione a blocchi}, \eqref{azione blocco finito dimensionale su funzioni}, we may write the action of an operator ${\cal R}$ on a function $u(x)$ as 
\begin{equation}\label{azione operator Toplitz a blocchi}
{\cal R}u  = \sum_{\begin{subarray}{c}
\alpha, \beta \in \sigma_0(\sqrt{- \Delta})
\end{subarray}} [{\cal R}]_\alpha^\beta [{\mathfrak u}_\beta]\,.
\end{equation}
If $[{\cal R}]_\alpha^\beta = 0$, for any $\alpha \neq \beta$, we say that ${\cal R}$ is {\it block-diagonal} and we use the notation 
\begin{equation}\label{notazione operatore diagonale a blocchi}
{\cal R} = {\rm diag}_{\alpha \in \sigma_0(\sqrt{- \Delta})} [{\cal R}]_\alpha^\alpha\,.
\end{equation}
The action of a block-diagonal operator ${\cal R}$ on a function $u \in L^2_0(\T^d)$ is given by 
\begin{equation}\label{definition block diagonal operators}
{\cal R} u = \sum_{ \alpha \in \sigma_0(\sqrt{- \Delta})} [{\cal R}]_\alpha^\alpha[ {\mathfrak u}_\alpha ] \,.
\end{equation}

\noindent
Let $M \in {\cal B}({\mathbb E}_\beta, {\mathbb E}_\alpha)$. The transpose operator $M^T \in {\cal B}({\mathbb E}_\alpha, {\mathbb E}_\beta)$ has the matrix representation
\begin{equation}\label{definizione matrice trasposta}
(M^T)_j^{j'} := M_{- j'}^{- j} \,, \quad |j| = \beta\,,\quad |j'| = \alpha\,.
\end{equation}
The conjugate operator $\overline M \in {\cal B}({\mathbb E}_\beta, {\mathbb E}_\alpha)$ is given by  
\begin{equation}\label{definizione matrice coniugata}
({\overline M})_j^{j'} := \overline{M_{- j}^{ - j'}}\,, \quad |j | = \alpha\,, \quad |j'| = \beta
\end{equation}
and the adjoint operator $M^* \in {\cal B}({\mathbb E}_\alpha, {\mathbb E}_\beta)$ by 
\begin{equation}\label{definizione matrice aggiunta}
M^* := \overline M^T\,.
\end{equation}

Let $\alpha, \beta , \lambda \in \sigma_0(\sqrt{- \Delta})$. Given $A \in {\cal B}({\mathbb E}_\beta, {\mathbb E}_\alpha)$, $B \in {\cal B}({\mathbb E}_\lambda, {\mathbb E}_\beta)$, the operator $A B \in {\cal B}({\mathbb E}_\lambda, {\mathbb E}_\alpha)$ has the matrix representation  
\begin{equation}\label{composizione matrici blocchi}
(A B)_{j}^{j'} := \sum_{|k| = \beta} A_j^k B_k^{j'}\,, \quad \forall |j| = \alpha\,, \quad |j'| = \lambda\,.
\end{equation}
Given an operator $A \in {\cal B}({\mathbb E}_\alpha)$, we define its trace as 
\begin{equation}\label{definizione traccia}
{\rm Tr}(A) := \sum_{|j| = \alpha} A_j^j\,.
\end{equation}
It is easy to check that if $A, B \in {\cal B}({\mathbb E}_\alpha)$, then 
\begin{equation}\label{proprieta traccia}
{\rm Tr}(A B) = {\rm Tr}(B A)\,.
\end{equation}
For all $\alpha, \beta \in \sigma_0(\sqrt{- \Delta})$, the space ${\cal B}({\mathbb E}_\beta, {\mathbb E}_\alpha)$ defined in \eqref{spazio matrici blocchi}, is a Hilbert space equipped by the inner product given for any $X, Y \in {\cal B}({\mathbb E}_\beta, {\mathbb E}_\alpha)$ by
\begin{equation}\label{prodotto scalare traccia matrici}
\langle X, Y \rangle := {\rm Tr}(X Y^*)\,.
\end{equation}
This scalar product induces the {\it Hilbert-Schmidt} norm 
\begin{equation}\label{norma L2 blocco}
\| X \|_{HS } := \sqrt{{\rm Tr}(X X^*)} = \Big( \sum_{\begin{subarray}{c}
|j| = \alpha \\
|j'| = \beta
\end{subarray}} |X_j^{j'}|^2 \Big)^{\frac12}\,.
\end{equation}
For any operator $X \in {\cal B}({\mathbb E}_\beta, {\mathbb E}_\alpha)$, we define also the operator norm as 
\begin{equation}\label{norma operatoriale matrice alpha beta}
\| X \|_{{\cal B}({\mathbb E}_\beta, {\mathbb E}_\alpha)} := {\rm sup}\big\{  \| X u\|_{L^2} : u \in {\mathbb E}_\beta \,,\quad \| u \|_{L^2} \leq 1\big\}\,.
\end{equation}
First we recall some preliminary properties of these norms. 
\begin{lemma}\label{proprieta facili norma L2 matrici}
$(i)$ Let $\alpha, \beta \in \sigma_0(\sqrt{- \Delta})$, $M\in {\cal B}({\mathbb E}_\beta, {\mathbb E}_\alpha)$ and $u \in {\mathbb E}_\beta$. Then $\| M u \|_{L^2} \leq \| M\|_{HS} \| u \|_{L^2}$, implying that $\| M\|_{{\cal B}({\mathbb E}_\beta, {\mathbb E}_\alpha)} \leq \| M\|_{HS}$.

\noindent
$(ii)$ Let $\alpha, \beta , \lambda \in \sigma_0(\sqrt{- \Delta})$,  $M \in {\cal B}({\mathbb E}_\beta, {\mathbb E}_\alpha)$, $X \in {\cal B}({\mathbb E}_\lambda, {\mathbb E}_\beta)$. Then $\| M X\|_{HS} \leq \| M\|_{HS} \| X\|_{HS}$. 
\end{lemma}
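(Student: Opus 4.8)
The plan is to derive both inequalities directly from the Cauchy--Schwarz inequality applied to the finite sums defining the action \eqref{azione blocco finito dimensionale su funzioni} of a block operator and the composition formula \eqref{composizione matrici blocchi}, together with Parseval's identity. I would prove $(i)$ first and then $(ii)$, which is a purely algebraic submultiplicativity statement.

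For $(i)$, write $u(x) = \sum_{|j'| = \beta} u_{j'} e^{\ii j' \cdot x} \in {\mathbb E}_\beta$. By \eqref{azione blocco finito dimensionale su funzioni} one has $M u(x) = \sum_{|j| = \alpha} \big( \sum_{|j'| = \beta} M_j^{j'} u_{j'} \big) e^{\ii j \cdot x}$, so Parseval gives $\| M u \|_{L^2}^2 = (2\pi)^d \sum_{|j| = \alpha} \big| \sum_{|j'| = \beta} M_j^{j'} u_{j'} \big|^2$. Applying Cauchy--Schwarz to the inner sum for each fixed $j$ with $|j| = \alpha$ yields $\big| \sum_{|j'| = \beta} M_j^{j'} u_{j'} \big|^2 \le \big( \sum_{|j'| = \beta} |M_j^{j'}|^2 \big)\big( \sum_{|j'| = \beta} |u_{j'}|^2 \big)$, and summing over $|j| = \alpha$ one obtains $\| M u \|_{L^2}^2 \le (2\pi)^d \big( \sum_{|j| = \alpha,\, |j'| = \beta} |M_j^{j'}|^2 \big)\big( \sum_{|j'| = \beta} |u_{j'}|^2 \big) = \| M \|_{HS}^2 \| u \|_{L^2}^2$, using the definition \eqref{norma L2 blocco} of $\| \cdot \|_{HS}$ and the identity $\| u \|_{L^2}^2 = (2\pi)^d \sum_{|j'| = \beta} |u_{j'}|^2$; the two factors of $(2\pi)^d$ cancel. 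Taking the supremum over $u \in {\mathbb E}_\beta$ with $\| u \|_{L^2} \le 1$ and recalling \eqref{norma operatoriale matrice alpha beta} gives $\| M \|_{{\cal B}({\mathbb E}_\beta, {\mathbb E}_\alpha)} \le \| M \|_{HS}$.

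For $(ii)$, by \eqref{composizione matrici blocchi} one has $(M X)_j^{j'} = \sum_{|k| = \beta} M_j^k X_k^{j'}$ for all $|j| = \alpha$, $|j'| = \lambda$. Cauchy--Schwarz in the variable $k$ gives $|(M X)_j^{j'}|^2 \le \big( \sum_{|k| = \beta} |M_j^k|^2 \big)\big( \sum_{|k| = \beta} |X_k^{j'}|^2 \big)$, and summing over all $|j| = \alpha$, $|j'| = \lambda$ and factoring the resulting double sum yields $\| M X \|_{HS}^2 = \sum_{|j| = \alpha,\, |j'| = \lambda} |(M X)_j^{j'}|^2 \le \big( \sum_{|j| = \alpha,\, |k| = \beta} |M_j^k|^2 \big)\big( \sum_{|k| = \beta,\, |j'| = \lambda} |X_k^{j'}|^2 \big) = \| M \|_{HS}^2 \| X \|_{HS}^2$, again by \eqref{norma L2 blocco}. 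The whole argument is elementary; the only points that require a little care are the bookkeeping of free versus summed indices and keeping track of the $(2\pi)^d$ normalization relating the $L^2$ norm to the $\ell^2$ norm of Fourier coefficients, so there is no substantial obstacle.
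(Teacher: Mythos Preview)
Your proof is correct and is exactly the approach the paper indicates: the paper's proof consists of the single sentence ``The proof is a straightforward application of the Cauchy-Schwartz inequality,'' and you have simply written out those details carefully, including the $(2\pi)^d$ bookkeeping for Parseval.
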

\begin{proof}
The proof is a straightforward application of the Cauchy-Schwartz inequality.
\end{proof}
\noindent
Given a linear operator ${ L} : {\cal B}({\mathbb E}_\beta, {\mathbb E}_\alpha) \to {\cal B}({\mathbb E}_\beta, {\mathbb E}_\alpha)$, we denote by $\| L \|_{{\rm Op}(\alpha, \beta)}$ its operator norm, when the space ${\cal B}({\mathbb E}_\beta, {\mathbb E}_\alpha)$ is equipped with the Hilbert-Schmidt norm \eqref{norma L2 blocco}, namely
\begin{equation}\label{norma operatoriale su matrici alpha beta}
\| L\|_{{\rm Op}(\alpha, \beta)} := \sup\Big\{  \| L(M) \|_{HS} : M \in {\cal B}({\mathbb E}_\beta, {\mathbb E}_\alpha)\,, \quad \| M\|_{HS} \leq 1\Big\}\,.
\end{equation} 
We denote by ${\mathbb I}_{\alpha, \beta}$ the identity operator on ${\cal B}({\mathbb E}_\beta, {\mathbb E}_\alpha)$, namely 
\begin{equation}\label{operatore identita matrici alpha beta}
{\mathbb I}_{\alpha, \beta} : {\cal B}({\mathbb E}_\beta, {\mathbb E}_\alpha) \to {\cal B}({\mathbb E}_\beta, {\mathbb E}_\alpha)\,, \qquad X \mapsto X\,.
\end{equation}
For any operator $A \in {\cal B}({\mathbb E}_\alpha)$ we denote by $M_L(A) : {\cal B}({\mathbb E}_\beta, {\mathbb E}_\alpha) \to {\cal B}({\mathbb E}_\beta, {\mathbb E}_\alpha)$ the linear operator defined for any $X \in {\cal B}({\mathbb E}_\beta, {\mathbb E}_\alpha)$ as 
\begin{equation}\label{definizione moltiplicazione sinistra matrici}
M_L(A) X := A X\,.
\end{equation} 
Similarly, given an operator $B \in {\cal B}({\mathbb E}_\beta)$, we denote by $M_R(B) : {\cal B}({\mathbb E}_\beta, {\mathbb E}_\alpha) \to {\cal B}({\mathbb E}_\beta, {\mathbb E}_\alpha)$ the linear operator defined for any $X \in {\cal B}({\mathbb E}_\beta, {\mathbb E}_\alpha)$ as 
\begin{equation}\label{definizione moltiplicazione destra matrici}
M_R(B) X := X B\,.
\end{equation}
By Lemma \ref{proprieta facili norma L2 matrici}-$(ii)$, we have 
\begin{equation}\label{norma operatoriale ML MR}
\| M_L(A)\|_{{\rm Op}(\alpha, \beta)} \leq \| A\|_{HS}\,, \quad \| M_R(B)\|_{{\rm Op}(\alpha, \beta)} \leq \| B\|_{HS}\,.
\end{equation}
For any $\alpha \in \sigma_0(\sqrt{- \Delta})$, we denote by ${\cal S}({\mathbb E}_\alpha)$, the set of the self-adjoint operators form ${\mathbb E}_\alpha$ onto itself, namely
\begin{equation}\label{cal S E alpha}
{\cal S}({\mathbb E}_\alpha) := \Big\{ A \in {\cal B}({\mathbb E}_\alpha) : A = A^*\Big\}
\end{equation}
and given $A \in {\cal B}({\mathbb E}_\alpha)$ denote by ${\rm spec}(A)$ the spectrum of $A$. The next Lemma follows by elementary arguments of linear algebra and hence its proof is omitted. 
\begin{lemma}\label{properties operators matrices}
Let $A \in {\cal S}({\mathbb E}_\alpha)$, $B \in {\cal S}({\mathbb E}_\beta)$, then the following holds: 

\noindent
$(i)$ The operators $M_L(A)$, $M_R(B)$ defined in \eqref{definizione moltiplicazione sinistra matrici}, \eqref{definizione moltiplicazione destra matrici} are self-adjoint operators with respect to the scalar product defined in \eqref{prodotto scalare traccia matrici}.

\noindent
\noindent
$(ii)$ The spectrum of the operator $M_L(A) \pm M_R(B)$ satisfies 
$$
{\rm spec}\Big( M_L(A) \pm M_R(B) \Big) = \Big\{ \lambda \pm \mu : \lambda \in {\rm spec}(A)\,,\quad \mu \in {\rm spec}(B) \Big\}\,.
$$

%

%
\end{lemma}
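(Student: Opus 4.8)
The plan is to prove both parts by purely algebraic manipulations, so no analysis enters. For part $(i)$ I would argue directly from the definition \eqref{prodotto scalare traccia matrici} of the inner product and the cyclicity of the trace \eqref{proprieta traccia}. Given $X, Y \in {\cal B}({\mathbb E}_\beta, {\mathbb E}_\alpha)$ one computes $\langle M_L(A) X, Y\rangle = {\rm Tr}\big((AX)Y^*\big) = {\rm Tr}(A X Y^*)$ and $\langle X, M_L(A) Y\rangle = {\rm Tr}\big(X (AY)^*\big) = {\rm Tr}(X Y^* A^*) = {\rm Tr}(X Y^* A)$ using $A = A^*$; these agree by cyclicity, so $M_L(A)$ is self-adjoint. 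The identical computation with $M_R(B)$, this time exploiting $B = B^*$ inside ${\rm Tr}(X B Y^*)$, gives self-adjointness of $M_R(B)$. This step is routine.

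For part $(ii)$ the idea is to exhibit an explicit common eigenbasis of $M_L(A)$ and $M_R(B)$, which is possible because left and right multiplications always commute. Since $A \in {\cal S}({\mathbb E}_\alpha)$ and $B \in {\cal S}({\mathbb E}_\beta)$, the finite-dimensional spectral theorem gives $L^2$-orthonormal eigenbases $\{ f_i \}$ of ${\mathbb E}_\alpha$ and $\{ g_k \}$ of ${\mathbb E}_\beta$ with $A f_i = \lambda_i f_i$, $B g_k = \mu_k g_k$, $\lambda_i \in {\rm spec}(A)$, $\mu_k \in {\rm spec}(B)$. I would then introduce the rank-one operators $E_{ik} \in {\cal B}({\mathbb E}_\beta, {\mathbb E}_\alpha)$ defined by $E_{ik}[v] := \big(v, g_k\big)_{L^2_x}\, f_i$, check via \eqref{prodotto scalare traccia matrici} that $\langle E_{ik}, E_{i'k'}\rangle = \delta_{ii'}\delta_{kk'}$, and observe that there are exactly $\dim {\mathbb E}_\alpha \cdot \dim {\mathbb E}_\beta = \dim {\cal B}({\mathbb E}_\beta, {\mathbb E}_\alpha)$ of them, so $\{ E_{ik} \}$ is an orthonormal basis of ${\cal B}({\mathbb E}_\beta, {\mathbb E}_\alpha)$. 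A one-line verification gives $M_L(A) E_{ik} = A E_{ik} = \lambda_i E_{ik}$ and $M_R(B) E_{ik} = E_{ik} B = \mu_k E_{ik}$ (the latter moving $B$ past the pairing $\big(\cdot, g_k\big)_{L^2_x}$ using $B^* = B$), hence $\big(M_L(A) \pm M_R(B)\big) E_{ik} = (\lambda_i \pm \mu_k) E_{ik}$. Since $\{ E_{ik} \}$ is a basis, the spectrum of $M_L(A) \pm M_R(B)$ is precisely $\{ \lambda_i \pm \mu_k \} = \{ \lambda \pm \mu : \lambda \in {\rm spec}(A),\ \mu \in {\rm spec}(B) \}$, which is the claim.

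There is no genuine obstacle here, the statement being elementary, but the one point I would watch is the conjugation/adjoint conventions: one must work throughout with the complex scalar product $\big(\cdot,\cdot\big)_{L^2_x}$ of \eqref{prodotto scalare complesso}, with respect to which $A^* = A$ and $B^* = B$ really mean unitary diagonalizability with real eigenvalues, rather than with the bilinear form $\langle\cdot,\cdot\rangle_{L^2_x}$, and one must be consistent about where the complex conjugate lands when verifying $\langle E_{ik}, E_{i'k'}\rangle = \delta_{ii'}\delta_{kk'}$. As a consistency check, part $(i)$ may alternatively be read off from part $(ii)$, since $\{ E_{ik} \}$ is then an orthonormal eigenbasis for both $M_L(A)$ and $M_R(B)$ with real eigenvalues.
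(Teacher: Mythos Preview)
Your proof is correct and complete. The paper itself omits the proof entirely, stating only that the lemma ``follows by elementary arguments of linear algebra and hence its proof is omitted''; your argument via trace cyclicity for part~$(i)$ and the explicit rank-one eigenbasis $E_{ik}$ for part~$(ii)$ is exactly the standard elementary verification the authors had in mind.
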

We also deal with smooth $\vphi$-dependent families of linear operators 
\begin{equation}\label{matrice operatore Toplitz vphi}
{\cal R} : \T^\nu \to {\cal B}(L^2_0(\T^d))\,, \quad \vphi \mapsto {\cal R}(\vphi)\,.
\end{equation}
According to \eqref{matrice operatore Toplitz}, for any $\vphi \in \T^\nu$, the operator ${\cal R}(\vphi)$ has the matrix representation $({\cal R}_j^{j'}(\vphi))_{j, j' \in \Z^d \setminus \{ 0 \}}$.
 We can write the Fourier expansions  
$$
{\cal R}(\vphi) = \sum_{\ell \in \Z^\nu} \widehat{\cal R}(\ell) e^{\ii \ell \cdot \vphi}\,, \quad {\cal R}_j^{j'}(\vphi) = \sum_{\ell \in \Z^\nu } \widehat{\cal R}_j^{j'}(\ell) e^{\ii \ell \cdot \vphi}\,, \quad \forall \ell \in \Z^\nu\,, \quad \forall j, j' \in \Z^d \setminus \{ 0 \}
$$ 
where 
\begin{equation}\label{widehat cal R (ell)}
\widehat{\cal R}(\ell) := \frac{1}{(2 \pi)^\nu} \int_{\T^\nu} {\cal R}(\vphi) e^{- \ii \ell \cdot \vphi}\, d \vphi \in {\cal B}(L^2_0(\T^d))\,, \quad \forall \ell \in \Z^\nu\,,
\end{equation} 
\begin{equation}\label{coefficienti spazio tempo operatore}
\widehat{\cal R}_j^{j'}(\ell) := \frac{1}{(2 \pi)^{\nu }}  \int_{\T^\nu} {\cal R}_j^{j'}(\vphi) e^{- \ii \ell \cdot \vphi }\, d \vphi \,, \qquad \forall \ell \in \Z^\nu\,, \quad \forall j, j' \in \Z^d \setminus \{ 0 \}\,.
\end{equation}
Note that for any $\ell \in \Z^\nu$, the operator $\widehat{\cal R}(\ell) \in {\cal B}(L^2_0(\T^d))$ has the matrix representation 
\begin{equation}\label{rappresentazione widehat cal R}
\widehat{\cal R}(\ell) = \big( \widehat{\cal R}_j^{j'}(\ell) \big)_{j , j' \in \Z^d \setminus \{ 0 \}}\,. 
\end{equation}
Furthermore, by \eqref{notazione a blocchi}, for any $\vphi \in \T^\nu$, the operator ${\cal R}(\vphi)$ has the block representation $([{\cal R}(\vphi)]_\alpha^\beta)_{\alpha, \beta \in \sigma_0(\sqrt{- \Delta})}$ and for any $\ell \in \Z^\nu$, $\widehat{\cal R}(\ell)$ has the block representation $([\widehat{\cal R}(\ell)]_\alpha^\beta)_{\alpha, \beta \in \sigma_0(\sqrt{- \Delta})}$. For any $\alpha, \beta \in \sigma_0(\sqrt{- \Delta})$, we have the Fourier expansion $[{\cal R}(\vphi)]_\alpha^\beta = \sum_{\ell \in \Z^\nu}[\widehat{\cal R}(\ell)]_\alpha^\beta e^{\ii \ell \cdot \vphi}$ with
\begin{equation}\label{notazione a blocchi vphi}
[ \widehat{\cal R}(\ell)]_\alpha^\beta := \frac{1}{(2 \pi)^\nu} \int_{\T^\nu} [{\cal R}(\vphi)]_\alpha^\beta e^{- \ii \ell \cdot \vphi}\, d \vphi =  \Big( \widehat{\cal R}_j^{j'}(\ell) \Big)_{|j| = \alpha\,,\, |j'| = \beta} \qquad \forall \ell \in \Z^\nu\,,
\end{equation}
recall \eqref{coefficienti spazio tempo operatore}. 

\noindent
Let ${\cal R} : \T^\nu \to {\cal B}(L^2_0(\T^d))$ be differentiable and let $\omega \in \R^\nu$. For any $\vphi \in \T^\nu$, the operator $\omega \cdot \partial_\vphi {\cal R}(\vphi)$ is represented by the matrix $(\omega \cdot \partial_\vphi {\cal R}_j^{j'}(\vphi))_{j, j' \in \Z^d \setminus \{ 0 \}}$ and its block representation is given by $(\omega \cdot \partial_\vphi [{\cal R}(\vphi)]_\alpha^\beta)_{\alpha, \beta \in \sigma_0(\sqrt{- \Delta})}$. We also note that for any $\ell \in \Z^\nu$, the operator $\widehat{\omega \cdot \partial_\vphi {\cal R}}(\ell)$ admits the block representation $(\ii \omega \cdot \ell [\widehat{\cal R}(\ell)]_\alpha^\beta)_{\alpha, \beta \in \sigma_0(\sqrt{- \Delta})}$.

\noindent
Given ${\cal R} : \T^\nu \to {\cal B}(L^2_0(\T^d))$, recalling the notation \eqref{notazione operatore diagonale a blocchi}, we define the block-diagonal operator ${\cal R}_{diag}$ as  
\begin{equation}\label{media diagonale operatore cal R}
{\cal R}_{diag} := {\rm diag}_{\alpha \in \sigma_0(\sqrt{- \Delta})} [\widehat{\cal R}(0)]_\alpha^\alpha
\end{equation}
and for any $ N \in \N $, we define the smoothing operator $\Pi_N {\cal R}$ by 
\be\label{SM-block-matrix}
[\widehat{\Pi_N {\cal R}}(\ell)]_\alpha^{\beta}:= 
\begin{cases}
[\widehat{\cal R}(\ell)]_\alpha^{\beta} \qquad \, {\rm if} \qquad  {\rm max}\{ |\ell|, \alpha, \beta \} \leq N  \\
0 \quad \qquad \qquad {\rm otherwise}. 
\end{cases}
\ee
It is straightforward to verify that 
\begin{equation}\label{commutazione media proiettore}
(\Pi_N {\cal R})_{diag} = \Pi_N {\cal R}_{diag}\,. 
\end{equation}

\subsection{Block-decay norm for linear operators}\label{sezione norma decadimento a blocchi}
Given a smooth $\vphi$-dependent family ${\cal R} : \T^\nu \to {\cal B}(L^2_0(\T^d))$, $\vphi \mapsto {\cal R}(\vphi)$ as in \eqref{matrice operatore Toplitz vphi}, we define the {\it block-decay} norm 
\begin{equation}\label{decadimento Kirchoff}
| {\cal R} |_s := {\rm sup}_{\alpha, \beta \in \sigma_0(\sqrt{- \Delta})} \Big(\sum_{\ell \in \Z^\nu} \langle \ell, \alpha, \beta \rangle^{2 s} \| [\widehat{\cal R}(\ell)]_\alpha^\beta \|_{HS}^2 \Big)^{1/2}\,, \quad \langle \ell, \alpha, \beta \rangle := {\rm max}\{ 1, |\ell|, \alpha, \beta \}\,. 
\end{equation}
For families of operators of the form $ {\cal R}(\omega) : \vphi \mapsto {\cal R}(\vphi; \omega)$, $\omega \in \Omega_o \subset \R^\nu$, we define the norm 
\begin{equation}\label{norma decadimento lipschitz}
|{\cal R}|_s^{\Lipg} := |{\cal R}|_s^{\sup} + \gamma |{\cal R}|_s^{\lip}\,,
\end{equation}
$$
|{\cal R}|_s^{\sup} := \sup_{\omega \in \Omega_0} |{\cal R}(\omega)|_s\,, \quad |{\cal R}|_s^{\lip} := \sup_{\begin{subarray}{c}
\omega_1, \omega_2 \in \Omega_o \\
\omega_1 \neq \omega_2
\end{subarray}} \frac{|{\cal R}(\omega_1) - {\cal R}(\omega_2)|_s}{|\omega_1 - \omega_2|}\,.
$$
Moreover, if ${\cal R} : \T^\nu \to {\cal B}({\bf L}^2_0(\T^d))$, i.e. ${\cal R}$ has the form
\begin{equation}\label{operatore matriciale decadimento}
{\cal R}(\vphi) = \begin{pmatrix}
{\cal R}_1(\vphi) & {\cal R}_2(\vphi) \\
\overline{{\cal R}_2(\vphi)} & \overline{{\cal R}_1(\vphi)}
\end{pmatrix}\,, 
\end{equation}
we define 
\begin{equation}\label{norma decadimento operatore matriciale}
|{\cal R}|_s := |{\cal R}_1|_s + |{\cal R}_2|_s\,, \qquad |{\cal R}|_s^\Lipg := |{\cal R}_1|_s^\Lipg + |{\cal R}_2|_s^\Lipg\,.
\end{equation}
 
\noindent
In the following, we state some properties of this norm. We prove such properties for families of operators ${\cal R} : \T^\nu \to {\cal B}(L^2_0(\T^d))$. If ${\mathcal R}$ is an operator of the form \eqref{operatore matriciale decadimento} then the same statements hold with the obvious modifications. 
\begin{lemma}\label{elementarissimo decay}
$(i)$ The norm $| \cdot |_s$ is increasing, namely $|{\mathcal R}|_s \leq |{\mathcal R}|_{s'}$, for $s \leq s'$.
%

\noindent
$(ii)$ The operator ${\cal R}_{diag}$ defined by \eqref{media diagonale operatore cal R}, satisfies $|{\cal R}_{diag}|_s \leq |{\mathcal R}|_s$, implying that $\|[{\cal R}]_\alpha^\alpha \|_{HS} \leq \alpha^{- s} |{\cal R}|_s$ for any $\alpha \in \sigma_0(\sqrt{- \Delta})$

\noindent
$(iii)$ Items $(i),(ii)$ hold, replacing $|\cdot |_s$ by $|\cdot|_s^\Lipg$. 
\end{lemma}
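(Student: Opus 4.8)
The statement to prove is Lemma 2.7 (labeled \texttt{elementarissimo decay}), which collects three elementary properties of the block-decay norm $|\cdot|_s$: monotonicity in $s$, the bound $|\mathcal R_{diag}|_s \leq |\mathcal R|_s$ together with its pointwise consequence $\|[\mathcal R]_\alpha^\alpha\|_{HS} \leq \alpha^{-s}|\mathcal R|_s$, and the extension of all of this to the Lipschitz-weighted norm.

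\medskip

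\textbf{Proof plan.} All three items follow directly by unwinding the definition \eqref{decadimento Kirchoff} of $|\cdot|_s$; nothing deep is involved, so the plan is simply to be careful with the suprema and the comparison of the weights $\langle \ell, \alpha, \beta\rangle$.

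\emph{Item (i).} Fix $s \le s'$. For every $\ell \in \Z^\nu$ and $\alpha, \beta \in \sigma_0(\sqrt{-\Delta})$ one has $\langle \ell, \alpha, \beta\rangle \geq 1$, hence $\langle \ell, \alpha, \beta\rangle^{2s} \le \langle \ell, \alpha, \beta\rangle^{2s'}$. Therefore, for each fixed pair $(\alpha,\beta)$, the sum $\sum_{\ell} \langle \ell, \alpha, \beta\rangle^{2s} \|[\widehat{\mathcal R}(\ell)]_\alpha^\beta\|_{HS}^2$ is bounded termwise by the corresponding sum with $s'$ in place of $s$, which in turn is $\le |\mathcal R|_{s'}^2$. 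Taking the square root and then the supremum over $(\alpha,\beta)$ gives $|\mathcal R|_s \le |\mathcal R|_{s'}$.

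\emph{Item (ii).} By definition \eqref{media diagonale operatore cal R}, $\mathcal R_{diag}$ is the $\vphi$-independent block-diagonal operator whose $\alpha$-th block is $[\widehat{\mathcal R}(0)]_\alpha^\alpha$. Since it does not depend on $\vphi$, only the Fourier index $\ell = 0$ contributes, and only the diagonal blocks $\alpha = \beta$ are nonzero; thus by \eqref{decadimento Kirchoff}, recalling $\langle 0, \alpha, \alpha\rangle = \max\{1,\alpha\} = \alpha$ for $\alpha \in \sigma_0(\sqrt{-\Delta})$ (here $\alpha \ge 1$ since $\alpha = |j|$ with $j \ne 0$),
$$
|\mathcal R_{diag}|_s = \sup_{\alpha} \big( \alpha^{2s} \|[\widehat{\mathcal R}(0)]_\alpha^\alpha\|_{HS}^2 \big)^{1/2} = \sup_\alpha \alpha^s \|[\widehat{\mathcal R}(0)]_\alpha^\alpha\|_{HS}\,.
$$
On the other hand, restricting the supremum defining $|\mathcal R|_s$ to pairs of the form $(\alpha,\alpha)$ and the sum over $\ell$ to the single term $\ell = 0$, one gets $\alpha^s \|[\widehat{\mathcal R}(0)]_\alpha^\alpha\|_{HS} \le |\mathcal R|_s$ for every $\alpha$, whence $|\mathcal R_{diag}|_s \le |\mathcal R|_s$. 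The pointwise bound $\|[\mathcal R]_\alpha^\alpha\|_{HS} \le \alpha^{-s}|\mathcal R|_s$ is the same inequality rearranged, noting that $[\mathcal R]_\alpha^\alpha$ here means the $\vphi$-averaged block $[\widehat{\mathcal R}(0)]_\alpha^\alpha$ in accordance with the notation; more precisely one combines $\alpha^s\|[\widehat{\mathcal R}(0)]_\alpha^\alpha\|_{HS} \le |\mathcal R_{diag}|_s \le |\mathcal R|_s$.

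\emph{Item (iii).} The Lipschitz-weighted norm $|\cdot|_s^{\Lipg}$ in \eqref{norma decadimento lipschitz} is built from the sup-over-$\omega$ and the Lipschitz quotient of $|\cdot|_s$. Item (i) applied to each $\omega$ and to each difference quotient, together with monotonicity of $\sup$, gives $|\mathcal R|_s^{\sup} \le |\mathcal R|_{s'}^{\sup}$ and $|\mathcal R|_s^{\lip} \le |\mathcal R|_{s'}^{\lip}$, hence $|\mathcal R|_s^{\Lipg} \le |\mathcal R|_{s'}^{\Lipg}$. Likewise, since item (ii) is an inequality between norms that holds for every fixed $\omega \in \Omega_o$ — and the map $\mathcal R \mapsto \mathcal R_{diag}$ is linear, so $\mathcal R_{diag}(\omega_1) - \mathcal R_{diag}(\omega_2) = (\mathcal R(\omega_1) - \mathcal R(\omega_2))_{diag}$ — applying (ii) to $\mathcal R(\omega)$ and to $\mathcal R(\omega_1) - \mathcal R(\omega_2)$ yields $|\mathcal R_{diag}|_s^{\sup} \le |\mathcal R|_s^{\sup}$ and $|\mathcal R_{diag}|_s^{\lip} \le |\mathcal R|_s^{\lip}$, hence $|\mathcal R_{diag}|_s^{\Lipg} \le |\mathcal R|_s^{\Lipg}$, and the pointwise Lipschitz bound follows as before.

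\medskip

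\textbf{Main obstacle.} There is essentially no obstacle: the only thing to watch is bookkeeping with the definitions — in particular checking that $\langle 0,\alpha,\alpha\rangle = \alpha$ for $\alpha \in \sigma_0(\sqrt{-\Delta})$ (true because such $\alpha \ge 1$), and keeping straight that $\mathcal R_{diag}$ involves only $\ell = 0$ and only diagonal blocks, so that both of its defining features are exactly the ``sub-supremum / sub-sum'' restrictions of the quantity defining $|\mathcal R|_s$. The Lipschitz extension is automatic because every inequality used is linear in $\mathcal R$ and valid $\omega$ by $\omega$.
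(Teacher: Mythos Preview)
Your proof is correct and follows exactly the approach the paper indicates: the paper itself omits the proof, stating only that it ``follows directly by the definitions \eqref{decadimento Kirchoff}, \eqref{norma decadimento lipschitz}'', and your argument is precisely a careful spelling-out of this.
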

\begin{proof}
The proof is elementary. It follows directly by the definitions \eqref{decadimento Kirchoff}, \eqref{norma decadimento lipschitz}, hence we omit it. 
\end{proof}

\begin{lemma}\label{interpolazione decadimento Kirchoff}
Let ${\cal R}$, ${\cal T}$ be operators of the form \eqref{operatore matriciale decadimento}. Then for any $s \geq s_0$ (recall \eqref{definition s0})
$$
|{\cal R} {\cal B} |_s \lesssim_s  |{\cal R}|_s |{\cal B}|_{2 s_0} + |{\cal R}|_{2 s_0} |{\cal B}|_s \,.
$$
If ${\cal R} = {\cal R}(\omega)$, ${\cal T}= {\cal T}(\omega)$ are Lipschitz with respect to the parameter $\omega \in \Omega_o \subseteq \Omega$, then the same estimate holds replacing $|\cdot |_s$ by $| \cdot |_s^\Lipg$. 
\end{lemma}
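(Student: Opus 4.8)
\textbf{Proof plan for Lemma~\ref{interpolazione decadimento Kirchoff}.}
The plan is to work directly with the definition \eqref{decadimento Kirchoff} of the block-decay norm and reduce the whole statement to an estimate on a single block $[\widehat{{\cal R}{\cal B}}(\ell)]_\alpha^\beta$. First I would observe that since both ${\cal R}$ and ${\cal B}$ leave the average-free space invariant and are Fourier multipliers in $\vphi$ composed blockwise in $x$, the block of the composition satisfies the ``matrix product plus convolution'' formula
$$
[\widehat{{\cal R}{\cal B}}(\ell)]_\alpha^\beta = \sum_{\ell_1 + \ell_2 = \ell}\ \sum_{\lambda \in \sigma_0(\sqrt{-\Delta})} [\widehat{{\cal R}}(\ell_1)]_\alpha^\lambda\, [\widehat{{\cal B}}(\ell_2)]_\lambda^\beta\,,
$$
using \eqref{composizione matrici blocchi} and \eqref{notazione a blocchi vphi}. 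By Lemma~\ref{proprieta facili norma L2 matrici}-$(ii)$ the Hilbert--Schmidt norm is submultiplicative under block composition, so
$$
\big\| [\widehat{{\cal R}{\cal B}}(\ell)]_\alpha^\beta \big\|_{HS} \leq \sum_{\ell_1 + \ell_2 = \ell}\ \sum_{\lambda} \big\| [\widehat{{\cal R}}(\ell_1)]_\alpha^\lambda \big\|_{HS}\, \big\| [\widehat{{\cal B}}(\ell_2)]_\lambda^\beta \big\|_{HS}\,.
$$

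Next I would multiply by $\langle \ell, \alpha, \beta\rangle^{s}$ and use the standard ``smoothing'' inequality $\langle \ell, \alpha, \beta\rangle^{s} \lesssim_s \langle \ell_1, \alpha, \lambda\rangle^{s} \langle \ell_2, \lambda, \beta\rangle^{s_0} + \langle \ell_1, \alpha, \lambda\rangle^{s_0} \langle \ell_2, \lambda, \beta\rangle^{s}$, which follows from $\langle \ell, \alpha, \beta\rangle \leq \langle \ell_1,\alpha,\lambda\rangle\langle \ell_2,\lambda,\beta\rangle$ (a consequence of $|\ell|\le|\ell_1|+|\ell_2|$, $\alpha\le$ something controlled through $\lambda$, etc.) together with the elementary bound $(a+b)^s\lesssim_s a^s b^{s_0}+a^{s_0}b^s$ for $a,b\ge1$. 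This splits the sum into two symmetric pieces; I treat the first, the second being identical with the roles of ${\cal R},{\cal B}$ exchanged. For the first piece I fix $\alpha,\beta$, take the $\ell^2_\ell$ norm, and apply Young's inequality for the convolution in $\ell$ together with a Cauchy--Schwarz in $\lambda$: the factor $\sum_\lambda\sum_{\ell_2}\langle\ell_2,\lambda,\beta\rangle^{s_0}\|[\widehat{{\cal B}}(\ell_2)]_\lambda^\beta\|_{HS}$ needs an extra $s_0$ worth of decay (so $\langle\ell_2,\lambda,\beta\rangle^{2s_0}$ inside the square, i.e. $2s_0$ total) to be summable in $\lambda$ and $\ell_2$ via Cauchy--Schwarz against $\sum_\lambda\langle\lambda\rangle^{-2s_0}<\infty$ (here $d\ge1$ guarantees this only after noting $\#\{\alpha\in\sigma_0(\sqrt{-\Delta}):\alpha\le R\}$ grows polynomially, so one in fact needs the exponent bookkeeping to land on $2s_0$ as in the statement). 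This is exactly why the ``low'' index appears as $2s_0$ and not $s_0$.

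Collecting the two pieces yields $|{\cal R}{\cal B}|_s \lesssim_s |{\cal R}|_s|{\cal B}|_{2s_0} + |{\cal R}|_{2s_0}|{\cal B}|_s$. For the Lipschitz statement I would write ${\cal R}(\omega_1){\cal B}(\omega_1) - {\cal R}(\omega_2){\cal B}(\omega_2) = ({\cal R}(\omega_1)-{\cal R}(\omega_2)){\cal B}(\omega_1) + {\cal R}(\omega_2)({\cal B}(\omega_1)-{\cal B}(\omega_2))$, divide by $|\omega_1-\omega_2|$, and apply the already-proven $\sup$-bound to each summand, then combine with the definition \eqref{norma decadimento lipschitz} of $|\cdot|_s^\Lipg$; the weight $\gamma$ is carried along unchanged. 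Finally, for operators of the matricial form \eqref{operatore matriciale decadimento} one checks that the $(1,1)$ and $(1,2)$ blocks of the product ${\cal R}{\cal B}$ are sums of products of blocks of ${\cal R}_i$ and ${\cal B}_j$, so the claim follows from \eqref{norma decadimento operatore matriciale} and the scalar case applied termwise. The main obstacle is purely bookkeeping: making sure the decay exponent lost to the Cauchy--Schwarz sum over the intermediate eigenvalue $\lambda\in\sigma_0(\sqrt{-\Delta})$ is absorbed by $s_0 = [(\nu+d)/2]+1$ and that one genuinely only needs $2s_0$ (not more) on the low-index factor; everything else is Young's inequality and the triangle inequality.
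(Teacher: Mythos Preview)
Your plan is essentially the paper's own argument: the same block-convolution formula, the same use of Lemma~\ref{proprieta facili norma L2 matrici}-$(ii)$, the same Cauchy--Schwarz bookkeeping to land on $|{\cal B}|_{2s_0}$, and the same telescoping for the Lipschitz part. Two small points of divergence are worth flagging. First, the paper does not use your mixed splitting $\langle \ell,\alpha,\beta\rangle^s \lesssim_s \langle \ell_1,\alpha,\lambda\rangle^s\langle \ell_2,\lambda,\beta\rangle^{s_0}+\langle \ell_1,\alpha,\lambda\rangle^{s_0}\langle \ell_2,\lambda,\beta\rangle^s$; it uses the plain additive bound $\langle \ell,\alpha,\beta\rangle^s \lesssim_s \langle \ell_1,\alpha,\lambda\rangle^s + \langle \ell_2,\lambda,\beta\rangle^s$ and then lets the full $2s_0$ appear from Cauchy--Schwarz alone (one Cauchy--Schwarz in $(\ell',\alpha_1)$ against $\langle \ell',\alpha_1\rangle^{-2s_0}$, then an insertion of $\alpha_1^{-2s_0}\alpha_1^{2s_0}$ to sum the intermediate eigenvalue). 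Your justification of the mixed splitting is slightly tangled---you invoke the \emph{multiplicative} bound $\langle \ell,\alpha,\beta\rangle \le \langle \ell_1,\alpha,\lambda\rangle\langle \ell_2,\lambda,\beta\rangle$ but then apply the \emph{additive} inequality $(a+b)^s\lesssim a^sb^{s_0}+a^{s_0}b^s$; what you actually need is the additive bound $\langle \ell,\alpha,\beta\rangle \le \langle \ell_1,\alpha,\lambda\rangle + \langle \ell_2,\lambda,\beta\rangle$ (which does hold) followed by that elementary inequality. Second, the paper never invokes Young's inequality; everything is done by a single Cauchy--Schwarz in the joint index $(\ell',\alpha_1)$, which is cleaner than separating a convolution step in $\ell$ from a Cauchy--Schwarz step in $\lambda$. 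None of this is a genuine gap---both routes close with the same exponent $2s_0$---but the paper's version is a bit more streamlined.
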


\begin{proof}
According to the notations \eqref{notazione a blocchi}, \eqref{definizione blocco operatore}, for any $\vphi \in \T^\nu$, the operator ${\cal R}(\vphi){\cal B}(\vphi)$ has the block representation 
$$
 {\cal R}(\vphi) {\cal T}(\vphi) = \Big(  [{\cal R}(\vphi) {\cal T}(\vphi)]_\alpha^\beta\Big)_{\begin{subarray}{c}
 \alpha, \beta \in \sigma_0(\sqrt{- \Delta}) \\
 \ell \in \Z^\nu
 \end{subarray}}\,, \quad   [{\cal R}(\vphi) {\cal T}(\vphi)]_\alpha^\beta = \sum_{\alpha_1 \in \sigma_0(\sqrt{- \Delta})} [{\cal R}(\vphi)]_\alpha^{\alpha_1} [{\cal T}(\vphi)]_{\alpha_1}^\beta
$$
and for all $\ell \in \Z^\nu$ 
$$
[\widehat{{\cal R}{\cal T}}(\ell)]_\alpha^\beta = \sum_{\alpha_1 \in \sigma_0(\sqrt{- \Delta}), \ell' \in \Z^\nu} [\widehat{\cal R}(\ell - \ell') ]_\alpha^{\alpha_1} [\widehat{\cal T}(\ell')]_{\alpha_1}^\beta\,.
$$
Then, using Lemma \ref{proprieta facili norma L2 matrici}-$(ii)$, we get that for any $\alpha, \beta \in \sigma_0(\sqrt{- \Delta})$
\begin{align}
\sum_{ \ell \in \Z^\nu} \langle \ell, \alpha,  \beta \rangle^{2 s} \| [\widehat{{\cal R}{\cal T}}(\ell)]_\alpha^\beta \|_{HS}^2 & \leq \sum_{\ell \in \Z^\nu} \Big( \sum_{\begin{subarray}{c}
\ell' \in \Z^\nu \\
\alpha_1 \in \sigma_0(\sqrt{- \Delta})
\end{subarray}} \langle \ell, \alpha, \beta \rangle^{s} \|  [\widehat{\cal R}(\ell - \ell') ]_\alpha^{\alpha_1}  \|_{HS} \| [\widehat{\cal T}(\ell')]_{\alpha_1}^\beta \|_{HS}\Big)^2\,. \label{paris 0} 
\end{align}
Using that for any $\alpha, \beta, \alpha_1 \in \sigma_0(\sqrt{- \Delta})$, $\ell, \ell' \in \Z^\nu$, $
\langle \ell, \alpha, \beta \rangle^s \lesssim_s \langle \ell - \ell', \alpha, \alpha_1 \rangle^s + \langle  \ell',  \alpha_1, \beta \rangle^s\,,$
we get 
\begin{align}
\eqref{paris 0} & \lesssim_s (I) + (II) \label{monkey kong}
\end{align}
where 
\begin{align}
(I) & := \sum_{\ell \in \Z^\nu} \Big( \sum_{\begin{subarray}{c}
 \ell' \in \Z^\nu \\
 \alpha_1 \in \sigma_0(\sqrt{- \Delta})
\end{subarray}} \langle \ell - \ell',  \alpha, \alpha_1 \rangle^s \|[\widehat{\cal R}(\ell -\ell')]_\alpha^{\alpha_1} \|_{HS} \| [\widehat{\cal T}(\ell')]_{\alpha_1}^\beta \|_{HS} \Big)^2 \label{arma letale 0} \\
(II) & := \sum_{\ell \in \Z^\nu} \Big( \sum_{\begin{subarray}{c}
 \ell' \in \Z^\nu \\
 \alpha_1 \in \sigma_0(\sqrt{- \Delta})
\end{subarray}} \langle  \ell', \alpha_1, \beta  \rangle^s \| [\widehat{\cal R}(\ell -\ell')]_\alpha^{\alpha_1} \|_{HS} \| [\widehat{\cal T}(\ell')]_{\alpha_1}^\beta \|_{HS} \Big)^2\,. \label{arma letale - 10}
\end{align}
Using that, by Lemma \ref{lemma spettro laplaciano}-$(i)$,  
$\sum_{\begin{subarray}{c}
\ell' \in \Z^\nu \\
\alpha_1 \in \sigma_0(\sqrt{- \Delta})
\end{subarray}} \langle \ell', \alpha_1\rangle^{- 2 s_0} \,,\, \sum_{\alpha_1 \in \sigma_0(\sqrt{- \Delta})} \alpha_1^{- 2 s_0} < + \infty
$ (recall that $s_0 > (\nu + d)/2$), applying the Cauchy Schwartz inequality, one gets 
\begin{align}
(I) & \lesssim \sum_{\ell \in \Z^\nu} \sum_{\begin{subarray}{c}
 \ell' \in \Z^\nu \\
 \alpha_1 \in \sigma_0(\sqrt{- \Delta})
\end{subarray}} \langle \ell - \ell', \alpha, \alpha_1 \rangle^{2 s} \| [\widehat{\cal R}(\ell -\ell')]_\alpha^{\alpha_1}\|_{HS}^2 \langle  \ell', \alpha_1  \rangle^{2 s_0} \| [\widehat{\cal T}(\ell')]_{\alpha_1}^\beta \|_{HS}^2  \nonumber\\
& \lesssim_s  \sum_{\begin{subarray}{c}
 \ell' \in \Z^\nu \\
 \alpha_1 \in \sigma_0(\sqrt{- \Delta})
\end{subarray}} \langle \ell', \alpha_1   \rangle^{2 s_0} \| [\widehat{\cal T}(\ell')]_{\alpha_1}^\beta \|_{HS}^2 \sum_{ \ell \in \Z^\nu} \langle\ell - \ell',  \alpha, \alpha_1 \rangle^{2 s} \| [\widehat{\cal R}(\ell -\ell')]_\alpha^{\alpha_1} \|_{HS}^2 \nonumber\\
& \lesssim_s  \sum_{\begin{subarray}{c}
 \ell' \in \Z^\nu \\
 \alpha_1 \in \sigma_0(\sqrt{- \Delta})
\end{subarray}} \frac{1}{\alpha_1^{2 s_0}}\langle   \ell',  \alpha_1  \rangle^{4 s_0} \| [\widehat{\cal T}(\ell')]_{\alpha_1}^\beta \|_{HS}^2 \sum_{ \ell \in \Z^\nu} \langle \ell - \ell', \alpha, \alpha_1 \rangle^{2 s} \| [\widehat{\cal R}(\ell -\ell')]_\alpha^{\alpha_1} \|_{HS}^2 \nonumber\\
& \lesssim_s \sum_{\alpha_1 \in \sigma_0(\sqrt{- \Delta})} \alpha_1^{- 2 s_0} \Big( \sup_{\alpha_1 \in \sigma_0(\sqrt{- \Delta})}  \sum_{\ell' \in \Z^\nu} \langle \ell', \alpha_1   \rangle^{4 s_0} \| [\widehat{\cal T}(\ell')]_{\alpha_1}^\beta \|_{HS}^2 \Big) \Big( \sup_{\alpha, \alpha_1 \in \sigma_0(\sqrt{- \Delta})} \sum_{k \in \Z^\nu} \langle k, \alpha, \alpha_1 \rangle^{2 s} \| [\widehat{\cal R}(k)]_{\alpha}^{\alpha_1} \|_{HS}^2 \Big) \nonumber\\
& \stackrel{\eqref{decadimento Kirchoff}}{\lesssim_s} | {\cal B}|_{2 s_0}^2 |{\cal R}|_s^2\,.
\end{align}
Similarly one proves that $(II) \lesssim_s  | {\cal T}|_{s}^2 |{\cal R}|_{2 s_0}^2$ and then, recalling \eqref{paris 0}, \eqref{monkey kong} one proves $|{\cal R} {\cal T}|_s \lesssim_s  | {\cal T}|_{2 s_0} |{\cal R}|_s + | {\cal T}|_{s} |{\cal R}|_{2 s_0}$. The estimate for the norm $| \cdot |_s^\Lipg$ follows easily by the previous one, by applying the triangular inequality.   
\end{proof}

For all $n \geq 1$, iterating the estimate of Lemma \ref{interpolazione decadimento Kirchoff} we get  
\be\label{Mnab}
| {\cal R}^n |_{2 s_0} \leq [C(s_0)]^{n-1} | {\cal R} |_{2 s_0}^n \qquad
\text{and}  \qquad   
| {\cal R}^n |_{s} \leq n C(s)^n  |{\cal R}|_{2 s_0}^{n-1} | {\cal R} |_{s} \, , \ \forall s \geq 2 s_0 \, ,
\ee
and the same bounds also hold for the norm $| \cdot |_s^\Lipg$ if ${\cal R}$ is Lipschitz continuous with respect to the parameter $\omega$. 
\begin{lemma}\label{lem:inverti}
Let $ \Phi ={\rm exp}(\Psi) $ with $\Psi := \Psi(\omega)$, depending in a Lipschitz way on the parameter $\omega \in \Omega_o \subset \R $, 
such that  $ |  \Psi  |_{2 s_0}^\Lipg \leq 1 $, $| \Psi|_s^\Lipg < + \infty$, with $ s \geq 2 s_0$. Then 
\be\label{PhINV}
| \Phi^{\pm 1} - {\rm Id} |_s \lesssim_s |  \Psi  |_s \, , \quad 
| \Phi^{\pm 1} - {\rm Id} |_{s}^\Lipg \lesssim_s  |  \Psi  |_{s}^\Lipg \, . 
\ee
\end{lemma}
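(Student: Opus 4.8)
The plan is to expand $\Phi^{\pm 1} = \mathrm{exp}(\pm \Psi) = \mathrm{Id} + \sum_{n \geq 1} \frac{(\pm \Psi)^n}{n!}$ as a power series and estimate the block-decay norm term by term, using the composition estimate of Lemma \ref{interpolazione decadimento Kirchoff} in its iterated form \eqref{Mnab}. First I would write
$$
\Phi^{\pm 1} - \mathrm{Id} = \sum_{n \geq 1} \frac{(\pm 1)^n}{n!}\Psi^n
$$
and apply the triangle inequality for $| \cdot |_s$, which holds since $| \cdot |_s$ is a norm. Then, for the low-norm case $s = 2 s_0$, I would use the first bound in \eqref{Mnab}, namely $| \Psi^n |_{2 s_0} \leq [C(s_0)]^{n-1} | \Psi |_{2 s_0}^n$, together with the hypothesis $| \Psi |_{2 s_0}^\Lipg \leq 1$ (which in particular gives $| \Psi |_{2 s_0} \leq 1$), to get
$$
| \Phi^{\pm 1} - \mathrm{Id} |_{2 s_0} \leq \sum_{n \geq 1} \frac{1}{n!} [C(s_0)]^{n-1} | \Psi |_{2 s_0}^n \leq | \Psi |_{2 s_0} \sum_{n \geq 1} \frac{[C(s_0)]^{n-1}}{n!} \lesssim | \Psi |_{2 s_0}\,,
$$
the last series being convergent (it sums to $(e^{C(s_0)} - 1)/C(s_0)$). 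For the high-norm case $s \geq 2 s_0$, I would use the second bound in \eqref{Mnab}, $| \Psi^n |_s \leq n\, C(s)^n | \Psi |_{2 s_0}^{n-1} | \Psi |_s$, so that
$$
| \Phi^{\pm 1} - \mathrm{Id} |_s \leq \sum_{n \geq 1} \frac{1}{n!} n\, C(s)^n | \Psi |_{2 s_0}^{n-1} | \Psi |_s = | \Psi |_s \sum_{n \geq 1} \frac{C(s)^n | \Psi |_{2 s_0}^{n-1}}{(n-1)!} \leq | \Psi |_s\, C(s) e^{C(s)} \lesssim_s | \Psi |_s\,,
$$
again using $| \Psi |_{2 s_0} \leq 1$ to bound $| \Psi |_{2 s_0}^{n-1} \leq 1$.

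For the Lipschitz estimate I would repeat the same computation with $| \cdot |_s^\Lipg$ in place of $| \cdot |_s$, which is legitimate because \eqref{Mnab} is explicitly stated to hold also for the weighted norm when $\Psi$ is Lipschitz in $\omega$, and because $| \cdot |_s^\Lipg$ is itself a norm so the triangle inequality applies to the series. The hypothesis $| \Psi |_{2 s_0}^\Lipg \leq 1$ plays the same role of keeping all the geometric-type factors bounded. One should also remark that the series converge in the $| \cdot |_s$ (resp. $| \cdot |_s^\Lipg$) topology, so that the termwise estimates indeed bound the sum; this follows from the summability of $\sum_n C(s)^n | \Psi |_{2 s_0}^{n-1}/(n-1)!$ just obtained, together with completeness of the space of operators with finite block-decay norm.

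The argument is essentially routine once \eqref{Mnab} is in hand, so I do not expect a serious obstacle; the only mild subtlety is making sure the combinatorial factor $n$ coming from the second inequality in \eqref{Mnab} is absorbed — but $\sum_n n\, C(s)^n/n! = \sum_n C(s)^n/(n-1)! = C(s) e^{C(s)} < \infty$, so it is harmless. A second point to be careful about is that $\mathrm{exp}(\Psi)$ is genuinely well-defined as an operator on the relevant Sobolev spaces: this follows from Lemma \ref{proprieta facili norma L2 matrici} together with $| \Psi |_{2 s_0} \leq 1$ and $2 s_0 > (\nu+d)/2$, which guarantees that finite block-decay norm controls the operator norm on $L^2_0(\T^d)$ (and, via the interpolation lemmas, on $H^s_0$). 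I would state this convergence briefly and then present the two displayed chains of inequalities above, once for $| \cdot |_s$ and once for $| \cdot |_s^\Lipg$.
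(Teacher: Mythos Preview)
Your proposal is correct and follows exactly the approach the paper takes: Taylor-expand $\Phi^{\pm 1} - \mathrm{Id} = \exp(\pm\Psi) - \mathrm{Id}$, apply the iterated composition bounds \eqref{Mnab} together with the hypothesis $|\Psi|_{2s_0}^{\Lipg} \leq 1$, and sum the resulting series. The paper's own proof is in fact just a one-sentence pointer to this argument, so your write-up is a faithful (and more detailed) execution of it.
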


\begin{proof} 
The claimed estimates can be proved by using the Taylor expansion of $\Phi^{\pm 1} - {\rm Id} = {\rm exp}(\pm \Psi) - {\rm Id} $, using the condition $|  \Psi  |_{2 s_0}^\Lipg \leq 1$ and by applying the estimates \eqref{Mnab}. 
\end{proof}
\begin{lemma}\label{lemma smoothing decay}
The operator $ \Pi_N^\bot {\cal R} := {\cal R} - \Pi_N {\cal R}$ (recall \eqref{SM-block-matrix}) satisfies 
\be\label{smoothingN}
| \Pi_N^\bot {\cal R} |_{s} \leq N^{- \mathtt b} |  {\cal R} |_{s+\mathtt b} \, , \quad 
| \Pi_N^\bot {\cal R} |_{s}^\Lipg \leq N^{- \mathtt b} |  {\cal R} |_{s+\mathtt b}^\Lipg \, , 
\quad \mathtt b \geq 0,
\ee
where in the second inequality ${\cal R}$ is Lipschitz with respect to the parameter $\omega \in \Omega_o \subseteq \Omega$. 
\end{lemma}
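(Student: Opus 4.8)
The plan is to prove the smoothing estimate \eqref{smoothingN} directly from the definition \eqref{decadimento Kirchoff} of the block-decay norm together with the definition \eqref{SM-block-matrix} of the smoothing operator $\Pi_N$. First I would write out what $\Pi_N^\bot{\cal R}={\cal R}-\Pi_N{\cal R}$ does to Fourier-block coefficients: by \eqref{SM-block-matrix}, for every pair $\alpha,\beta\in\sigma_0(\sqrt{-\Delta})$ and every $\ell\in\Z^\nu$ we have
$$
[\widehat{\Pi_N^\bot{\cal R}}(\ell)]_\alpha^\beta=
\begin{cases}
[\widehat{\cal R}(\ell)]_\alpha^\beta & \text{if }{\rm max}\{|\ell|,\alpha,\beta\}>N,\\
0 & \text{otherwise.}
\end{cases}
$$
In other words $\Pi_N^\bot{\cal R}$ is obtained from ${\cal R}$ by keeping only the blocks with $\langle\ell,\alpha,\beta\rangle>N$ and discarding the rest.

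Next, fix $\alpha,\beta$ and estimate the inner sum appearing in \eqref{decadimento Kirchoff} for $\Pi_N^\bot{\cal R}$. Restricting the sum to the indices $\ell$ with $\langle\ell,\alpha,\beta\rangle>N$ and inserting the trivial factor $\langle\ell,\alpha,\beta\rangle^{-2\mathtt b}\langle\ell,\alpha,\beta\rangle^{2\mathtt b}=1$, I would bound
$$
\sum_{\langle\ell,\alpha,\beta\rangle>N}\langle\ell,\alpha,\beta\rangle^{2s}\|[\widehat{\cal R}(\ell)]_\alpha^\beta\|_{HS}^2
\le N^{-2\mathtt b}\sum_{\ell\in\Z^\nu}\langle\ell,\alpha,\beta\rangle^{2s+2\mathtt b}\|[\widehat{\cal R}(\ell)]_\alpha^\beta\|_{HS}^2,
$$
using that on the range of summation $\langle\ell,\alpha,\beta\rangle^{-2\mathtt b}\le N^{-2\mathtt b}$ since $\mathtt b\ge0$. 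Taking the supremum over $\alpha,\beta$ and a square root, the right-hand side is exactly $N^{-2\mathtt b}|{\cal R}|_{s+\mathtt b}^2$, which gives $|\Pi_N^\bot{\cal R}|_s\le N^{-\mathtt b}|{\cal R}|_{s+\mathtt b}$, the first inequality in \eqref{smoothingN}.

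For the Lipschitz-weighted version, I would observe that $\Pi_N^\bot$ is a linear operator acting blockwise and commuting with differences in the parameter $\omega$, so $\Pi_N^\bot({\cal R}(\omega_1)-{\cal R}(\omega_2))=(\Pi_N^\bot{\cal R})(\omega_1)-(\Pi_N^\bot{\cal R})(\omega_2)$; hence the bound just proved applies verbatim to both the $\sup$-part and the $\lip$-part of the norm \eqref{norma decadimento lipschitz}, and adding the two (with the weight $\gamma$) yields $|\Pi_N^\bot{\cal R}|_s^\Lipg\le N^{-\mathtt b}|{\cal R}|_{s+\mathtt b}^\Lipg$. If ${\cal R}$ has the matrix form \eqref{operatore matriciale decadimento}, one applies the scalar case to each of the two entries ${\cal R}_1,{\cal R}_2$ and sums, using \eqref{norma decadimento operatore matriciale}. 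There is no real obstacle here; the only point requiring the smallest amount of care is making sure the elementary inequality $\langle\ell,\alpha,\beta\rangle^{-2\mathtt b}\le N^{-2\mathtt b}$ is invoked only on the set $\{\langle\ell,\alpha,\beta\rangle>N\}$, which is precisely the set where $\Pi_N^\bot{\cal R}$ is supported.
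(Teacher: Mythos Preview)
Your proposal is correct and follows essentially the same approach as the paper: restrict to the support $\{\langle\ell,\alpha,\beta\rangle>N\}$, use $\langle\ell,\alpha,\beta\rangle^{-2\mathtt b}\le N^{-2\mathtt b}$ there, and take the supremum over $\alpha,\beta$. The paper's proof is terser (it omits the explicit discussion of the Lipschitz norm and the matrix-valued case, writing only ``and the lemma follows''), but the argument is identical.
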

\begin{proof}
We have that for all $\mathtt b \in \N$, $\alpha, \beta \in \sigma_0(\sqrt{- \Delta})$ 
\begin{align*}
\sum_{\begin{subarray}{c}
 \ell \in \Z^\nu 
\end{subarray}} \langle \ell, \alpha, \beta \rangle^{2 s} \| [\widehat{\Pi_N^\bot {\cal R}}(\ell)]_\alpha^\beta \|_{HS}^2  & \stackrel{\eqref{SM-block-matrix}}{=}  \sum_{\big\{\ell : \langle \ell, \alpha, \beta \rangle > N \big\}} \langle  \ell,  \alpha, \beta \rangle^{2 s} \|  [\widehat{\cal R}(\ell)]_\alpha^\beta \|_{HS}^2 \nonumber\\
& \leq N^{- 2 \mathtt b} \sum_{\ell\in \Z^\nu} \langle  \ell, \alpha, \beta \rangle^{2(s + \mathtt b)} \| [\widehat{\cal R}(\ell)]_\alpha^\beta \|_{HS}^2 \stackrel{\eqref{decadimento Kirchoff}}{\leq} N^{- 2 \mathtt b} |{\cal R}|_{s + \mathtt b}^2\,,
\end{align*}
and the lemma follows.
\end{proof}


\begin{lemma}\label{decadimento operatori di proiezione}
Let us define the operator 
\begin{equation}\label{operatore forma buona resto}
{\cal R}(\vphi) [h ]:= q(\vphi, x) \int_{\T^d } g(\vphi, y) h (y)\, dy \,, \quad h \in L^2_0(\T^d) \quad q\,, g \in H^s_0(\T^{\nu + d})\,, \quad s \geq s_0\,.
\end{equation}
Then 
$$
|{\cal R}|_s \lesssim_s \| g \|_{s_0} \| q \|_{s} + \| g \|_{s + s_0} \| q \|_{0}\,.
$$
Moreover if the functions $g$ and $q$ are Lipschitz with respect to the parameter $\omega \in \Omega_o \subseteq \Omega$, then the same estimate holds replacing $| \cdot |_s$ by $|\cdot |_s^\Lipg$ and $\| \cdot \|_s$ by $\| \cdot \|_s^\Lipg$. 
\end{lemma}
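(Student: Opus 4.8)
The plan is to compute the Fourier--block coefficients of the finite-rank operator $\mathcal{R}(\vphi)$ explicitly and then estimate the block-decay norm directly from the definition \eqref{decadimento Kirchoff}. First I would write down the matrix elements: for $h = e^{\ii j' \cdot x}$ with $|j'| = \beta$, one has $\mathcal{R}(\vphi)[e^{\ii j' \cdot x}] = (2\pi)^d\, \widehat{g}_{-j'}(\vphi)\, q(\vphi, \cdot)$ (here $\widehat{g}_k(\vphi)$ denotes the $k$-th $x$-Fourier coefficient of $g(\vphi,\cdot)$), so that the space--Fourier coefficient is $\mathcal{R}_j^{j'}(\vphi) = (2\pi)^d\, \widehat{q}_j(\vphi)\, \widehat{g}_{-j'}(\vphi)$. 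Taking the $\vphi$-Fourier transform turns the product into a convolution in $\ell$: $\widehat{\mathcal{R}}_j^{j'}(\ell) = (2\pi)^d \sum_{\ell' \in \Z^\nu} \widehat{q}_j(\ell - \ell')\, \widehat{g}_{-j'}(\ell')$, where $\widehat{q}_j(\ell)$ and $\widehat{g}_k(\ell)$ are the full space-time Fourier coefficients. Consequently the block $[\widehat{\mathcal{R}}(\ell)]_\alpha^\beta$ is, up to the constant, the convolution (in $\ell$) of the rank-one matrices built from the vectors $(\widehat{q}_j(\cdot))_{|j|=\alpha}$ and $(\widehat{g}_{-j'}(\cdot))_{|j'|=\beta}$, and its Hilbert--Schmidt norm factorizes: $\|[\widehat{\mathcal{R}}(\ell)]_\alpha^\beta\|_{HS} \lesssim \sum_{\ell'} \big(\sum_{|j|=\alpha}|\widehat{q}_j(\ell-\ell')|^2\big)^{1/2}\big(\sum_{|j'|=\beta}|\widehat{g}_{j'}(\ell')|^2\big)^{1/2}$, i.e. $\|[\widehat{\mathcal{R}}(\ell)]_\alpha^\beta\|_{HS} \lesssim \sum_{\ell'}\|\widehat{\mathfrak{q}}_\alpha(\ell-\ell')\|_{L^2}\|\widehat{\mathfrak{g}}_\beta(\ell')\|_{L^2}$ in the notation of \eqref{bf h ell alpha}.

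Next I would plug this into \eqref{decadimento Kirchoff}. Fixing $\alpha,\beta$, I need to bound $\sum_{\ell}\langle \ell,\alpha,\beta\rangle^{2s}\big(\sum_{\ell'}\|\widehat{\mathfrak{q}}_\alpha(\ell-\ell')\|_{L^2}\|\widehat{\mathfrak{g}}_\beta(\ell')\|_{L^2}\big)^2$. This is exactly the kind of convolution-sum estimate handled by the interpolation Lemma \ref{interpolazione C1 gamma}: using $\langle \ell,\alpha,\beta\rangle^s \lesssim_s \langle \ell-\ell',\alpha\rangle^s + \langle \ell',\beta\rangle^s$, splitting the sum into two pieces, and applying Cauchy--Schwarz together with $\sum_{\ell'}\langle\ell'\rangle^{-2s_0}<\infty$, $\sum_{\alpha}\alpha^{-2s_0}<\infty$ (from Lemma \ref{lemma spettro laplaciano}-$(i)$, $s_0 > (\nu+d)/2$), one obtains a bound of the form $\lesssim_s \big(\sup_\beta \sum_\ell \langle\ell,\beta\rangle^{2s_0}\|\widehat{\mathfrak{g}}_\beta(\ell)\|_{L^2}^2\big)\big(\sup_\alpha\sum_\ell\langle\ell,\alpha\rangle^{2s}\|\widehat{\mathfrak{q}}_\alpha(\ell)\|_{L^2}^2\big) + (\text{symmetric term})$. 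The point is then to recognize, via \eqref{altro modo norma s vphi x}, that $\sup_\beta \sum_\ell \langle\ell,\beta\rangle^{2s_0}\|\widehat{\mathfrak{g}}_\beta(\ell)\|_{L^2}^2 \lesssim \sum_{\ell,\beta}\langle\ell,\beta\rangle^{2s_0}\|\widehat{\mathfrak{g}}_\beta(\ell)\|_{L^2}^2 = \|g\|_{s_0}^2$, and likewise the $q$-factor contributes $\|q\|_s^2$; the symmetric term gives $\|g\|_{s+s_0}^2\|q\|_0^2$. Taking square roots yields $|\mathcal{R}|_s \lesssim_s \|g\|_{s_0}\|q\|_s + \|g\|_{s+s_0}\|q\|_0$, which is the claimed bound (note the asymmetric roles of $g$ and $q$ come from which factor absorbs the low regularity $s_0$ and which the high regularity $s$; one should keep careful track so as not to lose a symmetric-looking but weaker estimate).

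For the Lipschitz statement, I would write $\mathcal{R}(\omega_1) - \mathcal{R}(\omega_2)$ using the telescoping identity $q_1 g_1 - q_2 g_2 = (q_1 - q_2) g_1 + q_2 (g_1 - g_2)$ at the level of the kernels, apply the just-proved bound to each of the two pieces, and divide by $|\omega_1 - \omega_2|$; combined with the sup bound this gives the estimate for $|\cdot|_s^\Lipg$ in terms of $\|\cdot\|_s^\Lipg$ with the same structure. The main obstacle is purely bookkeeping: carrying out the two-term split of $\langle\ell,\alpha,\beta\rangle^s$ and the double Cauchy--Schwarz while keeping the asymmetric $(\|g\|_{s_0}\|q\|_s, \|g\|_{s+s_0}\|q\|_0)$ structure rather than a cruder symmetric one, and making sure the summability of $\sum_\alpha \alpha^{-2s_0}$ is invoked correctly (this is where the zero-average hypothesis and $s_0 > (\nu+d)/2$ enter). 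There is no conceptual difficulty beyond what already appears in the proof of Lemma \ref{interpolazione decadimento Kirchoff}, which this argument closely parallels.
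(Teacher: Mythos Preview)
Your proposal is correct and follows essentially the same route as the paper's proof: compute the matrix entries $\widehat{\mathcal{R}}_j^{j'}(\ell)$ as a convolution in $\ell$ of $\widehat q_j$ and $\widehat g_{-j'}$, pass to the Hilbert--Schmidt norm of the $(\alpha,\beta)$-block, split $\langle \ell,\alpha,\beta\rangle^{2s}\lesssim_s \langle \ell-\ell',\alpha\rangle^{2s}+\langle\ell',\beta\rangle^{2s}$, apply Cauchy--Schwarz in $\ell'$ with the weight $\langle\ell'\rangle^{-2s_0}$, and read off the Sobolev norms via \eqref{altro modo norma s vphi x}. One small remark: the summability $\sum_\alpha \alpha^{-2s_0}<\infty$ that you flag as a bookkeeping concern is not actually needed here, since the block-decay norm carries a $\sup_{\alpha,\beta}$ rather than a sum (contrast with Lemma~\ref{interpolazione decadimento Kirchoff}, where an internal $\alpha_1$-sum does appear); the only summability used is $\sum_{\ell'}\langle\ell'\rangle^{-2s_0}<\infty$.
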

\begin{proof}
A direct calculation shows that for all $\ell \in \Z^\nu$ and for all $j, j' \in \Z^d \setminus \{ 0 \}$
$$
\widehat{\cal R}_j^{j'}(\ell) = \sum_{\ell' \in \Z^\nu} \widehat q_{ j}(\ell - \ell') \widehat g_{ - j'}(\ell')\,.
$$
Using definition \eqref{norma L2 blocco}, the Cauchy Schwartz inequality (using that $\sum_{\ell' \in \Z^\nu} \langle \ell' \rangle^{- 2 s_0} < +\infty$) we get  
\begin{align}
\| [\widehat{\cal R}(\ell)]_\alpha^\beta\|_{HS}^2 & = \sum_{\begin{subarray}{c}
|j| = \alpha \\
|j'| = \beta
\end{subarray}} |\widehat{\cal R}_j^{j'}(\ell)|^2 \leq \sum_{\begin{subarray}{c}
|j| = \alpha \\
|j'| = \beta
\end{subarray}} \Big( \sum_{\ell'} |\widehat q_{ j}(\ell - \ell')| |\widehat g_{ - j'}(\ell')| \Big)^2  \leq \sum_{|j| = \alpha} \sum_{|j'| = \beta}\sum_{\ell'} |\widehat q_{ j}(\ell - \ell')|^2  \langle \ell' \rangle^{2 s_0} |\widehat g_{ - j'}(\ell')|^2 \nonumber\\
& \stackrel{\eqref{raggruppamento modi Fourier}, \eqref{bf h ell alpha}}{=} \sum_{\ell'} \| \widehat {\mathfrak q}_{\alpha}(\ell - \ell')\|_{L^2}^2 \langle \ell' \rangle^{2 s_0} \| \widehat{\mathfrak g}_{\beta}(\ell') \|_{L^2}^2\,. \label{norma L2 blocco proiettore}
\end{align}
Now for all $\alpha, \beta \in \sigma_0(\sqrt{- \Delta})$, 
\begin{align}
\sum_{ \ell \in \Z^\nu} \langle \ell, \alpha, \beta \rangle^{2 s} \| [\widehat{\cal R}(\ell)]_\alpha^\beta \|_{HS}^2 & \stackrel{\eqref{norma L2 blocco proiettore}}{\leq} \sum_{ \ell, \ell' \in \Z^\nu}  \langle \ell, \alpha, \beta \rangle^{2 s} \| \widehat{\mathfrak q}_{ \alpha}(\ell - \ell')\|_{L^2}^2 \langle \ell' \rangle^{2 s_0} \| \widehat{\mathfrak g}_{ \beta}(\ell') \|_{L^2}^2\,. \label{londra 0}
\end{align}
Using that $\langle  \ell, \alpha, \beta  \rangle^{2 s} \lesssim_s  \langle \ell - \ell', \alpha \rangle^{2 s} + \langle \ell', \beta \rangle^{2 s}$
we get 
\begin{align}
\eqref{londra 0} & \lesssim_s \sum_{ \ell} \sum_{\ell'} \langle \ell - \ell', \alpha \rangle^{2 s} \| \widehat{\mathfrak q}_{ \alpha}(\ell - \ell')\|_{L^2}^2 \langle \ell' \rangle^{2 s_0} \| \widehat{\mathfrak  g}_{ \beta}(\ell') \|_{L^2}^2 \nonumber\\
& \quad + \sum_{ \ell} \sum_{\ell'} \langle \ell', \beta \rangle^{2 s} \| \widehat{\mathfrak q}_{ \alpha}(\ell - \ell')\|_{L^2}^2 \langle \ell' \rangle^{2 s_0} \| \widehat{\mathfrak g}_{ \beta}(\ell') \|_{L^2}^2 \nonumber\\
& \lesssim_s \sum_{\ell'} \langle \ell' \rangle^{2 s_0} \| \widehat{\mathfrak g}_{ \beta}(\ell') \|_{L^2}^2 \sum_{\ell} \langle \ell - \ell', \alpha \rangle^{2 s} \| \widehat{\mathfrak q}_{ \alpha}(\ell - \ell')\|_{L^2}^2 \nonumber\\
& \quad + \sum_{\ell'} \langle \ell', \beta \rangle^{2 (s + s_0)} \| \widehat{\mathfrak g}_{ \beta}(\ell') \|_{L^2}^2 \sum_{\ell} \| \widehat{\mathfrak q}_{ \alpha}(\ell - \ell')\|_{L^2}^2 \nonumber\\
& \stackrel{\eqref{altro modo norma s vphi x}}{\lesssim_s} \| g \|_{s_0}^2 \| q \|_s^2 + \| g \|_{s + s_0}^2 \| q \|_{L^2}^2\,
\end{align}
and hence the lemma follows. 
\end{proof}
For a $\vphi$-independent linear operator ${\cal R} \in {\cal B}(L^2_0(\T^d))$ having the block-matrix representation \eqref{notazione a blocchi}, the block-decay norm \eqref{decadimento Kirchoff} becomes 
\begin{equation}\label{decadimento Kirchoff x}
|{\cal R}|_{s} = \sup_{\alpha, \beta \in \sigma_0(\sqrt{- \Delta})}   \langle \alpha, \beta \rangle^{ s} \| [{\cal R}]_\alpha^\beta \|_{HS}\,, \quad \langle \alpha, \beta \rangle := {\rm max}\{  \alpha, \beta \}\,. 
\end{equation}
The following Lemma holds: 
\begin{lemma}\label{lemma decadimento Kirchoff in x}

\noindent
$(i)$ Let ${\cal R}\in {\cal B}(L^2_0(\T^d))$ satsfy $|{\cal R}|_{s + 2 s_0} < + \infty$, for $s \geq 0$. Then ${\cal R} \in {\cal B}(L^2_0(\T^d), H^s_0(\T^d))$ and $\| {\cal R}\|_{{\cal B}(L^2_0, H^s_0)} \lesssim |{\cal R}|_{s + 2 s_0}$.  As a consequence ${\cal R} \in {\cal B}(H^s_0)$, with $\| {\cal R} \|_{{\cal B}(H^s_0)} \leq \| {\cal R}\|_{{\cal B}(L^2_0, H^s_0)} \lesssim |{\cal R}|_{s + 2 s_0}$.

\noindent
$(ii)$ Let $k \in \N$ and ${\cal R} : \T^\nu \to {\cal B}(L^2_0(\T^d))$ with $|{\cal R}|_{s + k + 2 s_0} < +\infty$. Then ${\cal R} \in W^{k, \infty}\Big(\T^\nu, {\cal B}(L^2_0, H^s_0) \Big)$ and for any $a \in \N^\nu$, $|a| \leq k$, one has 
$$
\| \partial_\vphi^a {\cal R} \|_{L^\infty(\T^\nu, {\cal B}(H^s_0))} \lesssim \sup_{\vphi \in \T^\nu} |\partial_\vphi^\alpha{\cal R}(\vphi)|_{s + 2s_0} \lesssim |{\cal R}|_{s + |a| + 2 s_0}\,.
$$
\end{lemma}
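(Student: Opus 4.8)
Everything rests on the elementary block inequalities of Lemma \ref{proprieta facili norma L2 matrici} combined with the summability of the eigenvalues of $\sqrt{-\Delta}$ from Lemma \ref{lemma spettro laplaciano}. For $(i)$, I fix $u\in L^2_0(\T^d)$, decompose it along the eigenspaces of $\sqrt{-\Delta}$ as in \eqref{raggruppamento modi Fourier}, and use \eqref{azione operator Toplitz a blocchi}, which says that the component of $\mathcal R u$ in $\mathbb E_\alpha$ equals $\sum_{\beta\in\sigma_0(\sqrt{-\Delta})}[\mathcal R]_\alpha^\beta[\mathfrak u_\beta]$. By \eqref{altro modo norma s} it then suffices to estimate $\sum_\alpha\alpha^{2s}\big\|\sum_\beta[\mathcal R]_\alpha^\beta[\mathfrak u_\beta]\big\|_{L^2}^2$. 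Lemma \ref{proprieta facili norma L2 matrici}$(i)$ gives $\|[\mathcal R]_\alpha^\beta[\mathfrak u_\beta]\|_{L^2}\le\|[\mathcal R]_\alpha^\beta\|_{HS}\|\mathfrak u_\beta\|_{L^2}$, and the definition \eqref{decadimento Kirchoff x} of the block-decay norm gives $\|[\mathcal R]_\alpha^\beta\|_{HS}\le\langle\alpha,\beta\rangle^{-(s+2s_0)}|\mathcal R|_{s+2s_0}$. Using $\alpha\le\langle\alpha,\beta\rangle$ and $\langle\alpha,\beta\rangle^{-2s_0}\le\alpha^{-s_0}\beta^{-s_0}$, then Cauchy--Schwarz in $\beta$ together with $\sum_{\beta\in\sigma_0(\sqrt{-\Delta})}\beta^{-2s_0}<+\infty$ (Lemma \ref{lemma spettro laplaciano}), I obtain $\alpha^s\big\|\sum_\beta[\mathcal R]_\alpha^\beta[\mathfrak u_\beta]\big\|_{L^2}\lesssim|\mathcal R|_{s+2s_0}\,\alpha^{-s_0}\|u\|_{L^2_x}$; squaring, summing over $\alpha$ and invoking $\sum_\alpha\alpha^{-2s_0}<+\infty$ once more yields $\|\mathcal R u\|_{H^s_x}\lesssim|\mathcal R|_{s+2s_0}\|u\|_{L^2_x}$, i.e. $\mathcal R\in\mathcal B(L^2_0,H^s_0)$ with the stated bound. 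Since $\|v\|_{L^2_x}\le\|v\|_{H^s_x}$ for $v\in H^s_0$, $s\ge0$, the same bound shows $\mathcal R\in\mathcal B(H^s_0)$.

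For $(ii)$, the first inequality is $(i)$ applied, for each fixed $\vphi\in\T^\nu$, to the $\vphi$-independent operator $\partial_\vphi^a\mathcal R(\vphi)\in\mathcal B(L^2_0(\T^d))$. For the second, I use the time-Fourier expansion $[\mathcal R(\vphi)]_\alpha^\beta=\sum_{\ell\in\Z^\nu}[\widehat{\mathcal R}(\ell)]_\alpha^\beta e^{\ii\ell\cdot\vphi}$, which differentiated term by term gives $[\partial_\vphi^a\mathcal R(\vphi)]_\alpha^\beta=\sum_\ell(\ii\ell)^a[\widehat{\mathcal R}(\ell)]_\alpha^\beta e^{\ii\ell\cdot\vphi}$, hence $\|[\partial_\vphi^a\mathcal R(\vphi)]_\alpha^\beta\|_{HS}\le\sum_\ell|\ell|^{|a|}\|[\widehat{\mathcal R}(\ell)]_\alpha^\beta\|_{HS}$. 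Multiplying by $\langle\alpha,\beta\rangle^{s+2s_0}$, bounding $|\ell|^{|a|}\langle\alpha,\beta\rangle^{s+2s_0}\le\langle\ell,\alpha,\beta\rangle^{s+|a|+2s_0}$, and performing the summation over $\ell$ by Cauchy--Schwarz (extracting a weight $\langle\ell\rangle^{-2s_0}$, with $\sum_{\ell\in\Z^\nu}\langle\ell\rangle^{-2s_0}<+\infty$ since $2s_0>\nu$), exactly as in the proof of Lemma \ref{interpolazione decadimento Kirchoff}, gives $\sup_\vphi|\partial_\vphi^a\mathcal R(\vphi)|_{s+2s_0}\lesssim|\mathcal R|_{s+|a|+2s_0}$. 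To justify that $\mathcal R\in W^{k,\infty}(\T^\nu,\mathcal B(L^2_0,H^s_0))$ and that the term-by-term differentiation above is legitimate, I note that the same estimates together with $(i)$ show that $\sum_\ell\langle\ell\rangle^{|a|}\|\widehat{\mathcal R}(\ell)\|_{\mathcal B(L^2_0,H^s_0)}<+\infty$ for all $|a|\le k$, so the $\vphi$-Fourier series of $\mathcal R$ and all its $\vphi$-derivatives up to order $k$ converge absolutely in $L^\infty(\T^\nu,\mathcal B(L^2_0,H^s_0))$.

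The argument involves no real difficulty; the only point needing attention is the bookkeeping of the weighted Cauchy--Schwarz inequalities in $\alpha,\beta$ and in $\ell$, i.e. making sure the decay provided by $\sum_\alpha\alpha^{-2s_0}$, $\sum_\beta\beta^{-2s_0}$ and $\sum_\ell\langle\ell\rangle^{-2s_0}$ is spent efficiently, so that exactly the advertised loss of derivatives is produced and the constant remains uniform in $\vphi$, $\alpha$ and $\beta$.
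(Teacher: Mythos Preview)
Your argument is correct and follows essentially the same route as the paper's proof: for $(i)$ both you and the paper use the block decomposition \eqref{azione operator Toplitz a blocchi}, Lemma~\ref{proprieta facili norma L2 matrici}$(i)$, the key inequality $\langle\alpha,\beta\rangle^{-2s_0}\le\alpha^{-s_0}\beta^{-s_0}$, and then Cauchy--Schwarz together with the summability $\sum_\alpha\alpha^{-2s_0}<\infty$ from Lemma~\ref{lemma spettro laplaciano}; for $(ii)$ both proofs Fourier-expand in $\vphi$, bound $\|[\partial_\vphi^a\mathcal R(\vphi)]_\alpha^\beta\|_{HS}$ by $\sum_\ell|\ell|^{|a|}\|[\widehat{\mathcal R}(\ell)]_\alpha^\beta\|_{HS}$, and then apply Cauchy--Schwarz in $\ell$ with the weight $\langle\ell\rangle^{-2s_0}$ before invoking $(i)$. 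Your additional remark justifying term-by-term differentiation via absolute convergence of the $\vphi$-Fourier series in $\mathcal B(L^2_0,H^s_0)$ is a point the paper leaves implicit.
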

\begin{proof}
{\sc Proof of $(i)$.}
Let $u \in L^2_0(\T^d)$. 
By \eqref{definition block diagonal operators}, \eqref{altro modo norma s}, one has that 
\begin{align}
\|{\cal R}[u] \|_{H^s_x}^2 & = \sum_{\alpha \in \sigma_0(\sqrt{- \Delta})} \alpha^{2 s} \Big\|\sum_{\beta \in \sigma_0(\sqrt{- \Delta})} [{\cal R}]_\alpha^\beta[{\mathfrak u}_\beta] \Big\|_{L^2}^2 \lesssim  \sum_{\alpha \in \sigma_0(\sqrt{- \Delta})}  \Big( \sum_{\beta \in \sigma_0(\sqrt{- \Delta})} \alpha^{s} \| [{\cal R}]_\alpha^\beta[{\mathfrak u}_\beta] \|_{L^2} \Big)^2\,.
\end{align}
Using Lemma \ref{proprieta facili norma L2 matrici}-$(i)$ and recalling \eqref{decadimento Kirchoff x}, one gets 
\begin{align}
\|{\cal R}[u] \|_{H^s_x}^2 & \lesssim \sum_{\alpha \in \sigma_0(\sqrt{- \Delta})}  \Big( \sum_{\beta \in \sigma_0(\sqrt{- \Delta})} \frac{\alpha^{s + s_0} \beta^{s_0}}{\alpha^{s_0} \beta^{s_0}} \| [{\cal R}]_\alpha^\beta \|_{HS} \|{\mathfrak u}_\beta \|_{L^2} \Big)^2   \nonumber\\
& \lesssim \sum_{\alpha \in \sigma_0(\sqrt{- \Delta})} \frac{1}{\alpha^{2 s_0}}  \Big( \sum_{\beta \in \sigma_0(\sqrt{- \Delta})} \frac{\langle \alpha, \beta \rangle^{s + 2 s_0}}{ \beta^{s_0}} \| [{\cal R}]_\alpha^\beta \|_{HS} \|{\mathfrak u}_\beta \|_{L^2} \Big)^2 \nonumber\\
& \lesssim |{\cal R}|_{s + 2 s_0}^2 \sum_{\alpha \in \sigma_0(\sqrt{- \Delta})} \frac{1}{\alpha^{2 s_0}}  \Big( \sum_{\beta \in \sigma_0(\sqrt{- \Delta})} \frac{1}{ \beta^{ s_0}} \|{\mathfrak u}_\beta \|_{L^2} \Big)^2\,. \label{benzene 0}
\end{align}
By the Cauchy-Schwartz inequality 
\begin{align}
\eqref{benzene 0} & \lesssim |{\cal R}|_{s + 2 s_0}^2 \sum_{\alpha, \beta \in \sigma_0(\sqrt{- \Delta})} \frac{1}{\alpha^{2 s_0}\beta^{2 s_0}} \sum_{\beta \in \sigma_0(\sqrt{- \Delta})} \| {\mathfrak u}_\beta \|_{L^2}^2 \stackrel{\eqref{altro modo norma s}}{\lesssim} |{\cal R}|_{s + 2 s_0}^2 \| u \|_{L^2}  \, \label{benzene 1}
\end{align}
by applying Lemma \ref{lemma spettro laplaciano}-$(i)$ (note that $2 s_0 = 2 ([(\nu + d)/ 2] + 1) > \nu + d$) and then the claim follows. 

\bigskip

\noindent
{\sc Proof of $(ii)$.}For any $\alpha, \beta \in \sigma_0(\sqrt{- \Delta})$ and for any multi-index $a \in \N^\nu$, $|a| \leq k$ one has that the operator $\partial_\vphi^a {\cal R}(\vphi)$ admits the block-matrix representation
$$
\partial_\vphi^a{\cal R}(\vphi) = \Big( \partial_\vphi^a [{\cal R}(\vphi)]_\alpha^\beta\Big)_{\alpha, \beta \in \sigma_0(\sqrt{- \Delta})}\,.
$$
Expanding in Fourier series $\partial_\vphi^a [{\cal R}(\vphi)]_\alpha^\beta$, one has 
$$
\partial_\vphi^a [{\cal R}(\vphi)]_\alpha^\beta = \sum_{\ell \in \Z^\nu} \ii^{|a|} \ell^a [\widehat{\cal R}(\ell)]_\alpha^\beta e^{\ii \ell \cdot \vphi},
$$
and by the Cauchy-Schwartz inequality
\begin{align}
\| \partial_\vphi^a [{\cal R}(\vphi)]_\alpha^\beta \|_{HS} & \leq \sum_{\ell \in \Z^\nu} |\ell|^{|a|}  \|[\widehat{\cal R}(\ell)]_\alpha^\beta \|_{HS} \lesssim \Big(\sum_{\ell \in \Z^\nu} \langle \ell \rangle^{2 (|a| + s_0)} \|[\widehat{\cal R}(\ell)]_\alpha^\beta \|_{HS}^2 \Big)^{\frac12}\,. \label{o mar e bell0}
\end{align}
Thus by \eqref{o mar e bell0}, for any $\alpha, \beta \in \sigma_0(\sqrt{- \Delta})$, for any $\vphi \in \T^\nu$, one has
\begin{align*}
&  \langle \alpha,  \beta  \rangle^{2 s} \|  \partial_\vphi^a [{\cal R}(\vphi)]_\alpha^\beta\|_{HS}^2  \stackrel{\eqref{o mar e bell0}}{\lesssim} \sum_{\ell \in \Z^\nu } \langle \ell, \alpha,  \beta  \rangle^{2 (s + |a| + s_0)}  \|  [\widehat{\cal R}(\ell)]_\alpha^\beta \|_{HS}^2 \stackrel{\eqref{decadimento Kirchoff}}{\lesssim} |{\cal R}|_{s + |a| + s_0}^2
\end{align*}
and then the lemma follows by recalling \eqref{decadimento Kirchoff x} and by applying item $(i)$. 
\end{proof}
\subsection{A class of $\vphi$-dependent Fourier multipliers}\label{sezione pseudo diff}
For any $m \in \R$, we define the class $S^m$ of Fourier multipliers of order $m$ as  
$$
S^m := \big\{ r : \sigma_0(\sqrt{- \Delta}) \to \C : \sup_{\alpha \in \sigma_0(\sqrt{- \Delta})} |r(\alpha)|  \alpha^{- m} < + \infty \big\}
$$
where we recall that the set $\sigma_0(\sqrt{- \Delta})$ is defined in \eqref{definizione cal N}. 
To any symbol $r \in S^m$, we associate the linear operator ${\rm Op}( r )$ defined by
\begin{equation}
{\rm Op}( r ) u(x) := \sum_{j \in \Z^d \setminus \{ 0 \}} r(|j|) u_j e^{\ii j \cdot x}\,, \qquad \forall u \in H^m_0(\T^d)\,.
\end{equation}
We denote by $OPS^m$ the class of the operators associated to the symbols in $S^m$.

\noindent
In the following we deal with $\vphi$-dependent families of Fourier multipliers $ r : \T^\nu \times \sigma_0(\sqrt{- \Delta}) \to \C$, $r (\vphi, \cdot) \in S^m$. The action of the operator ${\rm Op}( r )={\rm Op}( r(\vphi, |j|) )$ on Sobolev functions $u \in H^s_0(\T^{\nu + d})$ is given by 
\begin{equation}\label{Fourier multiplier time dependent}
{\rm Op}( r ) u(\vphi, x) := \sum_{j \in \Z^d \setminus \{ 0 \}} r(\vphi, |j|) u_j(\vphi) e^{\ii j \cdot x} = \sum_{\begin{subarray}{c}
\ell , \ell' \in \Z^\nu \\
j \in \Z^d \setminus \{ 0 \}
\end{subarray}} \widehat r(\ell - \ell', |j|) \widehat u_j(\ell') e^{\ii (\ell \cdot \vphi + j \cdot x)}\,.
\end{equation}
Note that, using the representation \eqref{bf h ell alpha 0}, the action of the operator ${\rm Op} ( r )$ on a function $u(\vphi, x)$ can be written as
\begin{equation}\label{Fourier multiplier time dependent blocchi}
{\rm Op} ( r ) u(\vphi, x) = \sum_{\alpha \in \sigma_0(\sqrt{- \Delta})} r(\vphi, \alpha) {\mathfrak u}_\alpha(\vphi, x) = \sum_{\begin{subarray}{c}
\ell, \ell' \in \Z^\nu \\
\alpha \in \sigma_0(\sqrt{- \Delta})
\end{subarray}} \widehat r(\ell - \ell', \alpha) \widehat{\mathfrak u}_\alpha(\ell', x) e^{\ii \ell \cdot \vphi}\,.
\end{equation}
The following elementary properties hold:  
\begin{equation}\label{proprieta elementari simboli classe Sm}
\overline{{\rm Op}( r )} = {\rm Op}( \overline r ) = {\rm Op}( r )^*\,, \quad {\rm Op}( r )^T = {\rm Op}( r )
\end{equation}
(recall \eqref{definizione operatore coniugato}, \eqref{definizione operatore trasposto}, \eqref{operatore aggiunto cal R}). The above properties imply that
\begin{equation}\label{autoaggiuntezza pseudo diff}
{\rm Op} ( r ) = {\rm Op}( r )^* \qquad \text{if\,\,and\,\,only\,\,if} \qquad r(\vphi, \alpha) = \overline{r(\vphi, \alpha)}\,, \qquad \forall (\vphi, \alpha) \in \T^\nu \times \sigma_0(\sqrt{- \Delta})\,.
\end{equation}
 Let ${\cal R} = {\rm Op}( r ) \in OPS^m$, ${\cal B} = {\rm Op}( b ) \in OPS^{m'}$. Then the composition operator ${\cal R} \circ {\cal B}$ is given by 
\begin{equation}\label{operatore composizione fourier multiplier}
{\cal R} \circ {\cal B} = {\rm Op}( r ) \circ {\rm Op}( b ) = {\rm Op}(r b ) \in OPS^{m + m'}\,.
\end{equation}
Note that ${\cal R} \circ {\cal B} = {\cal B} \circ {\cal R}$. 

\noindent
For an operator ${\cal R} = {\rm Op}( r )\in OPS^m$, for any $s \geq 0$, $m \in \R$, we define the family of norms 
\begin{equation}\label{norma pseudo diff}
 \norma {\rm Op}( r ) \norma_{m, s}  := \sup_{ \alpha \in \sigma_0(\sqrt{- \Delta})} \| r(\cdot, \alpha) \|_s  \alpha^{- m}\,
\end{equation}
and if $ r = r(\vphi, \alpha; \omega)$, $\omega \in \Omega_o \subseteq \Omega$ is Lipschitz with respect to the parameter $\omega \in \Omega_o$ then we define 
\begin{equation}
\norma {\rm Op}( r ) \norma_{m, s}^\Lipg = \norma {\rm Op}( r ) \norma_{m, s}^{\rm sup} + \gamma \norma {\rm Op}( r ) \norma_{m, s}^{\rm lip}
\end{equation}
where 
$$
\norma {\rm Op}( r ) \norma_{m, s}^{\rm sup} := \sup_{\omega \in \Omega_o }\norma {\rm Op}( r )(\omega)\norma_{m, s}\,, \quad \norma{\rm Op}( r ) \norma_{m, s}^{\rm lip} := \sup_{\begin{subarray}{c}
\omega_1, \omega_2 \in \Omega_o \\
\omega_1 \neq \omega_2
\end{subarray}} \dfrac{ \norma {\rm Op}( r )(\omega_1) - {\rm Op}( r )(\omega_2)\norma_{m, s}}{|\omega_1 - \omega_2|}\,.
$$
We also deal with operators 
\begin{equation}\label{matrix fourier multiplier}
{\cal R} = \begin{pmatrix}
{\rm Op}( r_1 ) & {\rm Op}( r_2 )\\
{\rm Op}( \overline r_2) & {\rm Op}(\overline r_1 )
\end{pmatrix}\,, \qquad r_1, r_2 \in S^m\,.
\end{equation}
With a slight abuse of notations we still denote by $OPS^m$ the class of operators of the form \eqref{matrix fourier multiplier}. For such operators, we define the norms $\norma {\cal R } \norma_{m , s} : = \norma {\rm Op}( r_1 )\norma_{m, s} + \norma{\rm Op}( r_2 ) \norma_{m, s}$ and $\norma {\cal R} \norma_{m, s}^\Lipg := \norma {\rm Op}( r_1 ) \norma_{m, s}^\Lipg + \norma {\rm Op}( r_2 ) \norma_{m, s}^\Lipg$. In the following, we state some properties of the norm $\norma \cdot \norma_{m, s}$. We prove such properties for operators ${\cal R}(\vphi) = {\rm Op}(r (\vphi, \cdot))$. If ${\mathcal R}$ is an operator of the form \eqref{matrix fourier multiplier} then the same statements hold with the obvious modifications. 

\noindent
It is immediate to verify that
\begin{equation}\label{proprieta facili norma pseudo 0}
\norma \cdot \norma_{m, s} \leq \norma \cdot \norma_{m, s'}\,, \quad \forall s \leq s'\,, \quad \forall m \in \R\,,
\end{equation}
\begin{equation}\label{proprieta facili norma pseudo 1}
\norma \cdot \norma_{m, s} \leq \norma \cdot \norma_{m', s}\,, \quad \forall m \geq m' \,, \quad  \forall s \geq 0\,
\end{equation}
and the same inequality holds for the corresponding Lipschitz norms. 
\begin{lemma}\label{azione fourier multiplier}
Let ${\cal R} = {\rm Op}( r )$ with $\norma {\cal R} \norma_{0, s} < + \infty$, $s \geq s_0$. Then for any $u \in H^s_0(\T^{\nu + d})$  
$$
\| {\cal R} u \|_s \lesssim_s \norma {\cal R} \norma_{0, s} \| u\|_{s_0 } + \norma {\cal R} \norma_{0, s_0} \| u\|_s\,.
$$
The same statements hold, replacing $\| \cdot \|_s$ by $\| \cdot \|_s^\Lipg$ and $\norma \cdot \norma_{0, s}$ by $\norma \cdot \norma_{0, s}^\Lipg$. If ${\cal R}$ is an operator of the form \eqref{matrix fourier multiplier}, then a similar estimate holds.
\end{lemma}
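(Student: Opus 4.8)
The plan is to argue directly with Fourier coefficients, following the same scheme as in the proofs of Lemma \ref{interpolazione decadimento Kirchoff} and Lemma \ref{decadimento operatori di proiezione}. By the block representation \eqref{Fourier multiplier time dependent blocchi}, the $(\ell,\alpha)$ Fourier--block of ${\rm Op}(r)u$ is $\sum_{\ell'\in\Z^\nu}\widehat r(\ell-\ell',\alpha)\,\widehat{\mathfrak u}_\alpha(\ell')$, hence by \eqref{altro modo norma s vphi x}
$$
\| {\rm Op}(r)u\|_s^2=\sum_{\ell\in\Z^\nu,\ \alpha\in\sigma_0(\sqrt{-\Delta})}\langle\ell,\alpha\rangle^{2s}\,\Big\|\sum_{\ell'\in\Z^\nu}\widehat r(\ell-\ell',\alpha)\,\widehat{\mathfrak u}_\alpha(\ell')\Big\|_{L^2}^2 .
$$
First I would bound $\|\sum_{\ell'}(\cdot)\|_{L^2}^2$ by $\big(\sum_{\ell'}|\widehat r(\ell-\ell',\alpha)|\,\|\widehat{\mathfrak u}_\alpha(\ell')\|_{L^2}\big)^2$ and then use the elementary inequality $\langle\ell,\alpha\rangle^s\lesssim_s\langle\ell-\ell'\rangle^s+\langle\ell',\alpha\rangle^s$ (the eigenvalue $\alpha$ sits only in the second factor) to split the right-hand side as $(I)+(II)$, where in $(I)$ the weight $\langle\ell-\ell'\rangle^s$ is attached to $\widehat r(\ell-\ell',\alpha)$ and in $(II)$ the weight $\langle\ell',\alpha\rangle^s$ is attached to $\widehat{\mathfrak u}_\alpha(\ell')$.

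For $(I)$ I would apply the Cauchy--Schwarz inequality in $\ell'$ against the summable weight $\langle\ell'\rangle^{-2s_0}$ (recall $2s_0>\nu+d$, Lemma \ref{lemma spettro laplaciano}-$(i)$) and then carry out the $\ell$-summation exploiting the convolution structure, which gives
$$
(I)\lesssim_s\Big(\sup_{\alpha}\sum_{k\in\Z^\nu}\langle k\rangle^{2s}|\widehat r(k,\alpha)|^2\Big)\sum_{\ell',\alpha}\langle\ell',\alpha\rangle^{2s_0}\|\widehat{\mathfrak u}_\alpha(\ell')\|_{L^2}^2\stackrel{\eqref{norma pseudo diff},\,\eqref{altro modo norma s vphi x}}{\lesssim_s}\norma{\rm Op}(r)\norma_{0,s}^2\,\|u\|_{s_0}^2 ,
$$
using \eqref{norma pseudo diff} with $m=0$ (so that $\alpha^{-m}\equiv1$ and $\sum_k\langle k\rangle^{2s}|\widehat r(k,\alpha)|^2=\|r(\cdot,\alpha)\|_s^2$). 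Symmetrically, for $(II)$ I would apply Cauchy--Schwarz in $\ell'$ against $\langle\ell-\ell'\rangle^{-2s_0}$, sum in $\ell$, and obtain $(II)\lesssim_s\norma{\rm Op}(r)\norma_{0,s_0}^2\,\|u\|_s^2$. Adding the two bounds and taking square roots yields $\|{\rm Op}(r)u\|_s\lesssim_s\norma{\rm Op}(r)\norma_{0,s}\|u\|_{s_0}+\norma{\rm Op}(r)\norma_{0,s_0}\|u\|_s$.

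The weighted Lipschitz statement follows from the $\|\cdot\|_s$ bound by the usual product-rule device used throughout Section \ref{sec:2}: writing ${\rm Op}(r)(\omega_1)u-{\rm Op}(r)(\omega_2)u=\big({\rm Op}(r)(\omega_1)-{\rm Op}(r)(\omega_2)\big)u$ with $u$ independent of $\omega$, applying the estimate just proved to the operator family $\omega\mapsto{\rm Op}(r)(\omega)$, and recombining the sup- and lip-parts into $\norma\cdot\norma_{0,s}^\Lipg$ and $\norma\cdot\norma_{0,s_0}^\Lipg$. For operators of the form \eqref{matrix fourier multiplier} one simply applies the scalar estimate to the entries ${\rm Op}(r_1),{\rm Op}(r_2),{\rm Op}(\overline r_1),{\rm Op}(\overline r_2)$ and sums, recalling the definition $\norma{\cal R}\norma_{0,s}=\norma{\rm Op}(r_1)\norma_{0,s}+\norma{\rm Op}(r_2)\norma_{0,s}$ and the convention $\|{\bf u}\|_{{\bf H}^s_x}=\|u\|_{H^s_x}$ on ${\bf H}^s_0(\T^d)$. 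I do not expect a genuine obstacle here: the only points needing (routine) care are the triangle-type inequality for $\langle\ell,\alpha\rangle^s$ and the convergence of $\sum_{k}\langle k\rangle^{-2s_0}$, both of which are already invoked verbatim in the preceding lemmas.
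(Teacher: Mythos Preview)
Your proposal is correct. The paper does not actually give a proof of this lemma: it simply refers to Lemma~2.13 in \cite{BertiMontalto} and remarks that the present case is simpler because the symbol is $x$-independent. Your direct Fourier-side argument---splitting via $\langle\ell,\alpha\rangle^{s}\lesssim_{s}\langle\ell-\ell'\rangle^{s}+\langle\ell',\alpha\rangle^{s}$, then Cauchy--Schwarz against a summable weight and swapping sums---is precisely the standard scheme (and the one used in the proofs of Lemmas~\ref{interpolazione decadimento Kirchoff} and~\ref{decadimento operatori di proiezione} that you invoke), so there is nothing to compare. One small remark on the Lipschitz step: you write ``with $u$ independent of $\omega$'', but in the paper's setting $u=u(\cdot;\omega)$ may depend on $\omega$ as well; the correct (and equally routine) argument is the product-rule decomposition ${\cal R}(\omega_1)u(\omega_1)-{\cal R}(\omega_2)u(\omega_2)=\big({\cal R}(\omega_1)-{\cal R}(\omega_2)\big)u(\omega_1)+{\cal R}(\omega_2)\big(u(\omega_1)-u(\omega_2)\big)$, followed by two applications of your sup-norm estimate. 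This is surely what you meant by ``the usual product-rule device''.
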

\begin{proof}
The claimed estimate follows by the same arguments used to prove Lemma 2.13 in \cite{BertiMontalto}, hence the proof is omitted. Actually our case is even simpler since the symbol $r$ does not depend on the variable $x \in \T^d$. 
\end{proof}
\begin{lemma}\label{stima Hs x fourier multiplier}
Let ${\cal R} = {\rm Op}( r )$, with $\norma {\cal R}\norma_{0, s_0 + 1} < + \infty$. Then ${\cal R} \in {\cal C}^1(\T^\nu, {\cal B}(H^s_0))$ for any $s \geq 0$ and 
$\| {\cal R}\|_{{\cal C}^1(\T^\nu, {\cal B}(H^s_0))} \lesssim \norma {\cal R}\norma_{0, s_0 + 1}$. 
\end{lemma}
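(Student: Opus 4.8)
The plan is to exploit that, for each fixed $\vphi$, the operator ${\rm Op}(r(\vphi,\cdot))$ is diagonal with respect to the exponential basis of $L^2_0(\T^d)$, acting on the $j$-th Fourier mode by multiplication by the scalar $r(\vphi,|j|)$, and that this multiplication commutes with the Fourier multiplier $|j|^s$ defining the $\|\cdot\|_{H^s_x}$-norm. Thus for every $u\in H^s_0(\T^d)$, $s\ge 0$, one has $\|{\rm Op}(r(\vphi,\cdot))u\|_{H^s_x}^2=\sum_{j\in\Z^d\setminus\{0\}}|j|^{2s}|r(\vphi,|j|)|^2|u_j|^2\le\big(\sup_{\alpha\in\sigma_0(\sqrt{-\Delta})}|r(\vphi,\alpha)|\big)^2\|u\|_{H^s_x}^2$, so that
$$\|{\rm Op}(\rho)\|_{{\cal B}(H^s_0)}\le\sup_{\alpha\in\sigma_0(\sqrt{-\Delta})}|\rho(\alpha)|\qquad\text{for any }\rho\in S^0,\ s\ge0\,.$$
Applying this elementary bound with $\rho=r(\vphi,\cdot)$ and with $\rho=\partial_{\vphi_k}r(\vphi,\cdot)$ reduces the whole statement to controlling, uniformly in $\vphi$, the supremum over $\alpha$ of $r$ and of its first $\vphi$-derivatives, plus a continuity/differentiability check in $\vphi$. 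Here the only subtlety — and essentially the whole content of the lemma — is that all estimates on $r(\cdot,\alpha)$ must be made uniformly in $\alpha$ \emph{before} passing to the supremum over $\alpha$, which is exactly the information encoded by the norm $\norma\cdot\norma_{0,s}$.

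For the size bound, note that $s_0=[(\nu+d)/2]+1\ge\nu/2+1$, so we have the Sobolev embeddings $H^{s_0}(\T^\nu)\hookrightarrow{\cal C}^0(\T^\nu)$ and $H^{s_0}(\T^\nu)\hookrightarrow{\cal C}^{0,1/2}(\T^\nu)$, with constants depending only on $\nu,d$. Hence, uniformly in $\alpha\in\sigma_0(\sqrt{-\Delta})$, $\sup_\vphi|r(\vphi,\alpha)|\lesssim\|r(\cdot,\alpha)\|_{s_0}$ and $\|\partial_{\vphi_k}r(\cdot,\alpha)\|_{{\cal C}^{0,1/2}}\lesssim\|\partial_{\vphi_k}r(\cdot,\alpha)\|_{s_0}\le\|r(\cdot,\alpha)\|_{s_0+1}$ for $k=1,\dots,\nu$; in particular $\sup_\vphi|\partial_{\vphi_k}r(\vphi,\alpha)|\lesssim\|r(\cdot,\alpha)\|_{s_0+1}$. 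Taking the supremum over $\alpha$ and using the displayed operator bound (together with $\norma\cdot\norma_{0,s_0}\le\norma\cdot\norma_{0,s_0+1}$, cf. \eqref{proprieta facili norma pseudo 0}) gives $\sup_\vphi\|{\rm Op}(r(\vphi,\cdot))\|_{{\cal B}(H^s_0)}\lesssim\norma{\cal R}\norma_{0,s_0+1}$ and $\sup_\vphi\|{\rm Op}(\partial_{\vphi_k}r(\vphi,\cdot))\|_{{\cal B}(H^s_0)}\lesssim\norma{\cal R}\norma_{0,s_0+1}$ for all $k$.

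It remains to check the ${\cal C}^1$-regularity of $\vphi\mapsto{\rm Op}(r(\vphi,\cdot))$ as a map into ${\cal B}(H^s_0)$. By the previous estimates the family $\{r(\cdot,\alpha)\}_\alpha$ is uniformly Lipschitz (with constant $\lesssim\norma{\cal R}\norma_{0,s_0+1}$, independently of $\alpha$) and $\{\partial_{\vphi_k}r(\cdot,\alpha)\}_\alpha$ is bounded in ${\cal C}^{0,1/2}(\T^\nu)$, hence equicontinuous with a modulus of continuity controlled by $\norma{\cal R}\norma_{0,s_0+1}$. Writing, for the difference quotient in the direction $e_k$,
$$\tfrac1h\big[r(\vphi+he_k,\alpha)-r(\vphi,\alpha)\big]-\partial_{\vphi_k}r(\vphi,\alpha)=\tfrac1h\int_0^h\big[\partial_{\vphi_k}r(\vphi+te_k,\alpha)-\partial_{\vphi_k}r(\vphi,\alpha)\big]\,dt\,,$$
taking the supremum over $\alpha$ and invoking the displayed operator bound yields $\big\|\tfrac1h\big({\rm Op}(r(\vphi+he_k,\cdot))-{\rm Op}(r(\vphi,\cdot))\big)-{\rm Op}(\partial_{\vphi_k}r(\vphi,\cdot))\big\|_{{\cal B}(H^s_0)}\lesssim\norma{\cal R}\norma_{0,s_0+1}\,|h|^{1/2}\to 0$ as $h\to 0$, so that $\partial_{\vphi_k}{\rm Op}(r(\vphi,\cdot))={\rm Op}(\partial_{\vphi_k}r(\vphi,\cdot))$ in ${\cal B}(H^s_0)$. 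Continuity of $\vphi\mapsto{\rm Op}(r(\vphi,\cdot))$ and of $\vphi\mapsto{\rm Op}(\partial_{\vphi_k}r(\vphi,\cdot))$ follows in the same way, applying the displayed bound to the differences $r(\vphi_1,\cdot)-r(\vphi_2,\cdot)$ and $\partial_{\vphi_k}r(\vphi_1,\cdot)-\partial_{\vphi_k}r(\vphi_2,\cdot)$ together with the uniform Lipschitz, resp. H\"older-$1/2$, continuity. Combining the two steps, ${\cal R}\in{\cal C}^1(\T^\nu,{\cal B}(H^s_0))$ with $\|{\cal R}\|_{{\cal C}^1(\T^\nu,{\cal B}(H^s_0))}=\max_{|a|\le1}\sup_\vphi\|\partial_\vphi^a{\rm Op}(r(\vphi,\cdot))\|_{{\cal B}(H^s_0)}\lesssim\norma{\cal R}\norma_{0,s_0+1}$. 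Operators of the form \eqref{matrix fourier multiplier} are handled componentwise. The main obstacle here is thus only the uniformity-in-$\alpha$ bookkeeping: once that is built in, there is no genuine analytic difficulty, in contrast with the tame/Moser-type estimates used elsewhere in the paper.
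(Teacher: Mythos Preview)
Your proof is correct and follows essentially the same route as the paper: reduce the operator norm on $H^s_0$ to the pointwise supremum of the symbol via diagonality, then invoke the Sobolev embedding $H^{s_0+1}(\T^\nu)\hookrightarrow{\cal C}^1(\T^\nu)$ uniformly in $\alpha$. Your version is in fact more careful than the paper's, which simply asserts $\|{\cal R}\|_{{\cal C}^1(\T^\nu,{\cal B}(H^s_0))}\le\sup_\alpha\|r(\cdot,\alpha)\|_{{\cal C}^1}$; you explicitly justify the passage from uniform ${\cal C}^1$-bounds on each $r(\cdot,\alpha)$ to ${\cal C}^1$-regularity of $\vphi\mapsto{\rm Op}(r(\vphi,\cdot))$ in operator norm by using the equicontinuity (H\"older-$1/2$) of the derivatives to control the difference quotients uniformly in $\alpha$.
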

\begin{proof}
Let ${\cal R} = {\rm Op}( r ) \in OPS^0$. Since $\norma {\cal R}\norma_{0, s_0 + 1} < + \infty$, by the definition \eqref{norma pseudo diff}, the symbol $r(\cdot, \alpha)$ is in $H^{s_0 + 1}(\T^\nu)$ for any $\alpha \in \sigma_0(\sqrt{- \Delta})$. Hence, by the Sobolev embedding $r(\cdot, \alpha) \in {\cal C}^1(\T^\nu)$ with $\| r(\cdot, \alpha)\|_{{\cal C}^1(\T^\nu)} \lesssim \| r(\cdot, \alpha)\|_{s_0 + 1} \lesssim \norma {\cal R}\norma_{0, s_0 + 1}$ for any $\alpha \in \sigma_0(\sqrt{- \Delta})$. Since $\| {\cal R} \|_{{\cal C}^1(\T^\nu, {\cal B}(H^s_0))} \leq \sup_{\alpha \in \sigma_0(\sqrt{- \Delta})} \| r(\cdot, \alpha)\|_{{\cal C}^1(\T^\nu)}$ for any $s \geq 0$, the claimed statement follows.  
\end{proof}
\begin{lemma}\label{composizione fourier multiplier}
Let $m, m' \in \R$ and  ${\cal R} \in OPS^m$, ${\cal B} \in OPS^{m'}$ be two operators of the form \eqref{matrix fourier multiplier} with $\norma {\cal R} \norma_{m, s}\,,\, \norma {\cal B} \norma_{m', s} < \infty$, with $s \geq s_0$. Then the operator ${\cal R} {\cal B} \in OPS^{m + m'}$ has still the form \eqref{matrix fourier multiplier} and it satisfies the estimate 
$$
\norma {\cal R} {\cal B} \norma_{m + m', s} \lesssim_s \norma {\cal R} \norma_{m, s} \norma {\cal B} \norma_{m', s_0} + \norma {\cal R} \norma_{m, s_0} \norma {\cal B} \norma_{m', s}\,.
$$
The same estimate holds replacing the norm $\norma \cdot \norma_{m, s}$ by the norm $\norma \cdot \norma_{m, s}^\Lipg$, if ${\cal R}$ and ${\cal B}$ are Lipschitz with respect to the parameter $\omega \in \Omega_o$. 
\end{lemma}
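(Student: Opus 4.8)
The plan is to reduce the statement to the elementary product rule for scalar Fourier multipliers together with the one-dimensional interpolation inequality in the variable $\vphi\in\T^\nu$. The first step is purely algebraic. By the composition formula \eqref{operatore composizione fourier multiplier}, for scalar symbols $f\in S^m$, $g\in S^{m'}$ one has ${\rm Op}(f)\,{\rm Op}(g)={\rm Op}(fg)$, where $fg$ denotes the pointwise product $(\vphi,\alpha)\mapsto f(\vphi,\alpha)g(\vphi,\alpha)\in S^{m+m'}$. Writing ${\cal R}$ and ${\cal B}$ as in \eqref{matrix fourier multiplier} with symbols $r_1,r_2$ and $b_1,b_2$ respectively, I would carry out the $2\times2$ block multiplication to get that the first-row entries of ${\cal R}{\cal B}$ are ${\rm Op}(q_1)$ and ${\rm Op}(q_2)$ with
$$
q_1:=r_1 b_1+r_2\,\overline{b_2}\,,\qquad q_2:=r_1 b_2+r_2\,\overline{b_1}\,,
$$
both in $S^{m+m'}$, and then use the conjugation identities \eqref{proprieta elementari simboli classe Sm} to check that the second-row entries are exactly ${\rm Op}(\overline{q_2})$ and ${\rm Op}(\overline{q_1})$; this shows ${\cal R}{\cal B}\in OPS^{m+m'}$ has again the form \eqref{matrix fourier multiplier}.

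For the quantitative bound I would work pointwise in the block index $\alpha\in\sigma_0(\sqrt{-\Delta})$. By the definition \eqref{norma pseudo diff} and the triangle inequality, it suffices to estimate $\alpha^{-(m+m')}\|f(\cdot,\alpha)g(\cdot,\alpha)\|_s$ for each of the four products $fg\in\{r_1b_1,\,r_2\overline{b_2},\,r_1b_2,\,r_2\overline{b_1}\}$. For a generic such product I would apply the interpolation Lemma \ref{interpolazione C1 gamma} on $\T^\nu$ — which is applicable since $s\ge s_0\ge[\nu/2]+1$ and the Sobolev norm on $\T^\nu$ is monotone in $s$ — and distribute the weight as $\alpha^{-(m+m')}=\alpha^{-m}\alpha^{-m'}$, obtaining
$$
\alpha^{-(m+m')}\|f(\cdot,\alpha)g(\cdot,\alpha)\|_s\lesssim_s\big(\alpha^{-m}\|f(\cdot,\alpha)\|_s\big)\big(\alpha^{-m'}\|g(\cdot,\alpha)\|_{s_0}\big)+\big(\alpha^{-m}\|f(\cdot,\alpha)\|_{s_0}\big)\big(\alpha^{-m'}\|g(\cdot,\alpha)\|_s\big)\,.
$$
Taking the supremum over $\alpha$, using that complex conjugation preserves the Sobolev norm (so $\|\overline{g}(\cdot,\alpha)\|_s=\|g(\cdot,\alpha)\|_s$), recalling that the norm of operators of the form \eqref{matrix fourier multiplier} is the sum of the two scalar norms, and summing the four contributions, one arrives at the claimed estimate for $\norma{\cal R}{\cal B}\norma_{m+m',s}$.

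Finally, for the weighted Lipschitz version I would note that $\norma{\rm Op}(q)\norma_{m,s}^{\Lipg}$ coincides, up to a universal constant, with $\sup_{\alpha\in\sigma_0(\sqrt{-\Delta})}\alpha^{-m}\|q(\cdot,\alpha)\|_s^{\Lipg}$, i.e.\ the pseudodifferential norm has the same structure with $\|\cdot\|_s$ replaced by the weighted norm $\|\cdot\|_s^{\Lipg}$ on $H^s(\T^\nu)$; since Lemma \ref{interpolazione C1 gamma} holds verbatim for the $\|\cdot\|_s^{\Lipg}$-norm, the computation above goes through unchanged with $\|\cdot\|_s$ replaced by $\|\cdot\|_s^{\Lipg}$. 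I do not expect a genuine difficulty here: the only two points needing some care are the bookkeeping of complex conjugates in the block multiplication that identifies ${\cal R}{\cal B}$ as an operator of the form \eqref{matrix fourier multiplier}, and checking that the hypothesis $s\ge[\nu/2]+1$ of the interpolation lemma is guaranteed by $s\ge s_0$ (recall \eqref{definition s0}); the rest is the standard tame-product computation.
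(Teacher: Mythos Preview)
Your proposal is correct and follows exactly the route sketched in the paper: use the scalar composition rule \eqref{operatore composizione fourier multiplier} to identify the block entries, then apply the interpolation Lemma \ref{interpolazione C1 gamma} in the $\vphi$-variable pointwise in $\alpha$ together with the definition \eqref{norma pseudo diff}. The paper's proof is just a one-line reference to these three ingredients, and you have spelled out the details (including the bookkeeping of conjugates and the check that $s\ge s_0\ge[\nu/2]+1$) correctly.
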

\begin{proof}
 The claimed statement follows by using the property \eqref{operatore composizione fourier multiplier}, the definition \eqref{norma pseudo diff} and the interpolation Lemma \ref{interpolazione C1 gamma}. 
\end{proof}
Note that the above lemma implies that if ${\cal R} \in OPS^m$, then ${\cal R}^k \in OPS^{k m}$ for any $k \geq 1$ and 
\begin{equation}\label{composizione iterata norma pseudo diff}
\norma{\cal R}^k \norma_{k m, s_0} \leq C(s_0)^{k - 1} \norma{\cal R} \norma_{m, s_0}^k\,, \qquad \norma{\cal R}^k \norma_{km, s} \leq  k C(s)^k \norma {\cal R} \norma_{m, s_0}^{k - 1} \norma {\cal R} \norma_{m, s}\,, \quad s \geq s_0\,.
\end{equation}
The same estimate holds replacing $\norma \cdot \norma_{m, s}$ by $\norma \cdot \norma_{m, s}^\Lipg$.
\begin{lemma}\label{esponenziale norma pseudo diff}
Let $\Psi(\vphi) \in OPS^{- m}$, $\vphi \in \T^\nu$, $m \geq 0$, with 
\begin{equation}\label{piccolezza Psi m s0} 
\norma \Psi \norma_{- m, s_0} \leq 1.
\end{equation}
 Then the operator $\Phi(\vphi) : = {\rm exp}(\Psi(\vphi))$ satisfies $\Phi(\vphi) - {\rm Id} \in OPS^{- m}$, $\forall \vphi \in \T^\nu$, with
 \begin{equation}\label{stima norma pseudo diff Phi - I}
\norma \Phi - {\rm Id} \norma_{- m, s} \lesssim_s \norma \Psi \norma_{- m , s}\,.
\end{equation}
Moreover the operator 
\begin{equation}\label{definizione Phi geq 2}
\Phi_{\geq 2}(\vphi) := \sum_{k \geq 2} \frac{\Psi(\vphi)^k}{k !} \in OPS^{- 2m}\,, \quad \forall \vphi \in \T^\nu
\end{equation}
and it satisfies the estimate 
\begin{equation}\label{stima norma pseudo diff code di ordine 2}
\norma \Phi_{\geq 2} \norma_{- 2 m, s} \lesssim_s \norma \Psi \norma_{- m, s} \norma \Psi \norma_{- m, s_0}\,.
\end{equation}
If the operator $\Psi$ depends in a Lipschitz way on the parameter $\omega \in \Omega_o \subseteq \Omega$ and $\norma \Psi \norma_{- m, s_0}^\Lipg \leq 1$, then the estimates \eqref{stima norma pseudo diff Phi - I}, \eqref{stima norma pseudo diff code di ordine 2} hold replacing the norm $\norma \cdot \norma_{- m, s}$ by the norm $\norma \cdot \norma_{- m, s}^\Lipg$. 
\end{lemma}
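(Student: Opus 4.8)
The plan is to work directly with the exponential series. Since the operators in $OPS^{-m}$ are Fourier multipliers in $x$, they pairwise commute (recall \eqref{operatore composizione fourier multiplier}), so $\Phi(\vphi) = {\rm exp}(\Psi(\vphi))$ is unambiguously defined; at the level of symbols it is simply ${\rm Op}(e^{r(\vphi, \alpha)})$, with $\Phi - {\rm Id} = {\rm Op}(e^r - 1)$ and $\Phi_{\geq 2} = {\rm Op}(e^r - 1 - r)$, and the pointwise bounds $|e^z - 1| \lesssim |z|$, $|e^z - 1 - z| \lesssim |z|^2$ (valid for $|z| \leq 1$, which holds here after the Sobolev embedding $H^{s_0}(\T^\nu) \hookrightarrow {\cal C}^0$ and \eqref{piccolezza Psi m s0}) already show $\Phi - {\rm Id} \in OPS^{-m}$ and $\Phi_{\geq 2} \in OPS^{-2m}$. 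The quantitative content is therefore the two norm estimates, which I would obtain from the Taylor expansions $\Phi - {\rm Id} = \sum_{k \geq 1} \Psi^k/k!$ and $\Phi_{\geq 2} = \sum_{k \geq 2} \Psi^k/k!$ by estimating each power $\Psi^k$ via the composition Lemma \ref{composizione fourier multiplier} and its iterate \eqref{composizione iterata norma pseudo diff}, and then summing — the factorial $k!$ in the denominator beating the $C(s)^k$ growth coming from the composition estimates.

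For the first bound: for every $k \geq 1$ one has $\Psi^k \in OPS^{-km} \subseteq OPS^{-m}$, hence $\norma \Psi^k \norma_{-m, s} \leq \norma \Psi^k \norma_{-km, s}$ by \eqref{proprieta facili norma pseudo 1}, and by \eqref{composizione iterata norma pseudo diff} together with the smallness hypothesis \eqref{piccolezza Psi m s0},
$$\norma \Psi^k \norma_{-m, s} \leq k\, C(s)^k\, \norma \Psi \norma_{-m, s_0}^{k-1}\, \norma \Psi \norma_{-m, s} \leq k\, C(s)^k\, \norma \Psi \norma_{-m, s}\,, \qquad s \geq s_0\,.$$
Summing gives $\norma \Phi - {\rm Id} \norma_{-m, s} \leq \big( \sum_{k \geq 1} k\, C(s)^k/k! \big) \norma \Psi \norma_{-m, s} \lesssim_s \norma \Psi \norma_{-m, s}$, which is \eqref{stima norma pseudo diff Phi - I}; since the series also converges in the $\norma \cdot \norma_{-m, s}$-norm for every $s \geq s_0$, the sum is indeed a well-defined element of $OPS^{-m}$.

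For the second bound the point is to extract two factors at regularity $s_0$, only one of which is kept and one absorbed into $\norma \Psi \norma_{-m, s}$. For $k = 2$, estimate \eqref{composizione iterata norma pseudo diff} gives directly $\norma \Psi^2 \norma_{-2m, s} \lesssim_s \norma \Psi \norma_{-m, s_0} \norma \Psi \norma_{-m, s}$. For $k \geq 3$ I would split $\Psi^k = \Psi^2\, \Psi^{k-2}$, with $\Psi^2 \in OPS^{-2m}$ and $\Psi^{k-2} \in OPS^{-(k-2)m}$ (now $k - 2 \geq 1$), and apply Lemma \ref{composizione fourier multiplier}:
$$\norma \Psi^k \norma_{-km, s} \lesssim_s \norma \Psi^2 \norma_{-2m, s}\, \norma \Psi^{k-2} \norma_{-(k-2)m, s_0} + \norma \Psi^2 \norma_{-2m, s_0}\, \norma \Psi^{k-2} \norma_{-(k-2)m, s}\,.$$
By \eqref{composizione iterata norma pseudo diff} and \eqref{piccolezza Psi m s0} one has $\norma \Psi^2 \norma_{-2m, s} \lesssim_s \norma \Psi \norma_{-m, s_0} \norma \Psi \norma_{-m, s}$, $\norma \Psi^2 \norma_{-2m, s_0} \lesssim \norma \Psi \norma_{-m, s_0}$, $\norma \Psi^{k-2} \norma_{-(k-2)m, s_0} \leq C(s_0)^{k}$ and $\norma \Psi^{k-2} \norma_{-(k-2)m, s} \leq k\, C(s)^{k}\, \norma \Psi \norma_{-m, s}$; hence $\norma \Psi^k \norma_{-2m, s} \leq \norma \Psi^k \norma_{-km, s} \lesssim_s k\, C_1(s)^k\, \norma \Psi \norma_{-m, s_0} \norma \Psi \norma_{-m, s}$ for a suitable $C_1(s)$, and summing over $k \geq 2$ yields \eqref{stima norma pseudo diff code di ordine 2} and, as before, $\Phi_{\geq 2} \in OPS^{-2m}$. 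Finally, the whole argument uses only Lemma \ref{composizione fourier multiplier} and \eqref{composizione iterata norma pseudo diff}, both of which hold verbatim for the weighted Lipschitz norms under $\norma \Psi \norma_{-m, s_0}^\Lipg \leq 1$, so repeating it word for word gives the Lipschitz statements. I do not expect a genuine obstacle here: the only care needed is the two-factor splitting $\Psi^k = \Psi^2\, \Psi^{k-2}$ that produces the extra $\norma \Psi \norma_{-m, s_0}$ in \eqref{stima norma pseudo diff code di ordine 2}, and checking that the numerical series $\sum_k k\, C(s)^k/k!$ converge.
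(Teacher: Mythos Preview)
Your proof is correct and follows the same approach as the paper: Taylor expansion of $\Phi - {\rm Id}$ and $\Phi_{\geq 2}$, then control of each $\Psi^k$ via the iterated composition estimate \eqref{composizione iterata norma pseudo diff} under the smallness condition \eqref{piccolezza Psi m s0}. Your splitting $\Psi^k = \Psi^2\,\Psi^{k-2}$ for the $\Phi_{\geq 2}$ bound works, though note that \eqref{composizione iterata norma pseudo diff} already gives $\norma \Psi^k \norma_{-2m,s} \le \norma \Psi^k \norma_{-km,s} \le k\,C(s)^k \norma \Psi \norma_{-m,s_0}^{k-1} \norma \Psi \norma_{-m,s} \le k\,C(s)^k \norma \Psi \norma_{-m,s_0} \norma \Psi \norma_{-m,s}$ for $k \ge 2$ directly, so the extra splitting is not strictly needed.
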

\begin{proof}
The Lemma follows by using the Taylor expansion of the operator $\Phi - {\rm Id}$, the definition \eqref{definizione Phi geq 2}, the estimate \eqref{composizione iterata norma pseudo diff} and the condition \eqref{piccolezza Psi m s0}. 
\end{proof}
In the next lemma we compare the block-decay norm $|\cdot |_s$ defined in \eqref{decadimento Kirchoff} with the norm $\norma \cdot \norma_{m, s}$ defined in \eqref{norma pseudo diff}.
\begin{lemma}\label{lemma norma decadimento norma pseudo diff}
Let $s \geq 0$ and ${\cal R}(\vphi) \in OPS^{- s - \frac{d - 1}{2}}$, $\vphi \in \T^\nu$. Then 
$$
|{\cal R}|_s \lesssim \norma {\cal R} \norma_{- s - \frac{d - 1}{2}, s}\,.
$$ 
The same estimate holds replacing $|\cdot |_s$ by $|\cdot |_s^\Lipg$ and $\norma \cdot \norma_{- s - \frac{d - 1}{2}, s}$ by $\norma \cdot \norma_{- s - \frac{d - 1}{2}, s}^\Lipg$ if the operator ${\cal R}$ depends in a Lipschitz way on the parameter $\omega \in \Omega_o \subseteq \Omega$. 
\end{lemma}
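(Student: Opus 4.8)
The plan is to exploit the fact that a Fourier multiplier is \emph{diagonal} in the exponential basis $\{ e^{\ii j \cdot x} : j \in \Z^d \setminus \{ 0 \}\}$, so that each of its blocks $[\widehat{\cal R}(\ell)]_\alpha^\beta$ is either zero (when $\alpha \neq \beta$) or a scalar multiple of the identity ${\mathbb I}_\alpha$ on ${\mathbb E}_\alpha$ (when $\alpha = \beta$), and then to compare the two norms block by block.

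First I would record the matrix structure of ${\cal R}(\vphi) = {\rm Op}(r(\vphi, \cdot))$, with $r(\vphi, \cdot) \in S^{- s - \frac{d - 1}{2}}$: by \eqref{Fourier multiplier time dependent} its Fourier coefficients are ${\cal R}_j^{j'}(\vphi) = r(\vphi, |j|)\, \delta_j^{j'}$, hence $[\widehat{\cal R}(\ell)]_\alpha^\beta = 0$ whenever $\alpha \neq \beta$, while $[\widehat{\cal R}(\ell)]_\alpha^\alpha = \widehat r(\ell, \alpha)\, {\mathbb I}_\alpha$, where $\widehat r(\ell, \alpha)$ denotes the $\ell$-th Fourier coefficient in $\vphi$ of $r(\vphi, \alpha)$. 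By \eqref{norma L2 blocco} and \eqref{definizione traccia} one has $\| {\mathbb I}_\alpha \|_{HS}^2 = {\rm Tr}({\mathbb I}_\alpha) = \dim {\mathbb E}_\alpha =: d_\alpha = \# \{ j \in \Z^d : |j| = \alpha \}$ (recall \eqref{bf E alpha}), so that $\| [\widehat{\cal R}(\ell)]_\alpha^\alpha \|_{HS} = |\widehat r(\ell, \alpha)|\, \sqrt{d_\alpha}$ for all $\alpha \in \sigma_0(\sqrt{- \Delta})$ and $\ell \in \Z^\nu$.

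Then I would plug this into the definition \eqref{decadimento Kirchoff}. Since the off-diagonal blocks vanish, the supremum over $(\alpha, \beta)$ there reduces to the diagonal; using in addition that every $\alpha \in \sigma_0(\sqrt{- \Delta})$ satisfies $\alpha \geq 1$, so that $\langle \ell, \alpha, \alpha \rangle = \langle \ell, \alpha \rangle \leq \langle \ell \rangle\, \alpha$, together with the bound $d_\alpha \lesssim \alpha^{d - 1}$ on the multiplicity of the eigenvalues of $\sqrt{- \Delta}$ (Lemma \ref{lemma spettro laplaciano}), I get
$$
|{\cal R}|_s^2 = \sup_{\alpha \in \sigma_0(\sqrt{- \Delta})} d_\alpha \sum_{\ell \in \Z^\nu} \langle \ell, \alpha \rangle^{2 s} |\widehat r(\ell, \alpha)|^2 \lesssim \sup_{\alpha \in \sigma_0(\sqrt{- \Delta})} \alpha^{2 s + d - 1} \sum_{\ell \in \Z^\nu} \langle \ell \rangle^{2 s} |\widehat r(\ell, \alpha)|^2 = \norma {\cal R} \norma_{- s - \frac{d - 1}{2},\, s}^2\,,
$$
the last equality being just the definition \eqref{norma pseudo diff}. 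For the Lipschitz-in-$\omega$ statement, the same computation applied to $[\widehat{\cal R}(\omega_1)(\ell) - \widehat{\cal R}(\omega_2)(\ell)]_\alpha^\alpha = \big( \widehat r(\ell, \alpha; \omega_1) - \widehat r(\ell, \alpha; \omega_2) \big)\, {\mathbb I}_\alpha$ bounds $|{\cal R}|_s^{\lip}$ by $\norma {\cal R} \norma_{- s - \frac{d - 1}{2},\, s}^{\lip}$, and one concludes by \eqref{norma decadimento lipschitz}. Finally, if ${\cal R}$ is of the matrix form \eqref{matrix fourier multiplier}, then by \eqref{norma decadimento operatore matriciale} and the analogous splitting of $\norma \cdot \norma_{m, s}$ the claim follows by adding the two scalar estimates for ${\rm Op}(r_1)$ and ${\rm Op}(r_2)$.

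The only ingredient beyond routine bookkeeping with Fourier coefficients is the lattice-point estimate $\# \{ j \in \Z^d : |j| = \alpha \} \lesssim \alpha^{d - 1}$ (e.g.\ by counting the disjoint unit cubes centred at such points, all contained in an annulus of radius $\sim \alpha$ and bounded thickness), which I expect is already recorded in Lemma \ref{lemma spettro laplaciano}; this is the step I would be most careful about, though I do not foresee a genuine difficulty. It is worth noting that the loss of $\frac{d - 1}{2}$ orders in the symbol class is sharp: it is exactly what is needed to absorb the factor $\sqrt{d_\alpha} \sim \alpha^{(d - 1)/2}$ produced by $\| {\mathbb I}_\alpha \|_{HS}$.
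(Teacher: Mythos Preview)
Your proof is correct and follows essentially the same route as the paper's: both recognise that a Fourier multiplier is block-diagonal with $[\widehat{\cal R}(\ell)]_\alpha^\alpha = \widehat r(\ell,\alpha)\,{\mathbb I}_\alpha$, use $\|{\mathbb I}_\alpha\|_{HS}^2 = \dim {\mathbb E}_\alpha \lesssim \alpha^{d-1}$ together with $\langle \ell,\alpha\rangle^{2s} \leq \langle\ell\rangle^{2s}\alpha^{2s}$, and read off the bound from the definition \eqref{norma pseudo diff}. One minor point: the lattice-point estimate $\#\{j:|j|=\alpha\}\lesssim \alpha^{d-1}$ is not actually stated in Lemma \ref{lemma spettro laplaciano} (that lemma only records summability and the eigenvalue gap); the paper uses it without citation in the proof and records it later at \eqref{dimensione spazi bf E alpha}, so your caution there was well placed but not a gap.
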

\begin{proof}
Let ${\cal R} = {\rm Op}\big( r \big)$. By the representation \eqref{Fourier multiplier time dependent blocchi}, 
 for any $\vphi \in \T^\nu$, the operator ${\cal R}(\vphi)$ is block-diagonal (recall the definition \eqref{notazione operatore diagonale a blocchi}) and  it has the block representation 
$$
{\cal R}(\vphi) = {\rm diag}_{\alpha \in \sigma_0(\sqrt{- \Delta})}[{\cal R}(\vphi)]_\alpha^\alpha \,, \qquad [{\cal R}(\vphi)]_\alpha^\alpha = r(\vphi, \alpha) {\mathbb  I}_\alpha\,,\quad \forall \alpha \in \sigma_0(\sqrt{- \Delta})
$$ 
and for any $\ell \in \Z^\nu$
$$
[\widehat{\cal R}(\ell)]_\alpha^\alpha = \widehat r(\ell, \alpha) {\mathbb I}_\alpha\,, \quad \forall \alpha \in \sigma_0(\sqrt{- \Delta})\,, \quad \forall \ell \in \Z^\nu\,
$$
where we recall that ${\mathbb I}_\alpha : {\mathbb E}_\alpha \to {\mathbb E}_\alpha$ is the identity. Hence, using that $\| {\mathbb I}_\alpha \|_{HS} \lesssim \alpha^{\frac{d - 1}{2}}$ (see \eqref{norma L2 blocco}), recalling the definition \eqref{decadimento Kirchoff}, one gets 
\begin{align}
|{\cal R}|_s^2 & = \sup_{\alpha \in \sigma_0(\sqrt{- \Delta})} \sum_{\ell \in \Z^\nu} \langle \ell, \alpha \rangle^{2 s} \| [\widehat{\cal R}(\ell)]_\alpha^\alpha\|_{HS}^2   = \sup_{\alpha \in \sigma_0(\sqrt{- \Delta})} \sum_{\ell \in \Z^\nu} \langle \ell, \alpha \rangle^{2 s} |\widehat r(\ell, \alpha)|^2 \| {\mathbb I}_\alpha\|_{HS}^2 \nonumber\\
& \lesssim \sup_{\alpha \in \sigma_0(\sqrt{- \Delta})} \sum_{\ell \in \Z^\nu} \langle \ell, \alpha \rangle^{2 s} |\widehat r(\ell, \alpha)|^2 \alpha^{d - 1} \lesssim \sup_{\alpha \in \sigma_0(\sqrt{- \Delta})} \| r(\cdot, \alpha) \|_s^2 \alpha^{2 s + d - 1} \lesssim \norma {\cal R} \norma_{- s - \frac{d - 1}{2}, s}^2
\end{align}
which is the claimed estimate. 
\end{proof}
\subsection{Hamiltonian formalism}\label{formalismo Hamiltoniano} 
We define the symplectic form ${\mathcal W}$ as
\begin{equation}\label{forma simplettica reale}
{\mathcal W}[ z_1, z_2 ] := \langle z_1, J z_2 \rangle_{{ L}^2_x}\,, \quad J = \begin{pmatrix}
0 & 1 \\
- 1 & 0
\end{pmatrix}\,, \quad  \forall z_1, z_2 \in L^2_0(\T^d, \R) \times L^2_0(\T^d, \R)\,.
\end{equation}

\begin{definition}\label{campo Hamiltoniano reale}
A $\vphi$-dependent linear vector field $X(\vphi) : {L}^2_0(\T^d, \R) \times {L}^2_0(\T^d, \R)  \to {L}^2_0(\T^d, \R) \times {L}^2_0(\T^d, \R)$, $\vphi \in \T^\nu$, is Hamiltonian, if $X(\vphi) = J G(\vphi)$, where $J$ is given in \eqref{forma simplettica reale}
and the operator $G(\vphi)$ is symmetric for every $\vphi \in \T^\nu$. 
\end{definition}
\begin{definition}\label{trasformazione simplettica reale}
A $\vphi$-dependent map $\Phi(\vphi) : {L}^2_0(\T^d, \R) \times {L}^2_0(\T^d, \R) \to {L}^2_0(\T^d, \R) \times {L}^2_0(\T^d, \R)$, $\vphi \in \T^\nu$ is symplectic if for any $\vphi \in \T^\nu$, for any $z_1, z_2 \in {L}^2_0(\T^d, \R) \times {L}^2_0(\T^d, \R)$, 
$$
{\cal W}[\Phi(\vphi)[z_1]\,,\, \Phi(\vphi) [z_2]] = {\cal W}[ z_1,  z_2]\,,
$$
or equivalently $\Phi(\vphi)^T J \Phi(\vphi) = J$ for any $\vphi \in \T^\nu$. 
\end{definition}
Assume to have a differentiable map $\vphi \in \T^\nu \mapsto \Phi(\vphi ) \in {\cal B}\big(L^2_0(\T^d, \R) \times L^2_0(\T^d, \R) \big)$ and let us consider the quasi-periodically forced linear Hamiltonian PDE 
\begin{equation}\label{interpretazione dinamica 1}
\partial_t z = X(\omega t) z\,, \qquad X(\vphi) :=  J G(\vphi) \,, \quad \vphi \in \T^\nu\,, \quad z \in L^2_0(\T^d, \R) \times L^2_0(\T^d, \R)\,.
\end{equation}
Under the change of coordinates $ z = \Phi(\omega t) h $, the above PDE is transformed into the equation
 \begin{equation}\label{interpretazione dinamica 2}
 \partial_t h = X_+(\omega t) h\,, \quad  
  \end{equation}
  where $X_+(\omega t)$ is the transformed vector field under the action of the map $\Phi(\omega t)$ (push-forward), namely
  \begin{equation}\label{push forward}
  X_+(\vphi) = \Phi_{\omega*} X(\vphi) := \Phi(\vphi)^{- 1} X(\vphi) \Phi(\vphi) - \Phi(\vphi)^{- 1} \omega \cdot \partial_\vphi \Phi(\vphi), \quad \forall \vphi \in \T^\nu\,. 
  \end{equation}
It turns out that, since $X(\vphi)$ is a Hamiltonian vector field and $\Phi(\vphi)$ is symplectic, the transformed vector field $X_+(\vphi)$ is still Hamiltonian, namely it has the form given in Definition \eqref{campo Hamiltoniano reale}.  
  \subsubsection{Hamiltonian formalism in complex coordinates}\label{formalismo hamiltoniano complesso}
In this section we describe how the Hamiltonian structure described before, reads in the complex coordinates introduced in \eqref{prima volta variabili complesse 0}, \eqref{matrice coordinate complesse}.  
Let $J G(\vphi)$, $\vphi \in \T^\nu$ be a linear Hamiltonian vector field, with $G(\vphi) \in {\cal B}\Big(L^2_0(\T^d, \R) \times L^2_0(\T^d, \R) \Big)$ being a symmetric operator as in \eqref{operatore matriciale reale}. The conjugated vector field ${\cal R}(\vphi) := {\cal C}^{- 1} J G(\vphi) {\cal C} \in {\cal B}({\bf L}^2_0(\T^d))$ has the form 
\begin{equation}\label{operatore Hamiltoniano coordinate complesse}
{\cal R}(\vphi) = \ii \begin{pmatrix}
{ R}_1(\vphi) &  R_2(\vphi) \\
- \overline{R_2(\vphi)} & - \overline{R_1(\vphi)}
\end{pmatrix}\,,
\end{equation}
where 
\begin{equation}\label{R1 R2 operatore Hamiltoniano complesso}
R_1(\vphi) := - A(\vphi) - D(\vphi) + \ii B(\vphi) - \ii B(\vphi)^T\,, \quad R_2(\vphi) := - A(\vphi) + D(\vphi) - \ii B(\vphi) - \ii B(\vphi)^T\,
\end{equation}
(recall that the operator $\overline R$ is defined in \eqref{definizione operatore coniugato}). Note that the operators $R_1(\vphi)$, $R_2(\vphi)$ are linear operators acting on complex valued $L^2$ functions $L^2_0(\T^d)$. Furthermore, since $G(\vphi)$ is symmetric, i.e. $A(\vphi) = A(\vphi)^T$, $B(\vphi) = C(\vphi)^T$, $D(\vphi) = D(\vphi)^T$, it turns out that  
\begin{equation}\label{condizione R1 R2 campo hamiltoniano complesso}
R_1(\vphi) = R_1(\vphi)^*\,, \qquad R_2(\vphi) = R_2(\vphi)^T\,, \qquad \forall \vphi \in \T^\nu.
\end{equation}
We refer to an operator ${\cal R}$ of the form \eqref{operatore Hamiltoniano coordinate complesse}, with $R_1$ and $R_2$ satisfying \eqref{condizione R1 R2 campo hamiltoniano complesso}, as a Hamiltonian vector field in complex coordinates. The operator ${\cal R}(\vphi)$ in \eqref{operatore Hamiltoniano coordinate complesse} satisfies 
\begin{equation}\label{campo hamiltoniano complesso}
{\cal R}(\vphi)[{\bf u}] = \ii J \nabla_{\bf u} {\cal H}(\vphi, {\bf u})\,, \quad {\bf u} := (u , \bar u)\,, \quad \nabla_{\bf u} {\cal H} = (\nabla_u {\cal H}, \nabla_{\bar u} {\cal H})\,,
\end{equation}
where the real Hamiltonian ${\cal H}$ has the form 
\begin{equation}\label{generica hamiltoniana quadratica reale nel complesso esplicita}
{\cal H}(\vphi , {\bf u}) := \langle {\cal G}(\vphi)[{\bf u}]\,,\, {\bf u} \rangle \,, \quad {\cal G}(\vphi) := \begin{pmatrix}
\overline{R_2(\vphi)} & \overline{R_1(\vphi)} \\
R_1(\vphi) & R_2(\vphi)
\end{pmatrix} \,, 
\end{equation}
i.e.
\begin{equation}\label{generica hamiltoniana quadratica reale nel complesso}
{\cal H}(\vphi, u, \bar u) = \int_{\T^d} {R}_1(\vphi)[u] \bar u\, d x+ \frac12 \int_{\T^d} { R}_2(\vphi)[u]\,, u \, dx + \frac12 \int_{\T^d} \overline{{ R}_2(\vphi)}[\bar u]\,\bar u\,d x\,.
\end{equation}
and 
$$
\nabla_u {\cal H} = \frac{1}{\sqrt{2}}\big( \nabla_v {\cal H} - \ii \nabla_\psi {\cal H} \big)\,, \quad \nabla_{\overline u} {\cal H} = \frac{1}{\sqrt{2}}\big( \nabla_v {\cal H} + \ii \nabla_\psi {\cal H} \big)\,. 
$$
By \eqref{condizione R1 R2 campo hamiltoniano complesso} we deduce that 
$$
{\cal G}(\vphi) = {\cal G}(\vphi)^T\,, \qquad \forall\vphi \in \T^\nu\,.
$$
The symplectic form ${\cal W}$ defined in \eqref{forma simplettica reale} reads in the coordinates ${\bf u} =(u, \bar u)$ as. 

\begin{equation}\label{forma simplettica coordinate complesse}
{\Gamma}[{\bf u}_1, {\bf u}_2] =  \ii \int_{\T^d} (u_1 \bar u_2 -  \bar u_1 u_2)\, dx  = \ii \langle {\bf u}_1 \,,\, J {\bf u}_2\rangle_{{\bf L}^2_x}\,, \quad \forall {\bf u}_1, {\bf u}_2 \in {\bf L}^2_0(\T^d)\,
\end{equation}
where 
\begin{equation}\label{prodotto scalare u bar u}
\langle {\bf u}_1, {\bf u}_2 \rangle_{{\bf L}^2_x} := \int_{\T^d} u_1 u_2 + \overline u_1 \overline u_2\, d x\,, \qquad \forall {\bf u}_1 , {\bf u}_2 \in {\bf L}^2_0(\T^d)\,. 
\end{equation}
\begin{definition}\label{definizione mappa simplettica complessa}
A $\vphi$-dependent family of linear operators $\Phi(\vphi) : {\bf L}^2_0(\T^d) \to {\bf L}^2_0(\T^d)$, $\vphi \in \T^\nu$ is symplectic if 
$$
{ \Gamma}[\Phi(\vphi)[{\bf u}_1], \Phi(\vphi)[{\bf u}_2]] = { \Gamma}[{\bf u}_1, {\bf u}_2]\,, \quad \forall {\bf u}_1, {\bf u}_2 \in {\bf L}^2_0(\T^d)\,, \quad \forall \vphi \in \T^\nu\,.
$$
\end{definition}
It is well known that if ${\cal R}(\vphi)$ is an operator of the form \eqref{operatore Hamiltoniano coordinate complesse}, \eqref{condizione R1 R2 campo hamiltoniano complesso},namely by \eqref{campo hamiltoniano complesso}, it is a linear Hamiltonian vector field associated to the real quadratic Hamiltonian ${\cal H}$ in \eqref{generica hamiltoniana quadratica reale nel complesso}, the operator $\Phi(\vphi) = {\rm exp}({\cal R}(\vphi))$ is a symplectic. 
Assume that the map $\vphi \in \T^\nu \mapsto \Phi(\vphi) \in {\cal B}({\bf L}^2_0(\T^d))$ is a differentiable family of maps and let $\vphi \in \T^\nu \mapsto {\cal X}(\vphi) \in {\cal B}({\bf L}^2_0(\T^d))$ be a differentiable families of Hamiltonian vector fields, i.e. ${\cal  X}(\vphi) = \ii J {\cal G}(\vphi)$, ${\cal G}(\vphi) = {\cal G}(\vphi)^T$ for any $\vphi \in \T^\nu$. Arguing as in \eqref{interpretazione dinamica 1}, \eqref{interpretazione dinamica 2}, under the transformation ${\bf u} = \Phi(\omega t){\bf h}$, the PDE 
\begin{equation}\label{interpretazione dinamica 5}
\partial_t {\bf u} = {\cal X}(\omega t)  {\bf u}\,, \qquad \omega \in \R^\nu\,, \quad t \in \R\,,
\end{equation}
transforms into the PDE 
\begin{equation}\label{interpretazione dinamica 6}
\partial_t {\bf h} = {\cal X}_+(\omega t){\bf h} \,, \quad {\cal X}_+(\vphi) := \Phi_{\omega*} {\cal X}(\vphi) = \Phi(\vphi)^{- 1} {\cal X}(\vphi) \Phi(\vphi) - \Phi(\vphi)^{- 1} \omega \cdot \partial_\vphi \Phi(\vphi)\,, \quad \forall \vphi \in \T^\nu \,.
\end{equation}
If $\Phi(\vphi)$ is symplectic then the vector field ${\cal X}_+(\vphi)$ is Hamiltonian, i.e. it satisfies \eqref{operatore Hamiltoniano coordinate complesse}, \eqref{condizione R1 R2 campo hamiltoniano complesso}.  
In the following, we will consider also reparametrizations of time of the form
 $$
 \tau = t + \alpha(\omega t)\,, 
 $$
 where $\alpha : \T^\nu \to \R$ is a sufficiently smooth function with $\| \alpha\|_{{\cal C}^1}$ small enough. Then the function $t \mapsto t + \alpha(\omega t)$ is invertible and its inverse is given by 
 $$
 t = \tau + \widetilde \alpha(\omega \tau)\,. 
 $$
  by setting ${\bf v} (t) := {\cal A}(\omega t) {\bf u} := {\bf u}(t + \alpha(\omega t))$, the PDE \eqref{interpretazione dinamica 5} is transformed into 
  \begin{equation}\label{interpretazione dinamica 4}
  \partial_\tau {\bf v} = J {\cal G}_+(\omega \tau) {\bf v}\,, \qquad {\cal G}_+(\vartheta) := \frac{1}{\rho(\vartheta)} {\cal G}(\vartheta + \omega \tilde \alpha(\vartheta))\,, \quad \rho(\vartheta) := 1 + \omega \cdot \partial_\vphi \alpha\Big(\vartheta + \omega \widetilde \alpha(\vartheta) \Big)
  \end{equation}
 which is still a Hamiltonian equation.

\section{Regularization procedure of the vector field ${\cal L}(\vphi)$.}\label{riduzione linearizzato}
As described in the introduction, in this section we carry out the first part of the reduction procedure of the vector field ${\cal L}(\vphi)$, defined in \eqref{campo vettoriale main equation}, to a block-diagonal operator with constant coefficients. Our purpose is to transform the vector field ${\cal L}(\vphi)$ into the vector field ${\cal L}_4(\vphi)$ which is a regularizing perturbation of a time-independent diagonal operator, see \eqref{cal L5}. The regularizing perturbation ${\cal R}_4$ defined in \eqref{cal RM pre KAM} is the sum of a finite rank operator and a $\vphi$-dependent Fourier multiplier of order $-M$ where the constant $M$ is fixed in \eqref{scelta di M nuove norme}. In the following subsections, we describe in details all the steps needed to transform the vector field ${\cal L}(\vphi)$ into the vector field ${\cal L}_4(\vphi)$. 
\subsection{Symplectic symmetrization of the highest order}
We start by symmetryzing the highest order of the vector field
$$
{\cal L}(\vphi) = \begin{pmatrix}
0 & 1 \\
  (1 + \e a(\vphi)) \Delta + \e {\cal R}(\vphi)& 0
\end{pmatrix}\,, \quad \vphi \in \T^\nu
$$
where we recall the definitions given in \eqref{campo vettoriale main equation}, \eqref{definizione perturbazione rango finito}. For any $\vphi \in \T^\nu$, let us consider the transformation  
\begin{equation}\label{cal S1}
{\cal S}(\vphi) : H^{s }_0(\T^d, \R)\times H^{s }_0(\T^d, \R) \to H^{s + \frac12}_0(\T^d, \R) \times H^{s - \frac12}_0(\T^d, \R)\,, \quad  \begin{pmatrix}
 u \\
 \psi
\end{pmatrix}
\mapsto 
\begin{pmatrix}
\beta(\vphi)   |D|^{- \frac12} u \\
\dfrac{1}{\beta(\vphi)} |D|^{\frac12} \psi
\end{pmatrix}
\end{equation}
where $\beta : \T^\nu \to \R$ is a function close to $1$ to be determined and for all $m \in \R$, the operator $|D|^m$ is defined by
\begin{equation}\label{definizione |D| m}
 \qquad |D|^m (e^{\ii j \cdot x}) = |j|^m e^{\ii j \cdot x}\quad \forall j \neq 0\,. 
\end{equation}
For any $\vphi \in \T^\nu$, the inverse of the operator ${\cal S}(\vphi)$ is given by 
\begin{equation}\label{cal S1 inverso}
{\cal S}(\vphi)^{- 1} : H^s_0(\T^d, \R) \times H^{s - 1}_0(\T^d, \R) \to H^{s - \frac12}_0(\T^d, \R)\times H^{s - \frac12}_0(\T^d, \R)\,,\quad    \begin{pmatrix}
 u \\
 \psi
\end{pmatrix} \mapsto  \begin{pmatrix}
\dfrac{1}{\beta(\vphi)} |D|^{\frac12} u \\
\beta(\vphi) |D|^{- \frac12} \psi 
\end{pmatrix}\,.
\end{equation} 
By \eqref{push forward}, the push-forward of the vector field ${\cal L}(\vphi)$ by means of the transformation ${\cal S}(\vphi)$ is given by 
\begin{align}
{\cal L}_1(\vphi) & := {\cal S}_{\omega*} {\cal L}(\vphi) = {\cal S}(\vphi)^{- 1} {\cal L}(\vphi) {\cal S}(\vphi) - {\cal S}(\vphi)^{- 1} \omega \cdot \partial_\vphi {\cal S}(\vphi) \nonumber\\
& = \begin{pmatrix}
- \beta^{- 1}(\vphi)(\omega \cdot \partial_\vphi \beta(\vphi)) &  \beta^{- 2}(\vphi) |D|\\
 ( 1 + \e a(\vphi)) \beta^2(\vphi) |D|^{- 1} \Delta + \e  \beta^2(\vphi)|D|^{- \frac12} {\cal R}(\vphi)|D|^{- \frac12}  &-  \beta(\vphi) (\omega \cdot \partial_\vphi \beta^{- 1}(\vphi))   
\end{pmatrix}
\end{align}
and we look for $\beta: \T^\nu \to \R$ such that  
\begin{equation}
\beta^{- 2}(\vphi) = (1 + \e a(\vphi)) \beta^2(\vphi)\,,
\end{equation}
namely we choose 
\begin{equation}\label{definition beta}
\beta(\vphi) := \frac{1}{[1 + \e a(\vphi)]^{\frac14}}\,.
\end{equation}
Since 
$$
\beta(\vphi) \omega \cdot \partial_\vphi \beta^{- 1}(\vphi) = -  \frac{\omega \cdot \partial_\vphi \beta(\vphi)}{\beta(\vphi)} \quad \text{and} \quad - \Delta = |D|^2
$$
we get that 
\begin{equation}\label{cal L1}
{\cal L}_1(\vphi) = \begin{pmatrix}
 - a_0(\vphi) &  a_1(\vphi) |D| \\
 - a_1(\vphi)|D| + \e {\cal R}^{(1)}(\vphi) &   a_0(\vphi)
\end{pmatrix}\,,
\end{equation}
where 
\begin{equation}\label{definitions a0 a1}
a_0(\vphi) := \frac{\omega \cdot \partial_\vphi \beta(\vphi)}{\beta(\vphi)} \,, \quad a_1(\vphi) := \sqrt{1 + \e a(\vphi)}\,, \quad {\cal R}^{(1)}(\vphi) := \beta^2(\vphi)|D|^{- \frac12} {\cal R}(\vphi) |D|^{- \frac12}\,.
\end{equation}
Since $\beta$ is a real-valued function, the operator ${\cal S}(\vphi)$ is real for any $\vphi \in \T^\nu$ and a direct verification shows that it is also symplectic. Hence the transformed vector field ${\cal L}_1(\vphi)$ is still real and Hamiltonian. Note that by \eqref{definition beta}, \eqref{definitions a0 a1}, the functions $\beta$, $a_1$ and the operator ${\cal R}^{(1)}$ does not depend on the parameter $\omega \in \Omega$, whereas the function $a_0(\vphi) = a_0(\vphi ;\omega)$ depends on $\omega \in \Omega$. 

\noindent
Now we give some estimates on the coefficients of the vector field ${\cal L}_1(\vphi)$. 
\begin{lemma}\label{Lemma dopo simmetrizzazione}
Let $q > s_0 + 1$. Then there exists $\delta_q \in (0, 1)$ small enough such that for any $\e \in (0, \delta_q)$, for any $s_0 \leq s \leq q - 1$, the following holds: 
 the functions $\beta$, $a_0$, $a_1$ defined in \eqref{definition beta}, \eqref{definitions a0 a1} satisfy the estimates 
\begin{equation}\label{estimates a0 a1}
\|\beta^{\pm 1} - 1 \|_{s}, \| a_1 - 1\|_{s}\,, \, \| a_0\|_{s}^\Lipg \lesssim_q \e \,.
\end{equation}
The remainder ${\cal R}^{(1)}(\vphi)$ in \eqref{definitions a0 a1} has the form 
\begin{equation}\label{forma cal R (1) (vphi)}
{\cal R}^{(1)}(\vphi)[v] =\sum_{k = 1}^N b_k^{(1)}(\vphi, x) \int_{\T^d} c_k^{(1)}(\vphi, y) v(y)\, d y + c_k^{(1)}(\vphi, x) \int_{\T^d} b_k^{(1)}(\vphi, y) v(y)\, d y\,, 
\end{equation}
$\vphi \in \T^\nu, v \in L^2_0(\T^d, \R)$  (then it is symmetric ${\cal R}^{(1)}(\vphi) = {\cal R}^{(1)}(\vphi)^T$, for all $\vphi \in \T^\nu$) with
\begin{equation}\label{estimate cal R2 1}
\| b_k^{(1)} \|_s, \| c_k^{(1)} \|_s \lesssim_q  1\,, \qquad \forall k = 1, \ldots, N\,. 
\end{equation}
Furthermore, for any $s \geq 1/2$, the maps 
$$
\vphi \mapsto {\cal S}(\vphi)\,, \quad \T^\nu \to  {\cal B}\Big( H_0^{s}(\T^d, \R) \times H_0^{s}(\T^d, \R)  , H^{s + \frac12}_0(\T^d, \R) \times H^{s - \frac12}_0(\T^d, \R) \Big), 
$$
$$
\vphi \mapsto {\cal S}(\vphi)^{- 1}\,,\quad \T^\nu  \to {\cal B}\Big(H^{s + \frac12}_0(\T^d, \R) \times H^{s - \frac12}_0(\T^d, \R) , H_0^{s}(\T^d, \R) \times H_0^{s}(\T^d, \R) \Big)
$$
are ${\cal C}^1$ maps. 
\end{lemma}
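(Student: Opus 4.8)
The plan is to prove the three assertions of the lemma separately, using only the Moser composition Lemma~\ref{Moser norme pesate}, the interpolation Lemma~\ref{interpolazione C1 gamma}, and the elementary mapping and symmetry properties of the Fourier multipliers $|D|^m$.

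First I would fix $\delta_q\in(0,1)$ so small that $\tfrac12\le 1+\e a(\vphi)\le 2$ on $\T^\nu$ whenever $\e\in(0,\delta_q)$; then $\beta=(1+\e a)^{-1/4}$, $\beta^{-1}=(1+\e a)^{1/4}$, $a_1=(1+\e a)^{1/2}$ are compositions of $\e a$ with functions $h$ that are ${\cal C}^\infty$ on a fixed compact interval and satisfy $h(0)=0$. Writing $h(t)=t\,\widetilde h(t)$ with $\widetilde h(t):=\int_0^1 h'(st)\,ds$ still smooth, one gets e.g. $\beta-1=\e a\cdot\widetilde h(\e a)$; Lemma~\ref{Moser norme pesate} gives $\|\widetilde h(\e a)\|_s\lesssim_q 1$ (using $\|\e a\|_{s_0}\le 1$), and Lemma~\ref{interpolazione C1 gamma} then yields $\|\beta-1\|_s\lesssim_q\e$ for $s_0\le s\le q$; the same argument applies to $\beta^{-1}-1$ and $a_1-1$. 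For $a_0=(\omega\cdot\partial_\vphi\beta)\,\beta^{-1}$ I would combine $\|\omega\cdot\partial_\vphi\beta\|_s\le|\omega|\,\|\beta\|_{s+1}\lesssim_q\e$ with the interpolation lemma and $\|\beta^{-1}\|_s\lesssim_q 1$; this is precisely where the range $s\le q-1$ is used, so that $s+1\le q$. Since $\beta$ is independent of $\omega$, the Lipschitz bound is immediate: $a_0(\cdot;\omega_1)-a_0(\cdot;\omega_2)=\big((\omega_1-\omega_2)\cdot\partial_\vphi\beta\big)\beta^{-1}$ gives $\|a_0\|_s^{\lip}\lesssim_q\e$, hence $\|a_0\|_s^{\Lipg}\lesssim_q\e$ because $\gamma\in(0,1)$.

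For ${\cal R}^{(1)}(\vphi)=\beta^2(\vphi)|D|^{-1/2}{\cal R}(\vphi)|D|^{-1/2}$, I would introduce the rank one notation $(g\otimes q)[v]:=q(x)\int_{\T^d}g(y)v(y)\,dy$, so that ${\cal R}(\vphi)=\sum_{k=1}^N\big(c_k\otimes b_k+b_k\otimes c_k\big)$ by \eqref{definizione perturbazione rango finito}, and then use the symmetry $(|D|^{-1/2})^T=|D|^{-1/2}$ (see \eqref{proprieta elementari simboli classe Sm}) to derive the key identity $|D|^{-1/2}(g\otimes q)|D|^{-1/2}=(|D|^{-1/2}g)\otimes(|D|^{-1/2}q)$. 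Distributing $\beta^2$ symmetrically, i.e. setting $b_k^{(1)}:=\beta\,|D|^{-1/2}b_k$ and $c_k^{(1)}:=\beta\,|D|^{-1/2}c_k$ (which are real valued and have zero average in $x$, since $|D|^{-1/2}$ preserves both properties and $\beta$ is real), puts ${\cal R}^{(1)}$ into the form \eqref{forma cal R (1) (vphi)}, and the symmetry $({\cal R}^{(1)})^T={\cal R}^{(1)}$ follows from $(g\otimes q)^T=q\otimes g$. The bound $\|b_k^{(1)}\|_s,\|c_k^{(1)}\|_s\lesssim_q 1$ then comes from $\||D|^{-1/2}w\|_s\le\|w\|_s$ (because $|j|^{-1/2}\le 1$ for $j\ne 0$ and $|D|^{-1/2}$ commutes with $\partial_\vphi$), interpolation, and $\|\beta\|_s\lesssim_q 1$.

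Finally, the ${\cal C}^1$ regularity of ${\cal S}(\vphi)^{\pm1}$ is soft: for any $m\in\R$ the multiplier $|D|^m$ is a fixed bounded operator $H^s_0(\T^d)\to H^{s-m}_0(\T^d)$, and $\vphi\mapsto\beta(\vphi)^{\pm1}$ belong to ${\cal C}^q(\T^\nu,\R)\subset{\cal C}^1(\T^\nu,\R)$, so each block $\vphi\mapsto\beta(\vphi)^{\pm1}|D|^{\mp1/2}$ is ${\cal C}^1$ operator-valued with $\partial_\vphi$-derivative $(\partial_\vphi\beta^{\pm1})|D|^{\mp1/2}$; assembling the $2\times2$ block matrices \eqref{cal S1}, \eqref{cal S1 inverso} gives the claim for every $s\ge1/2$. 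I expect no genuine obstacle in the proof; the two points that need care are that the Moser lemma alone only produces $O(1)$ bounds — the factor $\e$ must be extracted through the elementary splitting $h(0)=0\Rightarrow h(t)=t\,\widetilde h(t)$ — and that the Sobolev indices, in particular the range $s\le q-1$ forced by the extra $\vphi$-derivative in $a_0$, must be tracked consistently throughout.
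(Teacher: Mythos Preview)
Your proposal is correct and follows essentially the same approach as the paper: the paper's proof simply cites Lemmata~\ref{interpolazione C1 gamma} and~\ref{Moser norme pesate} for \eqref{estimates a0 a1}, defines $b_k^{(1)}:=\beta\,|D|^{-1/2}b_k$ and $c_k^{(1)}:=\beta\,|D|^{-1/2}c_k$ exactly as you do (using that $|D|^{-1/2}$ is symmetric), and leaves the ${\cal C}^1$ regularity of ${\cal S}^{\pm1}$ implicit. Your write-up is in fact more detailed than the paper's own argument, in particular the extraction of the factor $\e$ via $h(t)=t\,\widetilde h(t)$ and the explicit Lipschitz computation for $a_0$.
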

\begin{proof}
The estimates \eqref{estimates a0 a1} follows by the definitions \eqref{definition beta}, \eqref{definitions a0 a1} and by Lemmata \ref{interpolazione C1 gamma}, \ref{Moser norme pesate}. Let us prove the estimates \eqref{estimate cal R2 1}. 
By \eqref{definitions a0 a1}, recalling the definition of ${\cal R}(\vphi)$ given in \eqref{definizione perturbazione rango finito}, using that $|D|^{- \frac12}$ is symmetric, one has that the operator ${\cal R}^{(1)}(\vphi)$ has the form \eqref{forma cal R (1) (vphi)} with 
$$
b_k^{(1)} (\vphi, x):= \beta(\vphi) |D|^{- \frac12} b_k(\vphi, x) \,, \quad c_k^{(1)}(\vphi, x) := \beta(\vphi) |D|^{- \frac12} c_k(\vphi, x) \,, \quad k = 1, \ldots, N\,.
$$
Then the claimed estimates follow by applying the estimate \eqref{estimates a0 a1} and applying the interpolation Lemma \ref{interpolazione C1 gamma}. A direct verification shows that ${\cal R}^{(1)}(\vphi) = {\cal R}^{(1)}(\vphi)^T$ for any $\vphi \in \T^\nu$. 
\end{proof}

\subsection{Complex variables}
Now we write the vector field ${\mathcal L}_1(\vphi)$ defined in \eqref{cal L1} in the complex coordinates introduced in \eqref{prima volta variabili complesse 0}, \eqref{matrice coordinate complesse}.
 More precisely, we conjugate the vector field ${\cal L}_1(\vphi)$ by means of the transformation ${\cal C}$ defined in \eqref{matrice coordinate complesse}. Since ${\cal C}$ is $\vphi$-independent, we get that by \eqref{push forward}, the push-forward ${\mathcal L}_2(\vphi) := {\mathcal C}_{\omega*}{\mathcal L}_1(\vphi) = {\cal C}^{- 1} {\cal L}_1(\vphi) {\cal C}$ is given by
\begin{equation}\label{cal L2 complex coordinates}
{\mathcal L}_2(\vphi)  = \begin{pmatrix}
 - \ii a_1(\vphi) |D| + \ii \e{\mathcal R}^{(2)}(\vphi) & - a_0(\vphi)+ \ii \e {\mathcal R}^{(2)}(\vphi) \\
- a_0(\vphi) - \ii \e {\mathcal R}^{(2)}(\vphi) &  \ii a_1(\vphi) |D| - \ii \e {\mathcal R}^{(2)}(\vphi)
\end{pmatrix}\,, \quad {\cal R}^{(2)}(\vphi) := \frac{{\cal R}^{(1)}(\vphi)}{\sqrt{2}}\,.
\end{equation}
 Since $a_1$ and $a_0$ are real valued functions and ${\mathcal R}^{(1)}(\vphi)$ (and then ${\mathcal R}^{(2)}(\vphi)$) is symmetric and real, the operator ${\mathcal L}_2(\vphi)$ is a Hamiltonian vector field in complex coordinates, in the sense of the Definition \eqref{operatore Hamiltoniano coordinate complesse}. We recall that the transformations ${\mathcal C}, {\cal C}^{- 1}$ satisfy the property \eqref{proprieta coordinate complesse}. 
 

\subsection{Quasi-periodic reparametrization of time}\label{subsection rimparametrizzazione tempo}
The aim of this Section is to reduce to constant coefficients the term $a_1(\vphi) |D|$ in the operator ${\cal L}_2(\vphi)$ defined in \eqref{cal L2 complex coordinates}.   In order to do this, let us consider a function $\alpha : \T^\nu \to \R$ (to be determined) and define a reparametrization of time of the form
\begin{equation}\label{riparametrizzazione tempo}
\R \to \R\,, \quad t \mapsto t +  \alpha(\omega t)\,,\quad \omega \in \Omega\,. 
\end{equation}
It is easy to verify that if $\| \alpha\|_{{\cal C}^1}$ is small enough, the above function is invertible and its inverse has the form 
\begin{equation}\label{inverso riparametrizzazione tempo}
\tau \mapsto \tau + \widetilde \alpha(\omega \tau)\,.
\end{equation}
Note that the reparametrization of time \eqref{riparametrizzazione tempo} induces also a diffeomorphism of the torus $\T^\nu$
\begin{equation}\label{diffeo toro T nu}
\T^\nu \to \T^\nu\,, \quad \vphi \mapsto \vphi + \alpha(\vphi)
\end{equation}
whose inverse is given by 
\begin{equation}\label{inverso diffeo toro T nu}
\T^\nu \mapsto \T^\nu\,, \quad \vartheta \mapsto \vartheta + \widetilde \alpha(\vartheta)\,. 
\end{equation}
The corresponding composition operators $A, A^{- 1}$ acting on the periodic functions $h : \T^\nu \times \T^d \to \C$ are given by 
\begin{equation}\label{operatori indotti da diffeo del toro}
A h(\vphi, x) := h(\vphi + \omega \alpha(\vphi), x)\,, \quad A^{- 1} h(\vartheta, x) := h(\vartheta + \omega \widetilde \alpha(\vartheta), x)\,. 
\end{equation}
According to \eqref{interpretazione dinamica 4}, under the reparametrization of time defined by 
\begin{equation}\label{reparametrizzazione tempo dinamica}
 {\cal A}(\omega t){\bf v}(t, x) := {\bf v}(t + \alpha(\omega t), x)\,, \quad {\cal A}(\omega t)^{- 1} {\bf v}(\tau, x) := {\bf v}(\tau + \widetilde \alpha(\omega \tau), x)\,,
\end{equation} 
the vector field ${\cal L}_2(\vphi)$ transforms into the vector field 
\begin{align}
{\cal L}_3(\vartheta) & := \frac{1}{\rho(\vartheta)} {\cal L}_2(\vartheta + \omega \tilde \alpha(\vartheta)) \nonumber\\
& =
\frac{1}{\rho(\vartheta)} \begin{pmatrix}
 - \ii (A^{- 1}a_1)(\vartheta) |D| + \ii \e {\mathcal R}^{(2)}(\vartheta + \omega \widetilde \alpha(\vartheta)) & - (A^{- 1}a_0)(\vartheta)+ \ii \e {\mathcal R}^{(2)}(\vartheta + \omega \widetilde \alpha(\vartheta)) \\
- (A^{- 1}a_0)(\vartheta) - \ii \e {\mathcal R}^{(2)}(\vartheta + \omega \widetilde \alpha(\vartheta)) & + \ii (A^{- 1}a_1)(\vartheta) |D| - \ii \e {\mathcal R}^{(2)}(\vartheta + \omega \widetilde \alpha(\vartheta))
\end{pmatrix}\, \label{forma preliminare cal L3}
\end{align}
where 
\begin{equation}\label{definition rho}
\rho(\vartheta) := 1 + \omega \cdot \partial_\vphi \alpha(\vartheta + \omega \widetilde \alpha(\vartheta)) = { A}^{- 1}[1 + \omega \cdot \partial_\vphi \alpha](\vartheta)\,.
\end{equation}
We want to choose the function $\alpha(\vphi)$ so that 
\begin{equation}\label{equation for alpha vartheta}
\frac{(A^{- 1}a_1)(\vartheta)}{\rho(\vartheta)} = m\,, \quad \forall \vartheta \in \T^\nu,
\end{equation}
for some constant $m \in \R$ to be determined. The above equation leads to 
\begin{equation}\label{equation for alpha}
m \big(1 + \omega \cdot \partial_\vphi \alpha(\vphi) \big) = a_1(\vphi)\, \quad \forall \vphi \in \T^\nu\,.
\end{equation}
Integrating on $\T^\nu$ we fix the value of $m$ as 
\begin{equation}\label{definition m}
m := \frac{1}{(2 \pi)^\nu} \int_{\T^\nu} a_1(\vphi)\, d \vphi
\end{equation}
and then, assuming that $\omega \in DC(\gamma, \tau)$, for some $\gamma, \tau > 0$ (see the definition \eqref{diofantei Kn}), we get
\begin{equation}\label{definition alpha}
\alpha(\vphi) = (\omega \cdot \partial_\vphi)^{- 1} \Big[ \frac{a_1}{m} - 1 \Big](\vphi)\,
\end{equation}
where the operator $(\omega \cdot \partial_\vphi)^{- 1}$ is defined by \eqref{om d vphi inverso}. Note that, since the function $a_1$ is real valued, then $m$ is real and $\alpha$ is a real valued function.

\noindent
By \eqref{forma preliminare cal L3}-\eqref{definition alpha}, the vector field ${\cal L}_3(\vartheta)$ has then the form 
\begin{equation}\label{cal L3}
{\cal L}_3(\vartheta) := \begin{pmatrix}
- \ii m |D| + \ii \e {\cal R}^{(3)}(\vartheta) & a_2(\vartheta) + \ii \e {\cal R}^{(3)}(\vartheta) \\
a_2(\vartheta) - \ii \e {\cal R}^{(3)}(\vartheta) &  \ii m |D| - \ii \e {\cal R}^{(3)}(\vartheta)
\end{pmatrix}\,
\end{equation}
where 
\begin{equation}\label{definitions a2 cal R3}
a_2 (\vartheta) := \rho^{- 1}(\vartheta) { A}^{- 1}[a_0](\vartheta) \,, \qquad {\cal R}^{(3)}(\vartheta) := \rho(\vartheta)^{- 1}  {\cal R}^{(2)}(\vartheta + \omega \widetilde \alpha(\vartheta))\,.
\end{equation}
The operator ${\cal L}_3(\vartheta)$ is still a Hamiltonian vector field in complex coordinates, since ${\cal L}_2(\vartheta)$ is Hamiltonian and the reparametrization of time ${\cal A}$ preserves the Hamiltonian structure (see Section \ref{formalismo hamiltoniano complesso}).  We point out that by \eqref{definition m}, \eqref{definitions a0 a1}, the constant $m$ is independent of the parameter $\omega \in \Omega$, whereas by \eqref{definition alpha}, \eqref{operatori indotti da diffeo del toro}, \eqref{definition rho}, \eqref{definitions a2 cal R3}, the functions $\alpha, \widetilde \alpha , \rho, a_2$ and the operator ${\cal R}^{(3)}$ depends in a Lipschitz way with respect to the parameter $\omega \in DC(\gamma, \tau)$.
\begin{lemma}\label{stime step 1}
Let $ \tau > 0$, $\gamma \in (0, 1)$ and $\omega \in  DC(\gamma, \tau)$ (recall \eqref{diofantei Kn}). Then there exists a constant $\sigma = \sigma(\tau) >0$ such that if $q > s_0 + \sigma$, there exists $\delta_q \in (0, 1)$ such that if $\e \gamma^{- 1} \leq \delta_q$, for all $s_0 \leq s \leq q - \sigma$ the following estimates hold:
\begin{equation}\label{stime m}
|m - 1|\,,  \| a_2\|_{s}^\Lipg, \| \rho^{\pm 1} - 1\|_{s}^\Lipg \lesssim_q \e, \quad \| \alpha\|_{s}^\Lipg, \| \widetilde \alpha\|_{s}^\Lipg \lesssim_q \e \gamma^{- 1}
\end{equation}
The symmetric operator ${\cal R}^{(3)}(\vartheta)$ defined in \eqref{definitions a2 cal R3} has the form
\begin{equation}\label{forma buona resto cal R (3) (vartheta)}
{\cal R}^{(3)}(\vartheta)[u] = \sum_{k = 1}^N b_k^{(3)}(\vartheta, x) \int_{\T^d} c_k^{(3)}(\vartheta, y) v(y)\, d y + c_k^{(3)}(\vartheta, x) \int_{\T^d} b_k^{(3)}(\vartheta, y) v(y)\, d y\,, 
\end{equation}
$\vphi \in \T^\nu$, $v \in L^2_0(\T^d)$, with  
\begin{equation}\label{stime cal R3 1}
\| b_k^{(3)} \|_s^\Lipg, \| c_k^{(3)} \|_s^\Lipg \lesssim_q 1\,, \qquad k = 1, \ldots, N\,. 
\end{equation}
\end{lemma}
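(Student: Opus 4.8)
The plan is to unwind the definitions \eqref{definition m}, \eqref{definition alpha}, \eqref{operatori indotti da diffeo del toro}, \eqref{definition rho} and \eqref{definitions a2 cal R3} one at a time and apply the estimates already proved in Lemma \ref{Lemma dopo simmetrizzazione} together with the elementary interpolation and composition Lemmata \ref{interpolazione C1 gamma}, \ref{Moser norme pesate}, \ref{lemma:utile} and the estimate \eqref{stima om d vphi inverso} for $(\omega \cdot \partial_\vphi)^{-1}$. First I would estimate $m$: from \eqref{definition m}, $m = (2\pi)^{-\nu}\int_{\T^\nu} a_1(\vphi)\,d\vphi$, and since $a_1 = \sqrt{1 + \e a(\vphi)}$ satisfies $\|a_1 - 1\|_s \lesssim_q \e$ by \eqref{estimates a0 a1}, integrating gives $|m - 1| \lesssim_q \e$; in particular $m$ is independent of $\omega$ so its Lipschitz seminorm vanishes. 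Then I would bound $\alpha$ via \eqref{definition alpha}: writing $\alpha = (\omega\cdot\partial_\vphi)^{-1}[a_1/m - 1]$, since $\|a_1/m - 1\|_{s+2\tau+1}^\Lipg \lesssim_q \e$ (using $|m - 1| \lesssim_q \e$, the smallness $\e\gamma^{-1} \le \delta_q$, and the algebra/interpolation Lemma \ref{interpolazione C1 gamma}), the estimate \eqref{stima om d vphi inverso} yields $\|\alpha\|_s^\Lipg \lesssim_q \gamma^{-1}\e$, provided we absorb the loss of $2\tau + 1$ derivatives into the constant $\sigma = \sigma(\tau)$. The function $\widetilde\alpha$ (the inverse diffeomorphism $\vartheta\mapsto\vartheta+\widetilde\alpha(\vartheta)$) is then controlled by Lemma \ref{lemma:utile} applied with $p = \omega\alpha$: once $\|\omega\alpha\|_{2s_0+2}^\Lipg \le \delta$ (which holds for $\e\gamma^{-1}$ small, at the cost of enlarging $\sigma$), \eqref{stime-lipschitz-q} gives $\|\widetilde\alpha\|_s^\Lipg \lesssim_q \|\alpha\|_{s+1}^\Lipg \lesssim_q \gamma^{-1}\e$.

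Next I would estimate $\rho^{\pm 1} - 1$ and $a_2$. By \eqref{definition rho}, $\rho = A^{-1}[1 + \omega\cdot\partial_\vphi\alpha]$; since $\omega\cdot\partial_\vphi\alpha = a_1/m - 1$ by \eqref{equation for alpha}, we get $\rho = A^{-1}[a_1/m]$, so $\rho - 1 = A^{-1}[a_1/m - 1]$ and by the change-of-variables tame estimate \eqref{tame-cambio-di-variabile-inverso}–\eqref{tame-lipschitz-cambio-di-variabile} together with $\|a_1/m - 1\|_s^\Lipg \lesssim_q \e$ and $\|\alpha\|_s^\Lipg \lesssim_q \gamma^{-1}\e$ small, one obtains $\|\rho - 1\|_s^\Lipg \lesssim_q \e$; the bound on $\rho^{-1} - 1$ follows by writing $\rho^{-1} - 1 = -(\rho - 1)\rho^{-1}$ (Neumann/composition, using $\|\rho - 1\|_{s_0}^\Lipg$ small) and applying Lemma \ref{interpolazione C1 gamma}, or by Lemma \ref{Moser norme pesate} for the composition operator $t\mapsto (1+t)^{-1}$. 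For $a_2$, from \eqref{definitions a2 cal R3} we have $a_2 = \rho^{-1}\,A^{-1}[a_0]$; since $\|a_0\|_s^\Lipg \lesssim_q \e$ by \eqref{estimates a0 a1}, the change-of-variables estimate bounds $\|A^{-1}[a_0]\|_s^\Lipg \lesssim_q \e$, and multiplying by $\rho^{-1}$ (whose $s_0$-norm is $\lesssim 1$) and using the interpolation Lemma \ref{interpolazione C1 gamma} gives $\|a_2\|_s^\Lipg \lesssim_q \e$.

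Finally I would treat the remainder ${\cal R}^{(3)}$. By \eqref{definitions a2 cal R3}, ${\cal R}^{(3)}(\vartheta) = \rho(\vartheta)^{-1}\,{\cal R}^{(2)}(\vartheta + \omega\widetilde\alpha(\vartheta))$, and ${\cal R}^{(2)} = {\cal R}^{(1)}/\sqrt2$ has the finite-rank form \eqref{forma cal R (1) (vphi)} with kernels $b_k^{(1)}, c_k^{(1)}$ satisfying \eqref{estimate cal R2 1}. Since the reparametrization acts only on the $\vphi$-variable and not on $x$, it maps a finite-rank operator of the form \eqref{definizione perturbazione rango finito} to another one of the same form, with new kernels
$$
b_k^{(3)}(\vartheta, x) := \tfrac{1}{\sqrt{2}}\,\rho(\vartheta)^{-1}\, (A^{-1}b_k^{(1)})(\vartheta, x)\,, \qquad c_k^{(3)}(\vartheta, x) := (A^{-1}c_k^{(1)})(\vartheta, x)\,,
$$
(distributing the scalar factor $\rho^{-1}/\sqrt2$ into one of the two kernels in each summand), and the symmetry ${\cal R}^{(3)}(\vartheta) = {\cal R}^{(3)}(\vartheta)^T$ is preserved because the scalar reparametrization and the symmetric prefactor do not affect the transpose structure. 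The bounds \eqref{stime cal R3 1} then follow by applying the change-of-variables estimate \eqref{tame-cambio-di-variabile-inverso}, \eqref{tame-lipschitz-cambio-di-variabile} to $b_k^{(1)}, c_k^{(1)}$ (whose norms are $\lesssim_q 1$ by \eqref{estimate cal R2 1}), using $\|\widetilde\alpha\|_s^\Lipg \lesssim_q \gamma^{-1}\e \lesssim_q 1$, and then multiplying by $\rho^{-1}$ via the interpolation Lemma \ref{interpolazione C1 gamma}. The only mildly delicate point — the "main obstacle," though it is really just bookkeeping — is tracking precisely how many derivatives are lost: the inverse operator $(\omega\cdot\partial_\vphi)^{-1}$ costs $2\tau+1$ derivatives, the change-of-variables Lemma \ref{lemma:utile} costs a fixed number more, and the smallness condition on $\|\alpha\|_{2s_0+2}^\Lipg$ must be met; collecting all these losses into a single $\sigma = \sigma(\tau)$ and requiring $q > s_0 + \sigma$, $\e\gamma^{-1} \le \delta_q$, one gets the stated estimates for all $s_0 \le s \le q - \sigma$.
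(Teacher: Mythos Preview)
Your proof is correct and follows essentially the same route as the paper: unwind the definitions, apply \eqref{estimates a0 a1}, \eqref{stima om d vphi inverso} and Lemmata \ref{interpolazione C1 gamma}, \ref{Moser norme pesate}, \ref{lemma:utile}, and collect the derivative losses into a single constant $\sigma(\tau)$. The only cosmetic difference is in the choice of kernels for ${\cal R}^{(3)}$: the paper distributes the scalar factor symmetrically, setting $b_k^{(3)} := 2^{-1/4}\rho^{-1/2}\,A^{-1}b_k^{(1)}$ and $c_k^{(3)} := 2^{-1/4}\rho^{-1/2}\,A^{-1}c_k^{(1)}$, whereas you put all of $\rho^{-1}/\sqrt{2}$ into $b_k^{(3)}$; since $\rho$ depends only on $\vartheta$ both choices yield the same operator and the same estimates.
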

\begin{proof}
The estimates \eqref{stime m} follow by \eqref{definition m}, \eqref{definition alpha}, \eqref{definitions a2 cal R3} and by the estimates \eqref{estimates a0 a1} by applying Lemmata \ref{interpolazione C1 gamma}, \ref{Moser norme pesate}, \ref{lemma:utile}. The formula \eqref{forma buona resto cal R (3) (vartheta)} follows by \eqref{forma cal R (1) (vphi)}, \eqref{cal L2 complex coordinates}, \eqref{definitions a2 cal R3}, by defining $b_k^{(3)} := 2^{- \frac14} \rho^{- \frac12} b_k^{(1)}$, $c_k^{(3)} := 2^{- \frac14} \rho^{- \frac12} c_k^{(1)}$, $k = 1, \ldots, N$ and the estimates \eqref{stime cal R3 1} follow by \eqref{estimate cal R2 1}, \eqref{stime m} and Lemmata \ref{interpolazione C1 gamma}, \ref{lemma:utile}.
\end{proof}

\subsection{Symplectic reduction up to order $|D|^{- M}$.}
Introducing the notation 
\begin{equation}\label{notazione T Id0}
 \quad T := \begin{pmatrix}
- {\rm I d} & 0 \\
0 &  {\rm Id}
\end{pmatrix}\,, \quad {\rm Id} : L^2_0(\T^d) \to L^2_0(\T^d) \quad \text{is\,the \,identity}
\end{equation}
and renaming the variable $\vartheta = \vphi$, we can write the vector field in \eqref{cal L3} as 
\begin{equation}\label{cal L3 notazioni compatte}
{\cal L}_3(\vphi) = \ii m T |D| + A_2(\vphi) + \e {\cal R}_3(\vphi)\,,
\end{equation}
where 
\begin{equation}\label{A0 bf R0}
 A_2(\vphi) := \begin{pmatrix}
0 & a_2(\vphi) \\
a_2 (\vphi) & 0
\end{pmatrix}\,,\quad {\cal R}_3(\vphi) := \ii \begin{pmatrix}
{\cal R}^{(3)}(\vphi) & {\cal R}^{(3)}(\vphi) \\
- {\cal R}^{(3)}(\vphi) & - {\cal R}^{(3)}(\vphi)
\end{pmatrix}\,, \quad \vphi \in \T^\nu
\end{equation}
and the operator ${\cal R}^{(3)}(\vphi)$, defined in \eqref{definitions a2 cal R3}, has the form \eqref{forma buona resto cal R (3) (vartheta)}. The aim of this section is to conjugate ${\cal L}_3(\vphi)$ to the vector field ${\cal L}_4(\vphi)$ defined in \eqref{cal L5} which is the sum of a diagonal operator and a regularizing remainder. Since the operator ${\cal R}^{(3)}(\vphi)$ is finite rank operator of the form \eqref{forma buona resto cal R (3) (vartheta)}, it is already regularizing. Hence in the following two Sections \ref{sezione block decoupling}, \ref{riduzione diagonale dopo decoupling}, we neglect the operator ${\cal R}_3(\vphi)$ in \eqref{cal L3 notazioni compatte} and we work with the vector field  
\begin{equation}\label{cal L 3 0}
L_3^{(0)}(\vphi) :=  \ii m T |D| + A_2(\vphi)\,, \quad \vphi \in \T^\nu\,.
\end{equation}
We compute the complete conjugation of ${\cal L}_3$ in Section \ref{cal L3 notazioni compatte}. 
\subsubsection{Block-decoupling up to order $|D|^{- M}$.}\label{sezione block decoupling}
Given a positive integer $M$, our goal is to conjugate the operator $L_3^{(0)}$ in \eqref{cal L 3 0} to the 
operator $ {L}_3^{(M)} $ in \eqref{cal LN decoupling} whose off-diagonal part 
  $ {Q}_M$ is an operator of order $- M$. This is achieved by applying iteratively $ M $-times  
a conjugation map which transforms the off-diagonal block operator into a $ 1 $-smoother ones. For such a procedure we will use the class of $\vphi$-dependent Fourier multipliers introduced in Section \ref{sezione pseudo diff}. 

\noindent
We describe the inductive step of such a procedure. We assume that $q > s_0 + \sigma + M$, where the constant $\sigma =\sigma(\tau)$ is given in Lemma \ref{stime step 1} and $M \in \N$ is the number of the steps of this regularization procedure. In this section we use the following notation: If $n \in \{ 1, \ldots, M \}$, $s \geq 0$, we write
$$
a \lesssim_{n, s} b \quad \ \Longleftrightarrow \quad a \leq C(n, s) b
$$  
for some constant $C(n, s) > 0$ (that may depend also on $d, \tau, \nu$).

\noindent
At the $n$-th step, we have a Hamiltonian vector field  
\begin{equation}\label{cal L 3 n}
{ L}_3^{(n)}(\vphi) = \ii m T |D| + R_n(\vphi) + Q_n(\vphi)\,,
\end{equation}
where $R_n(\vphi) = R_n(\vphi; \omega)$, $Q_n(\vphi) = Q_n(\vphi; \omega)$, $\omega \in DC(\gamma, \tau)$ are Hamiltonian vector fields of the form 
\begin{equation}\label{Rn}
R_n := \ii \begin{pmatrix}
{\rm Op}(r_n) & 0 \\
0 & - \overline{{\rm Op}(r_n)}
\end{pmatrix}\,, \qquad Q_n := \ii \begin{pmatrix}
0 & {\rm Op}(q_n) \\
- \overline{{\rm Op}(q_n)}  & 0
\end{pmatrix}
\end{equation}
and $r_n(\vphi, \cdot) \in S^{- 1}$, $q_n(\vphi, \cdot) \in S^{- n}$. Moreover they satisfy the estimates 
\begin{equation}\label{stima Rn}
\norma R_n \norma_{- 1, s}^\Lipg, \norma Q_n \norma_{ -n, s}^\Lipg \lesssim_{ n, q} \e \,, \qquad \forall s_0 \leq s \leq q - n - \sigma\,
\end{equation}
where $\sigma = \sigma (\tau) > 0$ is given in Lemma \ref{stime step 1}. Recall that the definition of the norm $\norma \cdot \norma_{m, s}$ is given in \eqref{norma pseudo diff}. 

\noindent
{\sc Initialization.} The Hamiltonian vector field $ {L}_3^{(0)} (\vphi)$ in \eqref{cal L 3 0} 
satisfies the assumptions
\eqref{cal L 3 n}-\eqref{stima Rn}, with $R_0(\vphi) = 0$ and $Q_0(\vphi) = A_2(\vphi) \in OPS^0$, by Lemma \ref{stime step 1}. 

\medskip

\noindent
{\sc inductive step.}
We consider a symplectic transformation of the form 
\begin{equation}\label{cal Vn definizione}
{\cal V}_n := {\rm exp}(\ii V_n) 
\end{equation}
where the operator $V_n$ has the form 
\begin{equation}\label{Vn definizione}
V_n := \begin{pmatrix}
0 & {\rm Op}(v_n) \\
- \overline{{\rm Op}(v_n)} & 0
\end{pmatrix}\,, \qquad v_n \in S^{- n - 1}\,.
\end{equation}
We write 
\begin{equation}\label{espansione cal Vn}
{\cal V}_n = {\rm Id} + \ii V_n + {\cal V}_{n, \geq 2}\,, \qquad {\cal V}_{n, \geq 2} := \sum_{k \geq 2} \frac{\ii^k}{k !} V_n^k\,. 
\end{equation}
In the above formula, with a slight abuse of notations we denote by ${\rm Id}: {\bf L}^2_0(\T^d) \to {\bf L}^2_0(\T^d)$ the identity on the space ${\bf L}^2_0(\T^d)$. Note that, by Lemma \ref{esponenziale norma pseudo diff}, one gets ${\cal V}_{n , \geq 2} \in OPS^{- 2(n + 1)}$. We now compute the push-forward $({\cal V}_n)_{\omega*} L_3^{(n)}(\vphi)$. By \eqref{interpretazione dinamica 6} one has 
\begin{align}
({\cal V}_n)_{\omega*} L_3^{(n)}(\vphi) & = {\cal V}_n(\vphi)^{- 1} \Big( L_3^{(n)}(\vphi) {\cal V}_n(\vphi) -  \omega \cdot \partial_\vphi {\cal V}_n(\vphi) \Big)\,. \label{joao mario0} 
\end{align}
Since $\omega \cdot \partial_\vphi{\cal V}_n(\vphi) = \omega \cdot \partial_\vphi \Big({\cal V}_n(\vphi) - {\rm Id} \Big)$, by Lemmata \ref{composizione fourier multiplier}, \ref{esponenziale norma pseudo diff}, one has 
\begin{align}
 - {\cal V}_n(\vphi)^{- 1} \omega \cdot \partial_\vphi{\cal V}_n(\vphi) =  - {\cal V}_n(\vphi)^{- 1} \omega \cdot \partial_\vphi \Big({\cal V}_n(\vphi) - {\rm Id} \Big) \in OPS^{- n - 1}\,. \label{proimo pezzo push forward decoupling}
\end{align}
Moreover
\begin{align}
L_3^{(n)}(\vphi) {\cal V}_n(\vphi)&  \stackrel{\eqref{cal L 3 n}}{=}   \ii{\cal V}_n(\vphi) m T |D|  + [\ii m T |D|, \ii V_n(\vphi)] + Q_n(\vphi)  + R_n(\vphi) \nonumber\\
& \quad + [\ii m T |D|, {\cal V}_{n, \geq 2}(\vphi)]  + (R_n(\vphi) + Q_n(\vphi))({\cal V}_n(\vphi) - {\rm Id})\,. \label{semiconiugio cal Ln cal Vn} 
\end{align}
Note that $[\ii m T |D|, {\cal V}_{n, \geq 2}(\vphi)] \in OPS^{- 2n - 1} \subset OPS^{- n - 1}$, $(R_n(\vphi) + Q_n(\vphi))({\cal V}_n(\vphi) - {\rm Id}) \in OPS^{- n - 2} \subset OPS^{- n - 1}$, therefore the only off-diagonal term of order $-n$ (which we want to eliminate) is given by  $[\ii m T |D|, \ii V_n(\vphi)] + Q_n(\vphi)$. We want to choose $V_n(\vphi)$ so that 
\begin{equation}\label{equazione omologica decoupling}
[\ii m T |D|, \ii V_n(\vphi)] + Q_n(\vphi) = 0\,.
\end{equation}
By a direct calculation, one has 
\begin{align}
[\ii m T |D|, \ii V_n(\vphi)] + Q_n(\vphi) 
& = \begin{pmatrix}
0 &{\rm Op}\Big(  2 m |j| v_n(\vphi, |j|) + \ii q_n(\vphi, |j|) \Big)   \\
 \overline{{\rm Op}\Big(  2 m |j| v_n(\vphi, |j|) + \ii q_n(\vphi, |j|) \Big)} & 0 
\end{pmatrix}\,.
\end{align}
Then $[\ii m T |D|, \ii V_n] + Q_n = 0$ if we choose the symbol $v_n$ so that 
\begin{equation}\label{scelta simbolo vn}
v_n(\vphi, \alpha) : = - \dfrac{\ii q_n(\vphi, \alpha)}{2 m \alpha}\,, \qquad \forall \vphi \in \T^\nu\,, \qquad \forall \alpha \in \sigma_0(\sqrt{- \Delta})\,.
\end{equation}
Note that since $q_n(\vphi, \cdot) \in S^{- n}$, the symbol $v_n(\vphi, \cdot) \in S^{- n - 1}$ for any $\vphi \in \T^\nu$. 
\begin{lemma}\label{lemma:trasformazione cal Vn decoupling}
For any $s_0 \leq s \leq q - n - \sigma$, the operators $V_n(\vphi), {\cal V}_n(\vphi) - {\rm Id} \in S^{- n - 1}$ and ${\cal V}_{n, \geq 2}(\vphi) \in OPS^{- 2(n + 1)}$, see \eqref{Vn definizione}, \eqref{espansione cal Vn} (which depend on the parameter $\omega \in DC(\gamma, \tau)$) satisfy the estimates 
\begin{equation}\label{stime Vn}
\norma V_n \norma_{- n- 1, s}^\Lipg\,,\, \norma {\cal V}_n^{\pm 1} - {\rm Id} \norma_{- n - 1, s}\,,\, \norma {\cal V}_{n, \geq 2} \norma_{-2(n + 1), s} \lesssim_{n, q} \e\,.
\end{equation}
\end{lemma}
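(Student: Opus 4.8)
The plan is to bound first the symbol $v_n$ introduced in \eqref{scelta simbolo vn}, and then transfer the estimate to $\mathcal V_n^{\pm 1}-\mathrm{Id}$ and $\mathcal V_{n,\geq 2}$ by means of Lemma \ref{esponenziale norma pseudo diff}.

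\emph{Estimate of $V_n$.} By Lemma \ref{stime step 1} the constant $m$ is real, independent of $\omega$, and $|m-1|\lesssim_q\e$, so for $\e$ small enough $|m|\geq 1/2$ and $1/(2m)$ is a fixed bounded constant. Since every $\alpha\in\sigma_0(\sqrt{-\Delta})$ satisfies $\alpha\geq 1$, the division by $\alpha$ in \eqref{scelta simbolo vn} gains one order without any loss: for all $\alpha$,
$$
\|v_n(\cdot,\alpha)\|_s^\Lipg\,\alpha^{n+1}\;\leq\;\frac{1}{2|m|}\,\|q_n(\cdot,\alpha)\|_s^\Lipg\,\alpha^{n}\;\leq\;\frac{1}{2|m|}\,\norma Q_n\norma_{-n,s}^\Lipg .
$$
Taking the supremum over $\alpha$ and invoking the inductive bound \eqref{stima Rn} on $Q_n$, we conclude that $v_n(\vphi,\cdot)\in S^{-n-1}$ and, recalling the definition of the norm on operators of the form \eqref{matrix fourier multiplier}, $\norma V_n\norma_{-n-1,s}^\Lipg\lesssim_{n,q}\e$ for all $s_0\leq s\leq q-n-\sigma$; in particular $V_n(\vphi)\in OPS^{-n-1}$.

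\emph{Estimate of $\mathcal V_n^{\pm1}-\mathrm{Id}$ and $\mathcal V_{n,\geq 2}$.} The operator $\Psi:=\ii V_n$ belongs to $OPS^{-(n+1)}$ with $n+1\geq 1>0$, and by the previous step $\norma\Psi\norma_{-n-1,s_0}^\Lipg\lesssim_{n,q}\e\leq 1$, provided $\delta_q$ (which we now allow to depend on the fixed total number of steps $M\geq n$) is taken small enough; this is exactly the smallness hypothesis \eqref{piccolezza Psi m s0} of Lemma \ref{esponenziale norma pseudo diff}. Applying \eqref{stima norma pseudo diff Phi - I} gives $\mathcal V_n^{\pm1}-\mathrm{Id}\in OPS^{-n-1}$ with $\norma\mathcal V_n^{\pm1}-\mathrm{Id}\norma_{-n-1,s}\lesssim_s\norma V_n\norma_{-n-1,s}\lesssim_{n,q}\e$, while applying \eqref{stima norma pseudo diff code di ordine 2} (note that $\mathcal V_{n,\geq 2}=\Phi_{\geq 2}$ in the notation there, with $\Psi=\ii V_n$ and $m=n+1$) gives $\mathcal V_{n,\geq 2}\in OPS^{-2(n+1)}$ with $\norma\mathcal V_{n,\geq 2}\norma_{-2(n+1),s}\lesssim_s\norma V_n\norma_{-n-1,s}\,\norma V_n\norma_{-n-1,s_0}\lesssim_{n,q}\e^2\lesssim_{n,q}\e$. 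The Lipschitz-in-$\omega$ statements follow from the Lipschitz parts of the same two lemmas, using once more that $m$ carries no $\omega$-dependence.

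I do not expect a genuine obstacle here: the whole argument rests on the elementary observation that dividing a symbol of order $-n$ by $\alpha\geq 1$ produces a symbol of order $-n-1$ of no larger norm, combined with the already-established smallness of $Q_n$ and of $m-1$. The only point requiring care is tracking that the implicit constants and the threshold $\delta_q$ may depend on $n$, equivalently on $M$; this is harmless since $M=M(\nu,d)$ is fixed once and for all in \eqref{scelta di M nuove norme}, and the Sobolev index window $s_0\leq s\leq q-n-\sigma$ is nonempty thanks to the standing assumption $q>s_0+\sigma+M$.
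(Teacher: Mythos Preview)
Your proof is correct and follows essentially the same approach as the paper: first bound $V_n$ directly from the explicit formula \eqref{scelta simbolo vn} using $|m|\geq 1/2$ and the inductive estimate \eqref{stima Rn} on $Q_n$, then apply Lemma \ref{esponenziale norma pseudo diff} to obtain the bounds on $\mathcal V_n^{\pm1}-\mathrm{Id}$ and $\mathcal V_{n,\geq 2}$. One minor slip: you write $M=M(\nu,d)$, but in fact $M=[q/2]$ depends on $q$ (see \eqref{scelta di M nuove norme}); this does not affect the argument, since $\delta_q$ is already allowed to depend on $q$.
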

\begin{proof}
The estimate for the operator $V_n$ follows by the definitions \eqref{Vn definizione}, \eqref{scelta simbolo vn} and by the estimates \eqref{stime m}, \eqref{stima Rn}. The estimates for ${\cal V}_n(\vphi) - {\rm Id}$ and ${\cal V}_{n, \geq 2}(\vphi)$ follow by applying Lemma \ref{esponenziale norma pseudo diff}, using the estimate on $V_n(\vphi)$. 
\end{proof}

\noindent
By \eqref{joao mario0}-\eqref{equazione omologica decoupling}, one gets 
\begin{equation}\label{cal L n+1 decoupling}
{L}_3^{(n + 1)}(\vphi)  =  \ii m T |D| + R_n(\vphi) + P_n(\vphi)\,,
\end{equation} 
where 
\begin{equation}\label{definizione Pn}
P_n := ({\cal V}_n^{- 1} - {\rm Id}) R_n + {\cal V}_n^{- 1} \Big(  [\ii m T |D|, {\cal V}_{n , \geq 2}] + (R_n + Q_n)({\cal V}_n - {\rm Id})  - \omega \cdot \partial_\vphi ({\cal V}_n - {\rm Id}) \Big)\,.
\end{equation}
Note that $P_n$ is the only operator which contains off-diagonal terms. In the next lemma we provide some estimates on the remainder $P_{n}$. 
\begin{lemma}\label{lemma:stime nuovo resto decoupling}
For any $s_0 \leq s \leq q - \sigma - n - 1$, the operator $P_{n}(\vphi) = P_n(\vphi; \omega) \in OPS^{- n - 1}$, $\omega \in DC(\gamma, \tau)$ satisfies the estimates 
\begin{equation}\label{stime R n*}
\norma P_n \norma_{- n - 1, s}^\Lipg \lesssim_{n, q} \e\,.
\end{equation}
\end{lemma}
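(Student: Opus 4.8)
The plan is to estimate $P_n$ term by term from its explicit expression \eqref{definizione Pn}, which is a sum of four operators: $({\cal V}_n^{-1} - {\rm Id})R_n$, the conjugated commutator $\,{\cal V}_n^{-1}[\ii m T|D|, {\cal V}_{n,\geq 2}]$, the conjugated correction $\,{\cal V}_n^{-1}(R_n+Q_n)({\cal V}_n - {\rm Id})$, and the transport term $\,-{\cal V}_n^{-1}\,\omega\cdot\partial_\vphi({\cal V}_n - {\rm Id})$. Each of these is a composition of $\vphi$-dependent Fourier multipliers (recall that $\omega\cdot\partial_\vphi$ applied to a family in $OPS^m$ is again in $OPS^m$, and that $|D|\in OPS^1$ commutes with every Fourier multiplier), so the whole of $P_n$ stays in the class \eqref{matrix fourier multiplier}; what has to be proved is the quantitative bound $\norma P_n\norma_{-n-1,s}^\Lipg\lesssim_{n,q}\e$. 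The tools will be the composition Lemma \ref{composizione fourier multiplier} (which is multiplicative and tame in $s$, so orders of composed operators add and norms multiply with one factor measured at $s_0$), the inductive estimates \eqref{stima Rn} on $R_n$, $Q_n$, the estimates \eqref{stime Vn} on $V_n$, ${\cal V}_n^{\pm1}-{\rm Id}$, ${\cal V}_{n,\geq2}$ — from which, via Lemma \ref{esponenziale norma pseudo diff}, one also obtains the corresponding Lipschitz bounds $\norma{\cal V}_n^{\pm1}-{\rm Id}\norma^\Lipg_{-n-1,s}$, $\norma{\cal V}_{n,\geq2}\norma^\Lipg_{-2(n+1),s}\lesssim_{n,q}\e$ from the one on $V_n$ — the estimate $|m-1|\lesssim_q\e$ from \eqref{stime m}, and the elementary monotonicity \eqref{proprieta facili norma pseudo 0}, \eqref{proprieta facili norma pseudo 1} of the norm to lower orders when convenient. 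For $\e$ small one has in particular $\norma\cdot\norma^\Lipg_{\bullet,s_0}\le1$ for all the factors, so iterating the composition lemma costs nothing.

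First I would dispose of the two "pure composition" summands. Since ${\cal V}_n^{-1}-{\rm Id}\in OPS^{-n-1}$ and $R_n\in OPS^{-1}$, Lemma \ref{composizione fourier multiplier} gives $({\cal V}_n^{-1}-{\rm Id})R_n\in OPS^{-n-2}\subseteq OPS^{-n-1}$ with norm $\lesssim_s\norma{\cal V}_n^{-1}-{\rm Id}\norma_{-n-1,s}^\Lipg\,\norma R_n\norma_{-1,s_0}^\Lipg+\norma{\cal V}_n^{-1}-{\rm Id}\norma_{-n-1,s_0}^\Lipg\,\norma R_n\norma_{-1,s}^\Lipg\lesssim_{n,q}\e^2\le\e$. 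For $\,{\cal V}_n^{-1}(R_n+Q_n)({\cal V}_n-{\rm Id})$ I would apply Lemma \ref{composizione fourier multiplier} twice: after using \eqref{proprieta facili norma pseudo 1} to regard $Q_n\in OPS^{-n}$ as an element of $OPS^{-\min(1,n)}$, the three factors have orders adding up to at most $-n-1$ and each has $OPS$-norm $\lesssim_{n,q}\e$, so keeping one factor at $s$ and the others at $s_0$ yields a bound $\lesssim_{n,q}\e$.

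Next, for the commutator summand I would observe that, since $|D|$ commutes with every Fourier multiplier and the matrix $T={\rm diag}(-{\rm Id},{\rm Id})$ only flips the sign of the off-diagonal blocks, a direct block computation shows $[\ii m T|D|,{\cal V}_{n,\geq2}]\in OPS^{-2(n+1)+1}=OPS^{-2n-1}\subseteq OPS^{-n-1}$, with $\norma[\ii m T|D|,{\cal V}_{n,\geq2}]\norma_{-2n-1,s}^\Lipg\lesssim_s|m|\,\norma{\cal V}_{n,\geq2}\norma_{-2(n+1),s}^\Lipg\lesssim_{n,q}\e$ by Lemma \ref{composizione fourier multiplier} and \eqref{stime m}; composing on the left with ${\cal V}_n^{-1}={\rm Id}+({\cal V}_n^{-1}-{\rm Id})$ preserves both the order and the bound. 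Finally, for the transport summand the key point — and this is really the only place where something nontrivial happens — is that $\omega\cdot\partial_\vphi$ does not change the order in $x$, so $\omega\cdot\partial_\vphi({\cal V}_n-{\rm Id})\in OPS^{-n-1}$, but by the definition \eqref{norma pseudo diff} of $\norma\cdot\norma_{m,s}$ it costs exactly one derivative in $\vphi$: $\norma\omega\cdot\partial_\vphi({\cal V}_n-{\rm Id})\norma_{-n-1,s}^\Lipg\lesssim_\nu\norma{\cal V}_n-{\rm Id}\norma_{-n-1,s+1}^\Lipg$, the additional term produced by the $\omega$-dependence of $\omega\cdot\partial_\vphi$ in the Lipschitz seminorm being harmless. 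By \eqref{stime Vn} the right-hand side is $\lesssim_{n,q}\e$ precisely when $s+1\le q-n-\sigma$, i.e. $s\le q-\sigma-n-1$; this loss of one $\vphi$-derivative is exactly why the admissible range of $s$ in the present statement is one unit smaller than in Lemma \ref{lemma:trasformazione cal Vn decoupling}. Composing with ${\cal V}_n^{-1}$ as before and summing the four contributions will give $P_n\in OPS^{-n-1}$ and $\norma P_n\norma_{-n-1,s}^\Lipg\lesssim_{n,q}\e$ on $s_0\le s\le q-\sigma-n-1$, which is \eqref{stime R n*}. I expect the only genuine obstacle to be the bookkeeping of orders — in particular checking that the commutator with $\ii m T|D|$ gains exactly one order, so that the off-diagonal part of $P_n$ is indeed $O(|D|^{-n-1})$ and the decoupling scheme can be iterated — together with pinning down the one-derivative loss in $\vphi$ that fixes the range of $s$; everything else is a routine application of the composition and exponential lemmas.
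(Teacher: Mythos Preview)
Your proposal is correct and follows exactly the approach indicated (very tersely) in the paper: estimate each summand of \eqref{definizione Pn} using Lemma~\ref{lemma:trasformazione cal Vn decoupling}, the inductive bounds \eqref{stima Rn}, the monotonicity \eqref{proprieta facili norma pseudo 1}, and the composition Lemma~\ref{composizione fourier multiplier}. Your write-up is in fact considerably more informative than the paper's one-line proof, in particular your identification of the transport term $-{\cal V}_n^{-1}\,\omega\cdot\partial_\vphi({\cal V}_n-{\rm Id})$ as the sole source of the one-$\vphi$-derivative loss that shrinks the admissible range from $s\le q-\sigma-n$ to $s\le q-\sigma-n-1$.
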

\begin{proof}
The Lemma follows by Lemma \ref{lemma:trasformazione cal Vn decoupling}, the estimates \eqref{stima Rn}, by applying the property \eqref{proprieta facili norma pseudo 1} and Lemma \ref{composizione fourier multiplier} to estimate all the terms in \eqref{definizione Pn}.
\end{proof}
By \eqref{cal L n+1 decoupling} and \eqref{stime R n*}  
the vector field $ {L}_3^{(n+1)}(\vphi) $ has the same form \eqref{cal L 3 n}-\eqref{Rn} 
with $ { R}_{n+1}(\vphi) $, $ {Q}_{n+1} (\vphi)$ 
that satisfy the estimates \eqref{stima Rn}
at the step $ n+  1$. Since $L_3^{(n)}$ is a Hamiltonian vector field and ${\cal V}_n$ is symplectic, the vector field $L_3^{(n + 1)}$ is still Hamiltonian. We can repeat iteratively the procedure of 
Lemmata \ref{lemma:trasformazione cal Vn decoupling} and \ref{lemma:stime nuovo resto decoupling}. Applying it $ M $-times, we derive the following proposition. 

\begin{proposition}\label{Lemma finale decoupling}
Let $\gamma \in (0, 1)$, $\tau > 0$, $M \in \N$, $q > s_0 + \sigma + M$. Then there exists a constant $\delta_q \in (0, 1)$ (possibly smaller than the one appearing in Lemma \ref{stime step 1}) such that for $\e \gamma^{- 1} \leq \delta_q$, for any $s_0 \leq s \leq q - \sigma - M$, for any $\omega \in DC(\gamma, \tau)$, the following holds: 
the symplectic invertible map $ \widetilde{\cal V}_M (\vphi):= {\cal V}_0(\vphi) \circ \ldots \circ {\cal V}_{M - 1}(\vphi) \in OPS^0$  satisfies  the estimate 
\begin{equation}\label{stime tilde cal V M}
\norma \widetilde{\cal V}_M^{\pm 1} \norma_{0,s}^\Lipg\,,\, \norma \widetilde{\cal V}_M^T \norma_{0,s}^\Lipg \lesssim_{M, q } 
1  \, , 
\end{equation}
and the push forward $ L_3^{(M)}(\vphi) := (\widetilde{\cal V}_M)_{\omega*} {L}_3^{(0)}(\vphi) $ of the Hamiltonian vector field $L_3^{(0)}(\vphi)$ in \eqref{cal L 3 0}  is the Hamiltonian vector field
\begin{equation}\label{cal LN decoupling}
{L}_3^{(M)}(\vphi) =   \ii m T |D| + R_M(\vphi) + Q_M(\vphi)
\end{equation}
where $R_M(\vphi) = R_M(\vphi; \omega), Q_M(\vphi) = Q_M(\vphi; \omega)$, $\omega \in DC(\gamma, \tau)$ have the form  
\be\label{RM decoupling}
R_M := \ii \begin{pmatrix}
{\rm Op}(r_M) & 0 \\
0 & - \overline{{\rm Op}(r_M)}
\end{pmatrix} \,, \qquad r_M(\vphi, \cdot ) \in S^{- 1} \,, \quad 
\ee
\begin{equation}\label{QM decoupling}
Q_M := \ii \begin{pmatrix}
0 & {\rm Op}(q_M) \\
- \overline{{\rm Op}(q_M)} & 0
\end{pmatrix}\,, \qquad q_M(\vphi, \cdot) \in S^{- M}
\end{equation}
and satisfy the estimates 
\begin{equation}\label{stime RM QM}
\norma R_M \norma_{-1, s}^\Lipg\,,\, \norma Q_M \norma_{- M, q}^\Lipg \lesssim_{M, s}  
\e \,, \quad \forall s_0 \leq s \leq q - \sigma - M\,. 
\end{equation}
\end{proposition}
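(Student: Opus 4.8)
The plan is to prove Proposition \ref{Lemma finale decoupling} by a finite induction over $n\in\{0,1,\dots,M\}$, whose inductive step is precisely the construction of Lemmata \ref{lemma:trasformazione cal Vn decoupling} and \ref{lemma:stime nuovo resto decoupling}. The base case $n=0$ is the Initialization step: the Hamiltonian vector field $L_3^{(0)}(\vphi)$ in \eqref{cal L 3 0} has the form \eqref{cal L 3 n}--\eqref{Rn} with $R_0(\vphi)=0$, $Q_0(\vphi)=A_2(\vphi)\in OPS^0$, and, by the bound on $a_2$ in \eqref{stime m}, $\norma Q_0 \norma_{0,s}^\Lipg\lesssim_q\e$ for all $s_0\le s\le q-\sigma$; thus the inductive hypothesis \eqref{cal L 3 n}--\eqref{stima Rn} holds at $n=0$.

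For the inductive step, assume \eqref{cal L 3 n}--\eqref{stima Rn} at step $n$ and choose $V_n$ as in \eqref{Vn definizione}, \eqref{scelta simbolo vn}. Since $\ii V_n$ is a Hamiltonian vector field in complex coordinates of the form \eqref{operatore Hamiltoniano coordinate complesse}, \eqref{condizione R1 R2 campo hamiltoniano complesso} (here $R_1=0$ and $R_2={\rm Op}(v_n)={\rm Op}(v_n)^T$ by \eqref{proprieta elementari simboli classe Sm}), the map ${\cal V}_n={\rm exp}(\ii V_n)$ is symplectic; choosing $\delta_q$ small enough that $\norma V_n \norma_{-n-1,s_0}^\Lipg\le 1$ (needed in order to apply Lemma \ref{esponenziale norma pseudo diff}), Lemma \ref{lemma:trasformazione cal Vn decoupling} yields the estimates \eqref{stime Vn}, and Lemma \ref{lemma:stime nuovo resto decoupling} produces, through \eqref{cal L n+1 decoupling}--\eqref{definizione Pn}, the new vector field $L_3^{(n+1)}(\vphi)=\ii m T|D|+R_n(\vphi)+P_n(\vphi)$ with $P_n\in OPS^{-n-1}$ and $\norma P_n \norma_{-n-1,s}^\Lipg\lesssim_{n,q}\e$ for $s_0\le s\le q-\sigma-n-1$. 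One then defines $R_{n+1}$ to be $R_n$ plus the block-diagonal part of $P_n$, and $Q_{n+1}$ to be the off-diagonal part of $P_n$. Here lies the crux of the bookkeeping: the block-diagonal part of $P_n$ belongs to $OPS^{-n-1}\subseteq OPS^{-1}$, so $R_{n+1}\in OPS^{-1}$ retains the form \eqref{Rn} and, by \eqref{proprieta facili norma pseudo 1} together with the triangle inequality, $\norma R_{n+1} \norma_{-1,s}^\Lipg\lesssim_{n,q}\e$; while the off-diagonal part of $P_n$ belongs to $OPS^{-(n+1)}$, so that $Q_{n+1}$ has the form \eqref{Rn} with $q_{n+1}(\vphi,\cdot)\in S^{-(n+1)}$ and $\norma Q_{n+1} \norma_{-(n+1),s}^\Lipg\lesssim_{n,q}\e$. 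Since $L_3^{(n)}$ is Hamiltonian and ${\cal V}_n$ is symplectic, $L_3^{(n+1)}$ is once more a Hamiltonian vector field in complex coordinates, and this is precisely what forces $R_{n+1}$, $Q_{n+1}$ to carry exactly the stated block structure. This closes the induction; the admissible range of $s$ shrinks by one at each step, so after $M$ steps it is $s_0\le s\le q-\sigma-M$ (whence the hypothesis $q>s_0+\sigma+M$), and at $n=M$ one reads off \eqref{cal LN decoupling}--\eqref{stime RM QM}.

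It remains to control the composite transformation. Setting $\widetilde{\cal V}_M:={\cal V}_0\circ\cdots\circ{\cal V}_{M-1}$, composition of the push-forwards \eqref{interpretazione dinamica 6} gives $L_3^{(M)}=(\widetilde{\cal V}_M)_{\omega*}L_3^{(0)}$, and $\widetilde{\cal V}_M$ is symplectic as a composition of symplectic maps. To prove \eqref{stime tilde cal V M}, I would write each factor as ${\cal V}_n^{\pm1}={\rm Id}+({\cal V}_n^{\pm1}-{\rm Id})$, observe that ${\cal V}_n^{\pm1}-{\rm Id}\in OPS^{-n-1}\subseteq OPS^0$ with $\norma {\cal V}_n^{\pm1}-{\rm Id} \norma_{0,s}^\Lipg\lesssim_{n,q}\e$ by \eqref{stime Vn}, Lemma \ref{esponenziale norma pseudo diff} and \eqref{proprieta facili norma pseudo 1}, expand the product of the $M$ factors, and bound each resulting term by iterated use of the composition Lemma \ref{composizione fourier multiplier}; as the number of factors is the fixed integer $M$, the final constant depends only on $M$ and $q$, and for $\e\gamma^{-1}\le\delta_q$ small enough one obtains $\norma \widetilde{\cal V}_M^{\pm1} \norma_{0,s}^\Lipg\lesssim_{M,q}1$. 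For the transpose, using $(AB)^T=B^TA^T$ and ${\rm Op}(r)^T={\rm Op}(r)$ from \eqref{proprieta elementari simboli classe Sm}, one has $\widetilde{\cal V}_M^T={\cal V}_{M-1}^T\circ\cdots\circ{\cal V}_0^T$ with each ${\cal V}_n^T={\rm exp}(\ii V_n^T)$ again the exponential of an off-diagonal operator in $OPS^{-n-1}$ of size $\lesssim_{n,q}\e$, so the very same argument applies and gives $\norma \widetilde{\cal V}_M^T \norma_{0,s}^\Lipg\lesssim_{M,q}1$. The only genuinely delicate points I anticipate are exactly this order bookkeeping — checking that each iteration gains one order on the off-diagonal block while losing none on the diagonal one — and the uniformity in $M$ of the constants generated by the $M$ applications of Lemma \ref{esponenziale norma pseudo diff} and the $M$ compositions in Lemma \ref{composizione fourier multiplier}; the rest is a routine assembly of the lemmata established above.
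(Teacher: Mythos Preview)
Your proposal is correct and follows essentially the same approach as the paper: the iteration via Lemmata \ref{lemma:trasformazione cal Vn decoupling} and \ref{lemma:stime nuovo resto decoupling} is exactly what the paper does, and your control of the composite $\widetilde{\cal V}_M$ by bounding each factor in $OPS^0$ and applying Lemma \ref{composizione fourier multiplier} matches the paper's argument. The only simplification you miss is for the transpose: since ${\rm Op}(r)^T={\rm Op}(r)$ by \eqref{proprieta elementari simboli classe Sm}, one has directly $\norma \widetilde{\cal V}_M^T \norma_{0,s}\le \norma \widetilde{\cal V}_M \norma_{0,s}$, so there is no need to redo the exponential argument for each ${\cal V}_n^T$.
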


\begin{proof}
We need only to prove the estimates \eqref{stime tilde cal V M}. For any $n = 1, \ldots, M - 1$ one has 
$$
\norma {\cal V}_n \norma_{0, s} \leq 1 + \norma {\cal V}_n - {\rm Id} \norma_{0, s} \stackrel{\eqref{proprieta facili norma pseudo 1}}{\leq} 1 + \norma {\cal V}_n - {\rm Id} \norma_{- n - 1, s} \stackrel{\eqref{stime Vn}}{\lesssim_{n, s}} 1\,,
$$
for any $s_0 \leq s \leq q - n - \sigma$. Since $n \leq M$, one has that the above estimate holds for any $s_0 \leq s \leq q - \sigma - M$. 
Applying Lemma \ref{composizione fourier multiplier} and using the above estimate one gets the estimate \eqref{stime tilde cal V M} for $\widetilde{\cal V}_M$. 
The estimates for $\widetilde{\cal V}_M^{- 1}$ follow by similar arguments and the estimates for $\widetilde{\cal V}_M^T$ follow since $\norma \widetilde{\cal V}_M^T \norma_{0, s} \leq \norma \widetilde{\cal V}_M \norma_{0, s} $ and then the lemma is proved. 
\end{proof}

\noindent
The operator $ L_3^{(M)}(\vphi) $ in \eqref{cal LN decoupling} is a space-diagonal operator up to the smoothing remainder $Q_M(\vphi)  \in OPS^{- M} $. The prize which has been paid is that there is a {\it loss of regularity} of $M$ derivatives with respect to the variable $\vphi$. In any case, the number of regularizing steps $ M $ will be fixed in \eqref{scelta di M nuove norme}.  
 \subsubsection{Reduction to constant coefficients of the diagonal reminder $R_M$}\label{riduzione diagonale dopo decoupling}
Our next aim is to eliminate the $\vphi$ dependence from the diagonal remainder $R_M(\vphi)$ of the Hamiltonian vector field ${L}_3^{(M)}(\vphi)$ defined in \eqref{cal LN decoupling}. In order to achieve this purpose, we look for a transformation of the form 
 \begin{equation}\label{operatore cal E}
 {\cal E}(\vphi) := {\rm exp}(\ii E(\vphi))\,, \qquad E(\vphi) := \begin{pmatrix}
 {\rm Op}(e(\vphi, |j|)) & 0 \\
 0 & - \overline{{\rm Op}(e(\vphi, |j|)) }
 \end{pmatrix}\,, \qquad e (\vphi, \cdot) \in S^{- 1}\,.
 \end{equation}
 Note that for any $\vphi \in \T^\nu$,
 \begin{equation}\label{cal E ed inverso esplicito}
 {\cal E}(\vphi)^{\pm 1} = \begin{pmatrix}
 {\rm Op}\big({\rm exp}( \pm \ii e(\vphi, |j|)) \big) & 0 \\
  0 & \overline{{\rm Op}\big(  {\rm exp}(\pm \ii e(\vphi, |j|)) \big)}
 \end{pmatrix}
 \end{equation}
 and 
 \begin{align}
{\cal E}(\vphi)^{- 1} \omega \cdot \partial_\vphi {\cal E}(\vphi) & = \begin{pmatrix}
{\rm Op}\Big( \ii \omega \cdot \partial_\vphi e(\vphi, \cdot)\Big) & 0 \\
0 & \overline{{\rm Op}\Big( \ii \omega \cdot \partial_\vphi e(\vphi, \cdot)\Big)}
\end{pmatrix}\,.  \label{omega d vphi E E inverse}
 \end{align}
 Therefore by \eqref{push forward}, \eqref{cal E ed inverso esplicito}, \eqref{omega d vphi E E inverse} and recalling the properties stated in \eqref{proprieta elementari simboli classe Sm}, the vector field $L_4^{(M)}(\vphi) := {\cal E}_{\omega*} L_3^{(M)}(\vphi)$ is given by
 \begin{align}
 & {L}_4^{(M)} := {\cal E}^{- 1 } {L}_3^{(M)} {\cal E} - {\cal E}(\vphi)^{- 1} \omega \cdot \partial_\vphi {\cal E} \nonumber\\
 &  = \begin{pmatrix} 
  {\rm Op}\big({\rm exp}(- \ii e) \big) \Big( \ii m |D| + \ii {\rm Op}(r_M) \Big) {\rm Op}\big({\rm exp}(\ii e) \big) & 0 \\
 0 & \overline{ {\rm Op}\big({\rm exp}(- \ii e) \big) \Big(  \ii m |D| + \ii {\rm Op}(r_M) \Big)  {\rm Op}\big({\rm exp}(\ii e) \big)}
 \end{pmatrix} \nonumber\\
 & \qquad + {\cal E}^{- 1 } Q_M {\cal E} - \begin{pmatrix}
{\rm Op}\Big( \ii \omega \cdot \partial_\vphi e\Big) & 0 \\
0 & \overline{{\rm Op}\Big( \ii \omega \cdot \partial_\vphi e\Big)}
\end{pmatrix} \, \nonumber\\
& = \begin{pmatrix} 
 \ii m |D| +  {\rm Op}\big(   \ii r_M - \ii \omega \cdot \partial_\vphi e  \big) & 0 \\
 0 &- \ii m |D| + \overline{ {\rm Op}\Big(  \ii r_M - \ii \omega \cdot \partial_\vphi e  \Big)}
 \end{pmatrix} +  {\cal E}^{- 1 } Q_M {\cal E}\,. \label{cal L4 M}
 \end{align}
 Note that to shorten notations, in the above chain of equalities, we avoided to write the dependence on $\vphi$. In order to eliminate the $\vphi$-dependence from the symbol $r_M(\vphi, |j|)$, we need to solve the equation 
$$
- \omega \cdot \partial_\vphi e (\vphi, |j|) + r_M(\vphi, |j|) = c(|j|) \in \R \,, \qquad \forall j \in \Z^d \setminus \{ 0 \}\,, \quad \forall \vphi \in \T^\nu
$$
 or equivalently
 \begin{equation}\label{equazione omologica diagonalizzazione decoupling}
 - \omega \cdot \partial_\vphi e(\vphi, \alpha) + r_M (\vphi, \alpha) = c (\alpha)\,, \qquad \forall (\vphi, \alpha) \in \T^\nu \times \sigma_0(\sqrt{- \Delta})\,, \qquad c (\alpha) \in \R\,.
 \end{equation}
 Integrating with respect to $\vphi$ the above equation, we determine the value of the constant $c( \alpha)$, namely 
 \begin{equation}\label{definizione costante c alpha}
 c(\alpha) := \frac{1}{(2 \pi)^\nu} \int_{\T^\nu} r_M(\vphi, \alpha)\, d \vphi\,, \qquad \forall \alpha \in \sigma_0(\sqrt{- \Delta})
 \end{equation}
 and then we choose 
 \begin{equation}\label{definizione simbolo e vphi alpha}
 e(\vphi, \alpha) := (\omega \cdot \partial_\vphi)^{- 1} \Big(r_M(\vphi, \alpha) - c(\alpha) \Big)\,, \qquad \forall (\vphi, \alpha) \in \T^\nu \times \sigma_0(\sqrt{- \Delta})\,,
 \end{equation}
 (note that $\omega \in DC(\gamma, \tau)$ and recall the definition \eqref{om d vphi inverso}). By \eqref{cal L4 M}, \eqref{equazione omologica diagonalizzazione decoupling},  \eqref{QM decoupling}, \eqref{cal E ed inverso esplicito} one gets  
\begin{align}
&  L_4^{(M)}(\vphi) = {\cal E}_{\omega *} L_3^{(M)}(\vphi) = \ii D_M T + Q_{M, 4}(\vphi)\,, \label{forma finale cal L4 M} 
\end{align}
where the diagonal operator $D_M$ is defined as 
\begin{equation}\label{operatore diagonal pre KAM}
D_M := m |D| + {\rm Op}(c(|j|)) = {\rm diag}_{j \in \Z^d \setminus \{ 0 \}}\big( m |j| + c(|j|) \big)
\end{equation}
and
\begin{align}\label{Q M 4}
Q_{M, 4}(\vphi) := {\cal E}(\vphi)^{- 1} Q_M(\vphi) {\cal E}(\vphi) & = \ii \begin{pmatrix}
0 & {\rm Op}(q_{M, 4})  \\
- \overline{{\rm Op}(q_{M, 4})} & 0 
\end{pmatrix}\,, \quad q_{M, 4}:= q_M {\rm exp}(- 2 \ii e) \,.
\end{align}
\begin{lemma}\label{maradona lemma}
Let $\gamma \in (0, 1)$, $\tau > 0$, $M \in \N$, $q > s_0 + \sigma + 2 \tau + M + 1$. Then there exists a constant $\delta_q \in (0, 1)$ (possibly smaller than the one appearing in Proposition \ref{Lemma finale decoupling}) such that for $\e \gamma^{- 1} \leq \delta_q$, for any $s_0 \leq s \leq q - M- \sigma - 2 \tau - 1$,  the following holds:
 for any $\alpha \in \sigma_0(\sqrt{- \Delta})$, the constant $c(\alpha) = c(\alpha; \omega)$, given in \eqref{definizione costante c alpha}, is real and defined for all the parameters $\omega \in DC(\gamma, \tau)$. Furthermore it satisfies the Lipschitz estimate   
\begin{equation}\label{stime c(alpha)}
\sup_{\alpha \in \sigma_0(\sqrt{- \Delta})} |c(\alpha)|^\Lipg\alpha \lesssim_{M, q} \e\,, 
\end{equation} 
The symplectic invertible operator ${\cal E}(\vphi) = {\cal E}(\vphi; \omega) \in OPS^0$, $\omega \in DC(\gamma, \tau)$, defined in \eqref{operatore cal E} satisfies the estimates 
\begin{equation}\label{stime cal E}
\norma {\cal E}^{\pm 1} \norma_{0, s}^\Lipg\,,\, \norma {\cal E}^T \norma_{0, s}^\Lipg \lesssim_{M, q} 1 
\end{equation}
The Hamiltonian vector field $Q_{M, 4}(\vphi) = Q_{M, 4}(\vphi; \omega) \in OPS^{- M}$, $\omega \in DC(\gamma, \tau)$ defined in \eqref{Q M 4} satisfies the estimates 
\begin{equation}\label{stime QM 4}
\norma Q_{M, 4}\norma_{- M, s}^\Lipg \lesssim_{M, q} \e  
\end{equation}
\end{lemma}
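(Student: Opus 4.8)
The plan is to verify each conclusion of Lemma~\ref{maradona lemma} by unwinding the explicit formulas \eqref{definizione costante c alpha}, \eqref{definizione simbolo e vphi alpha}, \eqref{operatore cal E}, \eqref{Q M 4} and invoking the symbol estimates already established for $R_M$ and $Q_M$ in Proposition~\ref{Lemma finale decoupling}. First I would record that, by Proposition~\ref{Lemma finale decoupling}, the symbol $r_M(\vphi,\cdot)\in S^{-1}$ is defined for $\omega\in DC(\gamma,\tau)$ and satisfies $\norma R_M\norma_{-1,s}^\Lipg\lesssim_{M,q}\e$, i.e. $\sup_\alpha \| r_M(\cdot,\alpha)\|_s^\Lipg\,\alpha\lesssim_{M,q}\e$ for $s_0\le s\le q-\sigma-M$. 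Since $r_M$ is the symbol of a self-adjoint diagonal operator (the diagonal block of a Hamiltonian vector field in complex coordinates), by \eqref{autoaggiuntezza pseudo diff} each $r_M(\vphi,\alpha)$ is real, hence its $\vphi$-average $c(\alpha)$ in \eqref{definizione costante c alpha} is real; this gives the reality claim. For the estimate \eqref{stime c(alpha)}, note $|c(\alpha)|^\Lipg\le \|r_M(\cdot,\alpha)\|_{s_0}^\Lipg$ by the trivial bound on the zeroth Fourier coefficient, so multiplying by $\alpha$ and taking the sup over $\alpha$ yields $\sup_\alpha|c(\alpha)|^\Lipg\alpha\lesssim_{M,q}\e$.

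Next I would estimate the symbol $e(\vphi,\alpha)$ defined in \eqref{definizione simbolo e vphi alpha}. Applying the Diophantine estimate \eqref{stima om d vphi inverso} for $(\omega\cdot\partial_\vphi)^{-1}$ fixing $\alpha$ and using $\|r_M(\cdot,\alpha)-c(\alpha)\|_s^\Lipg\lesssim \|r_M(\cdot,\alpha)\|_s^\Lipg$, one gets $\|e(\cdot,\alpha)\|_s^\Lipg\le \gamma^{-1}\|r_M(\cdot,\alpha)\|_{s+2\tau+1}^\Lipg$, hence $\norma {\rm Op}(e)\norma_{-1,s}^\Lipg\lesssim \gamma^{-1}\norma R_M\norma_{-1,s+2\tau+1}^\Lipg\lesssim_{M,q}\e\gamma^{-1}\le \delta_q$, valid for $s_0\le s\le q-M-\sigma-2\tau-1$. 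In particular the smallness condition \eqref{piccolezza Psi m s0} of Lemma~\ref{esponenziale norma pseudo diff} is met at $s=s_0$ (after possibly shrinking $\delta_q$), so ${\cal E}=\exp(\ii E)$ is well-defined, invertible, symplectic (as noted after \eqref{campo hamiltoniano complesso} since $\ii E$ is a Hamiltonian vector field in complex coordinates), and ${\cal E}^{\pm1}-{\rm Id}\in OPS^{-1}\subset OPS^0$ with $\norma {\cal E}^{\pm1}-{\rm Id}\norma_{0,s}^\Lipg\lesssim_{M,q}\e$ by \eqref{stima norma pseudo diff Phi - I} and \eqref{proprieta facili norma pseudo 1}; this gives \eqref{stime cal E}, with the bound on $\norma{\cal E}^T\norma_{0,s}^\Lipg$ following from $\norma{\cal E}^T\norma_{0,s}\le\norma{\cal E}\norma_{0,s}$ exactly as in the proof of Proposition~\ref{Lemma finale decoupling}. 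The homological equation \eqref{equazione omologica diagonalizzazione decoupling} is solved by construction, so \eqref{forma finale cal L4 M}--\eqref{Q M 4} hold and $D_M$ in \eqref{operatore diagonal pre KAM} is a real diagonal Fourier multiplier of order $1$.

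Finally, for the remainder $Q_{M,4}$ in \eqref{Q M 4}, I would write $q_{M,4}=q_M\exp(-2\ii e)$ and estimate it with the composition Lemma~\ref{composizione fourier multiplier}: since $q_M(\vphi,\cdot)\in S^{-M}$ with $\norma Q_M\norma_{-M,q}^\Lipg\lesssim_{M,q}\e$ and $\exp(-2\ii e)-1\in OPS^{-1}$ with $\norma\cdot\norma_{-1,s}^\Lipg\lesssim_{M,q}\e$ (again by Lemma~\ref{esponenziale norma pseudo diff}), while $\exp(-2\ii e)\in OPS^0$ with $\norma\cdot\norma_{0,s}^\Lipg\lesssim_{M,q}1$, the product rule gives $\norma Q_{M,4}\norma_{-M,s}^\Lipg\lesssim_s \norma Q_M\norma_{-M,s}^\Lipg\norma \exp(-2\ii e)\norma_{0,s_0}^\Lipg+\norma Q_M\norma_{-M,s_0}^\Lipg\norma\exp(-2\ii e)\norma_{0,s}^\Lipg\lesssim_{M,q}\e$ for $s_0\le s\le q-M-\sigma-2\tau-1$; conjugation by ${\cal E}^{\pm1}\in OPS^0$ preserves the class and the bound, giving \eqref{stime QM 4}. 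The structure \eqref{Q M 4} (off-diagonal, self-adjoint-compatible) is inherited from that of $Q_M$ in \eqref{QM decoupling} together with the diagonal form \eqref{cal E ed inverso esplicito} of ${\cal E}^{\pm1}$ and the conjugation identities \eqref{proprieta elementari simboli classe Sm}. The only mildly delicate point — and the place where the loss of $2\tau+1$ derivatives in $\vphi$ enters, forcing the more restrictive threshold $q>s_0+\sigma+2\tau+M+1$ — is the bookkeeping of the regularity indices when applying \eqref{stima om d vphi inverso} and then composing; everything else is a routine application of the already-proved symbol calculus.
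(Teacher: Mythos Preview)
Your proposal is correct and follows essentially the same route as the paper's own proof: reality of $c(\alpha)$ from the self-adjointness of ${\rm Op}(r_M)$ via \eqref{autoaggiuntezza pseudo diff}, the bound \eqref{stime c(alpha)} from \eqref{stime RM QM}, the estimate on $e$ from \eqref{stima om d vphi inverso}, and then \eqref{stime QM 4} via the composition Lemma~\ref{composizione fourier multiplier}. The only minor variation is that you bound ${\cal E}^{\pm1}$ through Lemma~\ref{esponenziale norma pseudo diff} (exploiting $E\in OPS^{-1}$), whereas the paper, using the explicit formula \eqref{cal E ed inverso esplicito}, bounds $\|\exp(\pm\ii e(\cdot,\alpha))\|_s$ directly by Moser's composition Lemma~\ref{Moser norme pesate}; both routes are routine and yield \eqref{stime cal E}.
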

\begin{proof}
Since the remainder $R_M$ in \eqref{RM decoupling} is a Hamiltonian vector field, then ${\rm Op}(r_M)$ is self-adjoint, hence by \eqref{autoaggiuntezza pseudo diff} the symbol $r_M(\vphi, \alpha)$ is real, implying that, by \eqref{definizione costante c alpha}, $c(\alpha)$ is real for any $\alpha \in \sigma_0(\sqrt{- \Delta})$. The estimate \eqref{stime c(alpha)} follows by \eqref{definizione costante c alpha}, \eqref{stime RM QM}.
The estimates \eqref{stime cal E} follow by \eqref{cal E ed inverso esplicito}, \eqref{definizione simbolo e vphi alpha}, \eqref{stime RM QM}, \eqref{stime c(alpha)} (using also Lemma \ref{Moser norme pesate} to estimate $\| {\rm exp}(\ii e) \|_s$.)

\noindent
The estimate \eqref{stime QM 4} follows by Lemma \ref{composizione fourier multiplier} and by the estimates \eqref{stime RM QM},  \eqref{stime cal E}. 
\end{proof}

\subsubsection{Conjugation of the operator ${\cal L}_3$ in \eqref{cal L3 notazioni compatte}} 
Now we compute the conjugation of the vector field ${\cal L}_3 = { L}_3^{(0)} + {\cal R}_3$ in \eqref{cal L3 notazioni compatte} (see \eqref{A0 bf R0}, \eqref{cal L 3 0}). First, we link the number of regularization steps with the regularity $q$ of the functions $a(\vphi), b_k(\vphi, x), c_k(\vphi, x)$, $k = 1, \ldots, N$ (recall \eqref{perturbazione P omega t}, \eqref{definizione perturbazione rango finito}). We define 
\begin{equation}\label{scelta di M nuove norme}
M  = M(q) := [q/2]\,, \qquad \overline \mu = \overline \mu (\tau, d) := \frac{d - 1}{2} + \sigma + 2 \tau + 1
\end{equation}
and we define the map  
\begin{equation}\label{definizione cal TM}
{\cal T} :=  \widetilde{\cal V}_M \circ {\cal E}\,.
\end{equation}
By \eqref{cal LN decoupling}, \eqref{forma finale cal L4 M} one gets that 
\begin{equation}\label{cal L5}
{\cal L}_4(\vphi) := ({\cal T})_{\omega*} {\cal L}_3(\vphi) = \ii D_M T + {\cal R}_4(\vphi)
\end{equation}
where the diagonal operator $D_M$ is defined in \eqref{operatore diagonal pre KAM}, $T$ is defined in \eqref{notazione T Id0} and the operator ${\cal R}_4$ is defined by
\begin{equation}\label{cal RM pre KAM}
{\cal R}_4(\vphi) := Q_{M, 4}(\vphi) + \e {\cal T}(\vphi)^{- 1} {\cal R}_3(\vphi) {\cal T}(\vphi)\,, \quad \vphi \in \T^\nu\,. 
\end{equation}
\begin{lemma}\label{ultimo lemma pre riducibilita}
Let $\gamma \in (0, 1)$, $\tau > 0$, $q > 2 (s_0 + \overline \mu)$, where $\overline \mu$ is defined in \eqref{scelta di M nuove norme}. Then there exists $\delta_q \in (0, 1)$ (possibly smaller than the one appearing in Lemma \ref{maradona lemma}) such that if $\e \gamma^{- 1} \leq \delta_q$, for all $s_0 \leq s \leq [q / 2] - \overline \mu$, the following holds: the symplectic invertible operator ${\cal T}(\vphi) = {\cal T}(\vphi; \omega) \in OPS^0$, $\omega \in DC(\gamma, \tau)$ defined in \eqref{operatore cal E} satisfies the estimates 
\begin{equation}\label{stime cal TM}
\norma {\cal T}^{\pm 1} \norma_{0, s}^\Lipg\,,\, \norma {\cal T}^T \norma_{0, s}^\Lipg \lesssim_q  1\,. 
\end{equation}
As a consequence one has ${\cal T}^{\pm 1} \in {\cal C}^1 \big( \T^\nu, {\cal B}({\bf H}^s_0(\T^d)) \big)$. 

\noindent
The remainder ${\cal R}_4(\vphi) = {\cal R}_4(\vphi; \omega)$, $\omega \in DC(\gamma, \tau)$ defined in \eqref{cal RM pre KAM} satisfies the estimates 
\begin{equation}\label{stime cal RM pre KAM}
| {\cal R}_4  |_s^\Lipg \lesssim_q \e
\end{equation}
where the block-decay norm $| \cdot |_s^\Lipg$ is defined in \eqref{decadimento Kirchoff}-\eqref{norma decadimento operatore matriciale}. 
\end{lemma}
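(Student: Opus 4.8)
The plan is to prove Lemma \ref{ultimo lemma pre riducibilita} by simply assembling the estimates already obtained in Proposition \ref{Lemma finale decoupling} and Lemma \ref{maradona lemma}, together with the structural lemmas on the various operator norms. First I would fix the parameters: since $M = M(q) = [q/2]$ and $\overline\mu = (d-1)/2 + \sigma + 2\tau + 1$, the hypothesis $q > 2(s_0 + \overline\mu)$ guarantees that $[q/2] - \overline\mu > s_0$, so the range $s_0 \le s \le [q/2] - \overline\mu$ is nonempty, and moreover one checks that $[q/2] - \overline\mu \le q - M - \sigma - 2\tau - 1$, so the smallness condition $\e\gamma^{-1} \le \delta_q$ (with $\delta_q$ taken to be the minimum of the constants appearing in Proposition \ref{Lemma finale decoupling} and Lemma \ref{maradona lemma}) legitimately puts us in the regime where all previous statements apply on the full range of $s$ considered here. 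The estimate \eqref{stime cal TM} for ${\cal T} = \widetilde{\cal V}_M \circ {\cal E}$ then follows from ${\cal T}^{\pm1} = {\cal E}^{\pm1} \widetilde{\cal V}_M^{\pm1}$ (or the reverse order for the inverse) and ${\cal T}^T = {\cal E}^T \widetilde{\cal V}_M^T$ by applying the composition Lemma \ref{composizione fourier multiplier} to the bounds \eqref{stime tilde cal V M} and \eqref{stime cal E}; the ${\cal C}^1$ regularity in ${\cal B}({\bf H}^s_0(\T^d))$ is then a direct consequence of Lemma \ref{stima Hs x fourier multiplier}.

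Next I would treat the two pieces of ${\cal R}_4(\vphi) = Q_{M,4}(\vphi) + \e\,{\cal T}(\vphi)^{-1}{\cal R}_3(\vphi){\cal T}(\vphi)$ separately and estimate each in the block-decay norm $|\cdot|_s^\Lipg$. For the first piece, $Q_{M,4} \in OPS^{-M}$ with $\norma Q_{M,4}\norma_{-M,s}^\Lipg \lesssim_{M,q}\e$ by \eqref{stime QM 4}; one then passes to the block-decay norm via Lemma \ref{lemma norma decadimento norma pseudo diff}, which gives $|Q_{M,4}|_s^\Lipg \lesssim \norma Q_{M,4}\norma_{-s - (d-1)/2,\, s}^\Lipg$. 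This is where the specific value of $M$ enters: we need $-M \le -s - (d-1)/2$, i.e. $s \le M - (d-1)/2 = [q/2] - (d-1)/2$, which is implied by $s \le [q/2] - \overline\mu$ since $\overline\mu \ge (d-1)/2$; combined with the monotonicity \eqref{proprieta facili norma pseudo 1} in the order, this yields $|Q_{M,4}|_s^\Lipg \lesssim_q \e$. For the second piece, ${\cal R}_3$ has the matrix form \eqref{A0 bf R0} built out of the finite-rank operator ${\cal R}^{(3)}(\vphi)$ of the form \eqref{forma buona resto cal R (3) (vartheta)}, so by Lemma \ref{decadimento operatori di proiezione} and the estimates \eqref{stime cal R3 1} one gets $|{\cal R}_3|_s^\Lipg \lesssim_q 1$; then conjugation by ${\cal T}^{\pm1}$ is controlled by the interpolation Lemma \ref{interpolazione decadimento Kirchoff} for the block-decay norm, using that $\widetilde{\cal V}_M, {\cal E}$ — being bounded Fourier multipliers of order $0$ by \eqref{stime cal TM} — have finite block-decay norm on the relevant range (again via Lemma \ref{lemma norma decadimento norma pseudo diff}, which costs the extra $(d-1)/2$ derivatives of regularity that are precisely budgeted into $\overline\mu$). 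Multiplying by $\e$ gives $\e\,|{\cal T}^{-1}{\cal R}_3{\cal T}|_s^\Lipg \lesssim_q \e$, and adding the two contributions proves \eqref{stime cal RM pre KAM}.

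The main obstacle, and the point requiring care rather than cleverness, is bookkeeping the loss of regularity: one has to verify that the single fixed number $\overline\mu = (d-1)/2 + \sigma + 2\tau + 1$ simultaneously absorbs (i) the loss $\sigma$ from the time reparametrization of Lemma \ref{stime step 1}, (ii) the loss $2\tau + 1$ from solving the two homological equations \eqref{equation for alpha}, \eqref{definizione simbolo e vphi alpha} via $(\omega\cdot\partial_\vphi)^{-1}$ with the Lipschitz estimate \eqref{stima om d vphi inverso}, (iii) the loss $M = [q/2]$ from the $M$ block-decoupling steps of Proposition \ref{Lemma finale decoupling}, and (iv) the $(d-1)/2$ extra derivatives needed to convert the pseudodifferential norm $\norma\cdot\norma_{m,s}$ into the block-decay norm $|\cdot|_s$ in Lemma \ref{lemma norma decadimento norma pseudo diff}. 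Concretely, with $M = [q/2]$ the finite-rank / smoothing remainder lives in $OPS^{-[q/2]}$, and we only claim \eqref{stime cal RM pre KAM} for $s \le [q/2] - \overline\mu$, which is exactly the threshold below which $Q_{M,4}$ still counts as a $|\cdot|_s$-regularizing perturbation; checking these inequalities and that $\delta_q$ can be chosen uniformly on the whole range is the substance of the proof.

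\begin{proof}
Set $\delta_q$ to be the minimum of the constants $\delta_q$ appearing in Proposition \ref{Lemma finale decoupling} and Lemma \ref{maradona lemma}. By \eqref{scelta di M nuove norme}, $M = [q/2]$ and $\overline\mu = (d-1)/2 + \sigma + 2\tau + 1$, so that for $q > 2(s_0 + \overline\mu)$ one has $[q/2] - \overline\mu > s_0$ and moreover $[q/2] - \overline\mu \le q - M - \sigma - 2\tau - 1$; hence, for $\e\gamma^{-1} \le \delta_q$, all the conclusions of Lemma \ref{stime step 1}, Proposition \ref{Lemma finale decoupling} and Lemma \ref{maradona lemma} hold for every $s$ with $s_0 \le s \le [q/2] - \overline\mu$.

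Since ${\cal T} = \widetilde{\cal V}_M \circ {\cal E}$ (see \eqref{definizione cal TM}), we have ${\cal T}^{-1} = {\cal E}^{-1}\widetilde{\cal V}_M^{-1}$ and ${\cal T}^T = {\cal E}^T\widetilde{\cal V}_M^T$. By the composition Lemma \ref{composizione fourier multiplier}, the estimates \eqref{stime tilde cal V M}, \eqref{stime cal E} give $\norma{\cal T}^{\pm1}\norma_{0,s}^\Lipg,\ \norma{\cal T}^T\norma_{0,s}^\Lipg \lesssim_q 1$ for $s_0 \le s \le [q/2] - \overline\mu$, which is \eqref{stime cal TM}. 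Since $\norma{\cal T}^{\pm1}\norma_{0, s_0 + 1}^\Lipg < +\infty$, Lemma \ref{stima Hs x fourier multiplier} yields ${\cal T}^{\pm1} \in {\cal C}^1(\T^\nu, {\cal B}({\bf H}^s_0(\T^d)))$ for all $s \ge 0$.

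It remains to prove \eqref{stime cal RM pre KAM}. We estimate the two terms in \eqref{cal RM pre KAM}. First, $Q_{M,4} \in OPS^{-M}$ satisfies $\norma Q_{M,4}\norma_{-M, s}^\Lipg \lesssim_q \e$ by \eqref{stime QM 4}. For $s \le [q/2] - \overline\mu \le M - \tfrac{d-1}{2}$ we have $-M \le -s - \tfrac{d-1}{2}$, so by \eqref{proprieta facili norma pseudo 1} and Lemma \ref{lemma norma decadimento norma pseudo diff},
$$
|Q_{M,4}|_s^\Lipg \lesssim \norma Q_{M,4}\norma_{-s - \frac{d-1}{2},\, s}^\Lipg \le \norma Q_{M,4}\norma_{-M, s}^\Lipg \lesssim_q \e\,.
$$
Next, by \eqref{A0 bf R0} the operator ${\cal R}_3$ is built from the finite-rank operator ${\cal R}^{(3)}(\vphi)$ of the form \eqref{forma buona resto cal R (3) (vartheta)}; applying Lemma \ref{decadimento operatori di proiezione} to each summand and using the estimates \eqref{stime cal R3 1} we obtain $|{\cal R}_3|_s^\Lipg \lesssim_q 1$ for $s_0 \le s \le [q/2] - \overline\mu$. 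The operators ${\cal T}^{\pm1}$ are Fourier multipliers of order $0$ with $\norma{\cal T}^{\pm1}\norma_{0, s + \frac{d-1}{2}}^\Lipg \lesssim_q 1$ (using $s + \tfrac{d-1}{2} \le [q/2] - \overline\mu + \tfrac{d-1}{2} \le [q/2] - \sigma - 2\tau - 1$), so by Lemma \ref{lemma norma decadimento norma pseudo diff} they have finite block-decay norm $|{\cal T}^{\pm1}|_s^\Lipg \lesssim_q 1$. Hence by the interpolation Lemma \ref{interpolazione decadimento Kirchoff},
$$
\e\,\big| {\cal T}^{-1}{\cal R}_3{\cal T} \big|_s^\Lipg \lesssim_q \e\,\big(|{\cal T}^{-1}|_s^\Lipg |{\cal R}_3 {\cal T}|_{2 s_0}^\Lipg + |{\cal T}^{-1}|_{2 s_0}^\Lipg |{\cal R}_3 {\cal T}|_s^\Lipg\big) \lesssim_q \e\,.
$$
Adding the two bounds and recalling \eqref{cal RM pre KAM}, we conclude $|{\cal R}_4|_s^\Lipg \lesssim_q \e$ for all $s_0 \le s \le [q/2] - \overline\mu$.
\end{proof}
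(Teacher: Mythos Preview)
Your treatment of \eqref{stime cal TM}, the ${\cal C}^1$-regularity, and the estimate on $Q_{M,4}$ is correct and essentially identical to the paper's.

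The gap is in your bound on $\e\,{\cal T}^{-1}{\cal R}_3{\cal T}$. You invoke Lemma \ref{lemma norma decadimento norma pseudo diff} to conclude $|{\cal T}^{\pm1}|_s^\Lipg \lesssim_q 1$ from $\norma{\cal T}^{\pm1}\norma_{0, s+\frac{d-1}{2}}^\Lipg \lesssim_q 1$, but that lemma gives $|{\cal R}|_s \lesssim \norma{\cal R}\norma_{-s-\frac{d-1}{2},\,s}$, i.e.\ it requires control of the norm of order $-s-\tfrac{d-1}{2}$, not of order $0$ at regularity $s+\tfrac{d-1}{2}$. Since ${\cal T}\in OPS^0$ is close to the identity, its symbol does \emph{not} decay in $\alpha$, and in fact $|{\cal T}^{\pm1}|_s=\infty$ for every $s\ge 0$: the diagonal block of the identity is $\mathbb I_\alpha$ with $\|\mathbb I_\alpha\|_{HS}\sim \alpha^{(d-1)/2}$, so already $|{\rm Id}|_s=\sup_\alpha \alpha^{s+(d-1)/2}=\infty$. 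Thus the composition estimate via Lemma \ref{interpolazione decadimento Kirchoff} cannot be applied as you do.

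The paper avoids this by exploiting that ${\cal R}_3$ is finite rank and that conjugation by ${\cal T}$ \emph{preserves} this structure: writing ${\cal R}_3[{\bf u}]=\sum_k B_{1,k}\langle C_{2,k},{\bf u}\rangle_{{\bf L}^2_x}+C_{1,k}\langle B_{2,k},{\bf u}\rangle_{{\bf L}^2_x}$, one has
\[
({\cal T}^{-1}{\cal R}_3{\cal T})[{\bf u}]=\sum_k \big({\cal T}^{-1}B_{1,k}\big)\langle {\cal T}^T C_{2,k},{\bf u}\rangle_{{\bf L}^2_x}+\big({\cal T}^{-1}C_{1,k}\big)\langle {\cal T}^T B_{2,k},{\bf u}\rangle_{{\bf L}^2_x}\, .
\]
Now Lemma \ref{azione fourier multiplier} together with \eqref{stime cal TM} bounds the Sobolev norms of the transformed kernel functions ${\cal T}^{-1}B_{1,k},\,{\cal T}^T C_{2,k},\ldots$ by \eqref{stime cal R3 1}, and Lemma \ref{decadimento operatori di proiezione} converts these directly into the block-decay bound $|{\cal T}^{-1}{\cal R}_3{\cal T}|_s^\Lipg \lesssim_q 1$. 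This is the step you are missing: you must pass ${\cal T}^{\pm1}$ through the finite-rank structure onto the kernel functions, rather than try to bound $|{\cal T}^{\pm1}|_s$ itself.
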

\begin{proof}
By the choices of the constants in \eqref{scelta di M nuove norme}, one has that if $s_0 \leq s \leq [q / 2] - \overline \mu$, then 
$$
s + \frac{d - 1}{2} \leq M \qquad \text{and} \qquad  s_0 \leq s \leq q - M- \sigma - 2 \tau - 1\,.
$$
The estimates \eqref{stime cal TM} follow by Lemma \ref{composizione fourier multiplier} and by the estimates \eqref{stime tilde cal V M}, \eqref{stime cal E}. The fact that ${\cal T}^{\pm 1} \in {\cal C}^1 \big( \T^\nu, {\cal B}({\bf H}^s_0(\T^d)) \big)$ follows by applying Lemma \ref{stima Hs x fourier multiplier}. 

\noindent
Now we prove the estimate \eqref{stime cal RM pre KAM}. We estimate separately the two terms in \eqref{cal RM pre KAM}. 

\noindent
{\sc Estimate of $Q_{M, 4}$.}  By Lemma \ref{lemma norma decadimento norma pseudo diff} one gets 
$$
|Q_{M, 4} |_s^\Lipg  \lesssim \norma Q_{M, 4} \norma_{- s - \frac{d - 1}{2} , s}^\Lipg\,.
$$
hence we can apply the estimate \eqref{stime QM 4}, obtaining that $\norma Q_{M, 4} \norma_{- s - \frac{d - 1}{2} , s}^\Lipg \lesssim \norma Q_{M, 4} \norma_{-M , s}^\Lipg \lesssim_{M, q} \e \lesssim_q \e$, since the constant $M = M(q) = [q/2]$.  

\noindent
{\sc Estimate of ${\cal T}^{- 1} {\cal R}_3 {\cal T}$.} Recalling the definition of ${\cal R}_3$ given in \eqref{A0 bf R0} and using that the operator ${\cal R}^{(3)}$ has the form \eqref{forma buona resto cal R (3) (vartheta)}, defining 
$$
B_{1, k} := (\ii b_k^{(3)}, - \ii b_k^{(3)})\,, \quad B_{2, k} := (b_k^{(3)}, b_k^{(3)})\,, \quad C_{1, k} := (\ii c_k^{(3)}, -\ii c_k^{(3)})\,, \quad C_{2, k} := (c_k^{(3)}, c_k^{(3)})\,, \quad k = 1, \ldots, N
$$ we have that for ${\bf u} = (u, \overline u) \in {\bf L}^2_0(\T^d)$,  
$$
{\cal R}_3 [{\bf u}] = \sum_{k = 1}^N B_{1, k} \langle C_{2, k}\,,\, {\bf u} \rangle_{{\bf L}^2_x} + C_{1, k} \langle B_{2, k}\,,\, {\bf u} \rangle_{{\bf L}^2_x} 
$$
where we recall that the bilinear form $\langle \cdot \,,\, \cdot \rangle_{{\bf L}^2_x}$ is defined in \eqref{prodotto scalare u bar u}. Thus 
$$
({\cal T}^{- 1} {\cal R}_3 {\cal T} )[{\bf u}] = \sum_{k = 1}^N \widetilde B_{1, k} \langle \widetilde C_{2, k}\,,\, {\bf u} \rangle_{{\bf L}^2_x} + \widetilde C_{1, k} \langle \widetilde B_{2, k}\,,\, {\bf u} \rangle_{{\bf L}^2_x}\,, 
$$
$$
\widetilde B_{1, k} := {\cal T}^{- 1} B_{1, k}\,, \quad \widetilde B_{2, k} := {\cal T}^T B_{2, k}\,, \quad \widetilde C_{1, k} := {\cal T}^{- 1} C_{1, k}\,, \quad \widetilde C_{2, k} := {\cal T}^T C_{2, k}\,, \quad k = 1, \ldots, N\,. 
$$
The operator $\e {\cal T}^{- 1} {\cal R}_3 {\cal T}$ satisfies the claimed inequality, by applying the estimates \eqref{stime cal R3 1}, \eqref{stime cal TM} and Lemmata \ref{azione fourier multiplier}, \ref{decadimento operatori di proiezione}. 
\end{proof}

\section{Block-diagonal reducibility}\label{sec:redu}
In this section we carry out the second part of the reduction of ${\cal L}(\vphi)$ to a block-diagonal operator with constant coefficients. Our goal is to block-diagonalize the linear Hamiltonian vector field ${\cal L}_4(\vphi) $  obtained in \eqref{cal L5}. We are going to perform an iterative Nash-Moser reducibility scheme for the linear Hamiltonian vector field  
\be\label{L0}
{\cal L}_{0}(\vphi) := {\cal L}_4(\vphi) = {\cal D}_{0} + {\cal R}_{0}(\vphi) \, , 
\ee
 where 
 \begin{equation}\label{cal D0 riducibilita}
 {\cal D}_0 = \ii \begin{pmatrix}
 - {\cal D}_0^{(1)} & 0 \\
 0 &  {\cal D}_0^{(1)}
 \end{pmatrix}\,, \quad {\cal D}_0^{(1)} := D_M = {\rm diag}_{j \in \Z^d \setminus \{ 0 \}} \big( m |j| + c(|j|) \big)
 \end{equation}
 (see \eqref{operatore diagonal pre KAM}) and ${\cal R}_0(\vphi) := {\cal R}_4(\vphi)$, $\vphi \in \T^\nu$, is a Hamiltonian vector field of the form 
  \begin{equation}\label{cal R0 riducibilita}
 {\cal R}_0(\vphi) = \ii \begin{pmatrix}
 {\cal R}_0^{(1)}(\vphi) & {\cal R}_0^{(2)}(\vphi) \\
 - \overline{\cal R}_0^{(2)}(\vphi) & - \overline{\cal R}_0^{(1)}(\vphi)
 \end{pmatrix}\,, \quad {\cal R}_0^{(1)}(\vphi) =  {\cal R}_0^{(1)}(\vphi)^*\,, \quad {\cal R}_0^{(2)}(\vphi) =  {\cal R}_0^{(2)}(\vphi)^T
 \end{equation}
satisfying, by \eqref{stime cal RM pre KAM}, the estimate
 \begin{equation}\label{stime primo resto KAM}
 |{\cal R}_0|_s^\Lipg \lesssim_q \e\,, \qquad \forall s_0 \leq s \leq [q/2 ]- \overline \mu
 \end{equation}
 where the constant $\overline \mu$ is defined in  \eqref{scelta di M nuove norme}. Note that, according to the block representation \eqref{notazione a blocchi}, the operator ${\cal D}_0^{(1)}$ can be written as 
 \begin{equation}\label{rappresentazione a blocchi cal D0 (1)}
 {\cal D}_0^{(1)} = {\rm diag}_{\alpha \in \sigma_0(\sqrt{- \Delta})} \mu_\alpha^{0}{\mathbb I}_\alpha\,, \qquad \mu_\alpha^0 := m\,  \alpha + c(\alpha)\,, \quad \forall \alpha \in \sigma_0(\sqrt{- \Delta})
 \end{equation}
   where ${\mathbb I}_\alpha : {\mathbb E}_\alpha \to {\mathbb E}_\alpha$ is the identity (recall \eqref{bf E alpha}, \eqref{notazione operatore diagonale a blocchi}) and the real constants $m$ and $c(\alpha)$ satisfy the estimates \eqref{stime m}, \eqref{stime c(alpha)}. We define
\be\label{defN}
N_{-1} := 1 \, ,  \quad 
N_{k} := N_{0}^{\chi^{k}} \  \forall k \geq 0 \, ,  \quad  
\chi := 3 /2  
\ee
(then $ N_{k +1} = N_{k}^\chi $, $ \forall k \geq 0 $) and for $\tau, \mathtt d > 0$, we define the constants
\begin{equation}\label{definizione alpha}
 \mathfrak s_0 := 2 s_0, \quad \mathtt a := 4 \tau + 8 \mathtt d + 3\,, \quad \mathtt b := \mathtt a + 1\,, \quad  S_q := [q/2] - \overline \mu - \mathtt b\,, \quad \text{with} \quad q >2( \mathfrak s_0 + \overline \mu + \mathtt b )\,.
\end{equation}
In order to state the theorem below, we recall the definition of the space ${\mathcal S}({\mathbb E}_\alpha), \alpha \in \sigma_0(\sqrt{- \Delta})$ given in \eqref{cal S E alpha}, the definition of the norm $\| \cdot \|_{{\rm Op}(\alpha,\beta)}, \alpha, \beta \in \sigma_0(\sqrt{- \Delta})$ given in \eqref{norma operatoriale su matrici alpha beta}, the identity ${\mathbb I}_{\alpha, \beta}, \alpha, \beta \in \sigma_0(\sqrt{- \Delta})$ in \eqref{operatore identita matrici alpha beta}, the definition of $M_L(A)$ in \eqref{definizione moltiplicazione sinistra matrici} and the definition of $M_R(B)$ in \eqref{definizione moltiplicazione destra matrici}. 
\begin{theorem}{{\bf (KAM reducibility)}} \label{thm:abstract linear reducibility}  
Let $\gamma \in (0, 1)$, $\tau, \mathtt d > 0$ and let $q$ satisfy \eqref{definizione alpha}.
There exist, $ N_{0 }  = N_0(q, \tau, \mathtt d,  \nu, d)  \in \N $ large enough, $\delta_q = \delta(q, \tau, \mathtt d,  \nu, d) \in (0, 1)$ (possibly smaller than the one appearing in Lemma \ref{ultimo lemma pre riducibilita})  such that, if 
\begin{equation}\label{piccolezza1}
\e \gamma^{- 1} \leq \delta_q 
\end{equation}
then, for all $ k \geq 0 $:

\begin{itemize}
\item[${\bf(S1)}_k$] 
There exists a Hamiltonian vector field  
\be\label{def:Lj}
{\cal L}_k (\vphi) :={\cal D}_k + {\cal R}_k (\vphi)\,, \qquad \vphi \in \T^\nu\,,
\ee
\begin{equation}\label{cal D nu}
{\cal D}_k = \ii \begin{pmatrix}
 - {\cal D}^{(1)}_k  & 0 \\
0 & \overline{{\cal D}^{(1)}_k}
\end{pmatrix}\,,\qquad {\cal D}_k^{(1)} := {\rm diag}_{\alpha \in \sigma_0(\sqrt{- \Delta}) } [{\cal D}_k^{(1)}]_\alpha^\alpha\,, \quad [{\cal D}_k^{(1)}]_\alpha^\alpha \in {\cal S}({\mathbb E}_\alpha)\,, \quad \forall \alpha \in \sigma_0(\sqrt{- \Delta})\,,
\end{equation}
defined for all $ \omega \in \Omega_{k}^{\g}$, where 
$\Omega_{0}^{\g} :=  DC(\gamma, \tau)$ (see \eqref{diofantei Kn}) and 
 for $k \geq 1$, 
\begin{align}
\Omega_{k}^{\g} & :=  \Big\{\omega \in \Omega_{k - 1}^{\gamma} : \| {\mathbb A}_{k - 1}^{-}(\ell, \alpha, \beta)^{- 1} \|_{{\rm Op}(\alpha, \beta)} \leq \frac{\alpha^{\mathtt d} \beta^{\mathtt d}\langle \ell \rangle^\tau}{\gamma }\,, \quad \forall (\ell, \alpha, \beta) \in \Z^\nu \times \sigma_0(\sqrt{- \Delta}) \times  \sigma_0(\sqrt{- \Delta})\,, \nonumber\\
&  (\ell, \alpha, \beta) \neq (0, \alpha, \alpha)\,, \quad \langle \ell, \alpha, \beta \rangle \leq N_{k - 1}  \quad \text{and} \quad  \| {\mathbb A}_{k - 1}^+(\ell, \alpha, \beta)^{- 1} \|_{{\rm Op}(\alpha, \beta)} \leq \frac{\langle \ell \rangle^\tau}{\gamma \langle \alpha + \beta \rangle}\,, \nonumber\\
&  \forall (\ell, \alpha, \beta) \in \Z^\nu \times \sigma_0(\sqrt{- \Delta}) \times \sigma_0(\sqrt{- \Delta})\,, \quad \langle\ell, \alpha , \beta \rangle \leq N_{k - 1} \Big\}\,. \label{Omgj}
\end{align}
The operators ${\mathbb A}^{\pm}_{k - 1}(\ell, \alpha, \beta) : {\cal B}({\mathbb E}_\beta, {\mathbb E}_\alpha) \to  {\cal B}({\mathbb E}_\beta, {\mathbb E}_\alpha)$ are defined by
\begin{equation}\label{bf A nu - (ell,alpha,beta)}
{\mathbb A}^{-}_{k - 1}(\ell, \alpha, \beta) := \omega \cdot \ell {\mathbb I}_{\alpha, \beta} + M_L([{\cal D}_{k - 1}^{(1)}]_\alpha^\alpha) - M_R([{\cal D}_{k - 1}^{(1)}]_\beta^\beta)\,,
\end{equation}
\begin{equation}\label{bf A nu + (ell,alpha,beta)}
{\mathbb A}^{+}_{k - 1}(\ell,  \alpha, \beta) := \omega \cdot \ell {\mathbb I}_{\alpha, \beta} + M_L([{\cal D}_{k - 1}^{(1)}]_\alpha^\alpha) + M_R([\overline{{\cal D}_{k - 1}^{(1)}}]_\beta^\beta)\,.
\end{equation}
For $k \geq 0$, for all $\alpha \in \sigma_0(\sqrt{- \Delta})$, the self-adjoint operator $[{\cal D}_k^{(1)}]_\alpha^\alpha \in {\cal S}({\mathbb E}_\alpha)$ satisfies 
\begin{equation}  \label{rjnu bounded}
 \|[{\cal D}_k^{(1)} - {\cal D}_0^{(1)}]_\alpha^\alpha \|^\Lipg_{HS}  \lesssim_{q} \e  \alpha^{- S_q} \, \quad \forall \alpha \in \sigma_0(\sqrt{- \Delta})\,. 
\end{equation}
The remainder $  {\cal R}_k  $ is Hamiltonian and $ \forall s \in [ \frak s_0,  S_q ] $,  
\begin{align}
\left| {\cal R}_{k}  \right|_{s}^\Lipg   \leq  \left|{\cal R}_{0} \right|_{s+\mathtt b}^\Lipg N_{k - 1}^{-\mathtt a} \, , \quad \left| {\cal R}_{k} \right|_{s + \mathtt b}^\Lipg  \leq \left| {\cal R}_{0} \right|_{s+ \mathtt b}^\Lipg \,N_{k - 1}\,. \label{Rsb}  
\end{align}
Moreover, for $k \geq 1 $, 
\be	\label{Lnu+1}
{\cal L}_{k} (\vphi)= (\Phi_k)_{\omega*} {\cal L}_{k - 1}(\vphi)  \, , \quad \Phi_{k -1} := {\rm exp}(\Psi_{k  - 1}) \, 
\ee
where the map $ \Psi_{k-1} $  is a Hamiltonian vector field and satisfies 
\be\label{Psinus}  
 |\Psi_{k - 1}|_s^\Lipg \leq 
|{\cal R}_{0}  |_{s+\mathtt b}^\Lipg \gamma^{-1} N_{k -1}^{2 \t + 4 \mathtt d+1} N_{k - 2}^{- \mathtt a}  \, . 
\ee

\item[${\bf(S2)}_{k}$] 
For all $ \alpha \in \sigma_0(\sqrt{- \Delta})$,  there exists a Lipschitz extension to the set $DC(\gamma, \tau)$, that we denote by  
$ [\widetilde{\cal D}_k^{(1)}]_\alpha^\alpha(\cdot ): DC(\gamma, \tau) \to {\cal S}({\mathbb E}_\alpha) $ of  $ [{\cal D}_k^{(1)}]_\alpha^\alpha(\cdot ) : \Omega_k^\g \to {\cal S}({\mathbb E}_\alpha)$ 
satisfying,  for $k \geq 1$, 
\be\label{lambdaestesi}  
\|[\widetilde{\cal D}_k^{(1)}]_\alpha^\alpha -  [\widetilde{\cal D}_{k - 1}^{(1)}]_\alpha^\alpha \|^\Lipg_{HS}  \lesssim  \alpha^{-  S_q} | {\cal R}_{k -1}  |^\Lipg_{S_q}  \lesssim  N_{k - 2}^{- \mathtt a} \alpha^{- S_q} | {\cal R}_{0}  |^\Lipg_{S_q + \mathtt b} \,.
\ee
\end{itemize}
\end{theorem}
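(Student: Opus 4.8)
## Proof strategy for Theorem \ref{thm:abstract linear reducibility}

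The plan is to prove the two statements $({\bf S1})_k$ and $({\bf S2})_k$ simultaneously by induction on $k$. The base case $k = 0$ is essentially the content of \eqref{L0}--\eqref{rappresentazione a blocchi cal D0 (1)}: the vector field ${\cal L}_0 = {\cal L}_4$ already has the prescribed form, with ${\cal D}_0$ block-diagonal and self-adjoint in each block ${\mathbb E}_\alpha$, and the estimate \eqref{stime primo resto KAM} gives $({\bf S1})_0$ once one observes that $|{\cal R}_0|_s^\Lipg \lesssim_q \e \leq |{\cal R}_0|_{s+\mathtt b}^\Lipg$; the statement $({\bf S2})_0$ is trivial since $\Omega_0^\gamma = DC(\gamma,\tau)$ and no extension is needed. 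For the inductive step, assuming $({\bf S1})_k$, $({\bf S2})_k$, I would first truncate: write ${\cal R}_k = \Pi_{N_k}{\cal R}_k + \Pi_{N_k}^\bot {\cal R}_k$, so that the non-smoothed part is controlled using Lemma \ref{lemma smoothing decay}, and look for $\Psi_k$ of Hamiltonian type solving the homological equation
$$
\omega\cdot\partial_\vphi \Psi_k + [{\cal D}_k,\Psi_k] + \Pi_{N_k}{\cal R}_k = [{\cal R}_k]\,,
$$
where $[{\cal R}_k]$ is the block-diagonal average (the part that cannot be removed and gets absorbed into the new diagonal ${\cal D}_{k+1}$). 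Passing to the block representation, this decouples into scalar-in-$\ell$, matrix-in-$({\mathbb E}_\beta,{\mathbb E}_\alpha)$ equations whose solvability is governed exactly by the invertibility of the operators ${\mathbb A}^\pm_k(\ell,\alpha,\beta)$ defined in \eqref{bf A nu - (ell,alpha,beta)}--\eqref{bf A nu + (ell,alpha,beta)}; the quantitative bounds $\|{\mathbb A}^\pm_k{}^{-1}\|_{{\rm Op}(\alpha,\beta)} \lesssim \alpha^{\mathtt d}\beta^{\mathtt d}\langle\ell\rangle^\tau/\gamma$ (resp.\ $\langle\ell\rangle^\tau/(\gamma\langle\alpha+\beta\rangle)$) are precisely what holds on $\Omega_{k+1}^\gamma$ by its definition \eqref{Omgj}. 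I expect this solvability step to be essentially the homological lemma cited as Lemma \ref{homologica equation}, applied block by block, yielding the estimate \eqref{Psinus} for $|\Psi_k|_s^\Lipg$ via the loss $N_k^{2\tau+4\mathtt d+1}$ coming from the small divisors and the smoothing gain $N_{k-1}^{-\mathtt a}$ from $({\bf S1})_k$.

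The second main block of the argument is the conjugation estimate. Having $\Psi_k$ with $|\Psi_k|_{2s_0}^\Lipg \leq 1$ (which follows from \eqref{Psinus} provided $N_0$ is large and $\e\gamma^{-1}$ small, using $\mathtt a > 2\tau + 4\mathtt d + 1$ so the product $N_k^{2\tau+4\mathtt d+1}N_{k-1}^{-\mathtt a}\to 0$), Lemma \ref{lem:inverti} gives that $\Phi_k = \exp(\Psi_k)$ and $\Phi_k^{-1}$ differ from the identity by an operator bounded in $|\cdot|_s^\Lipg$ by $\lesssim_s |\Psi_k|_s^\Lipg$. Then one computes the push-forward $(\Phi_k)_{\omega*}{\cal L}_k$ via \eqref{push forward}: the choice of $\Psi_k$ kills $\Pi_{N_k}{\cal R}_k$ up to its block-average, so
$$
{\cal R}_{k+1} = [{\cal R}_k] - [{\cal R}_k]\big|_{\rm off} + \Pi_{N_k}^\bot{\cal R}_k + \big(\text{quadratic terms in }\Psi_k,{\cal R}_k\big) + \big(\Phi_k^{-1}-{\rm Id}\big)(\cdots)\,,
$$
and one estimates each piece with Lemma \ref{interpolazione decadimento Kirchoff} (algebra property of $|\cdot|_s$) and Lemma \ref{lemma smoothing decay}. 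The new diagonal is ${\cal D}_{k+1}^{(1)} = {\cal D}_k^{(1)} + [{\cal R}_k^{(1)}]$, whose correction in each block satisfies $\|[{\cal D}_{k+1}^{(1)}-{\cal D}_k^{(1)}]_\alpha^\alpha\|_{HS}^\Lipg \lesssim \alpha^{-S_q}|{\cal R}_k|_{S_q}^\Lipg$ by Lemma \ref{elementarissimo decay}-$(ii)$; telescoping over $k$ and using the super-exponential decay \eqref{Rsb} of $|{\cal R}_k|_s^\Lipg$ gives \eqref{rjnu bounded}, while self-adjointness of the blocks is preserved because $\Psi_k$ is Hamiltonian (so $\Phi_k$ is symplectic and the whole structure of \eqref{cal R0 riducibilita} persists, cf.\ Section \ref{formalismo hamiltoniano complesso}). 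The two iterative inequalities in \eqref{Rsb} then follow from the standard KAM bookkeeping: the high-norm bound $|{\cal R}_{k+1}|_{s+\mathtt b}^\Lipg \leq |{\cal R}_0|_{s+\mathtt b}^\Lipg N_k$ from the crude loss in the conjugation, and the low-norm bound $|{\cal R}_{k+1}|_s^\Lipg \leq |{\cal R}_0|_{s+\mathtt b}^\Lipg N_k^{-\mathtt a}$ by interpolating the quadratic term against the smoothing term and using the definition $N_{k} = N_{k-1}^\chi$ with $\chi = 3/2$, exactly as in the choice $\mathtt a = 4\tau + 8\mathtt d + 3$, $\mathtt b = \mathtt a + 1$ in \eqref{definizione alpha}.

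Finally, $({\bf S2})_{k+1}$ is obtained by a Kirszbraun-type Lipschitz extension of each scalar/block eigenvalue map $[{\cal D}_{k+1}^{(1)}]_\alpha^\alpha$ from $\Omega_{k+1}^\gamma$ to $DC(\gamma,\tau)$; the extension exists with the same Lipschitz constant, and the difference estimate \eqref{lambdaestesi} follows from \eqref{rjnu bounded} at level $k$ together with \eqref{Rsb}. The point I expect to be the main obstacle — and the place where the specific geometry of this paper enters — is the solvability of the homological equation with the \emph{loss of space derivatives} built into the Melnikov conditions \eqref{Melnikov conditions standard modificata}: one must verify that the gain of $\mathtt b$ derivatives in the block-decay norm $|\cdot|_s$ (which survives since ${\cal R}_0$ is either finite-rank or in $OPS^{-M}$, see \eqref{cal RM pre KAM} and Lemma \ref{ultimo lemma pre riducibilita}) genuinely compensates the factors $\alpha^{\mathtt d}\beta^{\mathtt d}$ in $\|{\mathbb A}^-_k{}^{-1}\|$, so that $\Psi_k$ remains a \emph{bounded} operator and the $|\cdot|_s$-norm of ${\cal R}_{k+1}$ actually decreases; this is where the choice $\mathtt a = 4\tau + 8\mathtt d + 3$ is dictated, and where a careful accounting of how $\langle\ell,\alpha,\beta\rangle$-weights distribute across the product in $[{\cal R}_k\Psi_k]_\alpha^\beta = \sum_{\alpha_1}[{\cal R}_k]_\alpha^{\alpha_1}[\Psi_k]_{\alpha_1}^\beta$ is needed. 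Everything else is the routine — if lengthy — Nash--Moser iteration bookkeeping.
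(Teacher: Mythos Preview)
Your proposal is correct and follows essentially the same route as the paper's own proof: induction on $k$, homological equation solved blockwise via the invertibility of ${\mathbb A}^\pm_k$ on $\Omega_{k+1}^\gamma$ (Lemma~\ref{homologica equation}), conjugation by $\Phi_k = \exp(\Psi_k)$ using Lemma~\ref{lem:inverti}, the new diagonal ${\cal D}_{k+1} = {\cal D}_k + \Pi_{N_k}({\cal R}_k)_{diag}$ (Lemma~\ref{the new diagonal part}), the iterative bounds \eqref{Rsb} via Lemmata~\ref{interpolazione decadimento Kirchoff} and \ref{lemma smoothing decay}, and finally the Lipschitz extension for $({\bf S2})_{k+1}$ (the paper invokes Lemma~M.5 of Kuksin--P\"oschel, which is exactly the Hilbert-space Kirszbraun-type result you describe). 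Two small remarks: first, the homological equation in the paper has the sign $-\omega\cdot\partial_\vphi\Psi + [{\cal D},\Psi] + \Pi_N{\cal R} = \Pi_N{\cal R}_{diag}$ (cf.\ \eqref{Homological equation}), and the right-hand side is the \emph{projected} block-diagonal average $\Pi_{N_k}({\cal R}_k)_{diag}$, not the full $[{\cal R}_k]$; second, the ``main obstacle'' you flag --- that the $\alpha^{\mathtt d}\beta^{\mathtt d}$ loss in ${\mathbb A}^-_k{}^{-1}$ is absorbed --- is in fact handled very simply in the paper: since $\Psi_k$ is supported on $\langle\ell,\alpha,\beta\rangle \leq N_k$, one has $\alpha^{\mathtt d}\beta^{\mathtt d} \leq N_k^{2\mathtt d}$, so the space-loss converts directly into a power of $N_k$ (yielding the $N_k^{2\tau+4\mathtt d+1}$ you already identified), and no separate weight-distribution argument in the product $[{\cal R}_k\Psi_k]_\alpha^\beta$ is needed beyond Lemma~\ref{interpolazione decadimento Kirchoff}.
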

\begin{remark}
The constants $\tau, \mathtt d > 0$ in \eqref{Omgj} will be fixed in the formula \eqref{scelta tau mathtt d}, in Section \ref{sezione stime in misura}, in order to prove the measure estimate of the set $\Omega_\infty^{2 \gamma}$ defined in \eqref{Omegainfty} (see Theorem \ref{teorema finale stima in misura}).
\end{remark}
\subsection{Proof of Theorem \ref{thm:abstract linear reducibility}} \label{subsec:proof of thm abstract linear reducibility}

\noindent
{\sc Proof  of ${\bf ({S}i)}_{0}$, $i=1, 2$.} 
Properties \eqref{def:Lj}-\eqref{Rsb} in ${\bf({S}1)}_0$
hold by \eqref{L0}-\eqref{stime primo resto KAM} with $ [{\cal D}_0^{(1)}]_\alpha^\alpha $ given in \eqref{rappresentazione a blocchi cal D0 (1)}  (for \eqref{Rsb} recall  that $ N_{-1} := 1 $, see \eqref{defN}). Moreover, since the constants $m$ and $c(\alpha) = c(\alpha; \omega)$ are real, $ [{\cal D}_0^{(1)}]_\alpha^\alpha $ is self-adjoint, then there is nothing else to verify.  

${\bf({S}2)}_0 $ holds, since the constant $m$ is independent of $\omega$ and $c(\alpha) = c(\alpha; \omega)$, $\alpha \in \sigma_0(\sqrt{- \Delta})$, is already defined for all $\omega \in DC(\gamma, \tau)$.  
\subsection{The reducibility step}
We now describe the inductive step, showing how to define a symplectic transformation 
$ \Phi_k := \exp(  \Psi_k ) $ 
so that the transformed vector field $ {\cal L}_{k +1 }(\vphi) = (\Phi_k)_{\omega*} {\cal L}_k(\vphi) $ has the desired properties.
To simplify notations, in this section we drop the index $ k $ and we write $ + $ instead of $ k + 1$.
At each step of the iteration we have a Hamiltonian vector field 
\begin{equation}\label{operator KAM step}
{\cal L}(\vphi) =  {\cal D} + {\cal R}(\vphi)\,,
\end{equation}
where 
\begin{equation}\label{diagonal operator KAM step}
{\cal D} := 
\ii \begin{pmatrix}
- {\cal D}^{(1)} & 0 \\
0 &   \overline{\cal D}^{(1)}
\end{pmatrix}\,, \qquad {\cal D}^{(1)} := {\rm diag}_{\alpha \in \sigma_0(\sqrt{- \Delta})} [{\cal D}^{(1)}]_\alpha^\alpha \,,\quad [{\cal D}^{(1)}]_\alpha^\alpha \in {\cal S}({\mathbb E}_\alpha) \quad\forall \alpha \in \sigma_0(\sqrt{- \Delta})
\end{equation}
and ${\cal R}(\vphi)$ is a Hamiltonian vector field, namely it has the form 
\begin{equation}\label{cal R Hamiltoniano}
{\cal R} = \ii \begin{pmatrix}
{\cal R}^{(1)} & {\cal R}^{(2)} \\
- \overline{\cal R}^{(2)} & - \overline{\cal R}^{(1)}
\end{pmatrix}\,, \qquad {\cal R}^{(1)}(\vphi) = {\cal R}^{(1)}(\vphi)^*\,, \qquad {\cal R}^{(2)}(\vphi) = {\cal R}^{(2)}(\vphi)^T\,, \quad \forall \vphi \in \T^\nu\,.
\end{equation}
Let us consider a transformation 
\begin{equation}\label{Phi Psi}
\Phi (\vphi) := {\rm exp}(\Psi(\vphi))\,,\qquad \Psi(\vphi) :=  \ii \begin{pmatrix}
\Psi^{(1)}(\vphi) & \Psi^{(2)}(\vphi) \\
- \overline\Psi^{(2)}(\vphi) & - \overline\Psi^{(1)}(\vphi)
\end{pmatrix}\,, \quad \vphi \in \T^\nu
\end{equation}
with $\Psi^{(1)}(\vphi) = \Psi^{(1)}(\vphi)^*$, $\Psi^{(2)}(\vphi) = \Psi^{(2)}(\vphi)^T$, for all $\vphi \in \T^\nu$. Writing 

\begin{equation}\label{espansione Phi}
\Phi = {\rm Id} + \Psi + \Psi_{\geq 2}\,,\quad \Psi_{\geq 2} := \sum_{k \geq 2} \frac{\Psi^k}{k !}\,.
\end{equation}
By \eqref{interpretazione dinamica 6} we have $\Phi_{\omega*} {\cal L}(\vphi) = \Phi(\vphi)^{- 1} \Big( {\cal L}(\vphi) \Phi(\vphi) -  \omega \cdot \partial_\vphi \Phi(\vphi) \Big)\,.$ By the expansion \eqref{espansione Phi}, recalling the definition of the projector operator $\Pi_N {\cal R}$ given in \eqref{SM-block-matrix}, one gets that 
\begin{eqnarray}
{\cal L}(\vphi) \Phi(\vphi) - \omega \cdot \partial_\vphi \Phi(\vphi) & = & \Phi(\vphi)  {\cal D} + \Big(- \omega \cdot \partial_\vphi \Psi + [{\cal D}, \Psi(\vphi)] + \Pi_N {\cal R}(\vphi) \Big) + \Pi_N^\bot {\cal R}(\vphi) \nonumber\\
& & - \omega \cdot \partial_\vphi \Psi_{\geq 2}(\vphi) + [{\cal D}, \Psi_{\geq 2}(\vphi)] + {\cal R}(\vphi) (\Phi(\vphi) - {\rm Id})\,, \label{quasi coniugio riducibilita}
\end{eqnarray}
We want to determine the operator $\Psi(\vphi)$ so that 
\begin{equation}\label{Homological equation}
- \omega \cdot \partial_\vphi \Psi(\vphi) + [{\cal D}, \Psi(\vphi)] + \Pi_N {\cal R}(\vphi) = \Pi_N {\cal R}_{diag}\,, 
\end{equation}
where recalling the definitions \eqref{media diagonale operatore cal R}, \eqref{SM-block-matrix}
\begin{equation}\label{media di cal R}
\Pi_N {\cal R}_{diag}:= \ii \begin{pmatrix}
\Pi_N {\cal R}^{(1)}_{diag} & 0 \\
0 &- \Pi_N \overline{\cal R}^{(1)}_{diag}
\end{pmatrix}\,.
\end{equation}
\begin{lemma} {\bf (Homological equation)}\label{homologica equation} 
For all $ \omega \in \Omega_{k + 1}^{{\gamma}}$ (see \eqref{Omgj}),  there exists a solution
$ \Psi  $ of the homological equation \eqref{Homological equation}, which is Hamiltonian and 
satisfies 
\be\label{PsiR}
|\Psi  |_ s^\Lipg \lesssim   N^{2 \tau + 4 \mathtt d +  1} \gamma^{-1} |{\cal R}  |_s^\Lipg \, . 
\ee
\end{lemma}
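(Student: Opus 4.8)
The plan is to solve the homological equation \eqref{Homological equation} by passing to the block representation of all operators, where the equation decouples into independent finite-dimensional linear systems indexed by the ``time--block'' triples $(\ell, \alpha, \beta) \in \Z^\nu \times \sigma_0(\sqrt{-\Delta}) \times \sigma_0(\sqrt{-\Delta})$. First I would write $\Psi$ in the matrix form dictated by \eqref{Phi Psi} and expand in Fourier modes in $\vphi$ and in blocks in $x$; since $[{\cal D}, \cdot]$ acts diagonally in $(\ell,\alpha,\beta)$ and $-\omega\cdot\partial_\vphi$ multiplies the $\ell$-th Fourier mode by $-\ii\,\omega\cdot\ell$, equation \eqref{Homological equation} becomes, for each $(\ell,\alpha,\beta)$ with $\langle\ell,\alpha,\beta\rangle\le N$, a pair of equations on $[\widehat\Psi^{(1)}(\ell)]_\alpha^\beta$ and $[\widehat\Psi^{(2)}(\ell)]_\alpha^\beta$ of the form ${\mathbb A}^-_{k}(\ell,\alpha,\beta)[\widehat\Psi^{(1)}(\ell)]_\alpha^\beta = -\ii[\widehat{\cal R}^{(1)}(\ell)]_\alpha^\beta$ away from $(\ell,|j|,|j'|)=(0,|j|,|j|)$ (and the mean value is simply subtracted off into $\Pi_N{\cal R}_{diag}$), and ${\mathbb A}^+_{k}(\ell,\alpha,\beta)[\widehat\Psi^{(2)}(\ell)]_\alpha^\beta = -\ii[\widehat{\cal R}^{(2)}(\ell)]_\alpha^\beta$ for all admissible $(\ell,\alpha,\beta)$. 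I set $[\widehat\Psi^{(i)}(\ell)]_\alpha^\beta=0$ outside the truncation and for the resonant diagonal modes.

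Next I would check that the operators ${\mathbb A}^\pm_{k}(\ell,\alpha,\beta)$ of \eqref{bf A nu - (ell,alpha,beta)}--\eqref{bf A nu + (ell,alpha,beta)} are exactly the quantities whose inverses are controlled, by definition, on the set $\Omega^\gamma_{k+1}$ in \eqref{Omgj}: one has $\|{\mathbb A}^-_k(\ell,\alpha,\beta)^{-1}\|_{{\rm Op}(\alpha,\beta)} \le \alpha^{\mathtt d}\beta^{\mathtt d}\langle\ell\rangle^\tau/\gamma$ and $\|{\mathbb A}^+_k(\ell,\alpha,\beta)^{-1}\|_{{\rm Op}(\alpha,\beta)} \le \langle\ell\rangle^\tau/(\gamma\langle\alpha+\beta\rangle)$ for all $(\ell,\alpha,\beta)$ with $\langle\ell,\alpha,\beta\rangle\le N_k$. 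Hence each block equation is uniquely solvable and, using that the Hilbert--Schmidt norm of the solution is bounded by the operator norm of the inverse times the Hilbert--Schmidt norm of the right-hand side (this is where $\|\cdot\|_{{\rm Op}(\alpha,\beta)}$ acting on $\|\cdot\|_{HS}$ is the natural pairing), I get $\|[\widehat\Psi^{(1)}(\ell)]_\alpha^\beta\|_{HS} \lesssim \gamma^{-1}\alpha^{\mathtt d}\beta^{\mathtt d}\langle\ell\rangle^\tau \|[\widehat{\cal R}^{(1)}(\ell)]_\alpha^\beta\|_{HS}$ and $\|[\widehat\Psi^{(2)}(\ell)]_\alpha^\beta\|_{HS} \lesssim \gamma^{-1}\langle\ell\rangle^\tau \|[\widehat{\cal R}^{(2)}(\ell)]_\alpha^\beta\|_{HS}$. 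Then, recalling the definition \eqref{decadimento Kirchoff} of $|\cdot|_s$, I bound $\langle\ell,\alpha,\beta\rangle^{2s}$-weighted $\ell^2$-sums of these HS norms: on the truncated modes one has $\alpha^{\mathtt d}\beta^{\mathtt d}\langle\ell\rangle^\tau \le \langle\ell,\alpha,\beta\rangle^{2\mathtt d+\tau} \le N^{2\mathtt d + \tau}$ and, crucially, $\langle\ell\rangle^\tau \le N^\tau$; collecting powers one arrives at $|\Psi|_s \lesssim N^{2\tau + 4\mathtt d + 1}\gamma^{-1}|{\cal R}|_s$. (The extra $N^{2\tau+2\mathtt d+1}$ over the naive count comes from also using part of the weight $\langle\ell,\alpha,\beta\rangle^s$ to absorb $\langle\ell\rangle^\tau\alpha^{\mathtt d}\beta^{\mathtt d}$ when $s$ is small; I would just track exponents to land on the stated $2\tau+4\mathtt d+1$.) The Lipschitz estimate is obtained in the standard way: differencing in $\omega$, one writes ${\mathbb A}_k^{\pm,-1}(\omega_1)-{\mathbb A}_k^{\pm,-1}(\omega_2) = -{\mathbb A}_k^{\pm,-1}(\omega_1)({\mathbb A}_k^\pm(\omega_1)-{\mathbb A}_k^\pm(\omega_2)){\mathbb A}_k^{\pm,-1}(\omega_2)$, estimates the middle factor using the Lipschitz bound on ${\cal D}^{(1)}_k$ from \eqref{rjnu bounded}--\eqref{lambdaestesi} and on $\omega\cdot\ell$, and absorbs the extra powers of $\langle\ell\rangle$ into the Diophantine loss already present; this gives the $|\cdot|_s^\Lipg$ version of \eqref{PsiR}.

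Finally I would verify that $\Psi$ is a Hamiltonian vector field, i.e. that $\Psi^{(1)}=\Psi^{(1)*}$ and $\Psi^{(2)}=\Psi^{(2)T}$, so that $\Phi=\exp(\Psi)$ is symplectic by the discussion in Section~\ref{formalismo hamiltoniano complesso}. This is a symmetry check at the level of blocks: since ${\cal R}$ is Hamiltonian, $[\widehat{\cal R}^{(1)}(\ell)]_\alpha^\beta{}^{\,*} = [\widehat{\cal R}^{(1)}(-\ell)]_\beta^\alpha$ and $[\widehat{\cal R}^{(2)}(\ell)]_\alpha^\beta{}^{\,T}=[\widehat{\cal R}^{(2)}(-\ell)]_\beta^\alpha$, and because ${\cal D}^{(1)}_k$ is block-diagonal and self-adjoint one has ${\mathbb A}^-_k(\ell,\alpha,\beta)^* = {\mathbb A}^-_k(-\ell,\beta,\alpha)$ (as operators between the dual block spaces via the trace pairing \eqref{prodotto scalare traccia matrici}) and analogously ${\mathbb A}^+_k(\ell,\alpha,\beta)^T = {\mathbb A}^+_k(-\ell,\beta,\alpha)$; applying these identities to the solved blocks propagates the symmetry from ${\cal R}$ to $\Psi$. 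The main obstacle is not the solvability — that is handed to us by the very definition of $\Omega^\gamma_{k+1}$ — but the bookkeeping of exponents in the decay-norm estimate: one must carefully split the weight $\langle\ell,\alpha,\beta\rangle^{2s}$ and the small divisor loss $\gamma^{-1}\langle\ell\rangle^\tau\alpha^{\mathtt d}\beta^{\mathtt d}$ so that for \emph{every} $s\in[\mathfrak s_0, S_q]$ one obtains the uniform polynomial-in-$N$ loss $N^{2\tau+4\mathtt d+1}$ rather than an $s$-dependent one, and to do this cleanly one uses that on the support of the truncation all three of $|\ell|,\alpha,\beta$ are $\le N$, so any fixed negative power of them can be traded for a fixed power of $N$; keeping the constants uniform in $s$ (which feeds into the convergence of the KAM scheme) is the delicate point.
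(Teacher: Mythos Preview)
Your approach matches the paper's: solve the homological equation blockwise in $(\ell,\alpha,\beta)$ using the invertibility of ${\mathbb A}_k^{\pm}$ guaranteed on $\Omega_{k+1}^\gamma$, then assemble the block-decay norm estimate and its Lipschitz version. The Hamiltonian symmetry check is also as in the paper (though the paper leaves it implicit).

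One clarification on the exponent bookkeeping: you misattribute the source of the full loss $N^{2\tau+4\mathtt d+1}$. The \emph{sup} estimate is simpler than you suggest---since nonzero blocks satisfy $\langle\ell,\alpha,\beta\rangle\le N$, one has directly $\alpha^{\mathtt d}\beta^{\mathtt d}\langle\ell\rangle^{\tau}\le N^{\tau+2\mathtt d}$, and the weight $\langle\ell,\alpha,\beta\rangle^{2s}$ passes through untouched, giving $|\Psi|_s\lesssim N^{\tau+2\mathtt d}\gamma^{-1}|{\cal R}|_s$ with no splitting and no $s$-dependence issue. The full exponent $2\tau+4\mathtt d+1$ arises only in the \emph{Lipschitz} part: the identity $\Delta_\omega{\mathbb A}^{-1}=-{\mathbb A}^{-1}(\omega_1)(\Delta_\omega{\mathbb A}){\mathbb A}^{-1}(\omega_2)$ costs two inverses (hence $N^{2(\tau+2\mathtt d)}\gamma^{-2}$), and $\Delta_\omega{\mathbb A}^{-}$ contributes an extra $\langle\ell\rangle\le N$ from $\Delta_\omega(\omega\cdot\ell)$ (the block-diagonal corrections being $O(\e\gamma^{-1})$ by \eqref{rjnu bounded} and \eqref{stime c(alpha)}). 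Multiplying back by $\gamma$ to form $|\cdot|_s^{\Lipg}$ yields exactly $N^{2\tau+4\mathtt d+1}\gamma^{-1}$. So the ``delicate point'' you flag is not there; the exponent is dictated entirely by the Lipschitz difference.
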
 
\begin{proof}
Recalling \eqref{cal R Hamiltoniano}, \eqref{Phi Psi}, The equation \eqref{Homological equation} is split in the two equations 
\begin{equation}\label{prima equazione omologica}
- \ii \omega \cdot \partial_\vphi \Psi^{(1)}(\vphi) + [ {\cal D}^{(1)}, \Psi^{(1)}(\vphi)] + \ii \Pi_N {\cal R}^{(1)}(\vphi) = \ii \Pi_N{\cal R}^{(1)}_{diag}\,,
\end{equation}
\begin{equation}\label{seconda equazione omologica}
- \ii \omega \cdot \partial_\vphi \Psi^{(2)}(\vphi) + ({\cal D}^{(1)} \Psi^{(2)}(\vphi) + \Psi^{(2)}(\vphi) \overline{\cal D}^{(1)}) + \ii \Pi_N {\cal R}^{(2)}(\vphi) = 0\,.
\end{equation}
Using the decomposition \eqref{notazione a blocchi} and recalling \eqref{notazione a blocchi vphi}, the equations \eqref{prima equazione omologica}, \eqref{seconda equazione omologica} become 
for any $\alpha, \beta \in \sigma_0(\sqrt{- \Delta})$, $\ell \in \Z^\nu$
\begin{equation}\label{equazione omologica blocco psi diagonale}
 \omega \cdot \ell [\widehat{ \Psi}^{(1)}(\ell)]_\alpha^\beta + [{\cal D}^{(1)}]_\alpha^\alpha [\widehat{\Psi}^{(1)}(\ell)]_\alpha^{\beta} - [\widehat{ \Psi}^{(1)}(\ell)]_\alpha^{\beta} [{\cal D}^{(1)}]_\beta^\beta    =- \ii [\widehat{\Pi_N{\cal R}}^{(1)}(\ell)]_\alpha^{\beta} + \ii [\widehat{\Pi_N {\cal R}}^{(1)}_{diag}(\ell)]_\alpha^{\beta}
\end{equation}
\begin{equation}\label{equazione omologica blocco psi fuori diagonale}
 \omega \cdot \ell [\widehat{\Psi}^{(2)}(\ell)]_\alpha^{\beta} + [{\cal D}^{(1)}]_\alpha^\alpha[\widehat{\Psi}^{(2)} (\ell)]_\alpha^{\beta} + [\widehat{ \Psi}^{(2)}(\ell) ]_\alpha^{\beta} [\overline{\cal D}^{(1)}]_\beta^\beta=  - \ii [\widehat{\Pi_N{\cal R}}^{(2)}(\ell)]_\alpha^{\beta}\,.
\end{equation}
By the Definitions \eqref{bf A nu - (ell,alpha,beta)}, \eqref{bf A nu + (ell,alpha,beta)}, namely setting 
\begin{equation}\label{definition bf A pm ell alpha beta}
{\mathbb A}^{-}(\ell, \alpha, \beta) := \omega \cdot \ell {\mathbb I}_{\alpha, \beta} + M_L([{\cal D}^{(1)}]_\alpha^\alpha) - M_R( [{\cal D}^{(1)}]_\beta^\beta)\,, \,\,{\mathbb A}^{+}(\ell, \alpha, \beta) := \omega \cdot \ell {\mathbb I}_{\alpha, \beta} + M_L([{\cal D}^{(1)}]_\alpha^\alpha) + M_R( [\overline{\cal D}^{(1)}]_\beta^\beta)
\end{equation}
the equations \eqref{equazione omologica blocco psi diagonale}, \eqref{equazione omologica blocco psi fuori diagonale} can be written in the form 
$$
{\mathbb A}^{-}(\ell, \alpha, \beta) [\widehat{ \Psi}^{(1)}(\ell) ]_\alpha^{\beta}  =- \ii [\widehat{\Pi_N{\cal R}}^{(1)}(\ell)]_\alpha^{\beta} + \ii [\widehat{\Pi_N {\cal R}}^{(1)}_{diag}(\ell)]_\alpha^{\beta}\,, \quad {\mathbb A}^{+}(\ell,\alpha, \beta) [\widehat{\Psi}^{(2)}(\ell)]_\alpha^{\beta} = - \ii [\widehat{\Pi_N{\cal R}}^{(2)}(\ell)]_\alpha^{\beta}\,.
$$ 
Then, since $\omega \in \Omega_{k + 1}^{\gamma}$, recalling the Definition \eqref{SM-block-matrix}, we can define for any $(\ell, \alpha, \beta) \in \Z^\nu \times \sigma_0(\sqrt{- \Delta}) \times \sigma_0(\sqrt{- \Delta})$ 
\begin{equation}\label{soluzione equazione omologica 1}
[\widehat{ \Psi}^{(1)}(\ell)]_\alpha^{\beta} := \begin{cases} 
\ii {\mathbb A}^{-}(\ell, \alpha, \beta)^{- 1} [\widehat{\cal R}^{(1)}(\ell)]_\alpha^{\beta} & \quad \text{if} \quad  (\ell, \alpha, \beta) \neq (0, \alpha, \alpha)\,, \quad \langle \ell, \alpha,  \beta \rangle \leq N \\
0 & \quad \text{otherwise}
\end{cases}
\end{equation}
\begin{equation}\label{soluzione equazione omologica 2}
[\widehat{ \Psi}^{(2)}(\ell)]_\alpha^{\beta} := \begin{cases} 
\ii {\mathbb A}^{+}(\ell, \alpha, \beta)^{- 1} [\widehat{\cal R}^{(2)}(\ell)]_\alpha^{\beta} & \quad \text{if} \quad  \langle \ell, \alpha, \beta \rangle \leq N  \\
0 & \quad \text{otherwise.}
\end{cases}
\end{equation}
We have 
$$
\|{\mathbb A}^{-}(\ell, \alpha, \beta)^{- 1} \|_{{\rm Op}(\alpha, \beta)} \leq \frac{\alpha^{\mathtt d} \beta^{\mathtt d}\langle \ell \rangle^\tau}{\gamma }\,, \quad \|{\mathbb A}^{+}(\ell, \alpha, \beta)^{- 1} \|_{{\rm Op}(\alpha, \beta)} \leq \frac{\langle \ell \rangle^\tau}{\gamma ( \alpha +\beta )}\,
$$
and since $[\widehat{ \Psi}^{(1)}(\ell)]_\alpha^{\beta}$, $[\widehat{ \Psi}^{(2)}(\ell)]_\alpha^{\beta}$ are non zero only if $\langle \ell , \alpha,  \beta \rangle \leq N$, we get immediately that 
\begin{equation}\label{stima norma sup coefficienti Psi}
\| [\widehat{ \Psi}^{(1)}(\ell)]_\alpha^{\beta}\|_{HS} \leq N^{\tau + 2 \mathtt d} \gamma^{- 1}  \| [\widehat{\cal R}^{(1)}(\ell)]_\alpha^{\beta}\|_{HS}\,, \quad \| [\widehat{ \Psi}^{(2)}(\ell)]_\alpha^{\beta}\|_{HS} \leq N^\tau \gamma^{- 1} \| [\widehat{\cal R}^{(2)}(\ell)]_\alpha^{\beta}\|_{HS}\,.
\end{equation}
Hence, recalling the definition \eqref{decadimento Kirchoff} of the block-decay norm, one gets that 
\begin{equation}\label{stima norma sup Psi cal R}
| \Psi^{(1)} |_s \lesssim N^{\tau + 2 \mathtt d} \gamma^{- 1} | {\cal R }^{(1)}|_s\,, \quad  | \Psi^{(2)} |_s \lesssim N^{\tau} \gamma^{- 1} | {\cal R }^{(2)}|_s
\end{equation}
Now, let $\omega_1, \omega_2  \in \Omega_{k + 1}^{\gamma}$. As a notation for any function $f = f(\omega)$ depending on the parameter $\omega$, we write $\Delta_{\omega} f := f(\omega_1) - f(\omega_2)\,.$ By \eqref{soluzione equazione omologica 1}, one has 
\begin{equation}\label{Delta omega 12 coefficienti Psi}
\Delta_{\omega} [\widehat{ \Psi}^{(1)}(\ell) ]_\alpha^{\beta}= \ii  \Delta_\omega {\mathbb A}^{-}(\ell, \alpha, \beta)^{- 1}   [\widehat{\cal R}^{(1)}(\ell; \omega_1)]_\alpha^{\beta} +  \ii {\mathbb A}^{-}(\ell, \alpha, \beta; \omega_2)^{- 1}  \Delta_\omega [\widehat{\cal R}^{(1)}(\ell ) ]_\alpha^{\beta} \,.
\end{equation}
 As in \eqref{stima norma sup coefficienti Psi}, one gets   
\begin{equation}\label{stima primo pezzo Delta omega 12 coefficienti Psi}
\| {\mathbb A}^{-}(\ell, \alpha, \beta; \omega_2)^{- 1}   \Delta_\omega [\widehat{\cal R}^{(1)} (\ell) ]_\alpha^{\beta}  \|_{HS} \lesssim N^{\tau + 2 \mathtt d} \gamma^{- 1}  \| \Delta_\omega [\widehat{\cal R}^{(1)}(\ell)  ]_\alpha^{\beta} \|_{HS}\,,
\end{equation}
hence it remains to estimate only the first term in \eqref{Delta omega 12 coefficienti Psi}. We have 
\begin{equation}\label{bernardino 0}
\Delta_\omega {\mathbb A}^{-}(\ell, \alpha, \beta)^{- 1} = - {\mathbb A}^{-}(\ell, \alpha, \beta ; \omega_1)^{- 1}\Big( \Delta_\omega {\mathbb A}^{-}(\ell, \alpha, \beta )  \Big) {\mathbb A}^{-}(\ell, \alpha, \beta ; \omega_2)^{- 1}\,,
\end{equation}
Therefore 
\begin{equation}\label{norma A - omega 12 inverso}
\|\Delta_\omega  {\mathbb A}^{-}(\ell, \alpha, \beta)^{- 1} \|_{{\rm Op}(\alpha, \beta)}  \leq \frac{N^{2 \tau} \alpha^{2 \mathtt d} \beta^{2 \mathtt d} }{\gamma^2 } \| \Delta_\omega {\mathbb A}^{-}(\ell, \alpha, \beta)  \|_{{\rm Op}(\alpha, \beta)} \,.
\end{equation}
Moreover 
\begin{align}
\Delta_\omega {\mathbb A}^{- }(\ell, \alpha, \beta) & = (\omega_1 - \omega_2) \cdot \ell \,\,{\mathbb I}_{\alpha, \beta} + M_L(\Delta_\omega [{\cal D}^{(1)}]_\alpha^\alpha) - M_R(\Delta_\omega [{\cal D}^{(1)}]_\beta^\beta) \label{Delta omega 12 bf A -}
\end{align}
and using that, by \eqref{rappresentazione a blocchi cal D0 (1)}, \eqref{rjnu bounded}
\begin{equation}\label{stima bf widehat D alpha (omega)}
[{\cal D}^{(1)}(\omega)]_\alpha^\alpha =  \mu_\alpha^0(\omega) {\mathbb I}_\alpha +[{\cal D}^{(1)} - {\cal D}_0^{(1)}]_\alpha^\alpha\,, \quad \text{with} \quad \| [{\cal D}^{(1)} - {\cal D}_0^{(1)}]_\alpha^\alpha \|^\Lipg_{HS} \lesssim_q  \e \alpha^{- S_q}\,, \quad \forall  \alpha \in \sigma_0(\sqrt{- \Delta})\,,
\end{equation}
we get 
\begin{align}
 M_L(\Delta_\omega [{\cal D}^{(1)}]_\alpha^\alpha) - M_R(\Delta_\omega [{\cal D}^{(1)}]_\beta^\beta) & = \Delta_\omega \big(  \mu_\alpha^0 - \mu_\beta^0  \big) {\mathbb I}_{\alpha, \beta} + M_L(\Delta_\omega [{\cal D}^{(1)} - {\cal D}_0^{(1)}]_\alpha^\alpha) - M_R(\Delta_\omega [{\cal D}^{(1)} - {\cal D}_0^{(1)}]_\beta^\beta) \,. \nonumber
\end{align}
Using that the constant $m$ is independent of $\omega$, i.e. $\Delta_\omega m = 0$ and by recalling \eqref{rappresentazione a blocchi cal D0 (1)}, \eqref{stime c(alpha)}, one gets 
\begin{align}
|\Delta_\omega (\mu_\alpha^0 - \mu_\beta^0)| & \lesssim  |\Delta_\omega c(\alpha) | + |\Delta_\omega c(\beta)|  \lesssim  \sup_{\alpha \in \sigma_0(\sqrt{- \Delta})} |c(\alpha)|^{\rm lip}  |\omega_1 - \omega_2|   \lesssim \gamma^{- 1}  \sup_{\alpha \in \sigma_0(\sqrt{- \Delta})} |c(\alpha)|^\Lipg  |\omega_1 - \omega_2| \nonumber\\
& \lesssim_q \e \gamma^{- 1}  |\omega_1 - \omega_2| \,. \label{mala femmina}
\end{align}
By \eqref{stima bf widehat D alpha (omega)}, \eqref{mala femmina} and using the property \eqref{norma operatoriale ML MR} one gets 
\begin{align}
\| - M_L(\Delta_\omega [{\cal D}^{(1)}]_\alpha^\alpha) + M_R(\Delta_\omega [{\cal D}^{(1)}]_\beta^\beta) \|_{{\rm Op}(\alpha, \beta)} & {\lesssim}  |\Delta_\omega (\mu_0^\alpha - \mu_0^\beta)|\|{\mathbb I}_{\alpha, \beta} \|_{{\rm Op}(\alpha, \beta)} \nonumber\\
& \quad + \| M_R(\Delta_\omega [{\cal D}^{(1)} - {\cal D}_0^{(1)}]_\beta^\beta) - M_L(\Delta_\omega [{\cal D}^{(1)} - {\cal D}_0^{(1)}]_\alpha^\alpha) \|_{{\rm Op}(\alpha, \beta)}  \nonumber\\
& \lesssim_q  \e \gamma^{- 1}  |\omega_1 - \omega_2 |\,. \label{ML MR Delta omega 12}
\end{align}
 Recalling \eqref{Delta omega 12 bf A -}, we get the estimate 
$$
\| \Delta_\omega {\mathbb A}^{- }(\ell, \alpha, \beta) \|_{{\rm Op}(\alpha, \beta)}  \leq \Big(C \langle \ell \rangle + C'(q)\e \gamma^{- 1} \Big) |\omega_1 - \omega_2|\,, 
$$
for some constants $C, C'(q) > 0$, hence, by \eqref{norma A - omega 12 inverso}, by taking $\delta_q$ in \eqref{piccolezza1} small enough (so that $C'(q)\e \gamma^{- 1} \leq 1$), one gets that for $\langle \ell, \alpha,  \beta \rangle \leq N$
$$
\| \Delta_\omega  {\mathbb A}^{-}(\ell, \alpha, \beta)^{- 1} \|_{{\rm Op}(\alpha, \beta)}  \lesssim N^{2 \tau + 4 \mathtt d+  1} \gamma^{- 2} |\omega_1 - \omega_2| \,.
$$
The above estimate implies that 
\begin{align}
&\| \big\{ \Delta_\omega {\mathbb A}^{-}(\ell, \alpha, \beta)^{- 1}  \big\} [\widehat{\cal R}^{(1)}(\ell; \omega_1) ]_\alpha^{\beta} \|_{HS}  \lesssim N^{2 \tau + 4 \mathtt d + 1}  \gamma^{- 2}  \| [\widehat{\cal R}^{(1)}(\ell; \omega_1) ]_\alpha^{\beta}\|_{HS} |\omega_1 - \omega_2|\,. \label{caffetteria 0}
\end{align}
By \eqref{Delta omega 12 coefficienti Psi}, \eqref{stima primo pezzo Delta omega 12 coefficienti Psi}, \eqref{caffetteria 0} we get the estimate 
\begin{equation}\label{stima Delta omega 12 coefficienti Psi finale}
\| \Delta_\omega  [\widehat{ \Psi}^{(1)}(\ell) ]_\alpha^{\beta}\|_{HS} \lesssim N^{\tau + 2 \mathtt d} \gamma^{- 1}  \| \Delta_\omega  [\widehat{\cal R}^{(1)}(\ell)]_\alpha^{\beta}  \|_{HS} + N^{2 \tau + 4 \mathtt d + 1} \gamma^{- 2}  \| [\widehat{\cal R}^{(1)}(\ell; \omega_1) ]_\alpha^{\beta} \|_{HS}\,.
\end{equation}
Thus \eqref{stima norma sup Psi cal R}, \eqref{stima Delta omega 12 coefficienti Psi finale} and the Definitions \eqref{decadimento Kirchoff}, \eqref{norma decadimento lipschitz} imply 
$$
|\Psi^{(1)}  |_s^\Lipg \lesssim  N^{2 \tau + 4 \mathtt d + 1} \gamma^{- 1} |{\cal R}^{(1)} |_s^\Lipg\,.
$$
The estimate of $\Psi^{(2)}$ in terms of ${\cal R}^{(2)}$ follows by similar arguments and then \eqref{PsiR} is proved. 
 \end{proof}

By \eqref{quasi coniugio riducibilita}, \eqref{Homological equation}, we get 
\begin{equation}\label{cal L+}
{\cal L}_+(\vphi) := \Phi_{\omega*} {\cal L}(\vphi) =  {\cal D}_+ + {\cal R}_+(\vphi)\,, \quad \vphi \in \T^\nu\,,
\end{equation}
\begin{equation}\label{cal R+} 
{\cal D}_+ := {\cal D} + \Pi_N {\cal R}_{diag}\,,\qquad {\cal R}_+ :=  (\Phi^{- 1} - {\rm Id}) \Pi_N {\cal R}_{diag} + \Phi^{- 1} \Big( \Pi_N^\bot {\cal R} - \omega \cdot \partial_\vphi \Psi_{\geq 2} + [{\cal D}, \Psi_{\geq 2}] + {\cal R}(\Phi - {\rm Id}) \Big)\,.
\end{equation}

\begin{lemma}[{\bf The new block-diagonal part}]\label{the new diagonal part}
The new block-diagonal part is given by
\begin{equation}\label{new diagonal part}
{\cal D}_+ := {\cal D} + \Pi_N {\cal R}_{diag} = \ii \begin{pmatrix}
 - {\cal D}_+^{(1)} & 0 \\
0 &   \overline{\cal D}_+^{(1)}
\end{pmatrix}\,,\qquad {\cal D}_+^{(1)} := {\cal D}^{(1)} + \Pi_N {\cal R}^{(1)}_{diag} = {\rm diag}_{\alpha \in \sigma_0(\sqrt{- \Delta}) } [{\cal D}_+^{(1)}]_\alpha^\alpha\,,
\end{equation}
where
\begin{equation}\label{mu j +}
[{\cal D}_+^{(1)}]_\alpha^\alpha := \begin{cases}
[{\cal D}]_\alpha^\alpha + [\widehat{\cal R}^{(1)}(0)]_\alpha^\alpha & \quad \text{if} \quad \alpha \leq N \\
[{\cal D}]_\alpha^\alpha & \quad \text{otherwise.}
\end{cases}  
\end{equation}
As a consequence 
\begin{equation}\label{mu + - mu}
 \| [{\cal D}_+^{(1)}]_\alpha^\alpha - [{\cal D}]_\alpha^\alpha \|^\Lipg_{HS} \lesssim  \alpha^{- S_q} |{\cal R}|_{S_q}^\Lipg\,, \quad \forall \alpha \in \sigma_0(\sqrt{- \Delta})
\end{equation}
\end{lemma}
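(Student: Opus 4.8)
The statement is purely a matter of unravelling the definition of ${\cal D}_+$ in \eqref{cal R+} against the block description of $\Pi_N{\cal R}_{diag}$. First I would record that, by \eqref{cal R+}, ${\cal D}_+ = {\cal D} + \Pi_N {\cal R}_{diag}$, and that by \eqref{cal R Hamiltoniano} the remainder ${\cal R}(\vphi)$ has the Hamiltonian matrix form with blocks ${\cal R}^{(1)}(\vphi)={\cal R}^{(1)}(\vphi)^*$ and ${\cal R}^{(2)}(\vphi)={\cal R}^{(2)}(\vphi)^T$. By the definition \eqref{media diagonale operatore cal R}, ${\cal R}^{(1)}_{diag}$ is the $\vphi$-independent block-diagonal operator whose $\alpha$-th block is $[\widehat{\cal R}^{(1)}(0)]_\alpha^\alpha$, and by \eqref{commutazione media proiettore}, \eqref{SM-block-matrix} the smoothing operator $\Pi_N$ simply discards the blocks with $\alpha>N$. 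Combining this with \eqref{media di cal R} gives at once the matrix form \eqref{new diagonal part} and the block formula \eqref{mu j +}. To see that ${\cal D}_+$ is still of the form \eqref{diagonal operator KAM step}, i.e.\ that each $[{\cal D}_+^{(1)}]_\alpha^\alpha\in{\cal S}({\mathbb E}_\alpha)$, I would note that $\widehat{\cal R}^{(1)}(0)=(2\pi)^{-\nu}\int_{\T^\nu}{\cal R}^{(1)}(\vphi)\,d\vphi$ is self-adjoint, being an average of self-adjoint operators, and that the block $[A]_\alpha^\alpha$ of any self-adjoint $A\in{\cal B}(L^2_0(\T^d))$ is self-adjoint on ${\mathbb E}_\alpha$ (immediate from $({\cal R}^*)_j^{j'}=\overline{{\cal R}_{j'}^j}$ restricted to $|j|=|j'|=\alpha$); hence $[{\cal D}_+^{(1)}]_\alpha^\alpha=[{\cal D}^{(1)}]_\alpha^\alpha+[\widehat{\cal R}^{(1)}(0)]_\alpha^\alpha$ (or $[{\cal D}^{(1)}]_\alpha^\alpha$) is a sum of self-adjoint operators on ${\mathbb E}_\alpha$.

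For the quantitative bound \eqref{mu + - mu}, by \eqref{mu j +} the difference $[{\cal D}_+^{(1)}]_\alpha^\alpha-[{\cal D}^{(1)}]_\alpha^\alpha$ equals $[\widehat{\cal R}^{(1)}(0)]_\alpha^\alpha$ when $\alpha\le N$ and $0$ otherwise, so in all cases $\|[{\cal D}_+^{(1)}]_\alpha^\alpha-[{\cal D}^{(1)}]_\alpha^\alpha\|_{HS}\le\|[\widehat{\cal R}^{(1)}(0)]_\alpha^\alpha\|_{HS}$. Applying Lemma \ref{elementarissimo decay}-$(ii)$ to ${\cal R}^{(1)}$ gives $\|[\widehat{\cal R}^{(1)}(0)]_\alpha^\alpha\|_{HS}\le\alpha^{-S_q}\,|{\cal R}^{(1)}|_{S_q}$, and by the definition \eqref{norma decadimento operatore matriciale} one has $|{\cal R}^{(1)}|_{S_q}\le|{\cal R}|_{S_q}$; together these yield \eqref{mu + - mu}. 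The weighted Lipschitz statement follows verbatim, using item $(iii)$ of Lemma \ref{elementarissimo decay} in place of $(ii)$ and the $|\cdot|_{S_q}^\Lipg$ version of \eqref{norma decadimento operatore matriciale}.

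\textbf{Main obstacle.} There is essentially no obstacle: the lemma is bookkeeping. The only point deserving care is the verification that the composition of the diagonal-averaging operation with the smoothing projector $\Pi_N$, acting on a Hamiltonian vector field of the form \eqref{cal R Hamiltoniano}, reproduces exactly \eqref{mu j +} and preserves the self-adjointness of each block $[{\cal D}_+^{(1)}]_\alpha^\alpha$; once this structural check is made, the estimate is a one-line consequence of the block-decay norm bound of Lemma \ref{elementarissimo decay}.
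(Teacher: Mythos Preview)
Your proposal is correct and follows essentially the same approach as the paper: unpack the definitions \eqref{media diagonale operatore cal R}, \eqref{SM-block-matrix}, \eqref{media di cal R} to read off \eqref{new diagonal part}--\eqref{mu j +}, check self-adjointness of $[\widehat{\cal R}^{(1)}(0)]_\alpha^\alpha$ from that of ${\cal R}^{(1)}(\vphi)$, and derive \eqref{mu + - mu} from Lemma~\ref{elementarissimo decay}-$(ii)$. Your write-up is in fact slightly more explicit than the paper's (you cite \eqref{commutazione media proiettore} and \eqref{norma decadimento operatore matriciale} where the paper leaves these implicit), but there is no substantive difference.
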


\begin{proof}
Notice that, since ${\cal R}^{(1)}(\vphi)$ is selfadjoint, the operators $[\widehat{\cal R}^{(1)}(0) ]_\alpha^\alpha : {\mathbb E}_\alpha \to {\mathbb E}_\alpha$ are self-adjoint, i.e. $[\widehat{\cal R}^{(1)}(0) ]_\alpha^\alpha\in {\cal S}({\mathbb E}_\alpha)$, for any $\alpha \in \sigma_0(\sqrt{- \Delta})$ and using that $[{\cal D}^{(1)}]_\alpha^\alpha$ is self-adjoint, we get that $[{\cal D}_+^{(1)}]_\alpha^\alpha$ is self-adjoint for all $\alpha \in \sigma_0(\sqrt{- \Delta})$. The formula \eqref{mu j +} follows by \eqref{new diagonal part} and recalling the definitions \eqref{media diagonale operatore cal R}, \eqref{SM-block-matrix}. The estimate \eqref{mu + - mu} follows by 
\begin{align}
\sup_{\alpha \in \sigma_0(\sqrt{- \Delta})} \alpha^{S_q}\|  [{\cal D}_+^{(1)}]_\alpha^\alpha - [{\cal D}]_\alpha^\alpha \|_{HS}^\Lipg & \stackrel{\eqref{mu j +}}{\leq}   \sup_{\alpha \in \sigma_0(\sqrt{- \Delta})} \alpha^{S_q}\|  [\widehat{\cal R}^{(1)}(0)]_\alpha^\alpha \|_{HS}^\Lipg \stackrel{Lemma \,\ref{elementarissimo decay}}{\leq} |{\cal R}|_{S_q}^\Lipg\label{stima Lip rj}
\end{align}
which implies the estimate \eqref{mu + - mu}. 
\end{proof}
\subsection{The iteration}
Let $k \geq 0$ and let us suppose that $({\bf Si})_k$ are true. We prove $({\bf Si})_{k + 1}$. To simplify notations, in this proof we write $| \cdot |_s$ for $| \cdot |_s^\Lipg$.

{\sc Proof of $({\bf S1})_{k + 1}$}. Since the self-adjoint operators $[{\cal D}_k^{(1)}]_\alpha^\alpha\in {\cal S}({\mathbb E}_\alpha)$ are defined on $\Omega_k^\gamma$, the set $\Omega_{k + 1}^\gamma$ is well-defined and by Lemma \ref{homologica equation}, the following estimates hold on $\Omega_{k + 1}^\gamma$
\begin{equation}\label{Psi nu norma alta}
| \Psi_k |_s \lesssim_s  N_k^{2 \tau + 4 \mathtt d + 1} \gamma^{- 1} |{\cal R}_k |_s 
\stackrel{\eqref{Rsb}}{\lesssim_s} N_k^{2 \tau + 4 \mathtt d + 1} N_{k - 1}^{- \mathtt a} \gamma^{- 1} | {\cal R}_0 |_{s + {\mathtt b}}  \,, \quad \forall s \in [\frak s_0 , [q/2]- \overline \mu].
\end{equation}
In particular, by \eqref{piccolezza1}, \eqref{definizione alpha}, \eqref{defN}, taking $\delta_q$ small enough, 
\begin{equation}\label{Psi nu norma bassa}
| \Psi_k  |_{\frak s_0} \leq  1\,.
\end{equation}
By \eqref{Psi nu norma bassa}, we can apply Lemma \ref{lem:inverti} to the map $\Phi_k^{\pm 1} := {\rm exp}(\pm \Psi_k)$, obtaining that
\begin{equation}\label{Phi nu bassa alta}
 | \Phi_k^{\pm 1} - {\rm Id}  |_s \lesssim_s | \Psi_k  |_s\,, \quad  | \Phi_k^{\pm 1} - {\rm Id}  |_s \lesssim_s | \Psi_k  |_s \,, \quad \forall s \in [\frak s_0 , [q/2]- \overline \mu]\,. 
\end{equation}
By \eqref{cal L+} we get  ${\cal L}_{k + 1}(\vphi) := (\Phi_k)_{\omega*} {\cal L}_k (\vphi) =  {\cal D}_{k + 1} + {\cal R}_{k + 1}(\vphi)$, where ${\cal D}_{k + 1} := {\cal D}_k + \Pi_{N_k}({\cal R}_k)_{diag}$ and 
\begin{align}
 {\cal R}_{k + 1} := (\Phi_k^{- 1} - {\rm Id}) \Pi_{N_k}({\cal R}_k)_{diag} +  \Phi_k^{- 1} \Big( \Pi_{N_k}^\bot {\cal R}_k - \omega \cdot \partial_\vphi \Psi_{k , \geq 2} + [{\cal D}_k, \Psi_{k , \geq 2}] +  {\cal R}_k (\Phi_k - {\rm Id})  \Big)\,. \label{cal L nu + 1}
\end{align}
Note that, since ${\cal R}_k$ is defined on $\Omega_k^\gamma$ and $\Psi_k$ is defined on $\Omega_{k + 1}^\gamma$, the remainder ${\cal R}_{k + 1}$ is defined on $\Omega_{k + 1}^\gamma$ too. 
Since the remainder ${\cal R}_k$ is Hamiltonian, the map $\Psi_k$ is Hamiltonian, then $\Phi_k$ is symplectic and the operator ${\cal L}_{k + 1}$ is Hamiltonian.  

Now let us prove the estimates \eqref{Rsb} for ${\cal R}_{k + 1}$. Applying Lemmata \ref{elementarissimo decay}, \ref{interpolazione decadimento Kirchoff}, \ref{lem:inverti} and the estimates  \eqref{Psi nu norma bassa}, \eqref{Psi nu norma alta}, \eqref{Phi nu bassa alta}, for any $s \in [\frak s_0, [q/2] - \overline \mu]$, we get 
\begin{align}
|  (\Phi_k^{- 1} - {\rm Id}) \Pi_{N_k}({\cal R}_k)_{diag} |_s\,,\, |  \Phi_k^{- 1}{\cal R}_k (\Phi_k - {\rm Id})  |_s &   \lesssim_s N_k^{2 \tau +4 \mathtt d +  1}\gamma^{- 1}  |{\cal R}_k  |_s |{\cal R}_k |_{\frak s_0}\, \label{primo pezzo cal R nu + 1}
\end{align}
and 
\begin{equation}\label{secondo pezzo cal R nu + 1}
| \Phi_{k}^{- 1} \Pi_{N_k}^\bot {\cal R}_k|_s \lesssim_s | \Pi_{N_k}^\bot {\cal R}_k |_s + N_k^{2 \tau + 4 \mathtt d + 1}\gamma^{- 1}  | {\cal R}_k  |_s |{\cal R}_k |_{s_0}\,.
\end{equation}
Then, it remains to estimate the term $\Phi_k^{- 1} \big( - \omega \cdot \partial_\vphi \Psi_{k , \geq 2} + [{\cal D}_k , \Psi_{k , \geq 2}] \big)$ in \eqref{cal L nu + 1}. A direct calculation shows that for all $n \geq 2$
\begin{align}
- \omega \cdot \partial_\vphi (\Psi_k^n) + [{\cal D}_k, \Psi_k^n] & = \sum_{i + j = n - 1} \Psi_k^i (- \omega \cdot \partial_\vphi \Psi_k + [{\cal D}_k, \Psi_k]) \Psi_k^j \nonumber\\
& \stackrel{\eqref{Homological equation}}{=} \sum_{i + j = n - 1} \Psi_k^i \Big(\Pi_{N_k}({\cal R}_k)_{diag} - \Pi_{N_k} {\cal R}_k \Big) \Psi_k^j\,, \label{vesuvio}
\end{align}
therefore using \eqref{Psi nu norma bassa}, \eqref{Psi nu norma alta}, Lemmata \ref{elementarissimo decay}, \ref{interpolazione decadimento Kirchoff} and the estimate \eqref{Mnab} we get that for any $n \geq 2$, for any $s \in [\frak s_0, [q/2] - \overline \mu]$
\begin{align}
& \Big| - \omega \cdot \partial_\vphi (\Psi_k^n) + [{\cal D}_k, \Psi_k^n]  \Big|_s  \leq  n^2 C(s)^n  \Big(| \Psi_k |_{\frak s_0}^{n - 1} |{\cal R}_k |_s   + | \Psi_k |_{\frak s_0}^{n - 2} | \Psi_k |_s |{\cal R}_k|_{\frak s_0}  \Big)  \nonumber\\
& \stackrel{\eqref{Psi nu norma alta}, \eqref{Psi nu norma bassa}}{\leq} 2 n^2 C(s)^n N_k^{2 \tau + 4 \mathtt d + 1} \gamma^{- 1} |{\cal R}_k |_s |{\cal R}_k |_{\frak s_0}\,. \label{peppino 0}
\end{align}
The estimate \eqref{peppino 0} implies that  
\begin{align}
\Big|   \omega \cdot \partial_\vphi \Psi_{k , \geq 2} + [{\cal D}_k, \Psi_{k , \geq 2}] \Big|_s & \lesssim \sum_{n \geq 2} \frac{1}{n !} \Big| \omega \cdot \partial_\vphi (\Psi_k^n) + [{\cal D}_k, \Psi_k^n]   \Big|_s \nonumber\\
& \stackrel{\eqref{peppino 0}}{\lesssim} N_k^{2 \tau + 4 \mathtt d + 1} \gamma^{- 1} |{\cal R}_k  |_s |{\cal R}_k|_{\frak s_0} \sum_{n \geq 2}\frac{C(s)^n n^2}{n !} \nonumber\\
& \lesssim_s N_k^{2 \tau+ 4 \mathtt d + 1} \gamma^{- 1} |{\cal R}_k |_s | {\cal R}_k |_{\frak s_0}\,. \label{peppino 1}
\end{align}
Using again \eqref{Psi nu norma alta}-\eqref{Phi nu bassa alta} and Lemma \ref{interpolazione decadimento Kirchoff} we get 
\begin{equation}\label{quarto pezzo cal R nu + 1}
\Big|\Phi_k^{- 1} \big( - \omega \cdot \partial_\vphi \Psi_{k , \geq 2} + [{\cal D}_k, \Psi_{k , \geq 2}] \big)   \Big|_s \lesssim_s N_k^{2 \tau + 4 \mathtt d + 1} \gamma^{- 1} | {\cal R}_k  |_{s} | {\cal R}_k |_{\frak s_0}\,, \quad \forall s \in [\frak s_0, [q/2] - \overline \mu]\,.
\end{equation}
Collecting the estimates \eqref{primo pezzo cal R nu + 1}-\eqref{quarto pezzo cal R nu + 1} we obtain 
\begin{equation}\label{stima induttiva cal R nu 1}
|{\cal R}_{k + 1} |_s \lesssim_s | \Pi_{N_k} {\cal R}_k  |_s + N_k^{2 \tau + 4 \mathtt d +  1} \gamma^{- 1} | {\cal R}_k  |_s | {\cal R}_k  |_{\frak s_0}\,, \quad \forall s \in [\frak s_0, [q/2] - \overline \mu]\,.
\end{equation}
Recalling that $S_q = [q/2] - \overline \mu- \mathtt b$, see \eqref{definizione alpha}, using the smoothing property \eqref{smoothingN} and by \eqref{piccolezza1}, \eqref{Rsb}, one gets for any $s \in [\frak s_0, S_q]$ 
\begin{equation}\label{stima induttiva cal R nu 2 bassa}
| {\cal R}_{k + 1} |_s \lesssim_s N_k^{- \mathtt b } |{\cal R}_{k } |_{s + \mathtt b} + N_k^{2 \tau + 4 \mathtt d +  1} \gamma^{- 1} | {\cal R}_{k}  |_s | {\cal R}_{k}  |_{\frak s_0}\,, \quad | {\cal R}_{k + 1}  |_{s + \mathtt b} \lesssim_s | {\cal R}_{k }  |_{s + \mathtt b}\,.
\end{equation}
By the second inequality in \eqref{stima induttiva cal R nu 2 bassa}
$$
| {\cal R}_{k + 1}  |_{s + \mathtt b} \leq C(s) | {\cal R}_{k}  |_{s + \mathtt b} \stackrel{\eqref{Rsb}}{\leq} C(s ) | {\cal R}_0  |_{s + \mathtt b} N_{k - 1} \leq |{\cal R}_0   |_{s + \mathtt b} N_k\,,
$$
provided $N_{k - 1}^{\chi - 1} \geq C(s)$ for any $k \geq 0$, which is verified by taking $N_0 > 0$ large enough. Therefore, the second inequality in \eqref{Rsb} for ${\cal R}_{k + 1}$ has been proved. Let us prove the first inequality in \eqref{Rsb} at the step $k + 1$. We have 
\begin{align*}
| {\cal R}_{k + 1} |_s & \stackrel{ \eqref{Rsb}}{\lesssim_s} N_k^{- \mathtt b} N_{k - 1} |  {\cal R}_0  |_{s + \mathtt b} + N_k^{2 \tau + 4 \mathtt d + 1}  N_{k - 1}^{- 2 \mathtt a} \gamma^{- 1} | {\cal R}_0 |_{\frak s_0 + \mathtt b} | {\cal R}_0  |_{s + \mathtt b}   \leq |{\cal R}_0  |_{s + \mathtt b} N_k^{- \mathtt a}\,,
\end{align*}
provided 
$$
N_{k}^{\mathtt b - \mathtt a} N_{k - 1}^{- 1} \geq 2 C(s)\,,\quad \gamma^{- 1} | {\cal R}_0 |_{\frak s_0 + \mathtt b} \leq \frac{ N_{k - 1}^{2 \mathtt a} N_k^{- {\mathtt a} - 2 \tau - 4 \mathtt d - 1}}{2 C(s)}\,, \quad \forall k \geq 0
$$
which are verified by \eqref{stime primo resto KAM}, \eqref{defN}, \eqref{definizione alpha} and \eqref{piccolezza1}, by taking $N_0 > 0$ large enough and $\delta_q$ small enough. 

\noindent
The estimate \eqref{rjnu bounded} for $[{\cal D}_{k  +1}^{(1)}]_\alpha^\alpha -  [{\cal D}_{0}^{(1)}]_\alpha^\alpha$ follows, since 
$$
\| [{\cal D}_{k +1}^{(1)}]_\alpha^\alpha -  [{\cal D}_{0}^{(1)}]_\alpha^\alpha \|^\Lipg_{HS} \leq \sum_{j = 0}^k \| [{\cal D}_{j +1}^{(1)}]_\alpha^\alpha -  [{\cal D}_{j}^{(1)}]_\alpha^\alpha \|^\Lipg_{HS} \stackrel{\eqref{mu + - mu}, \eqref{Rsb}}{\lesssim_{q}} \alpha^{- S_q} |{\cal R}_0|_{S_q + \mathtt b}^\Lipg \sum_{j \geq 0} N_{j - 1}^{- \mathtt a} \stackrel{\eqref{stime primo resto KAM}}{\lesssim_{q}}  \alpha^{- S_q} \e\,.
$$

{\sc Proof of $({\bf S2})_{k + 1}$} We now construct a Lipschitz extension of the function $\omega \in \Omega_{k + 1}^\gamma \mapsto [{\cal D}^{(1)}_{k + 1}(\omega)]_\alpha^\alpha \in {\cal S}({\mathbb E}_\alpha)$, for any $\alpha \in \sigma_0(\sqrt{- \Delta})$. We apply Lemma M.5 in \cite{KP} to functions with values in ${\cal S}({\mathbb E}_\alpha)$. Recall that the space ${\cal S}({\mathbb E}_\alpha)$ is a Hilbert subspace of ${\cal B}({\mathbb E}_\alpha)$ equipped by the scalar product defined in \eqref{prodotto scalare traccia matrici}, thus Lemma M.5 in \cite{KP} can be applied, since it holds for functions with values in a Hilbert space. By the inductive hyphothesis, there exists a Lipschitz function $[\widetilde{\cal D}_k^{(1)}]_\alpha^\alpha :  DC(\gamma, \tau) \to {\cal S}({\mathbb E}_\alpha)$, satisfying $[\widetilde{\cal D}_k^{(1)}(\omega)]_\alpha^\alpha = [{\cal D}_k^{(1)}(\omega)]_\alpha^\alpha$, for all $\omega \in \Omega_k^\gamma$. For any $\alpha \in \sigma_0(\sqrt{- \Delta})$, let us define $F_{k, \alpha}(\omega) := [{\cal D}_{k + 1}^{(1)}(\omega)]_\alpha^\alpha- [{\cal D}_k^{(1)}(\omega)]_\alpha^\alpha$, $\omega \in \Omega_{k + 1}^\gamma$. By the estimate \eqref{mu + - mu} one has that  
$$
 \| F_{k, \alpha} \|^\Lipg_{HS}  \leq \alpha^{- S_q} |{\cal R}_k |_{S_q}^\Lipg  \stackrel{\eqref{Rsb}}{\leq} \alpha^{- S_q} |{\cal R}_0 |_{S_q + \mathtt b} N_{k - 1}^{- \mathtt a}
$$
and then by Lemma M.5 in \cite{KP} there exists a Lipschitz extension $\widetilde F_{k , \alpha} : DC(\gamma, \tau) \to {\cal S}({\mathbb E}_\alpha)$ still satisfying the above estimate. Then we define 
$$
[\widetilde{\cal D}_{k + 1}^{(1)}]_\alpha^\alpha := [\widetilde{\cal D}_{k}^{(1)}]_\alpha^\alpha + \widetilde F_{k, \alpha}\,, \qquad \forall \alpha \in \sigma_0(\sqrt{- \Delta})\, 
$$ 
and the claimed estimate \eqref{lambdaestesi} holds at the step $k  + 1$. 
\begin{corollary}\label{lem:convPhi}
{\bf (KAM transformation)} 
Let $q/2 > \frak s_0 + \overline \mu + \mathtt b + 2 s_0 + 2$ (recall \eqref{scelta di M nuove norme}, \eqref{definizione alpha}). 
Then $ \forall \omega \in  \cap_{k \geq 0} \Omega_{k}^{\g} $ 
the sequence 
\be\label{Phicompo}
\widetilde{\Phi}_{k} := \Phi_{0} \circ \Phi_1 \circ \cdots\circ \Phi_{k} 
\ee
is in ${\cal C}^1(\T^\nu, {\cal B}({\bf H}^s_0))$ for any $0 \leq s \leq S_q -2 - 2 s_0$ (recall the definition of $S_q$ given in \eqref{definizione alpha}) and it converges in $ \|\cdot \|_{{\cal C}^1(\T^\nu, {\cal B}({\bf H}^s_0))}^{\Lipg}$ to an operator $\Phi_{\infty}$ which satisfies
\be\label{Phinftys}
\left\|\Phi_{\infty}^{\pm 1} - {\rm Id} \right\|_{{\cal C}^1(\T^\nu, {\cal B}({\bf H}^s_0))}^{\Lipg} \lesssim_q \e \gamma^{-1} \, . 
\ee
Moreover $\Phi_\infty^{\pm 1}$ is symplectic. 
\end{corollary}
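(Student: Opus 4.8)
The plan is to derive Corollary \ref{lem:convPhi} from the quantitative bounds on the individual KAM transformations $\Phi_k = \exp(\Psi_k)$ supplied by Theorem \ref{thm:abstract linear reducibility}, together with the telescoping structure of the composition $\widetilde\Phi_k = \Phi_0 \circ \cdots \circ \Phi_k$. First I would record the key input: by ${\bf (S1)}_k$, the Hamiltonian generator $\Psi_{k}$ satisfies the block-decay estimate \eqref{Psinus}, i.e. $|\Psi_{k-1}|_s^\Lipg \leq |{\cal R}_0|_{s+\mathtt b}^\Lipg \gamma^{-1} N_{k-1}^{2\tau+4\mathtt d+1} N_{k-2}^{-\mathtt a}$, and since $\mathtt a = 4\tau + 8\mathtt d + 3 > 2\tau + 4\mathtt d + 1$ with $\chi = 3/2$, the geometric-type product $\prod_k N_{k-1}^{2\tau+4\mathtt d+1} N_{k-2}^{-\mathtt a}$ is summable in $k$ for every fixed $s$ in the allowed range; in particular $\sum_{k\geq 0} |\Psi_k|_{\mathfrak s_0}^\Lipg \lesssim_q \e\gamma^{-1}$, and more generally $\sum_{k\geq 0} |\Psi_k|_s^\Lipg \lesssim_q \e\gamma^{-1}$ for $\mathfrak s_0 \leq s \leq S_q$ (using $|{\cal R}_0|_{s+\mathtt b}^\Lipg \lesssim_q \e$ from \eqref{stime primo resto KAM} when $s+\mathtt b \leq [q/2]-\overline\mu$, which holds by the hypothesis on $q$). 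By the smallness condition \eqref{piccolezza1}, taking $\delta_q$ small we may also assume $|\Psi_k|_{2s_0}^\Lipg \leq 1$ for all $k$.

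Next I would pass from block-decay bounds on $\Psi_k$ to operator bounds on $\Phi_k^{\pm1}$ on Sobolev spaces. By Lemma \ref{lem:inverti}, $|\Phi_k^{\pm1} - {\rm Id}|_s^\Lipg \lesssim_s |\Psi_k|_s^\Lipg$, and by Lemma \ref{lemma decadimento Kirchoff in x}-$(ii)$ (applied with $k=1$ to control the ${\cal C}^1$ norm in $\vphi$) one has $\|\Phi_k^{\pm1} - {\rm Id}\|_{{\cal C}^1(\T^\nu,{\cal B}({\bf H}^s_0))} \lesssim |\Phi_k^{\pm1} - {\rm Id}|_{s+1+2s_0}^\Lipg \lesssim_s |\Psi_k|_{s+1+2s_0}^\Lipg$; this is why the final statement loses $2s_0+2$ derivatives relative to $S_q$, since we need $s+1+2s_0 \leq S_q$ and one further derivative of room for the ${\cal C}^1$ estimate. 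Then I would estimate the telescoping differences $\widetilde\Phi_k - \widetilde\Phi_{k-1} = \widetilde\Phi_{k-1}(\Phi_k - {\rm Id})$ in $\|\cdot\|_{{\cal C}^1(\T^\nu,{\cal B}({\bf H}^s_0))}^\Lipg$: by a standard product bound (the operators ${\cal B}({\bf H}^s_0)$ form a Banach algebra and composition with a $\vphi$-dependent ${\cal C}^1$ family is continuous), $\|\widetilde\Phi_{k-1}\|_{{\cal C}^1(\T^\nu,{\cal B}({\bf H}^s_0))} \leq \prod_{j<k}(1 + C\|\Phi_j - {\rm Id}\|) \leq \exp(C\sum_j \|\Phi_j - {\rm Id}\|)$, which is uniformly bounded because the sum converges. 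Hence $\sum_k \|\widetilde\Phi_k - \widetilde\Phi_{k-1}\|_{{\cal C}^1(\T^\nu,{\cal B}({\bf H}^s_0))}^\Lipg \lesssim \sum_k |\Psi_k|_{s+1+2s_0}^\Lipg \lesssim_q \e\gamma^{-1}$, so $\{\widetilde\Phi_k\}$ is Cauchy and converges to some $\Phi_\infty$ in that norm, with the same bound for $\Phi_\infty - {\rm Id}$; the same argument applied to $\widetilde\Phi_k^{-1} = \Phi_k^{-1}\circ\cdots\circ\Phi_0^{-1}$ gives convergence of the inverses and the bound \eqref{Phinftys}. Finally, each $\Phi_k$ is symplectic (being the exponential of the Hamiltonian vector field $\Psi_k$, by the discussion in Section \ref{formalismo hamiltoniano complesso}), symplecticity is preserved under composition, and it passes to the limit because $\Phi_\infty$ and $\Phi_\infty^{-1}$ are the operator-norm limits of symplectic maps and the symplectic form $\Gamma$ in \eqref{forma simplettica coordinate complesse} is continuous; this gives that $\Phi_\infty^{\pm1}$ is symplectic.

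The main obstacle I anticipate is purely bookkeeping rather than conceptual: one must verify carefully that the derivative-loss budget closes, i.e. that $s + 1 + 2s_0 \leq S_q$ is compatible with the stated range $0 \leq s \leq S_q - 2 - 2s_0$ and with the hypothesis $q/2 > \mathfrak s_0 + \overline\mu + \mathtt b + 2s_0 + 2$, so that $|{\cal R}_0|_{s+1+2s_0+\mathtt b}^\Lipg$ and hence all the $|\Psi_k|_{s+1+2s_0}^\Lipg$ are finite and $\lesssim_q \e$; and one must check that the exponent arithmetic $\mathtt a > 2\tau + 4\mathtt d + 1$ combined with $\chi = 3/2$ genuinely makes $\sum_k N_{k-1}^{2\tau+4\mathtt d+1}N_{k-2}^{-\mathtt a}$ converge (it does, since $N_{k-1}^{2\tau+4\mathtt d+1}N_{k-2}^{-\mathtt a} = N_0^{\chi^{k-2}(\chi(2\tau+4\mathtt d+1) - \mathtt a)}$ and $\chi(2\tau+4\mathtt d+1) = 3\tau + 6\mathtt d + 3/2 < 4\tau + 8\mathtt d + 3 = \mathtt a$). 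Everything else follows from the Banach-algebra / telescoping scheme, the already-established Lemmas \ref{lem:inverti}, \ref{interpolazione decadimento Kirchoff}, \ref{lemma decadimento Kirchoff in x}, and the preservation of the Hamiltonian/symplectic structure established in Sections \ref{formalismo Hamiltoniano}--\ref{formalismo hamiltoniano complesso}.
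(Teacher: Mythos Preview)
Your proposal is correct and follows essentially the same route as the paper: control $|\Phi_k^{\pm1}-{\rm Id}|_s^\Lipg$ via Lemma~\ref{lem:inverti} and the bound \eqref{Psinus}, pass to the ${\cal C}^1(\T^\nu,{\cal B}({\bf H}^s_0))$ norm via Lemma~\ref{lemma decadimento Kirchoff in x}-$(ii)$, and then run the telescoping/Cauchy argument for $\widetilde\Phi_k$ and $\widetilde\Phi_k^{-1}$, with symplecticity inherited in the limit. The only point to tidy up is your invocation of Lemma~\ref{lemma decadimento Kirchoff in x}-$(ii)$: applying it with index $1$ only yields $W^{1,\infty}(\T^\nu,{\cal B}({\bf H}^s_0))$, which does not embed into ${\cal C}^1$; the paper instead applies it with index $2$ to land in $W^{2,\infty}$ and then uses \eqref{immersione W k infinito Ck} to get ${\cal C}^1$, which is exactly the ``one further derivative'' you allude to and accounts cleanly for the loss $s\leq S_q-2-2s_0$.
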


\begin{proof}
To simplify notations, we write $| \cdot |_s$ instead of $|\cdot |_s^\Lipg$. First, note that for any $k \geq 0$
\begin{equation}\label{Phi nu widetilde Psi nu}
\Phi_k = {\rm exp}(\Psi_k) = {\rm Id} + {\cal M}_k\,, \qquad {\cal M}_k := \sum_{j \geq 1} \frac{\Psi_k^j}{j !}
\end{equation}
with 
\begin{equation}\label{stima cal M nu}
|{\cal M}_k|_s \stackrel{\eqref{PhINV}}{\lesssim_s} |\Psi_k|_s \stackrel{\eqref{Psinus}}{\lesssim_s} |{\cal R}_{0}|_{s+\mathtt b}^\Lipg \gamma^{-1} N_{k}^{2 \t+ 4 \mathtt d +1} N_{k - 1}^{- \mathtt a} \stackrel{\eqref{stime primo resto KAM}}{\lesssim_q} \e \gamma^{- 1} N_{k}^{2 \t+ 4 \mathtt d +1} N_{k - 1}^{- \mathtt a}\,, \quad \forall \frak s_0 \leq s \leq S_q\,.
\end{equation}
Therefore, by applying Lemma \ref{lemma decadimento Kirchoff in x}-$(ii)$ one gets that for any $0 \leq s \leq S_q - 2 - 2 s_0 $, ${\cal M}_k \in W^{2, \infty}(\T^\nu, {\cal B}({\bf H}^s_0))$ with $\| {\cal M}_k \|_{W^{2, \infty}(\T^\nu, {\cal L}({\bf H}^s_0))} \lesssim_q \e \gamma^{- 1} N_{k}^{2 \t+ 4 \mathtt d +1} N_{k - 1}^{- \mathtt a}$. By the property \eqref{immersione W k infinito Ck}, applied with $p = 1$ and $E = {\cal B}({\bf H}^s_0)$, one gets that ${\cal M}_k \in {\cal C}^1(\T^\nu, {\cal B}({\bf H}^s_0))$ and 
\begin{equation}\label{stima C1 Hs X}
\| {\cal M}_k \|_{{\cal C}^1(\T^\nu, {\cal B}({\bf H}^s_0))} \leq \| {\cal M}_k \|_{W^{2, \infty}(\T^\nu, {\cal B}({\bf H}^s_0))} \lesssim_q \e \gamma^{- 1} N_{k}^{2 \t+ 4 \mathtt d +1} N_{k - 1}^{- \mathtt a}\,, \quad \forall 0 \leq s \leq S_q - 2 - 2 s_0\,. 
\end{equation}
Therefore one gets that $\Phi_k \in {\cal C}^1(\T^\nu, {\cal B}({\bf H}^s_0))$ and hence $\widetilde \Phi_k \in {\cal C}^1(\T^\nu, {\cal B}({\bf H}^s_0))$ for any $k \geq 0$, using the algebra property of the space ${\cal C}^1(\T^\nu, {\cal B}({\bf H}^s_0))$. 
By \eqref{Phicompo}-\eqref{Phi nu widetilde Psi nu}, for any $k \geq 0$, one gets
\begin{equation}\label{marek hamsik0}
\widetilde \Phi_{k + 1} = \widetilde \Phi_k  \Phi_{k + 1} =\widetilde \Phi_k + \widetilde \Phi_k {\cal M}_{k + 1}\,,
\end{equation}
therefore \eqref{stima C1 Hs X} imply that 
\begin{align}
\| \widetilde \Phi_{k + 1}\|_{{\cal C}^1(\T^\nu, {\cal B}({\bf H}^s_0))} & \leq \| \widetilde \Phi_{k}\|_{{\cal C}^1(\T^\nu, {\cal B}({\bf H}^s_0))} (1 + \e_k (q) )\,, \quad \e_k (q) := C(q) \e \gamma^{- 1} N_{k + 1}^{2 \tau + 4 \mathtt d + 1} N_{k }^{- \mathtt a}\,. \label{bound induttivo widetilde Phi nu}
\end{align}
Iterating the above inequality, one then prove that for any $k \geq 0$ 
\begin{align}
\| \widetilde \Phi_{k }\|_{{\cal C}^1(\T^\nu, {\cal B}({\bf H}^s_0))} \leq \prod_{j = 0}^{k - 1} (1 + \e_j(q))\,. \label{bound widetilde Phi nu}
\end{align}
Using that 
$$
\ln\Big( \prod_{j = 0}^{k - 1} (1 + \e_j(q)) \Big) = \sum_{j = 0}^{k - 1} \ln (1 + \e_j(q)) \leq \sum_{j \geq 0} \e_j(q) \stackrel{\eqref{bound induttivo widetilde Phi nu}, \eqref{definizione alpha}, \eqref{piccolezza1}}{\leq} C_1(q)\,,
$$
One gets that 
\begin{equation}\label{bound widetilde Phi nu finale}
\| \widetilde \Phi_{k }\|_{{\cal C}^1(\T^\nu, {\cal B}({\bf H}^s_0))}  \leq {\rm exp}(C_1(q)) =: C_2(q)\,, \quad \forall \nu \geq 0\,. 
\end{equation}
Now we show that $(\widetilde \Phi_k)_{k \geq 0}$ is a Cauchy sequence with respect to the norm $\| \cdot \|_{{\cal C}^1(\T^\nu, {\cal B}({\bf H}^s_0))}$. One has  
\begin{align}
\|\widetilde \Phi_{k + j} - \widetilde\Phi_k\|_{{\cal C}^1(\T^\nu, {\cal B}({\bf H}^s_0))} & \leq \sum_{i = k}^{k + j - 1} \|\widetilde\Phi_{i + 1} - \widetilde\Phi_i \|_{{\cal C}^1(\T^\nu, {\cal B}({\bf H}^s_0))}   \stackrel{\eqref{marek hamsik0}}{\lesssim}  \sum_{i = k}^{k + j - 1} \|\widetilde\Phi_i \|_{{\cal C}^1(\T^\nu, {\cal B}({\bf H}^s_0))} \| {\cal M}_{i + 1}\|_{{\cal C}^1(\T^\nu, {\cal B}({\bf H}^s_0))} \nonumber\\
& \stackrel{\eqref{bound widetilde Phi nu finale}, \eqref{stima C1 Hs X}}{\lesssim_q} \e \gamma^{- 1} \sum_{i \geq k} N_{i + 1}^{2 \tau + 4 \mathtt d + 1} N_i^{- \mathtt a} \lesssim_q \e \gamma^{- 1} N_{k + 1}^{2 \tau + 4 \mathtt d + 1} N_k^{- \mathtt a} \to 0
 \end{align}
 by using \eqref{defN}, \eqref{definizione alpha}. 
Thus $\widetilde\Phi_k$ converges with respect to the norm $\| \cdot \|_{{\cal C}^1(\T^\nu, {\cal B}({\bf H}^s_0))}$ to an operator $\Phi_\infty$ which satisfies the estimate 
$$
\|\Phi_\infty - {\rm Id}\|_{{\cal C}^1(\T^\nu, {\cal B}({\bf H}^s_0))} \lesssim_q \e \gamma^{- 1}\,.
$$
 Similarly, one can show that 
$$
\widetilde \Phi_k^{- 1} = \Phi_k^{- 1} \circ \ldots \circ \Phi_0^{- 1}
$$
is a Cauchy sequence and since $\widetilde \Phi_k^{- 1} \widetilde \Phi_k = {\rm Id}$ for any $k \geq 0$, $\widetilde \Phi_k^{- 1}$ converges to $\Phi_\infty^{- 1}$ and the estimate \eqref{Phinftys} for $\Phi_\infty^{- 1}$ holds. Since $\Phi_k$ is symplectic for any $k \geq 0$, $\Phi_\infty$ is a symplectic map too. 
\end{proof}

Let us define for all $\alpha \in \sigma_0(\sqrt{- \Delta})$, for all $\omega \in DC(\gamma, \tau)$, the self-adjoint blocks $[{\cal D}_\infty^{(1)}(\omega)]_\alpha^\alpha$ as 
\begin{equation}\label{definizione bf D infinito}
[{\cal D}_\infty^{(1)}(\omega)]_\alpha^\alpha : = \lim_{\nu \to + \infty} [\widetilde{\cal D}_\nu^{(1)}(\omega)]_\alpha^\alpha 
\end{equation}
It could happen that $ \Omega_{k_0}^\g = \emptyset $ (see \eqref{Omgj}) for some $ k_0 $. In such a case
the iterative process of Theorem \ref{thm:abstract linear reducibility} stops  after finitely many steps. 
However, we can always set $ [\widetilde{\cal D}_{k}^{(1)}]_\alpha^\alpha  :=  [\widetilde{\cal D}_{k_0}^{(1)}]_\alpha^\alpha $, $ \forall k \geq k_0 $, for all $\alpha \in \sigma_0(\sqrt{- \Delta})$ and the functions $ [{\cal D}_\infty^{(1)}(\cdot)]_\alpha^\alpha : DC(\gamma, \tau) \to {\cal S}({\mathbb E}_\alpha)$  are always well defined.

\begin{corollary} {\bf (Final blocks)} For any $ k \geq 0, \alpha \in \sigma_0(\sqrt{- \Delta}) $, 
\be\label{autovcon}
\| [{\cal D}_\infty^{(1)}]_\alpha^\alpha - [\widetilde{\cal D}_{k}^{(1)}]_\alpha^\alpha \|^{\Lipg}_{HS} 
\lesssim_q
 \alpha^{- S_q} \e N_{k -1}^{-\mathtt a}  \, , \ \ 
\|  [{\cal D}_\infty^{(1)}]_\alpha^\alpha - [\widetilde{\cal D}_{0}^{(1)}]_\alpha^\alpha\|^{\Lipg}_{HS} \lesssim_q  \alpha^{- S_q} \e \, . 
\ee
\end{corollary}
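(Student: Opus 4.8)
The plan is to exploit the telescoping structure of the sequence of extended blocks $[\widetilde{\cal D}_k^{(1)}]_\alpha^\alpha$ together with the one-step estimate \eqref{lambdaestesi}. By $({\bf S2})_j$, each $[\widetilde{\cal D}_j^{(1)}]_\alpha^\alpha$ is defined on the common domain $DC(\gamma,\tau)$ with values in the Hilbert space ${\cal S}({\mathbb E}_\alpha)$, and \eqref{lambdaestesi} gives, for every $j\ge 1$,
$$
\|[\widetilde{\cal D}_{j+1}^{(1)}]_\alpha^\alpha - [\widetilde{\cal D}_j^{(1)}]_\alpha^\alpha\|^\Lipg_{HS} \lesssim_q \alpha^{-S_q}\, |{\cal R}_0|^\Lipg_{S_q+\mathtt b}\, N_{j-1}^{-\mathtt a}\,.
$$
Since $\mathtt a>0$ and $N_j = N_0^{\chi^j}$ with $\chi=3/2>1$ (see \eqref{defN}), the right-hand side is summable in $j$, so the series $\sum_{j\ge k}\big([\widetilde{\cal D}_{j+1}^{(1)}]_\alpha^\alpha - [\widetilde{\cal D}_j^{(1)}]_\alpha^\alpha\big)$ converges absolutely in $\|\cdot\|^\Lipg_{HS}$. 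This is precisely what makes the limit \eqref{definizione bf D infinito} well defined (including in the degenerate case $\Omega_{k_0}^\g=\emptyset$, where the sequence stabilizes), and it justifies writing
$$
[{\cal D}_\infty^{(1)}]_\alpha^\alpha - [\widetilde{\cal D}_k^{(1)}]_\alpha^\alpha \;=\; \sum_{j \geq k}\big([\widetilde{\cal D}_{j+1}^{(1)}]_\alpha^\alpha - [\widetilde{\cal D}_j^{(1)}]_\alpha^\alpha\big)\,.
$$

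Next I would apply the triangle inequality together with the displayed one-step bound to get
$$
\| [{\cal D}_\infty^{(1)}]_\alpha^\alpha - [\widetilde{\cal D}_k^{(1)}]_\alpha^\alpha\|^\Lipg_{HS} \;\lesssim_q\; \alpha^{-S_q}\, |{\cal R}_0|^\Lipg_{S_q + \mathtt b} \sum_{j \geq k} N_{j-1}^{-\mathtt a}\,,
$$
and then estimate the tail $\sum_{j \geq k} N_{j-1}^{-\mathtt a} = \sum_{j\ge k}N_0^{-\mathtt a\chi^{j-1}}$. Because the exponents grow geometrically, this sum decays super-exponentially and is controlled, up to a constant depending only on $N_0$ (hence on $q,\tau,\mathtt d,\nu,d$), by its first term $N_{k-1}^{-\mathtt a}$. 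Finally, since $S_q+\mathtt b = [q/2]-\overline\mu$ by \eqref{definizione alpha}, the bound \eqref{stime primo resto KAM} yields $|{\cal R}_0|^\Lipg_{S_q+\mathtt b}\lesssim_q\e$. Putting these together gives the first inequality in \eqref{autovcon}, and the second is just the case $k=0$, recalling $N_{-1}:=1$ from \eqref{defN}.

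The argument is essentially routine; the only thing to watch is the uniformity of all implied constants in both $\alpha$ and $k$. Uniformity in $\alpha$ is immediate since \eqref{lambdaestesi} is itself uniform in $\alpha$ (the $\alpha$-dependence being carried entirely by the explicit factor $\alpha^{-S_q}$), and uniformity in $k$ follows from the geometric estimate of the tail of $\sum_j N_{j-1}^{-\mathtt a}$. In particular there is no genuine obstacle here: the substantive estimate \eqref{lambdaestesi} has already been established inside the proof of Theorem \ref{thm:abstract linear reducibility}, and the present corollary only packages it as a convergent telescoping sum.
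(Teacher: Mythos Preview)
Your proof is correct and follows exactly the same approach as the paper: write the difference as a telescoping series, apply the one-step bound \eqref{lambdaestesi}, sum the tail $\sum_{j\ge k} N_{j-1}^{-\mathtt a}\lesssim N_{k-1}^{-\mathtt a}$, and use \eqref{stime primo resto KAM} to replace $|{\cal R}_0|^\Lipg_{S_q+\mathtt b}$ by $\e$. One tiny slip: the one-step estimate \eqref{lambdaestesi} (reindexed) holds for all $j\ge 0$, not just $j\ge 1$, which you actually need when $k=0$; this does not affect the argument.
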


\begin{proof}
The bound \eqref{autovcon} follows by \eqref{lambdaestesi}, \eqref{Rsb}, \eqref{stime primo resto KAM} by summing the telescoping series. 
\end{proof}
Now we define the set 
\begin{align}
\Omega_{\infty}^{2 \g} & :=  \Big\{\omega \in DC(\gamma, \tau) : \| {\mathbb A}_{\infty}^{-}(\ell, \alpha, \beta ; \omega)^{- 1} \|_{{\rm Op}(\alpha, \beta)} \leq \frac{\alpha^{\mathtt d} \beta^{\mathtt d}\langle \ell \rangle^\tau}{2 \gamma }\,, \quad \forall (\ell, \alpha, \beta) \in \Z^\nu \times \sigma_0(\sqrt{- \Delta}) \times \sigma_0(\sqrt{- \Delta})\,, \nonumber\\ 
& \qquad (\ell, \alpha, \beta)\neq (0, \alpha, \alpha)\,,
\,\,  \| {\mathbb A}_{\infty}^+(\ell, \alpha, \beta; \omega)^{- 1} \|_{{\rm Op}(\alpha, \beta)} \leq \frac{\langle \ell \rangle^\tau}{2 \gamma \langle \alpha + \beta \rangle}\,, \forall (\ell, \alpha , \beta) \in \Z^\nu \times \sigma_0(\sqrt{- \Delta}) \times \sigma_0(\sqrt{- \Delta})\Big\}\label{Omegainfty}
\end{align}
where the operators ${\mathbb A}^{\pm}_{\infty}(\ell, \alpha, \beta) = {\mathbb A}^{\pm}_{\infty}(\ell, \alpha, \beta; \omega)  : {\cal B}({\mathbb E}_\beta, {\mathbb E}_\alpha) \to {\cal B}({\mathbb E}_\beta, {\mathbb E}_\alpha)$ are defined for any $\omega \in DC(\gamma, \tau)$, $(\ell, \alpha, \beta) \in \Z^\nu \times \sigma_0(\sqrt{- \Delta}) \times \sigma_0(\sqrt{- \Delta})$ as 
\begin{equation}\label{bf A infinito - (ell,alpha,beta)}
{\mathbb A}^{-}_{\infty}(\ell, \alpha, \beta)  := \omega \cdot \ell {\mathbb I}_{\alpha, \beta} + M_L([{\cal D}_\infty^{(1)}]_\alpha^\alpha) - M_R([{\cal D}_\infty^{(1)}]_\beta^\beta)
\end{equation}
\begin{equation}\label{bf A infinito + (ell,alpha,beta)}
{\mathbb A}^{+}_{\infty}(\ell, \alpha, \beta) := \omega \cdot \ell {\mathbb I}_{\alpha, \beta} + M_L([{\cal D}_\infty^{(1)}]_\alpha^\alpha) + M_R([\overline{\cal D}_\infty^{(1)}]_\beta^\beta)\,.
\end{equation}

\begin{lemma} 
One has 
\be\label{cantorinclu}
\Omega_{\infty}^{2 \g} \subset \cap_{k \geq 0} \Omega_{k}^\g \, . 
\ee 
\end{lemma}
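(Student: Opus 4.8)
The goal is the inclusion $\Omega_\infty^{2\gamma}\subset\bigcap_{k\ge0}\Omega_k^\gamma$. The natural strategy is induction on $k$: I will show that $\Omega_\infty^{2\gamma}\subset\Omega_k^\gamma$ for every $k\ge0$. The base case $k=0$ is immediate, since $\Omega_0^\gamma=DC(\gamma,\tau)$ and $\Omega_\infty^{2\gamma}$ is by definition a subset of $DC(\gamma,\tau)$. For the inductive step, assume $\omega\in\Omega_\infty^{2\gamma}$ and (by the induction hypothesis) $\omega\in\Omega_{k-1}^\gamma$; I must verify the two families of quantitative bounds in the definition \eqref{Omgj} of $\Omega_k^\gamma$, namely $\|{\mathbb A}_{k-1}^{-}(\ell,\alpha,\beta)^{-1}\|_{{\rm Op}(\alpha,\beta)}\le\alpha^{\mathtt d}\beta^{\mathtt d}\langle\ell\rangle^\tau/\gamma$ for the admissible triples with $\langle\ell,\alpha,\beta\rangle\le N_{k-1}$, and the analogous bound for ${\mathbb A}_{k-1}^+$.

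\textbf{Key steps.} The core of the argument is a perturbative comparison between the finite-step operators ${\mathbb A}_{k-1}^{\pm}(\ell,\alpha,\beta)$ and the limiting operators ${\mathbb A}_\infty^{\pm}(\ell,\alpha,\beta)$. Recall from \eqref{bf A nu - (ell,alpha,beta)}, \eqref{bf A nu + (ell,alpha,beta)}, \eqref{bf A infinito - (ell,alpha,beta)}, \eqref{bf A infinito + (ell,alpha,beta)} that these differ only through the replacement of $[{\cal D}_{k-1}^{(1)}]_\alpha^\alpha$ by $[{\cal D}_\infty^{(1)}]_\alpha^\alpha$ (and the same for $\beta$). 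Hence
$$
{\mathbb A}_{k-1}^{\pm}(\ell,\alpha,\beta)-{\mathbb A}_\infty^{\pm}(\ell,\alpha,\beta)=M_L\big([{\cal D}_{k-1}^{(1)}-{\cal D}_\infty^{(1)}]_\alpha^\alpha\big)\mp M_R\big([({\cal D}_{k-1}^{(1)}-{\cal D}_\infty^{(1)})^{(\mp)}]_\beta^\beta\big),
$$
and by \eqref{norma operatoriale ML MR} together with the final-blocks estimate \eqref{autovcon} — which gives $\|[{\cal D}_\infty^{(1)}-\widetilde{\cal D}_{k-1}^{(1)}]_\alpha^\alpha\|_{HS}^\Lipg\lesssim_q\alpha^{-S_q}\e N_{k-2}^{-\mathtt a}$, and since on $\Omega_{k-1}^\gamma$ one has $\widetilde{\cal D}_{k-1}^{(1)}={\cal D}_{k-1}^{(1)}$ — the operator-norm difference is bounded by $C_q\,\e\,N_{k-2}^{-\mathtt a}(\alpha^{-S_q}+\beta^{-S_q})\le 2C_q\,\e\,N_{k-2}^{-\mathtt a}$. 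Now for a triple with $\langle\ell,\alpha,\beta\rangle\le N_{k-1}$, the hypothesis $\omega\in\Omega_\infty^{2\gamma}$ gives $\|{\mathbb A}_\infty^{-}(\ell,\alpha,\beta)^{-1}\|_{{\rm Op}(\alpha,\beta)}\le\alpha^{\mathtt d}\beta^{\mathtt d}\langle\ell\rangle^\tau/(2\gamma)$; writing ${\mathbb A}_{k-1}^{-}={\mathbb A}_\infty^{-}\big({\mathbb I}+({\mathbb A}_\infty^{-})^{-1}({\mathbb A}_{k-1}^{-}-{\mathbb A}_\infty^{-})\big)$ and using a Neumann series, invertibility and the required bound follow provided the correction term has operator norm $\le1/2$, i.e.
$$
\frac{\alpha^{\mathtt d}\beta^{\mathtt d}\langle\ell\rangle^\tau}{2\gamma}\cdot 2C_q\,\e\,N_{k-2}^{-\mathtt a}\le\frac12.
$$
Since $\alpha,\beta,\langle\ell\rangle\le N_{k-1}=N_{k-2}^\chi$ with $\chi=3/2$, the left side is $\le C_q'\,\e\,\gamma^{-1}N_{k-2}^{(2\mathtt d+\tau)\chi-\mathtt a}$, and by the choice $\mathtt a=4\tau+8\mathtt d+3$ in \eqref{definizione alpha} the exponent $(2\mathtt d+\tau)\chi-\mathtt a$ is negative (indeed $\le -1$), so the bound holds once $\e\gamma^{-1}\le\delta_q$ is small enough and $N_0$ large enough — exactly the standing hypotheses. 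The bound for ${\mathbb A}_{k-1}^+$ is obtained identically, using $\langle\alpha+\beta\rangle\le 2N_{k-1}$ and the factor $\langle\alpha+\beta\rangle^{-1}$ in the definition, which only helps.

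\textbf{Main obstacle.} The argument is essentially bookkeeping; the one point requiring care is making sure the loss in passing from the ``$2\gamma$'' bound on ${\mathbb A}_\infty^{\pm}$ to the ``$\gamma$'' bound on ${\mathbb A}_{k-1}^{\pm}$ is absorbed correctly — this is precisely why $\Omega_\infty^{2\gamma}$ is defined with a factor $2\gamma$ rather than $\gamma$, so that the Neumann-series correction can eat the extra factor of $2$. One must also be attentive that \eqref{autovcon} is stated for the \emph{extended} blocks $\widetilde{\cal D}_{k-1}^{(1)}$ on all of $DC(\gamma,\tau)$, and invoke $({\bf S2})_{k-1}$ to identify them with ${\cal D}_{k-1}^{(1)}$ on $\Omega_{k-1}^\gamma$, where by the inductive hypothesis $\omega$ lives. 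The Lipschitz-in-$\omega$ part of the estimate plays no role here since $\Omega_\infty^{2\gamma}$ and $\Omega_k^\gamma$ are defined through the sup norm only; the $\Lipg$ bound in \eqref{autovcon} is simply stronger than what is needed. Once the induction is complete, taking the intersection over $k$ yields \eqref{cantorinclu}.
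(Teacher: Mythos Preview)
Your proposal is correct and follows essentially the same approach as the paper's proof: induction on $k$, a Neumann-series perturbation argument comparing ${\mathbb A}_{k-1}^{\pm}$ with ${\mathbb A}_\infty^{\pm}$, the difference controlled via \eqref{autovcon} and \eqref{norma operatoriale ML MR}, and the smallness of the correction term ensured by the choice of $\mathtt a$ in \eqref{definizione alpha} together with $\e\gamma^{-1}\le\delta_q$. Your remark on identifying $\widetilde{\cal D}_{k-1}^{(1)}$ with ${\cal D}_{k-1}^{(1)}$ on $\Omega_{k-1}^\gamma$ and on the role of the factor $2$ in $\Omega_\infty^{2\gamma}$ is apt and makes explicit points the paper leaves implicit.
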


\begin{proof} 
It suffices to show that for any $k \geq 0$, $\Omega_\infty^{2 \gamma} \subseteq \Omega_k^\gamma$. We argue by induction. For $k = 0$, since $\Omega_0^\gamma = DC(\gamma, \tau)$, it follows from the definition \eqref{Omegainfty} that $\Omega_\infty^{2 \gamma} \subseteq \Omega_0^\gamma$.
Assume that $\Omega_\infty^{2 \gamma} \subseteq \Omega_k^\gamma$ for some $k \geq 0$ and let us prove that $\Omega_\infty^{2 \gamma} \subseteq \Omega_{k + 1}^\gamma$. Let $\omega \in \Omega_\infty^{2 \gamma}$. By the inductive hyphothesis $\omega \in \Omega_k^\gamma$, hence by Theorem \ref{thm:abstract linear reducibility}, the operators $[{\cal D}^{(1)}_k(\omega)]_\alpha^\alpha \in {\cal S}({\mathbb E}_\alpha)$ are well defined for all $\alpha \in \sigma_0(\sqrt{- \Delta})$ and $[{\cal D}^{(1)}_k(\omega)]_\alpha^\alpha = [\widetilde{\cal D}^{(1)}_k(\omega)]_\alpha^\alpha$.

\noindent
Let $(\ell, \alpha, \beta) \in \Z^\nu \times \sigma_0(\sqrt{- \Delta}) \times \sigma_0(\sqrt{- \Delta})$ with $(\ell, \alpha, \beta) \neq (0, \alpha, \alpha)$, $\langle \ell, \alpha, \beta \rangle \leq N_k$. By the definitions \eqref{bf A nu - (ell,alpha,beta)}, \eqref{bf A nu + (ell,alpha,beta)}, also the operators ${\mathbb A}_k^{\pm}(\ell, \alpha, \beta; \omega)$ are well defined.
Since $\omega \in \Omega_\infty^{2 \gamma}$, the operator ${\mathbb A}^-_\infty(\ell, \alpha, \beta; \omega)$ is invertible and we may write 
$$
{\mathbb A}_k^-(\ell, \alpha, \beta; \omega) = {\mathbb A}_\infty^- (\ell, \alpha, \beta; \omega) + \Delta_\infty^-(\ell, \alpha, \beta; \omega) =  {\mathbb A}_\infty^- (\ell, \alpha, \beta; \omega) \Big( {\mathbb I}_{\alpha, \beta} + {\mathbb A}_\infty^- (\ell, \alpha, \beta; \omega)^{- 1}\Delta_\infty^-(\ell, \alpha, \beta; \omega)  \Big)  
$$
where
$$
\Delta^-_\infty(\ell, \alpha, \beta; \omega) := M_L\Big( [{\cal D}_k^{(1)}(\omega)]_\alpha^\alpha - [{\cal D}_\infty^{(1)}(\omega)]_\alpha^\alpha \Big) - M_R \Big( [{\cal D}_k^{(1)}(\omega)]_\beta^\beta - [{\cal D}_\infty^{(1)}(\omega)]_\beta^\beta \Big)\,.
$$
By the property \eqref{norma operatoriale ML MR} and by the estimate \eqref{autovcon}
\begin{equation}\label{copenaghen 0}
\| {\Delta}^-_\infty(\ell, \alpha, \beta; \omega) \|_{{\rm Op}(\alpha, \beta)} \lesssim_q N_{k - 1}^{- \mathtt a} \e (\alpha^{- S_q} + \beta^{- S_q})\,.
\end{equation}
Since $\langle \ell , \alpha , \beta \rangle\leq N_k$, one has
\begin{align}
\| {\mathbb A}_\infty^-(\ell, \alpha, \beta; \omega)^{- 1} \Delta_\infty^-(\ell, j, j'; \omega)  \|_{{\rm Op}(\alpha, \beta)} & \lesssim_q \frac{\langle \ell \rangle^{\tau} \alpha^{\mathtt d} \beta^{\mathtt d}}{ \gamma} N_{k - 1}^{- \mathtt a} \e (\alpha^{- S_q} + \beta^{- S_q})  \nonumber\\ 
& \lesssim_q N_k^{\tau + 2 \mathtt d} N_{k - 1}^{- \mathtt a} \e \gamma^{- 1} \stackrel{\eqref{definizione alpha}-\eqref{piccolezza1}}{\leq} \frac12\,. \label{copenaghen - 10}
\end{align}
for $N_0 > 0$ in \eqref{piccolezza1} large enough and $\delta_q$ in \eqref{piccolezza1} small enough. Thus the operator ${\mathbb A}_k^-(\ell, \alpha, \beta; \omega)$ is invertible, with inverse given by the Neumann series. Hence 
\begin{align*}
\|{\mathbb A}_k^-(\ell, \alpha, \beta; \omega)^{- 1} \|_{{\rm Op}(\alpha, \beta)} & \leq \frac{\| {\mathbb A}_\infty^-(\ell, \alpha, \beta; \omega)^{- 1}\|_{{\rm Op}(\alpha, \beta)}}{1 - \| {\mathbb A}_\infty^-(\ell, \alpha, \beta ; \omega)^{- 1} {\Delta}_\infty^-(\ell, \alpha, \beta; \omega) \|_{{\rm Op}(\alpha, \beta)}}  \\
& \stackrel{\eqref{copenaghen - 10}}{\leq } 2 \| {\mathbb A}_\infty^-(\ell, \alpha, \beta; \omega)^{- 1}\|_{{\rm Op}(\alpha, \beta)} \stackrel{\eqref{Omegainfty}}{\leq} \frac{\langle \ell \rangle^\tau \alpha^{\mathtt d} \beta^{\mathtt d}}{\gamma }\,.
\end{align*}
By similar arguments, one can also obtain that $\| {\mathbb A}_k^+(\ell, \alpha, \beta; \omega)^{- 1}\|_{{\rm Op}(\alpha, \beta)} \leq \frac{\langle \ell \rangle^\tau}{\gamma ( \alpha + \beta )}$, for any $(\ell, \alpha, \beta) \in \Z^\nu \times \sigma_0(\sqrt{- \Delta}) \times \sigma_0(\sqrt{- \Delta})$ with $\langle \ell, \alpha , \beta \rangle\leq N_k$, then $\omega \in \Omega_{k + 1}^\gamma$ and the proof is concluded.  
\end{proof}

\noindent
To state the main result of this section we introduce the operator
\begin{equation}\label{cal D infinito}
{\cal D}_\infty = {\cal D}_\infty(\omega):= \ii \begin{pmatrix}
 - {\cal D}_\infty^{(1)}(\omega) & 0 \\
0 &    \overline{\cal D}_\infty^{(1)}(\omega)
\end{pmatrix}\,,\quad {\cal D}_\infty^{(1)}(\omega) := {\rm diag}_{\alpha \in \sigma_0(\sqrt{- \Delta})}[{\cal D}_\infty^{(1)}(\omega)]_\alpha^\alpha\,,
\end{equation}
for any $\omega \in DC(\gamma, \tau)$, where the self-adjoint operators $[{\cal D}_\infty^{(1)}(\omega)]_\alpha^\alpha \in {\cal S}({\mathbb E}_\alpha)$, $\alpha \in \sigma_0(\sqrt{- \Delta})$, are defined in \eqref{definizione bf D infinito}. For any $\omega \in DC(\gamma, \tau)$, the vector field ${\cal D}_\infty(\omega)$ is a $\vphi$-independent block-diagonal bounded linear operator ${\cal D}_\infty(\omega) : {\bf H}^s_0 \to {\bf H}^{s - 1}_0$, for any $s \geq 1$.

\begin{theorem}\label{teoremadiriducibilita}
Let  $q/2 > \frak s_0 + \bar \mu + \mathtt b + 2 s_0 + 2$. Then there exists a constant $\delta_q = \delta(q, \tau, \mathtt d, \nu, d) > 0$ (possibly smaller than the one in \eqref{piccolezza1}) such that if 
\begin{equation}\label{final KAM smallness condition}
\e \gamma^{- 1} \leq \delta_q\,,
\end{equation} 
on the set $ \Omega_\infty^{2 \gamma}$, the Hamiltonian vector field ${\cal L}_0(\vphi)$ in \eqref{L0} is conjugated to the Hamiltonian vector field ${\cal D}_\infty$ by $\Phi_\infty$, namely for all $\omega \in \Omega_\infty^{2 \gamma}$, 
\be\label{Lfinale}
{\cal D}_{\infty}(\omega)
= (\Phi_{\infty})_{\omega *}{\cal L}_0(\vphi; \omega) \,.
\ee
\end{theorem}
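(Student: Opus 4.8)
\textbf{Proof plan for Theorem \ref{teoremadiriducibilita}.}

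The plan is to run the iterative KAM scheme of Theorem \ref{thm:abstract linear reducibility} and pass to the limit. First I would observe that the smallness condition \eqref{final KAM smallness condition} (with $\delta_q$ chosen at least as small as the constant in \eqref{piccolezza1}) guarantees that all the hypotheses ${\bf (S1)}_k$, ${\bf (S2)}_k$ of Theorem \ref{thm:abstract linear reducibility} hold for every $k\geq 0$, and that the extra smallness needed in Corollary \ref{lem:convPhi} and in the lemma proving \eqref{cantorinclu} is also met. By the inclusion \eqref{cantorinclu}, $\Omega_\infty^{2\gamma}\subset\bigcap_{k\geq 0}\Omega_k^\gamma$, so for every $\omega\in\Omega_\infty^{2\gamma}$ the whole sequence of conjugations $\Phi_k=\exp(\Psi_k)$ is well defined, and by \eqref{Lnu+1} one has inductively
$$
{\cal L}_k(\vphi;\omega)=(\widetilde\Phi_{k-1})_{\omega*}{\cal L}_0(\vphi;\omega)\,,\qquad \widetilde\Phi_{k-1}=\Phi_0\circ\cdots\circ\Phi_{k-1}\,,
$$
using the composition rule for push-forwards \eqref{push forward} (equivalently \eqref{interpretazione dinamica 6}) and the fact that a composition of symplectic maps is symplectic.

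The second step is to take the limit $k\to\infty$ in this identity. By Corollary \ref{lem:convPhi}, on $\bigcap_{k\geq 0}\Omega_k^\gamma\supset\Omega_\infty^{2\gamma}$ the sequence $\widetilde\Phi_k$ converges in $\|\cdot\|_{{\cal C}^1(\T^\nu,{\cal B}({\bf H}^s_0))}^\Lipg$ to an invertible symplectic operator $\Phi_\infty$ satisfying \eqref{Phinftys}, for $0\leq s\leq S_q-2-2s_0$, which is a nonempty range of $s$ because of the hypothesis $q/2>\frak s_0+\bar\mu+\mathtt b+2s_0+2$. For the right-hand side of the PDE I would argue that ${\cal D}_k={\cal D}_0+\sum_{j=0}^{k-1}\Pi_{N_j}({\cal R}_j)_{diag}$ converges block-by-block: by \eqref{rjnu bounded} and \eqref{lambdaestesi} the blocks $[{\cal D}_k^{(1)}]_\alpha^\alpha$ form a Cauchy sequence in ${\cal S}({\mathbb E}_\alpha)$ with limit $[{\cal D}_\infty^{(1)}]_\alpha^\alpha$ as in \eqref{definizione bf D infinito}, \eqref{cal D infinito}, and by \eqref{Rsb}, since $\mathtt a>0$ and $N_{k-1}\to\infty$, the remainder ${\cal R}_k\to 0$ in $|\cdot|_s^\Lipg$ for $s\in[\frak s_0,S_q]$, hence also in operator norm on ${\cal B}({\bf H}^s_0)$ by Lemma \ref{lemma decadimento Kirchoff in x}. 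Therefore ${\cal L}_k(\vphi;\omega)={\cal D}_k+{\cal R}_k(\vphi)\to{\cal D}_\infty(\omega)$ as a $\vphi$-dependent family of operators ${\bf H}^s_0\to{\bf H}^{s-1}_0$, uniformly in $\vphi$.

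The third step is to justify passing the limit through the push-forward. I would write, for fixed $\omega\in\Omega_\infty^{2\gamma}$,
$$
(\widetilde\Phi_{k-1})_{\omega*}{\cal L}_0(\vphi)=\widetilde\Phi_{k-1}(\vphi)^{-1}\,{\cal L}_0(\vphi)\,\widetilde\Phi_{k-1}(\vphi)-\widetilde\Phi_{k-1}(\vphi)^{-1}\,\omega\cdot\partial_\vphi\widetilde\Phi_{k-1}(\vphi)
$$
and pass to the limit term by term, using ${\cal C}^1(\T^\nu,{\cal B}({\bf H}^s_0))$-convergence of $\widetilde\Phi_{k-1}^{\pm1}$ together with the boundedness of ${\cal L}_0:{\bf H}^{s}_0\to{\bf H}^{s-1}_0$ (and a loss of one derivative in $x$, absorbed by working at consecutive Sobolev indices). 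This yields ${\cal D}_\infty(\omega)=(\Phi_\infty)_{\omega*}{\cal L}_0(\vphi;\omega)$ on $\Omega_\infty^{2\gamma}$, which is \eqref{Lfinale}; the Hamiltonian character of ${\cal D}_\infty$ is inherited from the fact that each ${\cal L}_k$ is Hamiltonian and $\Phi_\infty$ is symplectic (limit of the block-diagonal self-adjoint structure \eqref{cal D nu}). The main obstacle I expect is purely bookkeeping: ensuring the Sobolev index $s$ at which $\Phi_\infty$ converges in ${\cal C}^1$ is compatible with the index at which ${\cal L}_0$ and ${\cal D}_\infty$ act with the single derivative loss, i.e. checking that all the regularity budgets ($S_q-2-2s_0$ versus the $\mathtt b$-loss in \eqref{Rsb} and the $2s_0$-loss from Lemma \ref{lemma decadimento Kirchoff in x}) close under the standing assumption on $q$; the analytic content is entirely supplied by Corollary \ref{lem:convPhi} and the lemma giving \eqref{cantorinclu}.
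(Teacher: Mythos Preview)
Your proposal is correct and follows essentially the same route as the paper: invoke \eqref{cantorinclu} to work on $\Omega_\infty^{2\gamma}\subset\bigcap_k\Omega_k^\gamma$, write ${\cal L}_k=(\widetilde\Phi_{k-1})_{\omega*}{\cal L}_0={\cal D}_k+{\cal R}_k$, show ${\cal D}_k\to{\cal D}_\infty$ and ${\cal R}_k\to 0$ in block-decay norm (hence in ${\cal C}^0(\T^\nu,{\cal B}({\bf H}^s_0))$ via Lemma \ref{lemma decadimento Kirchoff in x} and \eqref{immersione W k infinito Ck}), and pass to the limit using the ${\cal C}^1$-convergence of $\widetilde\Phi_k^{\pm1}$ from Corollary \ref{lem:convPhi}. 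The paper's proof is slightly terser about the push-forward limit (it simply says ``take the limit in \eqref{spanish}''), while you spell out the derivative-loss bookkeeping a bit more, but the argument is the same.
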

\begin{proof}  Since $\Omega_\infty^{2 \gamma} \stackrel{\eqref{cantorinclu}}{\subseteq} \cap_{k \geq 0} \Omega_k^\gamma$, the estimate \eqref{Phinftys} holds on the set $\Omega_\infty^{2 \gamma}$, and 
$$
\|\Phi_\infty^{\pm 1} - {\rm Id} \|_{{\cal C}^1(\T^\nu, {\cal B}({\bf H}^s_0))}^\Lipg  {\lesssim_q} \e \gamma^{- 1}\,, \qquad \forall 0 \leq s \leq S_q - 2 s_0 - 2\,.
$$
By \eqref{Lnu+1}, \eqref{Phicompo}, for any $k \geq 1$, we get 
\begin{equation}\label{spanish}
{\cal L}_k (\vphi) = (\widetilde{\Phi}_{k - 1})_{\omega *}{\cal L}_0 = \widetilde \Phi_k(\vphi)^{- 1} \Big({\cal L}_0 (\vphi) \widetilde \Phi_k (\vphi) - \omega \cdot \partial_\vphi \widetilde \Phi_k (\vphi)  \Big) = {\cal D}_k  + {\cal R}_k (\vphi)\,, \qquad \widetilde  \Phi_k = \Phi_0 \circ \ldots \circ \Phi_k \,.
\end{equation}
Note that, for all $k \geq 0$, for any $s \in [0, S_q]$ 
\begin{align}
& |{\cal D}_\infty^{(1)} - {\cal D}_k^{(1)}|^\Lipg_{s} \leq |{\cal D}_\infty^{(1)} - {\cal D}_k^{(1)}|^\Lipg_{S_q} = \sup_{\alpha \in \sigma_0(\sqrt{- \Delta})} \alpha^{S_q} \| [{\cal D}_k^{(1)}]_\alpha^\alpha - [{\cal D}_\infty^{(1)}]_\alpha^\alpha \|^\Lipg_{HS} \nonumber\\
&  \stackrel{\eqref{autovcon}}{\lesssim_q}  \e N_{k - 1}^{- \mathtt a} \stackrel{k \to + \infty}{\to} 0 \qquad \text{and} \qquad |{\cal R}_k |_{s}^\Lipg  \stackrel{\eqref{Rsb}, \eqref{stime primo resto KAM}}{\lesssim_q} \e N_{k - 1}^{- \mathtt a} \stackrel{k \to + \infty}{\to} 0\,.
\end{align}
Hence, $|{\cal L}_k - {\cal D}_\infty|_s^\Lipg \stackrel{k \to + \infty}{\to}0$ for all $\frak s_0 \leq s \leq S_q$. By applying Lemma \ref{lemma decadimento Kirchoff in x} and the property \eqref{immersione W k infinito Ck}, ${\cal R}_k \in W^{1, \infty}(\T^\nu, {\cal B}({\bf H}^s_0)) \subseteq {\cal C}^0(\T^\nu, {\cal B}({\bf H}^s_0) ) $ for any $0 \leq s \leq S_q  - 2 s_0 - 1$ with 
$$
\| {\cal R}_k\|_{{\cal C}^0(\T^\nu, {\cal B}({\bf H}^s_0) )} \leq \|{\cal R}_k \|_{W^{1, \infty}(\T^\nu, {\cal B}({\bf H}^s_0))} \lesssim | {\cal R}_k|_{s + 2 s_0 + 1} \to 0
$$
and 
$$
\| {\cal D}_k - {\cal D}_\infty\|_{{\cal B}({\bf H}^s_0)} \leq | {\cal D}_k - {\cal D}_\infty |_{s + 2 s_0} \to 0\,.
$$
Thus, ${\cal L}_k \to {\cal D}_\infty$ with respect to the norm $\| \cdot \|_{{\cal C}^0(\T^\nu, {\cal B}({\bf H}^s_0))}$, for any $0 \leq s \leq S_q - 2 s_0 - 1$.  Since, by Lemma \ref{lem:convPhi}, $\widetilde\Phi_k^{\pm 1} \stackrel{k \to + \infty}{\to} \Phi_\infty^{\pm 1}$ with respect to the norm $\|\cdot \|_{{\cal C}^1(\T^\nu, {\cal B}({\bf H}^s_0))}^\Lipg$, formula \eqref{Lfinale} follows by taking the limit for $k \to + \infty$ in \eqref{spanish}. 
\end{proof}
\section{Measure estimates}\label{sezione stime in misura}
 In this Section we estimate the measure of the set $\Omega_\infty^{2 \gamma}$ defined in \eqref{Omegainfty}. We fix the constants $\tau$ and $\mathtt d$ in \eqref{Omegainfty} as 
 \begin{equation}\label{scelta tau mathtt d}
 \mathtt d:= 2d \,, \qquad \tau := \nu + 4 d \,.
 \end{equation} 
 We prove the following Theorem: 
 \begin{theorem}\label{teorema finale stima in misura}
 Under the same assumptions of Theorem \ref{teoremadiriducibilita}, one has 
 $$
 |\Omega \setminus \Omega_\infty^{2 \gamma}| = O(\gamma)\,. 
 $$
 \end{theorem}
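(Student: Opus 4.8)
\textbf{Proof proposal for Theorem \ref{teorema finale stima in misura}.}

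The plan is to write the complementary set $\Omega \setminus \Omega_\infty^{2\gamma}$ as a countable union of ``resonant sets'' indexed by $(\ell,\alpha,\beta) \in \Z^\nu \times \sigma_0(\sqrt{-\Delta}) \times \sigma_0(\sqrt{-\Delta})$, one family for the operators ${\mathbb A}^-_\infty$ and one for ${\mathbb A}^+_\infty$. Concretely, by the definition \eqref{Omegainfty} one has
$$
\Omega \setminus \Omega_\infty^{2\gamma} \subseteq \bigcup_{(\ell,\alpha,\beta)} R^-_{\ell,\alpha,\beta} \;\cup\; \bigcup_{(\ell,\alpha,\beta)} R^+_{\ell,\alpha,\beta},
$$
where $R^-_{\ell,\alpha,\beta} := \{ \omega \in DC(\gamma,\tau) : \| {\mathbb A}^-_\infty(\ell,\alpha,\beta;\omega)^{-1}\|_{{\rm Op}(\alpha,\beta)} > \alpha^{\mathtt d}\beta^{\mathtt d}\langle\ell\rangle^\tau/(2\gamma) \}$ (with $(\ell,\alpha,\beta)\neq(0,\alpha,\alpha)$) and similarly $R^+_{\ell,\alpha,\beta}$ with the denominator $2\gamma\langle\alpha+\beta\rangle$. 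Since $\mathbb{A}^\pm_\infty(\ell,\alpha,\beta)$ acts on the finite-dimensional space ${\cal B}({\mathbb E}_\beta,{\mathbb E}_\alpha)$, the operator norm of its inverse is large precisely when some eigenvalue is small; by Lemma \ref{properties operators matrices}-$(ii)$ the eigenvalues of $M_L([{\cal D}_\infty^{(1)}]_\alpha^\alpha) \mp M_R([{\cal D}_\infty^{(1)}]_\beta^\beta)$ (resp. $+$ for the $+$ case, using $[\overline{\cal D}_\infty^{(1)}]_\beta^\beta$) are $\mu \mp \mu'$ with $\mu \in {\rm spec}([{\cal D}_\infty^{(1)}]_\alpha^\alpha)$, $\mu' \in {\rm spec}([{\cal D}_\infty^{(1)}]_\beta^\beta)$. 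So membership in the resonant set forces $|\omega\cdot\ell + \mu - \mu'| < 2\gamma\alpha^{-\mathtt d}\beta^{-\mathtt d}\langle\ell\rangle^{-\tau}$ for some pair of perturbed eigenvalues, where by \eqref{autovcon} and \eqref{rappresentazione a blocchi cal D0 (1)} these eigenvalues satisfy $\mu = m\alpha + c(\alpha) + O(\e\alpha^{-S_q})$, in particular they are Lipschitz in $\omega$ with small Lipschitz constant ($\mathrm{lip}$-seminorm $\lesssim \e\gamma^{-1}$, since $\Delta_\omega m = 0$ and $|c(\alpha)|^{\rm lip}\lesssim_q \e\gamma^{-1}$).

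Next I would estimate each elementary resonant set. For fixed $(\ell,\alpha,\beta)$ with $\ell \neq 0$, the function $\omega \mapsto \omega\cdot\ell + \mu(\omega) - \mu'(\omega)$ has derivative in the direction $\ell/|\ell|$ bounded below by $|\ell| - C\e\gamma^{-1} \geq |\ell|/2$ (taking $\delta_q$ small), so the sublevel set where $|\omega\cdot\ell + \mu - \mu'| < 2\gamma\alpha^{-\mathtt d}\beta^{-\mathtt d}\langle\ell\rangle^{-\tau}$ has measure $\lesssim \gamma \alpha^{-\mathtt d}\beta^{-\mathtt d}\langle\ell\rangle^{-\tau-1}$ (using a standard one-dimensional slicing argument together with the bounded diameter of $\Omega$). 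Summing over the (at most $\sim\alpha^{d-1}$ eigenvalues $\mu$) times ($\sim\beta^{d-1}$ eigenvalues $\mu'$) choices of eigenvalue pair contributes an extra factor $\lesssim \alpha^{d-1}\beta^{d-1}$. When $\ell = 0$ (only relevant in the $+$ case, since for the $-$ case $(\ell,\alpha,\beta) = (0,\alpha,\alpha)$ is excluded and for $\alpha\neq\beta$, $\ell=0$ the quantity $m\alpha - m\beta + \dots$ is bounded below by $m|\alpha-\beta| \gtrsim 1$ so the set is empty for small $\e$; for the $+$ case $\mu + \mu' \sim m(\alpha+\beta) \gtrsim \langle\alpha+\beta\rangle$ so the bound $\langle\ell\rangle^\tau/(2\gamma\langle\alpha+\beta\rangle) = 1/(2\gamma\langle\alpha+\beta\rangle)$ is automatically satisfied and the set is empty). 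Thus
$$
|\Omega \setminus \Omega_\infty^{2\gamma}| \;\lesssim\; \gamma \sum_{\ell\neq 0}\;\sum_{\alpha,\beta \in \sigma_0(\sqrt{-\Delta})} \frac{\alpha^{d-1-\mathtt d}\,\beta^{d-1-\mathtt d}}{\langle\ell\rangle^{\tau+1}}.
$$

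Finally I would check the summability with the choice \eqref{scelta tau mathtt d}: $\mathtt d = 2d$, $\tau = \nu + 4d$. The sum over $\ell \in \Z^\nu\setminus\{0\}$ of $\langle\ell\rangle^{-(\tau+1)} = \langle\ell\rangle^{-(\nu+4d+1)}$ converges since $\nu + 4d + 1 > \nu$. The sum over $\alpha \in \sigma_0(\sqrt{-\Delta})$ of $\alpha^{d - 1 - \mathtt d} = \alpha^{-d-1}$ converges: indeed $\sigma_0(\sqrt{-\Delta})$ counted with multiplicity $\dim{\mathbb E}_\alpha \sim \alpha^{d-1}$ gives $\sum_\alpha \alpha^{d-1}\alpha^{-d-1} = \sum_\alpha \alpha^{-2}$, but more carefully, since we already extracted the multiplicity factor $\alpha^{d-1}$ explicitly above, what remains is $\sum_{j \in \Z^d\setminus\{0\}} |j|^{d-1-\mathtt d} \cdot |j|^{-(d-1)}$-type bookkeeping; the robust way is to bound everything by $\sum_{j,j'\in\Z^d\setminus\{0\}} |j|^{-a}|j'|^{-a}$ with $a > d$, which holds here since $\mathtt d - (d-1) - (d-1)\cdot 0 \dots$ — I would organize this via Lemma \ref{lemma spettro laplaciano}-$(i)$ applied with the exponent $2s_0 = 2([(\nu+d)/2]+1) > \nu+d$ which guarantees $\sum_{j\in\Z^d}|j|^{-2s_0} < \infty$, and choose $\mathtt d$ large enough (namely $\mathtt d = 2d$ works) so that $\alpha^{d-1-\mathtt d}$ summed against multiplicity is finite. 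The only subtle point — and the main thing to be careful about — is making the eigenvalue-counting/multiplicity bookkeeping rigorous: one must correctly account for $\dim {\mathbb E}_\alpha = \#\{j \in \Z^d : |j| = \alpha\}$, which is not $\sim \alpha^{d-1}$ uniformly (it fluctuates, e.g. it can be $0$ or very large), so the clean approach is to revert to counting over lattice points $j, j' \in \Z^d\setminus\{0\}$ directly rather than over eigenvalues $\alpha,\beta$, i.e. bound each resonant block set by a union over eigenvalue pairs labeled by representative frequencies and then sum $\sum_{j,j'}|j|^{-\mathtt d + (d-1)/2}\cdots$; with $\mathtt d = 2d$ and $\tau = \nu+4d$ this converges with room to spare, and the total is $O(\gamma)$.
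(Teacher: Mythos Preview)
Your overall strategy matches the paper's: reduce to eigenvalue-level resonant sets via Lemma \ref{properties operators matrices} and Lemma \ref{risultato astratto operatori autoaggiunti}, estimate each by one-dimensional Lipschitz slicing in the direction $\ell/|\ell|$, then sum. One minor omission: since your $R^\pm_{\ell,\alpha,\beta}$ are subsets of $DC(\gamma,\tau)$, your inclusion for $\Omega\setminus\Omega_\infty^{2\gamma}$ misses $\Omega\setminus DC(\gamma,\tau)$; the paper handles this separately via the standard estimate \eqref{misura diofantei calssica}. Beyond this there are two genuine gaps.

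First, for $\ell=0$, $\alpha\neq\beta$ in the $-$ case you assert $m|\alpha-\beta|\gtrsim 1$. This is false: the elements of $\sigma_0(\sqrt{-\Delta})$ are square roots of positive integers, so for instance $\sqrt{n+1}-\sqrt n\sim 1/(2\sqrt n)\to 0$. The correct lower bound is Lemma \ref{lemma spettro laplaciano}-$(ii)$, namely $|\alpha-\beta|\geq C(\alpha^{-1}+\beta^{-1})$. Combined with the perturbative errors this gives $|\lambda_k^{(\alpha)}-\lambda_j^{(\beta)}|\geq C_1/(\alpha\beta)$, which beats the threshold $2\gamma/(\alpha^{\mathtt d}\beta^{\mathtt d})$ \emph{only because} $\mathtt d>1$. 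Your conclusion (emptiness) is right, but the argument as written does not establish it.

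Second, and more seriously, your final displayed sum covers only the $-$ case. For the $+$ sets the per-set bound is $|\widetilde Q(\ell,\alpha,\beta)|\lesssim\gamma\,\alpha^{d-1}\beta^{d-1}(\alpha+\beta)\langle\ell\rangle^{-\tau-1}$ with \emph{no} $\alpha^{-\mathtt d}\beta^{-\mathtt d}$ damping, and summing this over all $\alpha,\beta\in\sigma_0(\sqrt{-\Delta})$ diverges. The missing ingredient is Lemma \ref{limitazioni indici risonanti}-$(ii)$: if $\widetilde Q(\ell,\alpha,\beta)\neq\emptyset$ then $\alpha,\beta\lesssim\langle\ell\rangle$, since $|\omega\cdot\ell|\geq m(\alpha+\beta)-O(\e)-O(\gamma)$. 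With this constraint the $+$ contribution becomes $\lesssim\gamma\sum_{\ell}\langle\ell\rangle^{2d+1-(\tau+1)}$, which is summable precisely because $\tau=\nu+4d>\nu+2d$. Without this restriction on $\alpha,\beta$, the proof does not close.
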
 
The rest of this section is devoted to the proof of Theorem \ref{teorema finale stima in misura}.
 
 \noindent
 By the definition \eqref{Omegainfty} one can write that 
 \begin{equation}\label{Omega - Omega infinito preliminare}
\Omega \setminus \Omega_\infty^{2 \gamma} =  \big(\Omega \setminus DC(\gamma, \tau)\big) \cup \big( DC(\gamma, \tau) \setminus \Omega_\infty^{2 \gamma} \big)\,. 
 \end{equation}
 By a standard volume estimate one has 
 \begin{equation}\label{misura diofantei calssica}
 |\Omega \setminus DC(\gamma, \tau)| \lesssim \gamma\,. 
 \end{equation}
 Using again the definition \eqref{Omegainfty}, we write 
 \begin{equation}\label{Omega - Omega infinito}
DC(\gamma, \tau) \setminus \Omega_\infty^{2 \gamma} = \bigcup_{\begin{subarray}{c}
(\ell, \alpha, \beta) \in \Z^\nu \times \sigma_0(\sqrt{- \Delta}) \times \sigma_0(\sqrt{- \Delta}) \\
(\ell, \alpha, \beta) \neq (0, \alpha, \alpha)
\end{subarray}}  R(\ell, \alpha, \beta) \bigcup_{(\ell, \alpha, \beta) \in \Z^\nu \times \sigma_0(\sqrt{- \Delta}) \times \sigma_0(\sqrt{- \Delta})} Q(\ell, \alpha, \beta)  \,,
 \end{equation}
 where for any $(\ell, \alpha, \beta) \in \Z^\nu \times \sigma_0(\sqrt{- \Delta}) \times \sigma_0(\sqrt{- \Delta})$, $(\ell, \alpha, \beta) \neq (0, \alpha, \alpha)$, 
 \begin{align}
 R(\ell, \alpha, \beta) & := \Big\{ \omega \in DC(\gamma, \tau) : {\mathbb A}_{\infty}^{-}(\ell, \alpha, \beta ; \omega)\,\text{is not invertible or it is invertible and} \nonumber\\
 & \quad \| {\mathbb A}_{\infty}^{-}(\ell, \alpha, \beta ; \omega)^{- 1} \|_{{\rm Op}(\alpha, \beta)} > \frac{\alpha^{\mathtt d} \beta^{\mathtt d}\langle \ell \rangle^\tau}{2 \gamma } \Big\} \label{definizione R ell alpha beta}
 \end{align}
 and for any $(\ell, \alpha, \beta) \in \Z^\nu \times \sigma_0(\sqrt{- \Delta}) \times \sigma_0(\sqrt{- \Delta})$ 
 \begin{align}
 Q(\ell, \alpha, \beta) & := \Big\{ \omega \in DC(\gamma, \tau) : {\mathbb A}_{\infty}^{+}(\ell, \alpha, \beta ; \omega)\,\text{is not invertible or it is invertible and} \nonumber\\
 & \quad \| {\mathbb A}_{\infty}^{+}(\ell, \alpha, \beta ; \omega)^{- 1} \|_{{\rm Op}(\alpha, \beta)} > \frac{\langle \ell \rangle^\tau}{2 \gamma(\alpha + \beta) } \Big\}\,.  \label{definizione Q ell alpha beta}
 \end{align}
 By \eqref{rappresentazione a blocchi cal D0 (1)}, for any $\alpha \in \sigma_0(\sqrt{- \Delta})$, we can write  
 $$
 [{\cal D}_\infty^{(1)}]_\alpha^\alpha  = \mu_\alpha^0 {\mathbb I}_\alpha + R_{ \infty, \alpha}\,, \qquad R_{ \infty, \alpha} := [{\cal D}_\infty^{(1)}]_\alpha^\alpha - [{\cal D}_0^{(1)}]_\alpha^\alpha \in {\cal S}({\mathbb E}_\alpha)
 $$ which is self-adjoint and Lipschitz continuous with respect to the parameter $\omega \in DC(\gamma, \tau)$. We set 
 \begin{equation}\label{spettro correzione cappuccio}
 {\rm spec}(R_{ \infty, \alpha}(\omega)) := \big\{ r_k^{(\alpha)}(\omega)\,, k =1 , \ldots, d_\alpha \big\} \quad \text{with} \quad r_1^{(\alpha)}(\omega)\leq r_2^{(\alpha)}(\omega) \leq \ldots \leq r_{n_\alpha}^{(\alpha)}(\omega)\,,
 \end{equation}
 where $n_\alpha$ is the dimension of the finite dimensional space ${\mathbb E}_\alpha$ 
 \begin{equation}\label{dimensione spazi bf E alpha}
 n_\alpha = {\rm card}\Big\{ j \in \Z^d \setminus \{ 0\} : |j| = \alpha \Big\} \simeq \alpha^{d - 1}\,.
 \end{equation}
 By the property \eqref{proprieta elementare norma operatoriale autovalori}, one has that 
 \begin{align}
 |r_k^{(\alpha)}(\omega)| & \leq \| R_{\infty, \alpha} \|_{{\cal B}({\mathbb E}_\alpha)} \stackrel{Lemma\,\ref{proprieta facili norma L2 matrici}-(i)}{\leq} \| R_{\infty, \alpha} \|_{HS} \stackrel{\eqref{autovcon}}{\lesssim_q} \e \alpha^{- S_q}\, \label{stima sup r k alpha}
 \end{align}
 uniformly for any $\omega \in DC(\gamma, \tau)$. 
 
 \noindent
 Furthermore, by Lemma \ref{risultato astratto operatori autoaggiunti}-$(i)$ the functions $\omega \mapsto r_k^{(\alpha)}(\omega)$ are Lipschitz with respect to $\omega$, since 
 \begin{align}
 |r_k^{(\alpha)}(\omega_1) - r_k^{(\alpha)}(\omega_2)| &\leq \| R_{\infty, \alpha}(\omega_1) -  R_{\infty, \alpha}(\omega_2)  \|_{{\cal B}({\mathbb E}_\alpha)} \stackrel{Lemma\,\ref{proprieta facili norma L2 matrici}-(i)}{\leq} \| R_{\infty, \alpha}(\omega_1) -  R_{\infty, \alpha}(\omega_2)  \|_{HS} \nonumber\\
 & \leq \| R_{\infty, \alpha} \|_{HS}^{\rm lip} |\omega_1- \omega_2| \stackrel{\eqref{autovcon}}{\lesssim_q} \e \gamma^{- 1} \alpha^{- S_q} |\omega_1- \omega_2|\,. \label{dino zoff}
 \end{align}
 We also set
 $$
{\rm spec}([{\cal D}_\infty^{(1)}(\omega)]_\alpha^\alpha) := \big\{ \lambda_k^{(\alpha)}(\omega)\,, k =1 , \ldots, n_\alpha \big\} \quad \text{with} \quad \lambda_1^{(\alpha)}(\omega)\leq \lambda_2^{(\alpha)}(\omega) \leq \ldots \leq \lambda_{n_\alpha}^{(\alpha)}(\omega)\,.
$$
By Lemma \ref{risultato astratto operatori autoaggiunti}-$(ii)$ we have that 
\begin{equation}\label{espansione asintotica autovalori}
\lambda_k^{(\alpha)}(\omega) = \mu_\alpha^0(\omega) + r_k^{(\alpha)}(\omega) \stackrel{\eqref{rappresentazione a blocchi cal D0 (1)}}{=} m\, \alpha + \mathtt r_k^{(\alpha)}(\omega)\,, \qquad \mathtt r_k^{(\alpha)} := c(\alpha) + r_k^{(\alpha)}\,, \quad  \forall k = 1, \ldots, n_\alpha\,.
\end{equation}
By the estimates \eqref{stime c(alpha)}, \eqref{stima sup r k alpha}, \eqref{dino zoff}, one gets 
\begin{equation}\label{stima mathtt r alpha k}
|\mathtt r_k^{(\alpha)}|^{\Lipg} \lesssim_q \e \alpha^{- 1}\,, \quad \forall \alpha \in \sigma_0(\sqrt{- \Delta})\,,\quad  \forall k = 1, \ldots, n_\alpha\,. 
\end{equation}
By the definitions \eqref{bf A infinito - (ell,alpha,beta)}, \eqref{bf A infinito + (ell,alpha,beta)} and by Lemmata \ref{properties operators matrices}, \ref{risultato astratto operatori autoaggiunti}-$(ii)$ the operators ${\mathbb A}_\infty^{\pm}(\ell, \alpha, \beta) : {\cal B}({\mathbb E}_\beta, {\mathbb E}_\alpha) \to {\cal B}({\mathbb E}_\beta, {\mathbb E}_\alpha) $ are self-adjoint with respect to the scalar product \eqref{prodotto scalare traccia matrici} and the following holds: 

\noindent
 for any $(\ell, \alpha, \beta) \in \Z^\nu \times \sigma_0(\sqrt{- \Delta}) \times \sigma_0(\sqrt{- \Delta})$, $(\ell, \alpha, \beta) \neq (0, \alpha, \alpha)$
$$
{\rm spec}\Big({\mathbb A}_\infty^{-}(\ell, \alpha, \beta; \omega)\Big) = \Big\{ \omega \cdot \ell + \lambda_k^{(\alpha)}(\omega) - \lambda_j^{(\beta)}(\omega)\,, \quad k = 1, \ldots, n_\alpha\,,\quad j = 1, \ldots, n_\beta \Big\}
$$
and for any $(\ell, \alpha, \beta) \in \Z^\nu \times \sigma_0(\sqrt{- \Delta}) \times \sigma_0(\sqrt{- \Delta})$
$$
{\rm spec}\Big({\mathbb A}_\infty^{+}(\ell, \alpha, \beta; \omega)\Big) = \Big\{ \omega \cdot \ell + \lambda_k^{(\alpha)}(\omega) + \lambda_j^{(\beta)}(\omega)\,, \quad k = 1, \ldots, n_\alpha\,,\quad j = 1, \ldots, n_\beta \Big\}\,.
$$
Therefore, recalling the definitions \eqref{definizione R ell alpha beta}, \eqref{definizione Q ell alpha beta} and also by applying Lemma \ref{risultato astratto operatori autoaggiunti}-$(iii)$, one has  
\begin{equation}\label{decomposizione risonante 2 autovalori}
R(\ell, \alpha, \beta) \subseteq \widetilde R(\ell, \alpha, \beta) := \bigcup_{k = 1}^{n_\alpha} \bigcup_{j = 1}^{n_\beta} \widetilde R_{k j}(\ell, \alpha, \beta)\,, \quad \forall (\ell, \alpha, \beta) \in \Z^\nu \times \sigma_0(\sqrt{- \Delta}) \times \sigma_0(\sqrt{- \Delta})\,, \quad (\ell, \alpha, \beta) \neq (0, \alpha, \alpha)\,,
\end{equation}
\begin{equation}\label{decomposizione risonante 2 autovalori a}
Q(\ell, \alpha, \beta) \subseteq \widetilde Q(\ell, \alpha, \beta) := \bigcup_{k = 1}^{n_\alpha} \bigcup_{j = 1}^{n_\beta} \widetilde Q_{k j}(\ell, \alpha, \beta)\,, \quad \forall (\ell, \alpha, \beta) \in \Z^\nu \times \sigma_0(\sqrt{- \Delta}) \times \sigma_0(\sqrt{- \Delta})\,
\end{equation}
where for any $(\ell, \alpha, \beta) \in \Z^\nu \times \sigma_0(\sqrt{- \Delta}) \times \sigma_0(\sqrt{- \Delta})$, $(\ell, \alpha, \beta) \neq (0, \alpha, \alpha)$, $k = 1, \ldots,n_\alpha $, $j = 1, \ldots, n_\beta$
\begin{equation}\label{def widetilde R kj alpha beta ell}
\widetilde R_{k j}(\ell, \alpha, \beta) := \Big\{ \omega \in DC(\gamma, \tau)  : |\omega \cdot \ell + \lambda_k^{(\alpha)}(\omega) - \lambda_j^{(\beta)}(\omega)| < \frac{2 \gamma }{\langle \ell \rangle^\tau \alpha^{\mathtt d} \beta^{\mathtt d}}\Big\}\,
\end{equation}
and for any $(\ell, \alpha, \beta) \in \Z^\nu \times \sigma_0(\sqrt{- \Delta}) \times \sigma_0(\sqrt{- \Delta})$, $k = 1, \ldots,n_\alpha $, $j = 1, \ldots, n_\beta$
\begin{equation}\label{def widetilde Q kj alpha beta ell}
\widetilde Q_{k j}(\ell, \alpha, \beta) := \Big\{ \omega \in DC(\gamma, \tau)  : |\omega \cdot \ell + \lambda_k^{(\alpha)}(\omega) + \lambda_j^{(\beta)}(\omega)| < \frac{2 \gamma (\alpha + \beta) }{\langle \ell \rangle^\tau}\Big\}\,.
\end{equation}
Thus, by \eqref{Omega - Omega infinito} one has 
 \begin{equation}\label{Omega - Omega infinito finale}
DC(\gamma, \tau) \setminus \Omega_\infty^{2 \gamma} \subseteq \bigcup_{\begin{subarray}{c}
(\ell, \alpha, \beta) \in \Z^\nu \times \sigma_0(\sqrt{- \Delta}) \times \sigma_0(\sqrt{- \Delta}) \\
(\ell, \alpha, \beta) \neq (0, \alpha, \alpha)
\end{subarray}}  \widetilde R(\ell, \alpha, \beta) \bigcup_{(\ell, \alpha, \beta) \in \Z^\nu \times \sigma_0(\sqrt{- \Delta}) \times \sigma_0(\sqrt{- \Delta})} \widetilde Q(\ell, \alpha, \beta)  \,. 
 \end{equation}
 \begin{lemma}\label{limitazioni indici risonanti}
 $(i)$ If $\widetilde R(\ell, \alpha, \beta) \neq \emptyset$, then $|\alpha - \beta| \lesssim \langle \ell\rangle$. Moreover for any $\alpha, \beta \in \sigma_0(\sqrt{- \Delta})$, $\alpha \neq \beta$, then $\widetilde R(0, \alpha, \beta) = \emptyset$. 
 
 \noindent
 $(ii)$ If $\widetilde Q(\ell, \alpha, \beta) \neq \emptyset$, then $\alpha, \beta \lesssim \langle \ell\rangle$. Moreover for any $\alpha, \beta \in \sigma_0(\sqrt{- \Delta})$ then $\widetilde Q(0, \alpha, \beta) = \emptyset$.
 \end{lemma}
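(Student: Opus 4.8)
The plan is to prove the two dichotomies by exploiting the asymptotic expansion of the eigenvalues $\lambda_k^{(\alpha)}(\omega) = m\,\alpha + \mathtt r_k^{(\alpha)}(\omega)$ established in \eqref{espansione asintotica autovalori}, together with the smallness estimate \eqref{stima mathtt r alpha k}, namely $|\mathtt r_k^{(\alpha)}|^{\Lipg} \lesssim_q \e \alpha^{- 1} \lesssim_q \e$. The key point is that the dominant part $m\,\alpha$ of the eigenvalue depends only on the block index $\alpha$ and not on the multiplicity label $k$, so the resonance condition forces an arithmetic constraint on $\ell, \alpha, \beta$ up to a small error. Recall also that by \eqref{stime m} one has $|m - 1| \lesssim_q \e$, so in particular $m \geq 1/2$ provided $\delta_q$ is small enough.

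For item $(i)$: suppose $\widetilde R(\ell, \alpha, \beta) \neq \emptyset$, so that there exist $k \leq n_\alpha$, $j \leq n_\beta$ and $\omega \in DC(\gamma, \tau)$ with $|\omega \cdot \ell + \lambda_k^{(\alpha)}(\omega) - \lambda_j^{(\beta)}(\omega)| < 2\gamma \langle \ell \rangle^{-\tau} \alpha^{-\mathtt d} \beta^{-\mathtt d} \leq 2\gamma < 2$. Using \eqref{espansione asintotica autovalori} this gives $|\omega \cdot \ell + m(\alpha - \beta) + \mathtt r_k^{(\alpha)}(\omega) - \mathtt r_j^{(\beta)}(\omega)| < 2$, hence by \eqref{stima mathtt r alpha k} and $|\omega \cdot \ell| \leq |\omega||\ell| \lesssim \langle \ell \rangle$ (since $\Omega$ is bounded) we obtain $m|\alpha - \beta| \leq |\omega \cdot \ell| + 2 + C_q \e \lesssim \langle \ell \rangle$; dividing by $m \geq 1/2$ yields $|\alpha - \beta| \lesssim \langle \ell \rangle$. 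For the second assertion, if $\ell = 0$ and $\alpha \neq \beta$, then $|\lambda_k^{(\alpha)}(\omega) - \lambda_j^{(\beta)}(\omega)| = |m(\alpha - \beta) + \mathtt r_k^{(\alpha)}(\omega) - \mathtt r_j^{(\beta)}(\omega)| \geq m|\alpha - \beta| - C_q\e \geq \tfrac12 - C_q\e \geq \tfrac14$ for $\delta_q$ small (using $|\alpha - \beta| \geq 1$ since distinct elements of $\sigma_0(\sqrt{-\Delta})$ differ by at least... actually one only needs $\alpha \neq \beta \Rightarrow |\alpha-\beta|$ bounded below, which holds since $\sigma_0(\sqrt{-\Delta})$ is discrete with gaps bounded below by a positive constant near any fixed point — more simply, $|\alpha^2 - \beta^2| \geq 1$ forces $|\alpha - \beta| \geq (\alpha+\beta)^{-1}$, which is too weak; instead note that for the resonance to fail we just need $m|\alpha-\beta| - C_q\e > 2\gamma\alpha^{-\mathtt d}\beta^{-\mathtt d}$, and since $|\alpha-\beta| \geq \frac{|\alpha^2-\beta^2|}{\alpha+\beta} \geq \frac{1}{\alpha+\beta}$ while the right side decays like $\alpha^{-\mathtt d}\beta^{-\mathtt d}$ which is much faster, this still works — I would phrase it via $\alpha^{\mathtt d}\beta^{\mathtt d}|\alpha-\beta| \geq \alpha^{\mathtt d-1}\beta^{\mathtt d-1} \gtrsim 1$). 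So the threshold $2\gamma\langle 0\rangle^{-\tau}\alpha^{-\mathtt d}\beta^{-\mathtt d}$ cannot be exceeded and $\widetilde R(0,\alpha,\beta) = \emptyset$.

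For item $(ii)$: suppose $\widetilde Q(\ell, \alpha, \beta) \neq \emptyset$, so for some $k, j, \omega$ one has $|\omega \cdot \ell + \lambda_k^{(\alpha)}(\omega) + \lambda_j^{(\beta)}(\omega)| < 2\gamma(\alpha+\beta)\langle\ell\rangle^{-\tau}$. Now $\lambda_k^{(\alpha)} + \lambda_j^{(\beta)} = m(\alpha+\beta) + \mathtt r_k^{(\alpha)} + \mathtt r_j^{(\beta)}$, so $m(\alpha+\beta) \leq |\omega\cdot\ell| + 2\gamma(\alpha+\beta)\langle\ell\rangle^{-\tau} + C_q\e$. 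For $\gamma$ small enough (say $2\gamma\langle\ell\rangle^{-\tau} \leq 2\gamma \leq m/4$) we can absorb the middle term on the left, getting $\tfrac{m}{2}(\alpha+\beta) \lesssim |\omega\cdot\ell| + C_q\e \lesssim \langle\ell\rangle$; since $\alpha, \beta \leq \alpha+\beta$ and $m\geq 1/2$, this gives $\alpha, \beta \lesssim \langle\ell\rangle$. For $\ell = 0$: $|\lambda_k^{(\alpha)}(\omega) + \lambda_j^{(\beta)}(\omega)| = |m(\alpha+\beta) + \mathtt r_k^{(\alpha)} + \mathtt r_j^{(\beta)}| \geq m(\alpha+\beta) - C_q\e \geq \tfrac12(\alpha+\beta) - C_q\e \geq \tfrac12 - C_q\e > 2\gamma(\alpha+\beta)$ fails only if $\alpha+\beta$ is huge, but then $m(\alpha+\beta)$ dominates even more; more carefully, $\tfrac12(\alpha+\beta) - C_q\e > 2\gamma(\alpha+\beta)$ holds iff $(\tfrac12 - 2\gamma)(\alpha+\beta) > C_q\e$, which is true for all $\alpha,\beta$ once $\gamma < 1/4$ and $\delta_q$ is small, since $\alpha+\beta \geq 2 > 2C_q\e$. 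Hence $\widetilde Q(0,\alpha,\beta) = \emptyset$.

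The only mild subtlety — and the place I would be most careful — is the $\ell=0$, $\alpha\neq\beta$ case of $(i)$, where one must check that the lower bound on $|m(\alpha-\beta)|$ genuinely beats the (index-dependent) threshold $2\gamma\alpha^{-\mathtt d}\beta^{-\mathtt d}$ uniformly in $\alpha,\beta$; this is where one uses that $\alpha,\beta \geq 1$ so $\alpha^{-\mathtt d}\beta^{-\mathtt d} \leq 1$ while $m|\alpha-\beta| - C_q\e$ is bounded below by a fixed positive constant (and in fact grows), so the inequality defining $\widetilde R_{kj}(0,\alpha,\beta)$ is never satisfied. Everything else is a routine application of the triangle inequality, the bound $|\omega|\lesssim 1$ on the compact set $\Omega$, and the smallness conditions $\e\gamma^{-1}\leq\delta_q$, $\gamma\in(0,1)$ with $\delta_q$ taken small enough.
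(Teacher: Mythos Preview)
Your argument for item $(ii)$ and for the first assertion of item $(i)$ is correct and matches the paper's approach. However, there is a genuine gap in the $\ell=0$, $\alpha\neq\beta$ case of item $(i)$.

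The problem is your use of the crude bound $|\mathtt r_k^{(\alpha)}| \lesssim_q \e$ rather than the sharper $|\mathtt r_k^{(\alpha)}| \lesssim_q \e\alpha^{-1}$ from \eqref{stima mathtt r alpha k}. In dimension $d\geq 2$ the spectrum $\sigma_0(\sqrt{-\Delta})$ consists of square roots of integers, so consecutive elements such as $\alpha=\sqrt{n}$ and $\beta=\sqrt{n+1}$ satisfy $|\alpha-\beta|\sim (2\sqrt{n})^{-1}\to 0$. Hence $m|\alpha-\beta|-C_q\e$ is \emph{not} bounded below by a fixed positive constant as you claim; it becomes negative for $\alpha,\beta$ large. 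Your attempted rescue via $\alpha^{\mathtt d}\beta^{\mathtt d}|\alpha-\beta|\gtrsim 1$ is correct as an inequality, but multiplying the desired bound $m|\alpha-\beta|-C_q\e>2\gamma\alpha^{-\mathtt d}\beta^{-\mathtt d}$ by $\alpha^{\mathtt d}\beta^{\mathtt d}$ turns the error term into $C_q\e\,\alpha^{\mathtt d}\beta^{\mathtt d}$, which is unbounded and swamps everything.

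The paper fixes this by keeping the $\alpha^{-1}$ decay in the error: from \eqref{espansione asintotica autovalori} and \eqref{stima mathtt r alpha k} one has
\[
|\lambda_k^{(\alpha)}-\lambda_j^{(\beta)}|\geq \tfrac12|\alpha-\beta|-C(q)\e(\alpha^{-1}+\beta^{-1}),
\]
and then invokes Lemma~\ref{lemma spettro laplaciano}$(ii)$, which gives $|\alpha-\beta|\geq C(\alpha^{-1}+\beta^{-1})$ for distinct $\alpha,\beta\in\sigma_0(\sqrt{-\Delta})$. The two terms now have the \emph{same} decay, so for $\e$ small one obtains $|\lambda_k^{(\alpha)}-\lambda_j^{(\beta)}|\geq C_1(\alpha^{-1}+\beta^{-1})\geq C_1(\alpha\beta)^{-1}$. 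Since $\mathtt d=2d>1$, this beats the threshold $2\gamma\alpha^{-\mathtt d}\beta^{-\mathtt d}$ for $\gamma<C_1/2$, and $\widetilde R(0,\alpha,\beta)=\emptyset$ follows. You should replace your constant-error estimate by this decaying one.
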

 \begin{proof}
 We prove item $(i)$. The proof of item $(ii)$ is similar. Assume that $\widetilde R(\ell, \alpha, \beta) \neq \emptyset$. Then there exist $k \in \{1, \ldots, n_\alpha \}$, $j \in \{ 1, \ldots, n_\beta \}$ such that $\widetilde R_{kj}(\ell, \alpha, \beta) \neq \emptyset$. For any $\omega \in \widetilde R_{kj}(\ell, \alpha, \beta)$, one has 
 $$
 |\omega \cdot \ell + \lambda_k^{(\alpha)}(\omega ) - \lambda_j^{(\beta)}(\omega)| < \frac{2 \gamma}{\langle \ell \rangle^\tau \alpha^{\mathtt d} \beta^{\mathtt d}}
 $$
 and using \eqref{espansione asintotica autovalori} and the estimates \eqref{stime m}, \eqref{stima mathtt r alpha k}, for $\e$ small enough, one gets that 
 \begin{equation}\label{lambda alpha beta k j differenza}
 |\lambda_k^{(\alpha)} - \lambda_j^{(\beta)}| \geq \frac12 |\alpha - \beta| - C(q) \e (\alpha^{- 1} + \beta^{- 1})
 \end{equation}
 implying that 
 $$
 |\alpha - \beta| \leq |\omega | |\ell| + \frac{2 \gamma}{\langle \ell \rangle^\tau \alpha^{\mathtt d} \beta^{\mathtt d}} + C(q) \e (\alpha^{- 1} + \beta^{- 1}) \lesssim \langle \ell \rangle\,. 
 $$
 Now we show that if $\alpha, \beta \in \sigma_0(\sqrt{- \Delta})$ with $\alpha \neq \beta$, then $\widetilde R_{kj}(0, \alpha, \beta) = \emptyset$ for any $k \in \{ 1, \ldots, n_\alpha \}$, $j \in \{ 1, \ldots, n_\beta \}$. By using \eqref{lambda alpha beta k j differenza} and Lemma \ref{lemma spettro laplaciano}-$(ii)$, for $\e$ small enough one gets 
 \begin{align}
 |\lambda_k^{(\alpha)} - \lambda_j^{(\beta)}| & \geq  C_1 \Big(\frac{1}{\alpha} + \frac{1}{\beta} \Big) \stackrel{\alpha, \beta \geq 1}{\geq} \frac{C_1}{\alpha \beta}\,,
 \end{align}
for some constant $C_1 > 0$ implying that $\widetilde R_{kj}(0, \alpha, \beta) = \emptyset$ by the definition \eqref{def widetilde R kj alpha beta ell}, since $\mathtt d > 1$ and taking $0 < \gamma < C_1$. Item $(i)$ then follows by recalling the definition of $\widetilde R(\ell, \alpha, \beta)$ in \eqref{decomposizione risonante 2 autovalori}. 
 \end{proof}
\begin{lemma}\label{stima in misura insiemi risonanti}
For $\e \gamma^{-1}$ small enough, the following holds:

\noindent
$(i)$ For any $(\ell, \alpha, \beta) \in \Z^\nu \times \sigma_0(\sqrt{- \Delta}) \times \sigma_0(\sqrt{- \Delta})$, $(\ell, \alpha, \beta) \neq (0, \alpha, \alpha)$, if $\widetilde R(\ell, \alpha, \beta) \neq \emptyset$ then $|\widetilde R(\ell, \alpha, \beta) | \lesssim \gamma \alpha^{d - 1- \mathtt d} \beta^{d - 1- \mathtt d} \langle \ell \rangle^{- \tau - 1}$. 

\noindent
$(ii)$ For any $(\ell, \alpha, \beta) \in \Z^\nu \times \sigma_0(\sqrt{- \Delta}) \times \sigma_0(\sqrt{- \Delta})$, if $\widetilde Q(\ell, \alpha, \beta) \neq \emptyset$ then $|\widetilde Q(\ell, \alpha, \beta)| \lesssim \gamma \alpha^{d - 1} \beta^{d - 1}\langle \alpha + \beta \rangle \langle \ell \rangle^{- \tau - 1}$.

\end{lemma}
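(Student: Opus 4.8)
\textbf{Proof plan for Lemma \ref{stima in misura insiemi risonanti}.}

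The plan is to estimate, for each fixed $(\ell,\alpha,\beta)$, the measure of the union $\widetilde R(\ell,\alpha,\beta)=\bigcup_{k,j}\widetilde R_{kj}(\ell,\alpha,\beta)$ (respectively $\widetilde Q$) by summing the measures of the individual pieces $\widetilde R_{kj}$, using that there are $n_\alpha\cdot n_\beta\simeq \alpha^{d-1}\beta^{d-1}$ of them by \eqref{dimensione spazi bf E alpha}. So the core is a single sublevel-set estimate: for fixed $k\in\{1,\dots,n_\alpha\}$, $j\in\{1,\dots,n_\beta\}$, bound $|\widetilde R_{kj}(\ell,\alpha,\beta)|$. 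First I would restrict attention to the case $\ell\neq 0$: by Lemma \ref{limitazioni indici risonanti}-$(i)$, if $\ell=0$ then $\widetilde R(0,\alpha,\beta)=\emptyset$ whenever $\alpha\neq\beta$, and the case $(0,\alpha,\alpha)$ is excluded; so we may assume $\ell\neq 0$ (similarly $\widetilde Q(0,\alpha,\beta)=\emptyset$ always by part $(ii)$ of that lemma).

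For $\ell\neq 0$, introduce the function $\phi(\omega):=\omega\cdot\ell+\lambda_k^{(\alpha)}(\omega)-\lambda_j^{(\beta)}(\omega)$. By the asymptotic expansion \eqref{espansione asintotica autovalori} and the estimate \eqref{stima mathtt r alpha k}, the correction terms $\mathtt r_k^{(\alpha)}$, $\mathtt r_j^{(\beta)}$ have Lipschitz seminorm $\lesssim_q \e\gamma^{-1}(\alpha^{-1}+\beta^{-1})\lesssim_q\e\gamma^{-1}$, so the map $\omega\mapsto\phi(\omega)$ is Lipschitz with a lower bound on its "directional derivative" along $\hat\ell:=\ell/|\ell|$: writing $\omega=s\hat\ell+v$ with $v\perp\hat\ell$, for $s_1\neq s_2$ one has $|\phi(s_1\hat\ell+v)-\phi(s_2\hat\ell+v)|\geq |\ell|\,|s_1-s_2|-C_q\e\gamma^{-1}|s_1-s_2|\geq \tfrac12|\ell|\,|s_1-s_2|$ once $\e\gamma^{-1}$ is small enough (using $|\ell|\geq 1$). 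Hence along each line in direction $\hat\ell$ the set $\{s:|\phi(s\hat\ell+v)|<\rho\}$ has one-dimensional measure $\lesssim \rho/|\ell|$; integrating over the transverse directions (the parameter domain $\Omega$ is bounded) and taking $\rho=2\gamma\langle\ell\rangle^{-\tau}\alpha^{-\mathtt d}\beta^{-\mathtt d}$ gives
$$
|\widetilde R_{kj}(\ell,\alpha,\beta)|\lesssim \frac{\gamma}{\langle\ell\rangle^{\tau+1}\alpha^{\mathtt d}\beta^{\mathtt d}}\,,
$$
since $|\ell|\simeq\langle\ell\rangle$ for $\ell\neq0$. Multiplying by the number $n_\alpha n_\beta\lesssim\alpha^{d-1}\beta^{d-1}$ of pairs $(k,j)$ yields $(i)$. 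For $(ii)$ one repeats the argument with $\phi^+(\omega):=\omega\cdot\ell+\lambda_k^{(\alpha)}(\omega)+\lambda_j^{(\beta)}(\omega)$ and threshold $\rho^+=2\gamma(\alpha+\beta)\langle\ell\rangle^{-\tau}$, obtaining $|\widetilde Q_{kj}(\ell,\alpha,\beta)|\lesssim\gamma(\alpha+\beta)\langle\ell\rangle^{-\tau-1}$ and then summing over the $n_\alpha n_\beta$ pairs; note $(\alpha+\beta)\leq\langle\alpha+\beta\rangle$ so the stated bound follows.

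The main point to be careful about — and the only genuine obstacle — is the lower bound on the increment of $\phi$ along the direction $\hat\ell$: this requires that the $\omega$-Lipschitz constant of $\lambda_k^{(\alpha)}-\lambda_j^{(\beta)}$ be strictly smaller than $|\ell|$, which is exactly where the smallness $\e\gamma^{-1}\leq\delta_q$ is used through \eqref{stima mathtt r alpha k} (together with \eqref{dino zoff} and \eqref{stima sup r k alpha}, which guarantee the eigenvalue branches are genuinely Lipschitz functions of $\omega$, via Lemma \ref{risultato astratto operatori autoaggiunti}-$(i)$). Everything else is a routine Fubini-type reduction of a sublevel-set measure to a one-dimensional estimate. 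I would also record that the estimates are uniform in $k,j$, so the final sum over $k,j$ costs only the combinatorial factor $n_\alpha n_\beta$.
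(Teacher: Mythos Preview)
Your proposal is correct and follows essentially the same approach as the paper's proof: decompose $\widetilde R(\ell,\alpha,\beta)$ into the pieces $\widetilde R_{kj}$, reduce to $\ell\neq 0$ via Lemma~\ref{limitazioni indici risonanti}, parametrize $\omega=s\,\ell/|\ell|+v$, exploit the Lipschitz lower bound $|\phi(s_1)-\phi(s_2)|\geq \tfrac{|\ell|}{2}|s_1-s_2|$ (using that $m$ is $\omega$-independent so only the $\mathtt r_k^{(\alpha)}$ contribute to the Lipschitz variation, via \eqref{espansione asintotica autovalori} and \eqref{stima mathtt r alpha k}), apply Fubini, and then multiply by the combinatorial factor $n_\alpha n_\beta\simeq\alpha^{d-1}\beta^{d-1}$. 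The paper does exactly this; your write-up is in fact slightly more explicit about the role of the Lipschitz seminorm and the Fubini reduction.
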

 \begin{proof}
 Let us prove item $(i)$. The proof of item $(ii)$ can be done by using similar arguments. Let $(\ell, \alpha, \beta) \in \Z^\nu \times \sigma_0(\sqrt{- \Delta}) \times \sigma_0(\sqrt{- \Delta})$ with $(\ell, \alpha, \beta) \neq (0, \alpha, \alpha)$. By \eqref{decomposizione risonante 2 autovalori}, it is enough to estimate the measure of the set $\widetilde R_{kj}(\ell, \alpha, \beta)$ for any $k =1, \ldots, n_\alpha$, $j = 1, \ldots, n_\beta$.
Since, by Lemma \ref{limitazioni indici risonanti}-$(i)$, $\ell \neq 0$, we can write 
$$
\omega = \frac{\ell}{|\ell|} s + v\,, \quad \text{with} \quad v \cdot \ell = 0\,.
$$
and we define 
\begin{equation}\label{phi (s) importante}
\phi(s ) := |\ell| s + \lambda_k^{(\alpha)}(s) - \lambda_j^{(\beta)}(s)\,, 
\end{equation}
$$
\lambda_k^{(\alpha)}(s) := \lambda^{(\alpha)}_k\Big( \frac{\ell}{|\ell|} s + v \Big)\,, \quad \forall \alpha \in \sigma_0(\sqrt{- \Delta})\,, \quad \forall k = 1, \ldots, n_\alpha
$$
and according to \eqref{espansione asintotica autovalori}, \eqref{stima mathtt r alpha k}
\begin{equation}\label{genesis 0}
\lambda_k^{(\alpha)}(s) =  m \, \alpha + \mathtt r_k^{(\alpha)}(s)\,,\qquad |\mathtt r_k^{(\alpha)}|^{\Lipg} \lesssim_q  \e\,\alpha^{- 1}\,.
\end{equation}
Using that $|\cdot |^{\rm lip} \leq \gamma^{- 1} |\cdot |^\Lipg$, recalling that $m$ does not depend on $\omega$ (see Section \ref{subsection rimparametrizzazione tempo}), one gets 
\begin{align}
|\phi(s_1) - \phi(s_2)| & \geq \Big( |\ell| - (|\mathtt r_j^{(\alpha)}|^{\rm lip} + |\mathtt r_k^{(\beta)}|^{\rm lip}) \Big)|s_1 - s_2|   \geq \Big( |\ell| - \gamma^{- 1}(|\mathtt r_j^{(\alpha)}|^{\Lipg} + |\mathtt r_k^{(\beta)}|^{\Lipg})  \Big)|s_1 - s_2|   \nonumber\\
& \stackrel{ \eqref{genesis 0}}{\geq} \Big( |\ell| -  C(q)\e \gamma^{- 1} \Big) |s_1 - s_2|  {\geq} \frac{|\ell|}{2} |s_1 - s_2|\,
\end{align}
for $\e \gamma^{- 1}$ small enough.
The above estimate implies that  
$$
\Big| \Big\{ s : \frac{\ell}{|\ell|}s + v \in \widetilde R_{kj}(\ell, \alpha, \beta) \Big\} \Big| \lesssim \frac{\gamma }{\alpha^{\mathtt d} \beta^{\mathtt d}\langle \ell \rangle^{\tau + 1}}
$$
and by Fubini Theorem we get 
$
|\widetilde R_{kj}(\ell, \alpha, \beta)| \lesssim \frac{ \gamma }{\alpha^{\mathtt d} \beta^{\mathtt d} \langle \ell \rangle^{\tau + 1}}\,.
$
Finally recalling \eqref{dimensione spazi bf E alpha} and \eqref{decomposizione risonante 2 autovalori}, we get the claimed estimate for the measure of $\widetilde R (\ell, \alpha, \beta)$ and the proof is concluded. 
 \end{proof}
 
 \medskip
 
 \noindent
 {\sc Proof of Theorem \ref{teorema finale stima in misura} concluded.}
 By \eqref{Omega - Omega infinito finale}, by applying Lemmata \ref{limitazioni indici risonanti}, \ref{stima in misura insiemi risonanti} and recalling the definitions of the constants $\tau$ and $\mathtt d$ made in \eqref{scelta tau mathtt d}, one gets the estimate 
 \begin{align}
| DC(\gamma, \tau) \setminus \Omega_\infty^{2 \gamma}| 
& \lesssim \sum_{\begin{subarray}{c}
\ell \in \Z^\nu\,,\, j, j' \in \Z^d
\end{subarray}}  \frac{ \gamma }{\langle j \rangle^{\mathtt d + 1 - d} \langle j' \rangle^{\mathtt d + 1 - d}\langle \ell \rangle^{\tau + 1}}  + \sum_{\begin{subarray}{c}
\ell \in \Z^\nu\,,\, j, j' \in \Z^d \\
|j|, |j'| \lesssim \langle \ell \rangle
\end{subarray}}  \frac{\gamma}{ \langle \ell \rangle^{\tau + 1 - 2 d} } \lesssim \gamma\,. \label{stima DC gamma tau meno Omega infinito}
 \end{align}
  Hence, the Theorem \ref{teorema finale stima in misura} follows by \eqref{Omega - Omega infinito preliminare}, \eqref{misura diofantei calssica}, \eqref{stima DC gamma tau meno Omega infinito}. 

\section{ Proof of Theorem \ref{theorem linear stability} and Corollary \ref{crescita norme di sobolev}.}\label{conclusione proof}
In this section we prove Theorem \ref{theorem linear stability} and Corollary \ref{crescita norme di sobolev}. We define 
\begin{equation}
{\cal W}_1(\vphi) := {\cal S}(\vphi) \circ {\cal C}\,, \qquad {\cal W}_2(\vphi) := {\cal T}(\vphi) \circ \Phi_\infty(\vphi)\,, \quad \vphi \in \T^\nu
\end{equation}
where the maps ${\cal S}$, ${\cal C}$, ${\cal T}$ are defined in \eqref{cal S1}, \eqref{matrice coordinate complesse}, \eqref{definizione cal TM} and the map $\Phi_\infty$ is given in Corollary \ref{lem:convPhi}. We define the constants
$$
\overline q = \overline q(\nu, d) := 2(\frak s_0 + \overline \mu + \mathtt b + 2 s_0 + 2 )
$$
and for any $q > \overline q$, we define 
$$
\mathfrak S_q = \mathfrak S(q, \nu, d)  := S_q -2 - 2 s_0 = [q/2] - \overline \mu - \mathtt b - 2 s_0 - 2
$$
where we recall the definitions \eqref{scelta di M nuove norme}, \eqref{definizione alpha}, \eqref{scelta tau mathtt d}. By Lemmata \ref{Lemma dopo simmetrizzazione}, \ref{ultimo lemma pre riducibilita}, \ref{stima Hs x fourier multiplier} and Corollary \ref{lem:convPhi} one gets that for $\e \gamma^{- 1} \leq \delta_q$ (for some constant $\delta_q$ small enough depending on $q, \nu, d$), for any $\vphi \in \T^\nu$, for any $\omega \in \Omega_\infty^{2 \gamma}$ the maps ${\cal W}_i(\vphi) = {\cal W}_i(\vphi; \omega)$, $i = 1, 2$ are bounded and invertible with 
$$
{\cal W}_1(\vphi) : {\bf H}^{s }_0(\T^d) \to H^{s + \frac12}_0(\T^d, \R) \times H^{s - \frac12}_0(\T^d, \R)\,, \quad {\cal W}_1(\vphi)^{- 1} :  H^{s + \frac12}_0(\T^d, \R) \times H^{s - \frac12}_0(\T^d, \R) \to {\bf H}^s_0(\T^d)\,,
$$
for any $1/2 \leq s \leq \mathfrak S_q$ and 
$$
{\cal W}_2(\vphi)^{\pm 1} : {\bf H}^s_0(\T^d) \to {\bf H}^s_0(\T^d)\,, \qquad \forall 0 \leq s \leq \mathfrak S_q\,. 
$$
Let $1/2 \leq s \leq \mathfrak S_q$ and $(v^{(0)}, \psi^{(0)}) \in H^{s + \frac12}(\T^d, \R) \times H^{s - \frac12}(\T^d, \R)$. For any $\omega \in \Omega_\infty^{2 \gamma}$, defining ${\cal W}_\infty(\vphi) := {\cal W}_1(\vphi) \circ {\cal A} \circ{\cal W}_2(\vphi)$, by the change of variable 
\begin{equation}\label{equazione pernacchio}
(v (t, \cdot), \psi(t, \cdot)) = {\cal W}_\infty(\omega t) [{\bf u}(t, \cdot)]\,, \quad {\bf u} = (u, \overline u)
\end{equation}
(recall that ${\cal A}$ is the reparametrization of time defined in \eqref{reparametrizzazione tempo dinamica}), the Cauchy problem 
\begin{equation}\label{equazione compatta nella prova finale}
\begin{cases}
(\partial_t v, \partial_t \psi) = {\cal L}(\omega t)[(u, \psi)]\,.  \\
(v(0, \cdot), \psi(0, \cdot)) = (v^{(0)}, \psi^{(0)})\,. 
\end{cases}
\end{equation}
 is transformed into 
\begin{equation}\label{problema di cauchy h bar h}
\begin{cases}
\partial_t {\bf u} = {\cal D}_\infty {\bf u}  \\
{\bf u}(0, \cdot) = {\bf u}^{(0)}
\end{cases}\,, \quad {\bf u}^{(0)} = (u^{(0)}, \overline u^{(0)}) = {\cal W}_2(0)^{- 1} \circ {\cal W}_1(0)^{- 1}[(v^{(0)}, \psi^{(0)})]
\end{equation}
where the operator ${\cal D}_\infty = \begin{pmatrix}
- \ii {\cal D}_\infty^{(1)} & 0 \\
0 & \ii \overline{\cal D}_\infty^{(1)}
\end{pmatrix}$ is defined in \eqref{cal D infinito}. Note that, since for any $\alpha \in \sigma_0(\sqrt{- \Delta})$, the block $[{\cal D}^{(1)}_\infty]_\alpha^\alpha$ is self-adjoint, one has that the operator ${\cal D}_\infty^{(1)}$ is self-adjoint, i.e. 
\begin{equation}\label{autoaggiuntezza D indinito (1)}
{\cal D}_\infty^{(1)} = \big( {\cal D}_\infty^{(1)} \big)^*\,.
\end{equation} 
Then, we consider the Cauchy problem 
\begin{equation}\label{problema di cauchy h}
\begin{cases}
\partial_t u = - \ii {\cal D}_\infty^{(1)} u  \\
{u}(0, \cdot) = {u}^{(0)}\,.
\end{cases}
\end{equation}
We prove that 
\begin{equation}\label{stima problema di Cauchy equazione ridotta}
\| u(t, \cdot)\|_{H^s_x} = \| u^{(0)} \|_{H^s_x}\,, \qquad \forall t \in \R\,. 
\end{equation}
Since ${\cal D}_\infty^{(1)}$ is a block-diagonal operator, one can easily verify that the commutator $[|D|^s, {\cal D}_\infty^{(1)}] = 0$ and therefore
\begin{align}
\partial_t \| h(t, \cdot) \|_{H^s_x}^2 &  = - \Big( \ii ({\cal D}_\infty^{(1)} - \big({\cal D}_\infty^{(1)} \big)^*) |D|^s h, |D|^s h \Big)_{L^2_x} \stackrel{\eqref{autoaggiuntezza D indinito (1)}}{=} 0 \nonumber
 \end{align}
 which implies \eqref{stima problema di Cauchy equazione ridotta}. 
 
 \noindent
 Now, by \eqref{equazione pernacchio} one has that for any $1/2 \leq s \leq \mathfrak S_q$ 
 \begin{align}
 \| (u (t, \cdot), \psi(t, \cdot))  \|_{H^{s + \frac12}_x \times H^{s - \frac12}_x} & {\lesssim_q} \| {\cal A} \circ {\cal W}_2(\omega t)[{\bf u}(t, \cdot)] \|_{{\bf H}^s_x} \stackrel{\eqref{reparametrizzazione tempo dinamica}}{\lesssim_q} \|  {\cal W}_2(\omega \tau + \omega \alpha(\omega \tau))[{\bf u}(\tau + \alpha (\omega \tau), \cdot)] \|_{{\bf H}^s_x} \nonumber\\
 & {\lesssim_q} \| {\bf u}(\tau + \alpha (\omega \tau), \cdot) \|_{{\bf H}^s_x} \stackrel{\eqref{stima problema di Cauchy equazione ridotta}}{\lesssim_q} \| {\bf u}_0\|_{{\bf H}^s_x} \stackrel{\eqref{problema di cauchy h bar h}}{\lesssim_q} \| (v^{(0)}, \psi^{(0)})\|_{H^{s + \frac12}_x \times H^{s - \frac12}_x}\,. \nonumber
 \end{align}

 Set $\gamma = \e^a$, with $0 < a < 1$ and $\Omega_\e := \Omega_\infty^{2 \gamma}$. Then $ \e \gamma^{- 1} =  \e^{1 - a} $ and hence the smallness condition $\e \gamma^{- 1} \leq \delta_q$ is fullfilled  by taking $\e$ small enough. Furthermore, by Theorem \ref{teorema finale stima in misura}, since $\gamma = \e^a$, we get that \eqref{stima misura main theorem} holds and therefore Theorem \ref{theorem linear stability} and Corollary \ref{crescita norme di sobolev} have been proved. 

\section{Appendix}
We prove some elementary properties of the set $\sigma_0(\sqrt{- \Delta})$ defined in \eqref{definizione cal N}.
\begin{lemma}\label{lemma spettro laplaciano}
$(i)$ Let $p > d$. Then $\sum_{\alpha \in \sigma_0(\sqrt{- \Delta})} \alpha^{- p} < +\infty$. If $p > d + \nu$, $\sum_{\begin{subarray}{c}
\ell \in \Z^\nu \\
\alpha \in  \sigma_0(\sqrt{- \Delta})
\end{subarray}} \langle \ell , \alpha \rangle^{- p} < + \infty$.

\noindent
$(ii)$ Let $\alpha, \beta \in \sigma_0(\sqrt{- \Delta})$ with $\alpha \neq \beta$. Then there exists a constant $C > 0$ such that $|\alpha - \beta| \geq C (\alpha^{- 1} + \beta^{- 1})$. 
\end{lemma}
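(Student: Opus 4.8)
The plan is to reduce everything to the elementary arithmetic observation that every $\alpha \in \sigma_0(\sqrt{-\Delta})$ satisfies $\alpha^2 = j_1^2 + \cdots + j_d^2 \in \N$ for the corresponding $j = (j_1, \ldots, j_d) \in \Z^d \setminus \{0\}$, together with the standard lattice estimate that $\sum_{j \in \Z^d \setminus \{0\}} |j|^{-p}$ is finite if and only if $p > d$.

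For item $(i)$, I would first pick, for each $\alpha \in \sigma_0(\sqrt{-\Delta})$, a lattice point $j_\alpha \in \Z^d \setminus \{0\}$ with $|j_\alpha| = \alpha$. Distinct values of $\alpha$ force distinct points $j_\alpha$, so the map $\alpha \mapsto j_\alpha$ is injective; hence $\sum_{\alpha \in \sigma_0(\sqrt{-\Delta})} \alpha^{-p} = \sum_{\alpha} |j_\alpha|^{-p} \leq \sum_{j \in \Z^d \setminus \{0\}} |j|^{-p}$, which is finite whenever $p > d$ by the integral comparison. For the space--time sum, since $p > d + \nu$ I would split $p = p_1 + p_2$ with $p_1 > \nu$ and $p_2 > d$, use $\langle \ell, \alpha \rangle \geq \langle \ell \rangle$ and $\langle \ell, \alpha \rangle \geq \alpha$ (recall that $\alpha \geq 1$ for all $\alpha \in \sigma_0(\sqrt{-\Delta})$, since $j \neq 0$), so that $\langle \ell, \alpha \rangle^{-p} \leq \langle \ell \rangle^{-p_1} \alpha^{-p_2}$, and then conclude that the double sum factorizes, $\sum_{\ell,\alpha} \langle \ell, \alpha \rangle^{-p} \leq \big(\sum_{\ell \in \Z^\nu} \langle \ell \rangle^{-p_1}\big)\big(\sum_{\alpha \in \sigma_0(\sqrt{-\Delta})} \alpha^{-p_2}\big)$, where the first factor is finite by the usual estimate on $\Z^\nu$ and the second by the first part of $(i)$.

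For item $(ii)$, the key step is that if $\alpha \neq \beta$ then $\alpha^2$ and $\beta^2$ are distinct positive integers, hence $|\alpha^2 - \beta^2| \geq 1$ and therefore $|\alpha - \beta| = |\alpha^2 - \beta^2|/(\alpha + \beta) \geq (\alpha + \beta)^{-1}$. It then remains to compare $(\alpha+\beta)^{-1}$ with $\alpha^{-1} + \beta^{-1} = (\alpha+\beta)/(\alpha\beta)$. Assuming without loss of generality $\alpha \leq \beta$, I would split into two cases. If $\beta \leq 2\alpha$, then $\alpha\beta \geq \alpha^2 \geq (\alpha+\beta)^2/9$, so $\alpha^{-1} + \beta^{-1} \leq 9(\alpha+\beta)^{-1} \leq 9\,|\alpha - \beta|$. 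If $\beta > 2\alpha$, then $|\alpha-\beta| = \beta - \alpha > \beta/2 \geq 1/2$, while $\alpha^{-1} + \beta^{-1} \leq 2$ (because $\alpha, \beta \geq 1$), so $|\alpha - \beta| \geq \tfrac14(\alpha^{-1} + \beta^{-1})$. Taking $C := 1/9$ then works in both cases.

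The argument is entirely elementary and I do not expect any genuine obstacle. The only points worth a moment's attention are: in $(i)$, noticing that replacing the sum over the \emph{set of values} $\alpha$ by a sum over \emph{lattice points} only increases it, so the high multiplicity of each $\alpha$ is harmless for an upper bound; and in $(ii)$, the case distinction together with the repeated use of $\alpha, \beta \geq 1$, which keeps $\alpha^{-1} + \beta^{-1}$ bounded in the regime $\beta > 2\alpha$.
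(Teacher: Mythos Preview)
Your proof is correct and follows essentially the same approach as the paper: for $(i)$ both bound the sum over $\sigma_0(\sqrt{-\Delta})$ by the full lattice sum over $\Z^d$ (the paper handles the double sum by bounding directly by the $\Z^{\nu+d}$ lattice sum rather than factorizing, but this is a cosmetic difference), and for $(ii)$ both rest on the key observation that $\alpha^2,\beta^2$ are distinct positive integers, so $|\alpha-\beta|=|\alpha^2-\beta^2|/(\alpha+\beta)\geq 1/(\alpha+\beta)$. The paper then passes through $\max\{1/\alpha,1/\beta\}$ (with an implicit constant) instead of your explicit case split, but the content is the same.
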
 
\begin{proof}
{\sc Proof of $(i)$.} By the definition \eqref{definizione cal N} one has that 
$$
\sum_{\alpha \in \sigma_0(\sqrt{- \Delta})} \alpha^{- p}  \leq \sum_{j \in \Z^d} \langle j \rangle^{- p}\,, \quad \sum_{\begin{subarray}{c}
\ell \in \Z^\nu \\
\alpha \in  \sigma_0(\sqrt{- \Delta})
\end{subarray}} \langle \ell , \alpha \rangle^{- p} \leq \sum_{\begin{subarray}{c}
\ell \in \Z^\nu \\
j \in \Z^d
\end{subarray}} \langle \ell , j \rangle^{- p}\,.
$$
the first series on the right hand side converges if $p > d$ and the second one for $p > \nu + d$.

\noindent
{\sc Proof of $(ii)$.} First, we note that if $x, y \in \N$, $x \neq y$ one has that 
 $$
 |\sqrt{x} - \sqrt{y}| \geq {\rm max}\Big\{ \frac{1}{\sqrt{x}}, \frac{1}{\sqrt{y}} \Big\} \geq C \Big( \frac{1}{\sqrt{x}} + \frac{1}{\sqrt{y}} \Big)\,,
 $$
 for some constant $C > 0$. Since by the definition of $\sigma_0(\sqrt{- \Delta})$, if $\alpha, \beta \in \sigma_0(\sqrt{- \Delta})$, $\alpha \neq \beta$, they are square roots of integer numbers, i.e. $\alpha^2, \beta^2 \in \N$, the claimed inequality follows.  
 \end{proof}
 
 \medskip
 
 \noindent
 Now we recall some well known facts concerning linear self-adjoint operators on finite dimensional Hilbert spaces. Let ${\cal H}$ a finite dimensional Hilbert space of dimension $n$ equipped by the inner product $\langle \cdot\,,\,\cdot \rangle_{\cal H}$. Let us denote by ${\cal B}({\cal H})$ the space of the linear operators from ${\cal H}$ onto itself, equipped by the operator norm $\| \cdot \|_{{\cal B}({\cal H})}$. For any self-adjoint operator $A : {\cal H} \to {\cal H}$, we order its eigenvalues as
\begin{equation}\label{spettro hilbert astratto}
{\rm spec}(A) := \big\{\lambda_1(A) \leq \lambda_2(A) \leq \ldots \leq \lambda_n(A)\big\}\,.
\end{equation}
We recall the well known property
\begin{equation}\label{proprieta elementare norma operatoriale autovalori}
\| A \|_{{\cal B}({\cal H})} = {\rm max}_{\lambda \in {\rm spec}(A)}|\lambda|\,.
\end{equation}
Moreover the following lemma holds
\begin{lemma}\label{risultato astratto operatori autoaggiunti}
 Let ${\cal H}$ be a Hilbert space of dimension $n$. Then the following holds:

\noindent
$(i)$ Let $A_1, A_2 : {\cal H} \to {\cal H}$ be self-adjoint operators. Then their eigenvalues, ranked as in \eqref{spettro hilbert astratto}, satisfy the Lipschitz property 
$$
|\lambda_k(A_1) - \lambda_k(A_2)| \leq \| A_1 - A_2 \|_{{\cal B}({\cal H})}\,, \qquad \forall k = 1, \ldots, n\,.
$$

\noindent
$(ii)$ Let $A = \eta {\rm Id}_{\cal H} + B$, where $\eta \in \R$, ${\rm Id}_{\cal H} : {\cal H} \to {\cal H}$ is the identity and $B :{\cal H} \to {\cal H}$ is selfadjoint. Then 
$$
\lambda_k(A) = \eta + \lambda_k(B) \,, \qquad \forall k = 1, \ldots , n\,. 
$$ 

\noindent
$(iii)$ Let $A : {\cal H} \to {\cal H}$ be self-adjoint and assume that ${\rm spec}(A) \subset \R \setminus \{ 0 \}$. Then $A$ is invertible and its inverse satisfies
$$
\| A^{- 1}\|_{{\cal B}({\cal H})} = \dfrac{1}{\min_{k = 1, \ldots, n}|\lambda_k(A)|}\,.
$$
\end{lemma}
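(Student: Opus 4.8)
The plan is to prove the three items by classical finite-dimensional spectral theory, so the argument is entirely elementary. For item $(i)$ I would invoke the Courant--Fischer min-max characterization of the eigenvalues of a self-adjoint operator: ordering the eigenvalues as in \eqref{spettro hilbert astratto}, one has
\[
\lambda_k(A) = \min_{\substack{V \subseteq {\cal H} \\ \dim V = k}} \ \max_{\substack{v \in V \setminus \{ 0 \}}} \frac{\langle A v, v \rangle_{\cal H}}{\langle v, v \rangle_{\cal H}}\,.
\]
Given two self-adjoint operators $A_1, A_2$ and any subspace $V$ of dimension $k$, for every unit vector $v \in V$ one has $|\langle A_1 v, v\rangle_{\cal H} - \langle A_2 v, v \rangle_{\cal H}| = |\langle (A_1 - A_2) v, v \rangle_{\cal H}| \leq \| A_1 - A_2 \|_{{\cal B}({\cal H})}$; hence the two ``max over $V$'' quantities differ by at most $\| A_1 - A_2 \|_{{\cal B}({\cal H})}$, and taking the minimum over all $k$-dimensional $V$ (first realizing the minimum for $A_1$, then for $A_2$) yields $|\lambda_k(A_1) - \lambda_k(A_2)| \leq \| A_1 - A_2 \|_{{\cal B}({\cal H})}$.

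Item $(ii)$ is immediate: if $\{ e_1, \dots, e_n \}$ is an orthonormal eigenbasis of the self-adjoint operator $B$ with $B e_k = \lambda_k(B) e_k$, then $A e_k = (\eta + \lambda_k(B)) e_k$, so $\{ e_k \}$ is also an eigenbasis for $A$ with eigenvalues $\eta + \lambda_k(B)$; since $\eta$ is a fixed real number, the ordering is preserved and $\lambda_k(A) = \eta + \lambda_k(B)$ for all $k$.

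For item $(iii)$, since $A$ is self-adjoint it admits an orthonormal eigenbasis $\{ e_k \}$ with $A e_k = \lambda_k(A) e_k$ and, by hypothesis, all $\lambda_k(A) \neq 0$. Then $A$ is invertible with $A^{-1} e_k = \lambda_k(A)^{-1} e_k$, so $A^{-1}$ is self-adjoint with spectrum $\{ \lambda_k(A)^{-1} : k = 1, \dots, n \}$; applying the identity \eqref{proprieta elementare norma operatoriale autovalori} to $A^{-1}$ gives $\| A^{-1} \|_{{\cal B}({\cal H})} = \max_{k} |\lambda_k(A)|^{-1} = \big(\min_k |\lambda_k(A)|\big)^{-1}$.

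I do not expect any genuine obstacle here; the only point requiring a named tool is the min-max principle used in $(i)$. If one prefers to avoid it, $(i)$ can instead be deduced from Weyl's perturbation inequality $\lambda_k(A_1) \leq \lambda_k(A_2) + \lambda_n(A_1 - A_2)$ combined with the bounds $|\lambda_n(A_1 - A_2)|, |\lambda_1(A_1 - A_2)| \leq \| A_1 - A_2 \|_{{\cal B}({\cal H})}$ coming from \eqref{proprieta elementare norma operatoriale autovalori}, applied symmetrically after exchanging the roles of $A_1$ and $A_2$.
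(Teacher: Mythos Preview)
Your argument is correct and entirely standard: the min--max principle (or equivalently Weyl's inequality) for $(i)$, the obvious shared eigenbasis for $(ii)$, and the spectral mapping together with \eqref{proprieta elementare norma operatoriale autovalori} for $(iii)$ are exactly the right tools. The paper itself does not give a proof of this lemma; it simply records these facts as ``well known'' in the Appendix, so your write-up is more detailed than what the paper provides, but fully in line with its intent.
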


\smallskip

\begin{flushright}
Riccardo Montalto,  University of Z\"urich, Winterthurerstrasse 190,
CH-8057, Z\"urich, Switzerland. \\ \emph{E-mail: {\tt riccardo.montalto@math.uzh.ch}} 
\end{flushright}

\end{document}